\newcommand{\incfig}[3]{%
  \includegraphics[clip,trim=#3 #3 0mm 0mm,width=#2]{#1}%
}
\newcommand{\incfigl}[4]{%
  \includegraphics[clip,trim=#3 #4 0mm 0mm,width=#2]{#1}%
}
\title{Jellyfish exist}
\author{Ben Andrews\and Glen Wheeler}
\date{\today}
\theoremstyle{plain}
\newtheorem{theorem}{Theorem}[section]
\newtheorem{conj}[theorem]{Conjecture}
\newtheorem{proposition}[theorem]{Proposition}
\newtheorem{lemma}[theorem]{Lemma}
\newtheorem{corollary}[theorem]{Corollary}
\newtheorem{remark}[theorem]{Remark}
\newtheorem{definition}[theorem]{Definition}
\newcommand{\R}{\mathbb{R}}
\newcommand{\Q}{\mathbb{Q}}
\newcommand{\Z}{\mathbb{Z}}
\newcommand{\N}{\mathbb{N}}
\renewcommand{\S}{\mathbb{S}}
\newcommand{\C}{\mathbb{C}}
\newcommand{\SL}{\mathcal{L}}
\newcommand{\SE}{\mathcal{E}}
\newcommand{\SI}{\mathcal{I}}
\newcommand{\SF}{\mathcal{F}}
\newcommand{\SQ}{\mathcal{Q}}
\begin{document}
\maketitle

\begin{abstract}
We show the existence of infinitely many geometrically distinct homothetic expanders (jellyfish) for the elastic flow, epicyclic shrinkers for the curve diffusion flow, and epicyclic expanders for the ideal flow.
\end{abstract}

\begin{figure}[t]
\hfill
\centering
\begin{subfigure}[t]{0.32\textwidth}
  \centering
  \includegraphics[width=\linewidth]{./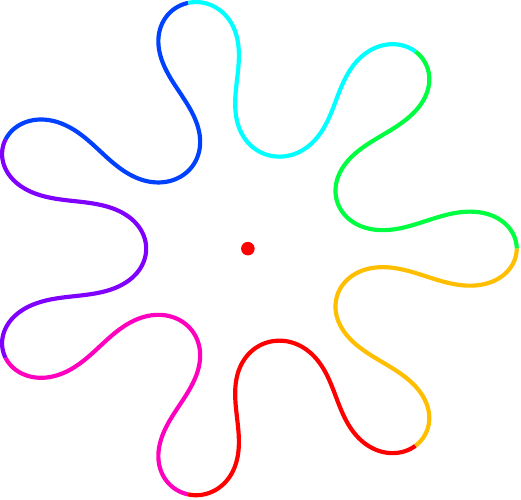}
  \caption{Free elastic flow jellyfish expander ($p/q=1/7$, $\omega=1$, $\varepsilon=0.13290$).}
  \label{fig:intro:rep:jellyfish}
\end{subfigure}\hfill
\begin{subfigure}[t]{0.32\textwidth}
  \centering
  \incfig{fig1_closed_3_4.pdf}{\linewidth}{0mm}
  \caption{CDF epicyclic shrinker ($p/q=3/4$, $\omega=3$, $\varepsilon=0.20132$).}
  \label{fig:intro:rep:cdf}
\end{subfigure}\hfill
\begin{subfigure}[t]{0.32\textwidth}
  \centering
  \incfig{fig4_closed_26_27.pdf}{\linewidth}{0mm}
  \caption{Ideal-flow epicyclic expander ($p/q=26/27$, $\omega=26$, $\varepsilon=0.20005$).}
  \label{fig:intro:rep:ideal}
\end{subfigure}
\hfill
\caption{Representative closed self-similar solutions from each family constructed in this paper.
Values of $\varepsilon$ are accurate to five significant figures.
Colours are used to denote doubled fundamental arcs.}
\label{fig:intro:representatives}
\end{figure}

\section{Introduction}

\subsection{Three curvature flow}

Let $\gamma:\S\times[0,T)\to\R^2$ be a one-parameter family of smooth, closed, immersed planar curves, where $T\in(0,\infty]$ is the maximal time of existence.
We write $s$ for arc-length along $\gamma(\cdot,t)$, $\tau=\partial_s\gamma$ for the unit tangent, $\nu$ for the unit normal, $k$ for the scalar curvature (so that $\partial_s\tau=k\nu$), and $k_s=\partial_s k$, $k_{ss}=\partial_s^2 k$, etc.
A planar curve's $\gamma$ associated  length, elastic energy, and ideal energy are as follows
\begin{equation}\label{eq:intro:functionals}
\SL[\gamma]=\int_{\gamma} ds,
\qquad
\SE[\gamma]=\frac12\int_{\gamma} k^2\,ds,
\qquad
\SI[\gamma]=\frac12\int_{\gamma} k_s^2\,ds.
\end{equation}

The steepest descent $L^2(ds)$-gradient flows of $\mathcal E$ and $\mathcal I$ are, respectively, the \emph{(free) elastic flow} and the \emph{ideal flow}.
The \emph{curve diffusion flow} is the steepest descent $H^{-1}(ds)$ (normal graphical) gradient flow of length.
If $\gamma$ is an elastic flow, its normal velocity satisfies
\begin{equation}\label{EF}\tag{EF}
\partial_t\gamma
= - \nabla^{L^2(ds)}\SE[\gamma]\nu
=
-\Big(k_{ss}+\frac12 k^3\Big)\nu,
\end{equation}
 if $\gamma$ is an ideal flow, then its normal velocity satisfies
\begin{equation}\label{IF}\tag{IF}
\partial_t\gamma
= - \nabla^{L^2(ds)}\SI[\gamma]\nu
=
\Big(k_{ssss}+k^2k_{ss}-\frac12 k\,k_s^2\Big)\nu,
\end{equation}
and finally, if $\gamma$ is a curve diffusion flow, its normal velocity satisfies
\begin{equation}\label{CDF}\tag{CDF}
\partial_t\gamma
=- \nabla^{H^{-1}(ds)}\SL[\gamma]
=-k_{ss}\,\nu.
\end{equation}
We typically drop the decorations and the argument on $\nabla$, denoting $\nabla\SE := \nabla^{L^2(ds)}\SE[\gamma]$, $\nabla\SI := \nabla^{L^2(ds)}\SI[\gamma]$ and $\nabla\SL = \nabla^{H^{-1}(ds)}\SL[\gamma]$.

All solutions to \eqref{EF} and \eqref{IF} are immortal (see \cite{AMWW20} for the ideal flow, which also contains a stability of circles result, and \cite{DKS02} for the elastic flow; for the stability of expanding circles, see \cite{MW25} and \cite{AW25}), and are expected to exhibit expansion behaviour.
On the other hand, solutions to \eqref{CDF} may exist only for finite time, and are expected to exhibit shrinking behaviour (see \cite{EGMWW15} for the lemniscate, \cite{EG97} and \cite{W13} for stability of circles (eventual convergence of immortal trajectories to multiply-covered circles holds, see \cite{W22} and \cite{M24}) and \cite{W13} for an estimate on the maximal existence time).
This can be understood as a consequence of the effect scaling action has on the energy: If $\eta = \rho\gamma$, then $\SL[\eta] = \rho\SL[\gamma]$, $\SE[\eta] = \rho^{-1}\SE[\gamma]$, and $\SI[\eta] = \rho^{-3}\SI[\gamma]$.
This suggests that the curve diffusion flow wishes to take $\rho\searrow0$ and that the elastic and ideal flows wish to take $\rho\nearrow\infty$.
For solutions evolving purely by homothety, that is solutions of the form \begin{equation}
\label{eq:intro:homot}
\gamma(\cdot,t) = \rho(t)\hat\gamma(\cdot), 
\text{ where for some $\sigma\in\R$ }
\begin{cases}
\nabla\SE[\hat\gamma] = -\sigma \hat\gamma\cdot\nu\,,\text{ for \eqref{EF}}
\\
\nabla\SI[\hat\gamma] = -\sigma \hat\gamma\cdot\nu\,,\text{ for \eqref{IF}}
\\
\nabla\SL[\hat\gamma] = -\sigma \hat\gamma\cdot\nu\,,\text{ for \eqref{CDF}}
\end{cases}
\end{equation}
this is exactly what does happen (and in finite time for the curve diffusion flow, infinite time for the elastic and ideal flows).
Non-stationary ($r(t)$ not constant) examples of profiles $\hat\gamma$ satisfying \eqref{eq:intro:homot}, outside of circles and Bernoulli's lemniscate, have yet to be found. 
The goal of this paper is to address this shortcoming.

\subsection{Main results}

\subsection*{Fundamental arcs and dihedral gluing}

The overarching strategy for the three families of solutions we construct is as follows.

\begin{itemize}
\item We begin with a {stationary base arc} for the relevant profile problem: a half-period of Euler's rectangular elastica for jellyfish, and a semicircle for the epicyclic family.
\item We introduce a small perturbation parameter and pose a boundary-value problem for a \emph{fundamental arc} which is compatible with a dihedral reflection principle (so that reflected concatenations are smooth across seams).
\item We prove existence of sufficiently many fundamental arcs (depending smoothly on parameters) and then impose a discrete dihedral closing condition ensuring that finitely many copies glue to a closed curve.
\end{itemize}

This turns the search for closed homothetic solutions into a combination of (i) a perturbative existence theory for an ODE boundary-value problem on an interval; and (ii) a symmetry-driven closing condition.

\smallskip
\noindent\textbf{Jellyfish expanders for the free elastic flow.}
These are closed expanders for \eqref{EF} with dihedral symmetry, visually resembling a ``body'' with multiple ``tentacles''.
They are constructed from a {fundamental arc} obtained by perturbing a half-period of Euler's rectangular elastica, and then gluing via reflections and rotations.

Specifically, let us denote by $\gamma^j_m,\gamma^e_m,\gamma^E_m:\S\to\R^2$ smooth closed curves with the following properties:
\begin{itemize}
\item $\gamma^j_m,\gamma^e_m,\gamma^E_m$ have dihedral symmetry of order $m$
\item $\gamma^j_m,\gamma^e_m,\gamma^E_m$ satisfy the homothetic solution equation \eqref{eq:intro:homot} for the elastic flow (and expands self-similarly), the curve diffusion flow (and shrinks self-similarly), and the ideal flow (and expands self-similarly) respectively
\item $\gamma^j_m,\gamma^e_m,\gamma^E_m$ is not equivalent to $\gamma^j_{\hat m},\gamma^e_{\hat m},\gamma^E_{\hat m}$ for $\hat m \ne m$ by similarity transformation.
\end{itemize}
We call $\gamma^j_m$ \emph{jellyfish}, $\gamma^e_m$ \emph{epicyclic shrinkers} and $\gamma^E_m$ \emph{epicyclic expanders}.

\begin{theorem}
\label{thm:intro:jellyfish}
There is an $m_0<\infty$ such that jellyfish expanders $\gamma^j_m$ exist for all $m>m_0$.
\end{theorem}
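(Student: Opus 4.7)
\medskip
\noindent\emph{Proof plan.}
I follow the three-step strategy highlighted in the subsection above. In arc length, the homothetic profile equation in \eqref{eq:intro:homot} for \eqref{EF} becomes the fourth-order ODE
\[
k_{ss}+\tfrac12 k^{3} \;=\; -\sigma\,\langle\hat\gamma,\nu\rangle,
\]
and I take as base arc a half-period $\gamma_0$ of Euler's rectangular (free) elastica, which solves the $\sigma=0$ version of this equation between two consecutive inflection points. The fundamental arc is posed on an interval $[0,L]$ with boundary data $k_s(0)=k_s(L)=0$ and the tangent transverse to a fixed pair of lines meeting at a common centre $O$; these are exactly the conditions under which reflection across either bounding line extends the arc $C^\infty$, so that a finite orbit under the resulting dihedral action assembles a smooth closed curve.

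First I would turn on a small parameter $\varepsilon$ controlling $\sigma$ (equivalently, the homothety rate), possibly together with a scale parameter to pin down the residual dilation symmetry of the base. The BVP is then a smooth equation $F(\varepsilon,u)=0$ on a suitable pair of Banach spaces, with $F(0,\gamma_0)=0$, whose linearisation $D_uF(0,\gamma_0)$ is the Jacobi operator of the free-elastic functional along $\gamma_0$ subject to the fundamental-arc boundary conditions. Provided this operator is invertible after the continuous ambient symmetries have been gauge-fixed, the implicit function theorem produces a smooth one-parameter family $\gamma_\varepsilon$, $|\varepsilon|\le\varepsilon_\ast$.

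Next I would reduce closure to a single scalar condition. Let $\alpha(\varepsilon)$ be the angle at $O$ subtended by the two bounding lines of $\gamma_\varepsilon$; the dihedral orbit of $\gamma_\varepsilon$ of order $m$ closes to a smooth immersed curve exactly when
\[
\alpha(\varepsilon)\;=\;\pi\,\tfrac{q}{m},\qquad \gcd(q,m)=1,
\]
with the ratios $p/q$ and rotation index $\omega$ of Figure~\ref{fig:intro:representatives} packaging this datum. The unperturbed angle $\alpha(0)=\alpha_0$ is a fixed number in $(0,\pi)$ determined by the rectangular elastica, and by continuity $\alpha(\varepsilon)$ sweeps out an open interval $J\ni\alpha_0$ as $\varepsilon$ runs over $[-\varepsilon_\ast,\varepsilon_\ast]$. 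For every $m$ larger than some threshold $m_0$, the set $\{\pi q/m:1\le q<m,\ \gcd(q,m)=1\}$ meets $J$, producing a value $\varepsilon_m$ with $\alpha(\varepsilon_m)=\pi q(m)/m$; the corresponding dihedral orbit of $\gamma_{\varepsilon_m}$ is the required closed expander $\gamma^j_m$. Distinctness modulo similarity across different $m$ is immediate from the different dihedral orders.

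The main obstacle I anticipate is the spectral step: verifying that the Jacobi operator of the rectangular elastica, restricted to the fundamental-arc space, has trivial kernel and cokernel. The rectangular elastica sits in continuous families generated by translation, rotation, scaling and modulus, so it comes equipped with a nontrivial vector space of Jacobi fields that must be either excluded by the boundary conditions or explicitly quotiented away; the linearised equation is of Lam\'e type with coefficients in Jacobi elliptic functions, which should make this finite and explicit but tedious. A secondary check is that $\alpha$ is not flat at $\varepsilon=0$, ideally via $\alpha'(0)\neq 0$; if this fails one can instead work in the joint image of two parameters (the homothety rate and a base-arc scale) so that the range of $\alpha$ is still an interval, and the closing argument goes through.
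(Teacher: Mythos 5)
Your overall strategy — perturb a half-period of the rectangular elastica into a fundamental arc via a small parameter, impose the reflection-compatible endpoint conditions $k_s=0$ and radial orthogonality, and close by a dihedral angle condition — does match the paper's. But the technical route you propose is genuinely different, and one of your key assertions is wrong in a way worth flagging.

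First, the paper never works with an infinite-dimensional linearisation. Instead of a Banach-space IFT whose linearisation is the Jacobi operator of $\SE$ (a Lam\'e-type fourth-order operator whose kernel you would have to gauge-fix), the paper encodes the BVP as a finite-dimensional shooting problem for the $5$-dimensional system \eqref{eq:ode-S}. The initial condition $(0,0,0,1,0)$ already kills translation, rotation, the curvature scale, and one boundary condition, leaving the two parameters $(\alpha,L)$ to hit the two endpoint conditions $(b_1,b_2)=(0,0)$. The non-degeneracy is then a scalar computation: $\partial_s b_1(L_0,0,0)=1$ (Lemma~\ref{lem:b1-s-nondeg}) lets one solve $b_1=0$ for $L(\alpha,\varepsilon)$, and the force-vector identity of Lemma~\ref{lem:force-identities} reduces the second non-degeneracy to $\partial_\alpha g(0,0)=-\int_0^{L_0}\cos^2\theta_0\,ds<0$ (Lemma~\ref{lem:galpha-nondeg}). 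This finite-dimensional argument sidesteps entirely the spectral obstacle you correctly identify as the main danger of your route; to make your version rigorous you would still have to carry out the elliptic-function kernel analysis, which the paper simply avoids.

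Second, your claim that the unperturbed seam angle $\alpha_0$ is a fixed number in $(0,\pi)$ is incorrect. After a half-period of the rectangular elastica the tangent returns to its initial direction ($\theta_0(L_0)=0$, Lemma~\ref{lem:base-halfperiod}), so the two radial seam lines are asymptotically parallel as $\varepsilon\to 0$ (the centre $c_\varepsilon=(-1/\varepsilon,0)$ escapes to infinity). The base seam angle is $0$, and the content of Lemma~\ref{lem:theta-bar-expansion} is that $\overline{\theta}(\varepsilon)=y_0(L_0)\,\varepsilon+O(\varepsilon^2)$ with $y_0(L_0)>0$, so the angle sweeps a one-sided interval $(0,\theta_\ast)$. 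This is in fact better for you than the $\alpha_0\in(0,\pi)$ picture: you simply take $q=1$ and $\pi/m<\theta_\ast$ for $m>m_0$, with no need for a density argument over the coprime fractions $q/m$ in a window around $\alpha_0$. If you retain your picture you must additionally argue that, for all large $m$, some coprime $q/m$ lands in $J$, which is true but an unnecessary detour.

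Finally, your claim that distinctness across $m$ ``is immediate from the different dihedral orders'' glosses over a real gap: a curve built with $D_m$ symmetry could a priori possess a larger symmetry group. The paper closes this by proving the curvature derivative $v=k_s$ is strictly negative in the interior of a fundamental arc (Lemma~\ref{lem:v-negative}), so there can be no hidden reflection axis, whence the full symmetry group is exactly $D_m$ (Lemma~\ref{lem:no-extra-axes}). Your plan should include an analogous monotonicity or no-interior-seam lemma before invoking the symmetry-order invariant.
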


\smallskip
\noindent\textbf{Epicyclic shrinkers/expanders for curve diffusion and ideal flows.}
In this case, the fundamental arc is a semicircle.
Geometrically, the resulting closed curves resemble epicycles, with many ``petals''; analytically, they are homothetic shrinkers for \eqref{CDF} and homothetic expanders for \eqref{IF}.

A key technical distinction from the jellyfish construction is the presence of a trivial circle branch at the semicircle: a na\"ive implicit-function setup degenerates (and fails) because the gluing conditions are automatically satisfied by circles.
To recover a non-degenerate bifurcation problem, we normalise the matching conditions by an explicit division procedure which removes the trivial branch and isolates the genuinely new solutions.

\begin{theorem}\label{thm:intro:cdf-epi}
There is an $m_0<\infty$ such that epicyclic shrinkers $\gamma^e_m$ exist for all $m>m_0$.
\end{theorem}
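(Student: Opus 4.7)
I would follow the dihedral-gluing template established for Theorem~\ref{thm:intro:jellyfish}, specialised to the curve diffusion setting. The homothety equation $\nabla\SL[\hat\gamma]=-\sigma\,\hat\gamma\cdot\nu$ reduces to the scalar ODE $k_{ss}=\sigma v$ with $v=\hat\gamma\cdot\nu$. Coupled with the Frenet relations and with $u=\hat\gamma\cdot\tau$, this produces the first-order autonomous system
\begin{equation*}
u_s=1+kv,\qquad v_s=-ku,\qquad (k_s)_s=\sigma v,\qquad \theta_s=k,
\end{equation*}
where $\theta$ is the tangent angle. On a fundamental arc $\hat\gamma|_{[0,L]}$ sitting inside a dihedral sector of opening angle $\pi/m$, $C^\infty$ matching across the two reflection seams reduces to the boundary data $u(0)=u(L)=0$ and $k_s(0)=k_s(L)=0$ (note that $k_{sss}=-\sigma ku$ vanishes automatically at the endpoints), together with a tangent-closing condition $\theta(L)-\theta(0)=\pi\omega/m$ matched to a rotation index $\omega$. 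The base object is a semicircle of appropriate radius, for which all of the above hold.

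\noindent The circle obstruction now makes itself felt: at $\sigma=0$ the equation forces $k\equiv\text{const}$, and every circular arc subtending the correct angle satisfies the four reflection boundary conditions, for every $m$. The naive matching map therefore has a trivial circle branch threading its zero set, and the implicit function theorem cannot be applied directly. Following the explicit division procedure flagged in the introduction, I would rescale each matching residual by the appropriate power of $\sigma$ (equivalently, by the amplitude $\varepsilon$ of the deviation from the semicircle) to produce a normalised map $\Phi(\varepsilon, m^{-1})$ whose zero set off $\{\sigma=0\}$ parametrises precisely the genuine non-circular shrinkers.

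\noindent With the normalisation in hand, the remainder is an implicit function argument. Linearising $\Phi$ at $(\varepsilon, m^{-1})=(0,0)$ yields a Jacobi-type fourth-order BVP on $[0,L]$ with the four reflection boundary conditions and the rotation constraint; I expect this operator to be invertible on the slice transverse to the circle direction. The implicit function theorem then produces a smooth branch $\varepsilon=\varepsilon(m)$ of fundamental arcs for every $m>m_0$, and reflecting across the $2m$ seams of the dihedral group assembles a closed $C^\infty$ shrinker $\gamma^e_m$. Non-equivalence for distinct $m$ is immediate from the dihedral order being a similarity invariant, and non-triviality (i.e.\ that $\gamma^e_m$ is not a multiply-covered circle) follows from $\sigma>0$ along the constructed branch.

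\noindent \emph{Main difficulty.} The crux is the combined normalisation and invertibility step. One must identify the precise factorisation that strips \emph{exactly} the circle branch without discarding any genuine non-circular solutions, and then verify that the linearisation of the renormalised matching map at the semicircle has trivial kernel on the transverse slice for all sufficiently large $m$. The latter is ultimately a Sturm/oscillation problem for the linearised fourth-order equation on an interval whose length scales with $m$, and it is there that the threshold $m_0$ arises, since finitely many resonant values of $m$ may have to be excluded.
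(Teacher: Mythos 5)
Your outline correctly identifies the two pillars of the construction: the circle degeneracy that must be normalised away before the implicit function theorem can bite, and the subsequent dihedral reflection--rotation gluing. Your autonomous system in $(u,v,k,k_s,\theta)$ with $u=\hat\gamma\cdot\tau$, $v=\hat\gamma\cdot\nu$ is a tidy reformulation that differs from the paper's $(\alpha,\varepsilon)$-parametrised shooting system but is morally the same object. However, two genuine gaps remain.

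\emph{The angle constraint is degenerate at first order.}
You fold the tangent-closing condition $\theta(L)-\theta(0)=\pi\omega/m$ into the same implicit-function system as the seam matching conditions, and expect to solve for a smooth branch $\varepsilon=\varepsilon(m^{-1})$. But along the circle branch the first variation of the endpoint turning angle in $\varepsilon$ vanishes identically: the paper computes $\Theta'(0)=0$ in Lemma~\ref{lem:cdf:Theta-expansion}, and a direct check confirms that the row of the Jacobian corresponding to the angle constraint is proportional to that of the seam condition $B$ (both equal $(\pi,1,0)$ in the $(\alpha,L,\varepsilon)$-coordinates), so the combined linearisation is singular. Your IFT, as described, fails. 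The paper avoids this by decoupling: it first applies the IFT only to the non-angle matching conditions $(\Phi,B)=(0,0)$, obtaining a branch of fundamental arcs parametrised freely by $\varepsilon$, and then establishes the non-trivial second-order expansion $\Theta(\varepsilon)=\pi-\pi\varepsilon^2+O(\varepsilon^3)$ via the second-variation computation of Lemmas~\ref{lem:cdf:Theta-expansion} and~\ref{lem:cdf:Faa}. This quadratic expansion is the hardest part of the argument and is not anticipated anywhere in your sketch (your ``appropriate power of $\sigma$'' hedge does not resolve it, since $\sigma\sim\varepsilon^2$ while $\alpha\sim\varepsilon$, so the factorisations are of different order). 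The threshold $m_0$ then comes simply from the fact that the angle deficit $\overline\theta(\varepsilon)=\pi-\Theta(\varepsilon)$ takes values in a small interval $(0,\theta_\ast)$, not from a Sturm-type resonance count; the fundamental arc length stays close to $\pi$ for all $m$, contrary to your remark about an interval ``whose length scales with $m$''.

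\emph{Distinctness is not immediate.}
You assert that non-equivalence of $\gamma^e_m$ for distinct $m$ is ``immediate from the dihedral order being a similarity invariant.'' That is true only once one knows the full symmetry group of $\gamma^e_m$ is exactly $D_m$ and not larger. The construction provides invariance under $D_m$, but ruling out additional hidden reflection axes is the content of Lemmas~\ref{lem:cdf:no-interior-seams}--\ref{lem:cdf:no-extra-axes}, which hinge on showing the radial seam function has no interior zeros on a fundamental arc for small $\varepsilon$. This step is not a formality and your proposal omits it.
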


\begin{theorem}\label{thm:intro:if-epi}
There is an $m_0<\infty$ such that epicyclic expanders $\gamma^E_m$ exist for all $m>m_0$.
\end{theorem}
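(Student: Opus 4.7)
The plan is to mimic the dihedral-gluing architecture outlined above together with the proof of Theorem~\ref{thm:intro:cdf-epi}, changing only the underlying differential operator. First I would rewrite the profile equation \eqref{eq:intro:homot} for \eqref{IF} along a fundamental arc as a scalar ODE in arc-length. Using the Frenet identities for the support function $h = \hat\gamma \cdot \nu$ and the tangent projection $g = \hat\gamma \cdot \tau$, the expander condition $k_{ssss}+k^2k_{ss}-\tfrac12 k\,k_s^2 = -\sigma h$ closes, up to integration constants fixed by the reflection-symmetry ansatz, to a fourth-order ODE for $k(s)$ along the fundamental arc. The one-parameter family of round circles is a trivial branch of solutions at $\sigma = 0$; the base arc of the perturbation is a semicircle of this family.

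\textbf{Fundamental arc via IFT after division.} On an interval of length $L(\varepsilon,\sigma)$ I would pose the BVP for the fundamental arc with seam conditions compatible with dihedral reflection: at each endpoint the curve meets the seam orthogonally and $k_s = k_{sss} = 0$, so that concatenated reflected copies are $C^\infty$ across every seam. The perturbation parameter $\varepsilon$ records the amplitude $\|k - k_*\|$ of the curvature deviation from the circle value $k_*$. A naive implicit-function argument at $\varepsilon = 0$ degenerates because every round circle satisfies both the equation and the seam conditions, yielding a kernel direction in the linearisation and forcing the seam-to-closing map to vanish along the entire circle branch. Following the division procedure introduced for Theorem~\ref{thm:intro:cdf-epi}, I would identify the order $n$ to which the seam map vanishes along the circle branch by a short Taylor expansion, divide by $\varepsilon^n$, and verify that the reduced linearisation at $\varepsilon = 0$ is an isomorphism transverse to the circle direction. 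The implicit function theorem then yields a smooth family $\hat\gamma_{\varepsilon,\sigma}$ of fundamental arcs for small $\varepsilon$.

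\textbf{Closing condition and infinitely many solutions.} The total turning angle of $\hat\gamma_{\varepsilon,\sigma}$ equals $\pi$ at $\varepsilon = 0$ and varies smoothly with $(\varepsilon,\sigma)$; write it as $\pi + \Phi(\varepsilon,\sigma)$. Concatenating $2q$ alternately reflected and rotated copies of the arc yields a smooth closed curve of dihedral order $q$ and rotation index $p$ precisely when $\Phi(\varepsilon,\sigma) = \pi(p/q - 1)$. Using the scaling action on $\sigma$ to fix a normalisation, I would compute the leading nonzero Taylor coefficient of $\Phi(\varepsilon)$ explicitly and verify it is nonzero, so that for each coprime pair $(p,q)$ with $|p/q - 1|$ sufficiently small there is an $\varepsilon_{p/q}$ solving the closing equation by the intermediate value theorem. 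Distinct $(p,q)$ give distinct rotation indices and hence non-similar curves, producing $\gamma^E_m$ for all $m = q$ exceeding some threshold $m_0$.

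\textbf{Main obstacle.} The principal technical difficulty is the circle-branch degeneracy. Because \eqref{IF} is sixth order in $\gamma$, the linearisation of the seam map at the semicircle has a richer kernel than in the CDF setting of Theorem~\ref{thm:intro:cdf-epi}, and pinning down the correct vanishing order $n$ and the correct Banach-space decomposition into the circle direction and its transverse complement is the crux. Once division is performed correctly, the remaining work --- applying the IFT on the quotient, computing the leading coefficient of $\Phi$, and running the integer closing argument --- is a direct, if computationally heavier, adaptation of the argument for Theorem~\ref{thm:intro:cdf-epi}.
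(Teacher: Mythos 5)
Your proposal keeps the two-parameter family $(\varepsilon,\sigma)$ used in the CDF case and then tries to solve for the fundamental arc and its closing condition. But the ideal-flow homothetic equation is fourth order in the curvature, not second, so a fundamental arc with symmetric endpoint data $k_s(0)=k_{sss}(0)=0$ still leaves $k_{ss}(0)$ as a genuinely free initial datum. The paper introduces this as a third shooting parameter $b:=k_{ss}(0)$ and solves for the triple $(\alpha,b,L)$ as a function of a single bifurcation parameter $\varepsilon$, matching the \emph{three} endpoint conditions $k_s(L)=0$, $k_{sss}(L)=0$, and the radial orthogonality $B=0$. Without $b$ your dimension count does not close: you have three endpoint conditions but effectively only $(\varepsilon,L)$ (or $(\sigma,L)$ after normalisation) as unknowns, and there is no reason the reduced shooting map should be surjective. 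This is not a cosmetic omission --- it is precisely the new feature you yourself flag as ``the crux'' but leave unresolved.

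\textbf{Secondary issue: the division is by $\alpha$, not $\varepsilon^n$.} You propose to identify an order $n$ to which the seam map vanishes in $\varepsilon$ and divide by $\varepsilon^n$. That does not match the structure of the degeneracy. As in the CDF case, the degenerate endpoint conditions factor through the coupling parameter $\alpha$ (which vanishes identically along the trivial circle branch $\alpha=0$), so the correct move is to divide the combined seam quantity $G:=k_s(L)+k_{sss}(L)$ by $\alpha$ (with $\partial_\alpha G$ at $\alpha=0$ as the smooth extension), retaining the other conditions $k_{sss}(L)=0$ and $B=0$ unchanged. The resulting $3\times 3$ Jacobian in $(\alpha,b,L)$ turns out to be invertible after an explicit computation; your vague division-by-$\varepsilon^n$ plan has no route to producing such a nondegenerate reduced system, especially without $b$.

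\textbf{What does carry over.} Your description of the seam conditions (orthogonality plus $k_s=0$ and $k_{sss}=0$), the closing condition in terms of the turning angle, and the rotation-index argument for geometric distinctness all match the paper. But the construction of the fundamental arc --- the actual content of the theorem --- is where your proposal stops short, and filling it in requires the extra parameter and the $\alpha$-division just described rather than a Taylor expansion of the seam map in $\varepsilon$ alone.
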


Theorems~\ref{thm:intro:jellyfish}-\ref{thm:intro:if-epi} should be viewed as existence results for {infinite} families of new homothetic solutions, rather than isolated examples.
We prove these theorems in Sections 2, 3 and 4 of the paper.

\begin{figure}[t]
\hfill
\centering
\begin{subfigure}[t]{0.49\textwidth}
  \centering
  \includegraphics[angle=90,width=0.7\linewidth]{./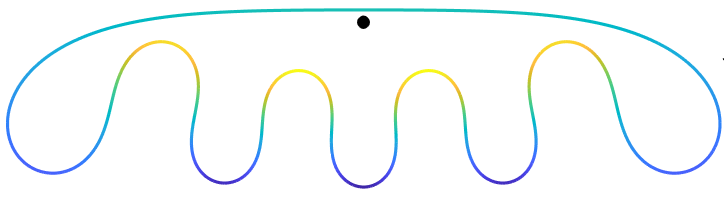}
\end{subfigure}\hfill
\begin{subfigure}[t]{0.49\textwidth}
  \centering
  \includegraphics[angle=90,width=0.7\linewidth]{./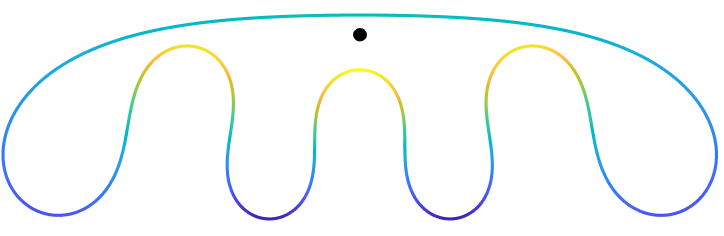}
\end{subfigure}
\hfill
\caption{Closed jellyfish-like expanders for the free elastic flow that do not possess dihedral symmetry. The indicated point is the centre of expansion.}
\label{fig:intro:mysterious}
\end{figure}

\subsection*{Open questions and outlook}
The families produced here do not come close to exhausting the space of homothetic solutions for any of the flows \eqref{EF}, \eqref{IF}, \eqref{CDF}.
Numerically we observe additional exotic solutions, including non-symmetric ``one-sided'' jellyfish (see Figure \ref{fig:intro:mysterious}), for which our method used here can not possibly apply.
Even restricting to a specific turning number appears to be a daunting problem.
A natural goal is to establish rigorous existence of such solutions, and, more ambitiously and more longer-term, to classify homothetic solutions for these  flows completely.

Beyond classification, the existence of large collections of expanders for \eqref{EF}, \eqref{CDF} and \eqref{IF} suggests a substantially richer dynamical picture than previously anticipated.
A first step toward understanding this complexity is to identify (even partially) the basins of attraction of the epicyclic expanders and jellyfish $\gamma^j_m,\gamma^e_m,\gamma^E_m$.
We suspect that these solutions are not stable for their respective flows, and venture the following conjecture.

\begin{conj}
The only dynamically stable solutions
\begin{enumerate}[label=(\alph*)]
\item to the curve diffusion flow are circles;
\item to the elastic flow are multiply-covered lemniscates of Bernoulli and multiply-covered circles;
\item and to the ideal flow are multiply-covered ideal lemniscates and multiply-covered circles.
\end{enumerate}
\end{conj}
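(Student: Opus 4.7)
The plan is to split the conjecture into a positive part (the listed solutions really are stable) and a negative part (nothing else is), and to attack both via a spectral analysis of the linearised operator about each homothetic profile. For the elastic flow, linearising \eqref{EF} around a normal graph $\gamma+\varphi\nu$ over a homothetic profile $\hat\gamma$ with scale factor $\rho(t)$ and passing to the natural self-similar variables yields, after the standard rescaling in which $\hat\gamma$ becomes a fixed point, a fourth-order self-adjoint operator of the schematic form $L\varphi=-\partial_s^4\varphi + (\text{lower-order, curvature-dependent})\varphi - \sigma\bigl(\varphi + \hat\gamma\cdot\nu\,\partial_s\varphi/\,|\hat\gamma|\bigr)$, with analogous sixth-order operators for \eqref{IF} and a constrained second-order problem for \eqref{CDF}. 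The game is then to compute or estimate enough of $\mathrm{spec}(L)$ on each profile.

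For the positive direction, stability of circles is already essentially in the literature (\cite{EG97}, \cite{W13} for \eqref{CDF}; \cite{MW25}, \cite{AW25} for \eqref{EF}; \cite{AMWW20} for \eqref{IF}), and for multiply-covered circles one reduces to the simple cover via an equivariant decomposition into Fourier modes on $\S^1$ and checks mode-by-mode that the extra covering introduces no negative eigenvalues (it only inserts eigenvalues already present in the simply-covered spectrum, repositioned by the covering degree). The genuinely new content is stability of Bernoulli's lemniscate under \eqref{EF} and of the ideal lemniscate under \eqref{IF}, for which I would proceed by: (i) exploiting the explicit integrable structure of these profiles to diagonalise $L$ on the fundamental arc between self-intersection crossings; (ii) imposing the correct transmission/periodicity conditions at the node using the symmetry group of the lemniscate; and (iii) verifying non-negativity of $L$ together with a Lojasiewicz-Simon inequality, as in \cite{DKS02}, to promote linear stability to nonlinear attraction.

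For the negative direction, the new solutions $\gamma_m^j,\gamma_m^e,\gamma_m^E$ are all constructed by bifurcation from a base profile (Euler's rectangular elastica or a circle) at discrete values of a small parameter $\varepsilon$. I would track the Morse index through the bifurcation: at the base one computes $\mathrm{spec}(L)$ directly, and as $\varepsilon$ crosses the critical value along the branch producing $\gamma_m^j$ (or $\gamma_m^e$, $\gamma_m^E$) an eigenvalue passes through zero by construction, so a Crandall-Rabinowitz-type exchange-of-stability argument furnishes an unstable mode on the non-trivial branch. The same strategy instantly gives instability of the non-symmetric one-sided jellyfish of Figure \ref{fig:intro:mysterious} once their existence is established, and it extends in principle to any family accessible by perturbation from a known critical point. \emph{The main obstacle}, and the reason this is stated only as a conjecture, is the global/classification aspect: ruling out stable homothetic (or more general self-similar) solutions that are not reachable from any presently known base profile. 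A complete proof would seem to require either a full classification of critical points of the relevant functionals on each homothety class, or a monotonicity/entropy argument forcing every stable trajectory into the short list above --- for example, a Huisken-type monotonicity formula adapted to \eqref{EF} and \eqref{IF}, combined with a rigidity statement for its equality case. Establishing such a monotonicity principle for these higher-order flows is, at present, the key missing ingredient.
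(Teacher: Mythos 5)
This statement is a \emph{Conjecture} in the paper, not a theorem: the authors offer no proof, so there is no argument of theirs to compare yours against. Your write-up is a research programme rather than a proof, and you say so yourself in the final paragraph; the global classification or monotonicity ingredient you identify as missing really is missing, which is precisely why the authors leave this open.

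A few cautions on the sketch itself. The exchange-of-stability step is weaker than you suggest. The jellyfish and epicyclic families are produced by applying the Implicit Function Theorem to a boundary-value shooting map about a base \emph{arc} (a half-period of the rectangular elastica, or a semicircle), not by a Crandall--Rabinowitz bifurcation from a closed homothetic solution at which a simple eigenvalue of the linearised flow crosses zero. The parameter $\varepsilon$ ranges over a continuum of fundamental arcs, and the closed curves appear only at a discrete set of $\varepsilon$-values where the rational closure condition $\overline\theta=\pi/m$ (or $\Theta=\frac{p}{q}\pi$) is met; there is no branch of closed solutions along which an eigenvalue is forced through zero, so ``an eigenvalue passes through zero by construction'' does not follow, and one would have to compute the Morse index of each profile from scratch. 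On the positive side, stability of the Bernoulli lemniscate under \eqref{EF} is genuinely open, the ideal lemniscate has not even been shown to exist (the paper flags this as a precondition of the conjecture), and your plan for \eqref{CDF} would additionally need to show the lemniscate is \emph{unstable} there, since part (a) excludes it from the stable list. Finally, a Lojasiewicz--Simon argument in the self-similar gauge would require a spectral gap and analyticity of the relevant renormalised energy in a suitable Banach setting; neither is currently established for these fourth- and sixth-order flows.
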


Here a curve $\gamma$ is called \emph{dynamically stable} if there exists an $\varepsilon>0$ such that all curves $\eta$ with $||\gamma-\eta||_{C^\infty} < \varepsilon$ generate solution trajectories that converge, modulo similarity transformation, to $\gamma$.

This conjecture has as a precondition the discovery of an \emph{ideal lemniscate}, a self-similarly expanding figure-8 type solution to the ideal flow, which is yet to be established rigorously.

The natural next target once this conjecture is established would be the following analogue of \cite{W22} for the elastic and ideal flows.

\begin{conj}
Suppose $\gamma:[0,\infty)\to\R^2$ is an immortal elastic or ideal flow with generic initial data.
Then $\gamma(\cdot,t)$ converges  exponentially fast in the smooth topology to a multiply-covered lemniscate or circle.
\end{conj}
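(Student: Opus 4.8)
The plan is to rescale the flow so that its stationary solutions become exactly the homothetic profiles $\hat\gamma$ of \eqref{eq:intro:homot}, prove that along an immortal trajectory the rescaled curves subconverge to one such profile, upgrade this to full exponential convergence via a {\L}ojasiewicz--Simon gradient inequality, and finally show that a \emph{generic} trajectory cannot limit onto the unstable profiles, leaving only circles and lemniscates as possible limits. Concretely, the scaling weights recorded in the introduction make $\SL\SE$ scale invariant for \eqref{EF} and $\SL^3\SI$ scale invariant for \eqref{IF}; setting $\tilde\gamma(\cdot,\tau)=\rho(t)^{-1}\gamma(\cdot,t)$ with $\rho(t)$ chosen so that $\SL[\tilde\gamma]\equiv1$ and $\tau$ a parabolic reparametrisation of time, $\tilde\gamma$ solves a quasilinear parabolic equation of order four (resp.\ eight) which is, up to the time change, the $L^2$-gradient flow of the scale-invariant energy on the unit-length submanifold; its equilibria are precisely the closed immersed homothetic profiles of the given turning number, with $\sigma$ entering as a Lagrange multiplier. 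Immortality (\cite{DKS02} for \eqref{EF}, \cite{AMWW20} for \eqref{IF}) together with interpolation and smoothing estimates for the flow should furnish uniform-in-$\tau$ bounds on $\|\partial_s^\ell\tilde k\|_\infty$, once one rules out the degenerate alternative in which the rescaled curvature collapses to zero (the curve ``rounding out'' to a multiply covered point after rescaling); here the monotonicity of $\SE$ (resp.\ $\SI$) and the Cauchy--Schwarz lower bound $\SL\SE\ge 2\pi^2(\text{turning number})^2$, which is preserved since the flow remains immersed, should do the work.

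From the uniform bounds one extracts a smooth subsequential limit $\tilde\gamma(\cdot,\tau_j)\to\hat\gamma$, necessarily a homothetic profile of the prescribed turning number: a round circle, a Bernoulli (resp.\ ideal) lemniscate, one of the new families $\gamma^j_m$ or $\gamma^E_m$ from Theorems~\ref{thm:intro:jellyfish} and~\ref{thm:intro:if-epi}, or possibly some further profile. Since $\SE$, $\SI$ and hence the scale-invariant energy are real analytic on spaces of $H^k$ immersions, and the linearised profile operator at $\hat\gamma$ is Fredholm, a {\L}ojasiewicz--Simon gradient inequality holds near $\hat\gamma$ modulo the finite-dimensional group of Euclidean motions (the scaling symmetry having already been quotiented out). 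The usual argument then promotes subconvergence to convergence of the whole rescaled trajectory $\tilde\gamma(\cdot,\tau)\to\hat\gamma$, with rate governed by the {\L}ojasiewicz exponent $\theta\in(0,\tfrac12]$; the value $\theta=\tfrac12$, which yields exponential convergence, should hold at the circle --- this is in essence the content of the stability results \cite{MW25} and \cite{AW25} --- and must be verified at the lemniscate, where one expects the Hessian to be nondegenerate transverse to the motion orbit. Unwinding the rescaling converts exponential decay in $\tau$ into convergence, modulo similarity, of $\gamma(\cdot,t)$ in the smooth topology, at an exponential rate in the appropriately reparametrised time.

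It remains to show that generic initial data flow to a \emph{dynamically stable} profile, which --- granting the first conjecture above --- means a circle or lemniscate. Equivalently, the set $\mathcal B$ of initial curves whose rescaled trajectory subconverges to an unstable profile (any $\gamma^j_m$, any $\gamma^E_m$, or any other non-circular, non-lemniscate profile) should be meagre (and shy), so that its complement is generic. I would argue: (a) each such profile is \emph{linearly} unstable, i.e.\ the linearised rescaled operator at $\hat\gamma$ has an eigenvalue of unstable sign --- this is precisely the instability that the excerpt only conjectures; (b) a centre--stable manifold theorem for the quasilinear flow then confines the local basin of $\hat\gamma$, modulo Euclidean motions, to a $C^1$ submanifold of codimension $\ge1$; (c) since there are only countably many such profiles --- indexed by dihedral order $m>m_0$, by turning number, and by any residual discrete data --- $\mathcal B$ is a countable union of locally finite-codimension submanifolds, hence meagre. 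This last step is the main obstacle: it requires a genuine infinite-dimensional invariant-manifold and no-wandering theory for a fourth- or eighth-order quasilinear parabolic flow with the scaling symmetry quotiented out; it requires the (still conjectural) instability of \emph{every} jellyfish and epicyclic expander; and, as a structural precondition, it requires both the classification of stable profiles asserted in the first conjecture and the as-yet-unestablished existence of the ideal lemniscate. Without (a)--(c) one would still obtain convergence to \emph{some} homothetic profile along every trajectory, and to a stable one for generic data; identifying that stable set with ``circle or lemniscate'' is the remaining, and hardest, ingredient.
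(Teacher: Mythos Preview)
The statement you are attempting to prove is labelled a \emph{Conjecture} in the paper, and the paper offers no proof of it whatsoever: it is presented as a long-term goal, contingent on the prior (also conjectural) classification of dynamically stable profiles and on the rigorous existence of an ideal lemniscate, neither of which is established. So there is no ``paper's own proof'' to compare against.

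Your proposal is not a proof but a program, and you are candid about this. The rescaling/\L ojasiewicz--Simon strategy you outline is the natural one, and your identification of the obstacles is accurate. But several of the ingredients you invoke are themselves open: the linear instability of every jellyfish and every epicyclic expander is precisely what the paper conjectures rather than proves; the countability of \emph{all} homothetic profiles (not just the ones constructed here) is unknown --- the paper explicitly exhibits non-dihedrally-symmetric numerical expanders and states that classification ``appears to be a daunting problem''; the existence of the ideal lemniscate is flagged as unestablished; and the nondegeneracy of the Hessian at the lemniscate (needed for $\theta=\tfrac12$) is asserted rather than argued. Any one of these gaps would block the argument as written. What you have is a plausible roadmap whose completion would constitute a substantial advance well beyond the scope of the paper.
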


\section*{Acknowledgements}

The research of Ben Andrews was supported by grants FL150100126 and DP250103952 of the Australian Research Council.  
The research of Glen Wheeler was supported by grant DP250101080.
Work on the project was additionally facilitated by the Australian National University.

\section{Jellyfish expanders}
\label{sec:jellyfish-expanders}

In this section we prove Theorem~\ref{thm:intro:jellyfish} by constructing, for all sufficiently large
$m\in\N$, a closed homothetic expander for the free elastic flow \eqref{EF} with dihedral symmetry of order $m$.
The construction proceeds in two stages:
(i) an application of the Implicit Function Theorem produces a one-parameter family of {fundamental arcs}
as perturbations of a half-period of Euler's rectangular elastica; and
(ii) a dihedral reflection-rotation gluing produces a smooth closed curve (the {jellyfish}) from a fundamental arc.
Throughout we work in arc-length gauge.

\subsection{Existence of a fundamental arc}
\label{subsec:fundamental-arc-expander}

\subsubsection{ODE formulation and boundary map}

Fix parameters $(\alpha,\varepsilon)$ in a neighbourhood of $(0,0)$ to be chosen.  We consider
\[
S(s;\alpha,\varepsilon):=(x(s),y(s),\theta(s),k(s),v(s))\in\R^5
\]
solving
\begin{equation}\label{eq:ode-S}
\left\{
\begin{aligned}
x'&=-\sin\theta,\\
y'&=\phantom{-}\cos\theta,\\
\theta'&=k,\\
k'&=v,\\
v'&=-\frac12\,k^3-\alpha\cos\theta-\alpha\varepsilon\bigl(x\cos\theta+y\sin\theta\bigr),
\end{aligned}
\right.
\qquad
S(0;\alpha,\varepsilon)=(0,0,0,1,0).
\end{equation}
Define the boundary data map $B=(b_1,b_2)^{\mathsf T}$ by
\begin{equation}\label{eq:Bdef}
B(s,\alpha,\varepsilon)
:=
\begin{bmatrix}
b_1(s,\alpha,\varepsilon)\\
b_2(s,\alpha,\varepsilon)
\end{bmatrix}
:=
\begin{bmatrix}
-\sin\theta(s)+\varepsilon\bigl(y(s)\cos\theta(s)-x(s)\sin\theta(s)\bigr)\\
v(s)
\end{bmatrix}.
\end{equation}

\begin{definition}[Fundamental arc]\label{def:fundamental-arc}
A \emph{fundamental arc} is a solution $S(\,\cdot\,;\alpha,\varepsilon)$ of \eqref{eq:ode-S} for which there exists
$L>0$ such that
\begin{equation}\label{eq:fund-arc-BC}
B(L,\alpha,\varepsilon)=(0,0).
\end{equation}
\end{definition}

\begin{remark}[Geometric meaning of $b_1$]\label{rem:b1-geometry}
For $\varepsilon\neq 0$ set $c_\varepsilon:=(-1/\varepsilon,0)$. Writing $\gamma=(x,y)$ and
$T=(-\sin\theta,\cos\theta)$, a direct computation gives
\[
b_1(s,\alpha,\varepsilon)=\varepsilon\,\big\langle \gamma(s)-c_\varepsilon,\,T(s)\big\rangle.
\]
Thus $b_1(L,\alpha,\varepsilon)=0$ is the orthogonality condition
$\langle \gamma(L)-c_\varepsilon,\,T(L)\rangle=0$, which is precisely the condition that $\gamma$ meets the
ray through $c_\varepsilon$ and $\gamma(L)$ orthogonally. The condition $b_2(L,\alpha,\varepsilon)=v(L)=k_s(L)=0$
is needed for smooth reflection across that ray.
\end{remark}

The follow is a consequence of standard ODE theory.

\begin{lemma}[Smooth dependence]\label{lem:smooth-dependence}
There exist $\eta>0$ and a neighbourhood $U\subset\R^2$ of $(0,0)$ such that for all $(\alpha,\varepsilon)\in U$
the solution $S(\,\cdot\,;\alpha,\varepsilon)$ exists on $[0,L_0+\eta]$ (where $L_0$ is fixed below), and the map
\[
(s,\alpha,\varepsilon)\longmapsto S(s;\alpha,\varepsilon)
\]
is $C^\infty$ (indeed real-analytic) on $[0,L_0+\eta]\times U$. Consequently $B$ is $C^\infty$ on this set.
\end{lemma}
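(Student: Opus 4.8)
The plan is to deduce Lemma~\ref{lem:smooth-dependence} entirely from the classical theory of ordinary differential equations with parameters; the only point requiring genuine care is the uniformity of the existence interval in $(\alpha,\varepsilon)$. First I would record that the vector field defining \eqref{eq:ode-S}, say $F(S;\alpha,\varepsilon)$ with $S=(x,y,\theta,k,v)$, is real-analytic jointly in all seven arguments: each component is a polynomial in $x,y,k,v,\alpha,\varepsilon$ composed with the entire functions $\sin$ and $\cos$ of $\theta$. Hence Cauchy--Lipschitz applies and yields, for each $(\alpha,\varepsilon)$, a unique maximal solution with the usual continuation principle — it can fail to exist globally only if $|S(s)|\to\infty$ in finite $s$.

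Next I would analyse the base point $(\alpha,\varepsilon)=(0,0)$, where the system reduces to $x'=-\sin\theta$, $y'=\cos\theta$, $\theta'=k$ and $k''=-\tfrac12 k^3$ with $k(0)=1$, $k'(0)=0$. Multiplying the curvature equation by $k'$ and integrating gives the conserved quantity $\tfrac12(k')^2+\tfrac18 k^4\equiv\tfrac18$, so $|k|\le 1$ and $|k'|\le\tfrac12$ for all $s\ge 0$; then $\theta$, and in turn $x$ and $y$, grow at most linearly, so $S(\,\cdot\,;0,0)$ exists on all of $[0,\infty)$. In particular, for the length $L_0<\infty$ of the unperturbed fundamental arc fixed below and any $\eta_0>0$, the base trajectory stays in a fixed compact set $K_0\subset\R^5$ on $[0,L_0+2\eta_0]$.

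Then I would transfer this to nearby parameters by a compactness/continuation argument on the fixed compact interval $[0,L_0+2\eta_0]$. Fix a slightly larger compact set $K\supset K_0$; since $F$ is continuous and $K$ compact, $\sup_{K}|F(\,\cdot\,;\alpha,\varepsilon)-F(\,\cdot\,;0,0)|\to 0$ as $(\alpha,\varepsilon)\to(0,0)$, so a Gr\"onwall estimate shows that for $(\alpha,\varepsilon)$ in a small neighbourhood $U$ of $(0,0)$ the solution $S(\,\cdot\,;\alpha,\varepsilon)$ stays in $K$, hence exists on an interval $[0,L_0+\eta]$ with $0<\eta\le\eta_0$. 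Finally I would invoke the $C^\infty$ (indeed real-analytic) dependence theorem for ODEs with real-analytic right-hand side: the flow $(s,S_0,\alpha,\varepsilon)\mapsto S$ is real-analytic on the open set where it is defined, and restricting to the fixed initial datum $S_0=(0,0,0,1,0)$ and to $[0,L_0+\eta]\times U$ gives the asserted regularity of $S$; since $B$ in \eqref{eq:Bdef} is obtained from $S$ by composition with polynomials and with $\sin,\cos$, it inherits the same regularity.

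The main obstacle — really the only non-automatic step — is this uniform existence time: because the $\alpha$-terms $-\alpha\cos\theta-\alpha\varepsilon(x\cos\theta+y\sin\theta)$ in \eqref{eq:ode-S} involve the a priori unbounded coordinates $x,y$, global existence is not inherited for free from the base solution, and one must argue on the fixed compact interval $[0,L_0+2\eta_0]$ using the boundedness of the base trajectory, as above. Everything else is bookkeeping: analyticity of the right-hand side, the standard continuation principle, and the standard smooth/analytic dependence theorems.
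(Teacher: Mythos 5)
Your proof is correct and, since the paper offers no argument beyond the remark that the lemma ``is a consequence of standard ODE theory'', it simply supplies the standard argument the authors take for granted: real-analyticity of the right-hand side, global existence of the base solution via the first integral $\tfrac12(k')^2+\tfrac18 k^4\equiv\tfrac18$, uniform existence time on a compact interval via continuity of the flow (your Gr\"onwall/continuation step, equivalently openness of the flow's domain), and the analytic-dependence theorem. Your explicit attention to the uniform existence time is the one point a careless reader might gloss over, since the $\alpha\varepsilon(x\cos\theta+y\sin\theta)$ term prevents trivially inheriting global existence from the base case, and you handle it correctly.
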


\subsubsection{The base arc at $(\alpha,\varepsilon)=(0,0)$}

Setting $\alpha=0$ in \eqref{eq:ode-S}, the curvature subsystem closes:
\begin{equation}\label{eq:base-curv}
k''+\frac12\,k^3=0,\qquad k(0)=1,\qquad k'(0)=v(0)=0.
\end{equation}

In order to streamline our presentation, we note the following three integrations (one scalar and one two-dimensional vector) of the curvature subsystem.

\begin{lemma}
\label{lem:base-first-integral}
The solution of \eqref{eq:base-curv} satisfies the identity
\begin{equation}\label{eq:energy-base}
\frac12 (k')^2+\frac18 k^4=\frac18
\quad\Longleftrightarrow\quad
v^2=\frac14(1-k^4).
\end{equation}
In particular $|k|\le 1$ and $k$ oscillates periodically between $1$ and $-1$.
\end{lemma}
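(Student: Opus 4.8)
The statement to be proved is Lemma \ref{lem:base-first-integral}: the solution of the base curvature ODE $k''+\tfrac12 k^3=0$ with $k(0)=1$, $k'(0)=0$ satisfies the first integral $\tfrac12(k')^2+\tfrac18 k^4=\tfrac18$, equivalently $v^2=\tfrac14(1-k^4)$, whence $|k|\le 1$ and $k$ oscillates periodically between $\pm1$. This is a textbook conservation-of-energy argument for a second-order conservative ODE, so the proof is short; the only ``obstacle'' is bookkeeping the initial data and the qualitative conclusion.

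First I would multiply the ODE $k''+\tfrac12 k^3=0$ by $k'$ and recognise the left-hand side as a total derivative:
\[
\frac{d}{ds}\!\left(\frac12 (k')^2+\frac18 k^4\right)=k'k''+\frac12 k^3 k' = k'\!\left(k''+\frac12 k^3\right)=0.
\]
Hence $E(s):=\tfrac12(k')^2+\tfrac18 k^4$ is constant along the solution. Evaluating at $s=0$ using $k(0)=1$ and $k'(0)=v(0)=0$ gives $E(0)=0+\tfrac18=\tfrac18$, which proves the identity \eqref{eq:energy-base}; rearranging and using $v=k'$ gives $v^2=\tfrac14(1-k^4)$. (One should note that the full system \eqref{eq:ode-S} at $\alpha=0$ indeed decouples: with $\alpha=0$ the $v'$ equation reads $v'=-\tfrac12 k^3$, so $(k,v)=(k,k')$ solves precisely \eqref{eq:base-curv}, independent of $x,y,\theta$ — this is the remark already made before the lemma, so I may take it as given.)

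For the qualitative conclusions: from $v^2=\tfrac14(1-k^4)\ge 0$ we get $k^4\le 1$, i.e.\ $|k|\le 1$ for all $s$ in the interval of existence; since the solution is global (the energy bound gives an a priori bound on $(k,k')$, so no blow-up), $|k|\le1$ holds for all $s\in\R$. To see that $k$ genuinely oscillates between $+1$ and $-1$ rather than, say, being constant: the constant solutions of $k''+\tfrac12 k^3=0$ are only $k\equiv0$, which does not match $k(0)=1$; and the energy level set $\{\tfrac12 v^2+\tfrac18 k^4=\tfrac18\}$ in the $(k,v)$-phase plane is a single closed curve (a ``squircle''-type oval) encircling the origin, on which the flow has no equilibria, so the orbit is a nonconstant periodic orbit traversing the full range $k\in[-1,1]$, attaining $k=\pm1$ exactly when $v=0$. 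Alternatively, and perhaps more cleanly for the write-up, one can integrate by separation of variables: $ds = \pm\, dk/\sqrt{\tfrac14(1-k^4)}$, and since $\int_{-1}^{1} dk/\sqrt{1-k^4}$ converges, the half-period is finite, giving periodicity explicitly; this also sets up the period computations presumably needed later. I would present the energy identity and the separation-of-variables period formula, and state the oscillation conclusion as an immediate consequence, which is all Lemma \ref{lem:base-first-integral} asserts.
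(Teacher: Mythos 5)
Your proof is correct and uses the same standard energy/first-integral approach as the paper (multiply by $k'$, integrate, evaluate at $s=0$); the paper's proof is a one-line version of exactly this. Your additional phase-plane and separation-of-variables justification of the oscillation conclusion is a reasonable elaboration of what the paper leaves implicit.
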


\begin{proof}
Multiply \eqref{eq:base-curv} by $k'$ and integrate in $s$, using $k'(0)=0$ and $k(0)=1$.
\end{proof}

\begin{lemma}\label{lem:force-identities}
Define
\begin{equation}\label{eq:force-def}
\mathcal F := \frac12 k^2\,T + k'\,N,
\qquad
T:=(-\sin\theta,\cos\theta),\quad N:=(-\cos\theta,-\sin\theta).
\end{equation}
Then
\begin{equation}\label{eq:force-deriv-general}
\mathcal F'
=\Bigl(k''+\frac12 k^3\Bigr)N.
\end{equation}
In particular, along the base solution $(\alpha,\varepsilon)=(0,0)$ we have $\mathcal F\equiv (0,\frac12)$, and
\begin{equation}\label{eq:cos-sin-identities}
\cos\theta = k^2,\qquad
\sin\theta = -2v.
\end{equation}
\end{lemma}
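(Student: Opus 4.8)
The plan is to verify \eqref{eq:force-deriv-general} by a one-line differentiation using the Frenet-type relations for the frame $\{T,N\}$, then specialise to the base arc, observe that $\mathcal F$ becomes constant, and read off \eqref{eq:cos-sin-identities} by projecting that constant vector onto the orthonormal frame.

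First I would record the frame derivatives. Along any solution of \eqref{eq:ode-S} one has $\gamma'=(-\sin\theta,\cos\theta)=T$ and $\theta'=k$, so differentiating the definitions $T=(-\sin\theta,\cos\theta)$ and $N=(-\cos\theta,-\sin\theta)$ gives $T'=kN$ and $N'=-kT$. Substituting into $\mathcal F=\tfrac12 k^2 T+k'N$ and keeping $k''$ unevaluated,
\[
\mathcal F'=(kk')T+\tfrac12 k^2(kN)+k''N+k'(-kT)=\Bigl(k''+\tfrac12 k^3\Bigr)N,
\]
the $T$-components cancelling. This is \eqref{eq:force-deriv-general}, and it holds for every $(\alpha,\varepsilon)$, since only the structural relations $\theta'=k$ and $k'=v$ (not the specific form of $v'$) were used.

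Next I would specialise to $(\alpha,\varepsilon)=(0,0)$, where \eqref{eq:base-curv} is precisely $k''+\tfrac12 k^3=0$; hence $\mathcal F'\equiv 0$ and $\mathcal F$ is constant in $s$. Evaluating at $s=0$, where $\theta(0)=0$ (so $T(0)=(0,1)$, $N(0)=(-1,0)$), $k(0)=1$, $k'(0)=v(0)=0$, gives $\mathcal F\equiv(0,\tfrac12)$. Finally, since $\{T,N\}$ is orthonormal, $\langle\mathcal F,T\rangle=\tfrac12 k^2$ and $\langle\mathcal F,N\rangle=k'$; on the other hand, from $\mathcal F=(0,\tfrac12)$ and the explicit components of $T,N$ one has $\langle\mathcal F,T\rangle=\tfrac12\cos\theta$ and $\langle\mathcal F,N\rangle=-\tfrac12\sin\theta$. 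Equating the two expressions yields $\cos\theta=k^2$ and $\sin\theta=-2v$, which is \eqref{eq:cos-sin-identities}.

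There is no genuine obstacle here; the only thing that needs care is the sign bookkeeping in $T'=kN$, $N'=-kT$, which is tied to the particular orientation conventions $T=(-\sin\theta,\cos\theta)$, $N=(-\cos\theta,-\sin\theta)$ fixed in \eqref{eq:force-def}. As a consistency check, \eqref{eq:cos-sin-identities} forces $k^4+4v^2=1$, which recovers the first integral of Lemma~\ref{lem:base-first-integral}.
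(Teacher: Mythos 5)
Your proof is correct, and the first half (deriving $\mathcal F'=(k''+\tfrac12 k^3)N$, specialising to the base, evaluating at $s=0$ to get $\mathcal F\equiv(0,\tfrac12)$) is essentially identical to the paper's. The final step, however, takes a genuinely cleaner route. The paper writes $\mathcal F\equiv(0,\tfrac12)$ out in Cartesian components, yielding the pair \eqref{eq:Fx-zero}--\eqref{eq:Fy-half}, eliminates $\sin\theta$ from the first, and then must invoke the first integral \eqref{eq:energy-base} (i.e.\ $v^2=\tfrac14(1-k^4)$) to land on $\cos\theta=k^2$. You instead decompose the constant vector $(0,\tfrac12)$ in the orthonormal frame $\{T,N\}$: since $\mathcal F=\tfrac12 k^2\,T+k'\,N$ by definition while $\langle(0,\tfrac12),T\rangle=\tfrac12\cos\theta$ and $\langle(0,\tfrac12),N\rangle=-\tfrac12\sin\theta$, the two identities in \eqref{eq:cos-sin-identities} drop out separately and immediately, with no elimination and no appeal to Lemma~\ref{lem:base-first-integral}. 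Your approach is more economical and arguably more conceptual (it is just the change-of-basis identity applied to a conserved vector), and it has the pleasant by-product that it \emph{rederives} the first integral $k^4+4v^2=1$ from $\cos^2\theta+\sin^2\theta=1$ rather than consuming it as an input. The paper's route is not wrong, but it needlessly intertwines Lemma~\ref{lem:base-first-integral} with this one.
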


\begin{proof}
Using $T'=kN$ and $N'=-kT$, a direct calculation yields \eqref{eq:force-deriv-general}. Due to \eqref{eq:base-curv}, $\mathcal F'\equiv 0$, so $\mathcal F$ is a constant vector. Evaluating at $s=0$ gives
$T(0)=(0,1)$, $N(0)=(-1,0)$, $k(0)=1$ and $k'(0)=0$, hence $\mathcal F\equiv(0,\frac12)$.
Writing $\mathcal F=(\mathcal F_x,\mathcal F_y)$ yields
\begin{align}
0=\mathcal F_x
&=-\frac12 k^2\sin\theta - v\cos\theta,
\label{eq:Fx-zero}\\
\frac12=\mathcal F_y
&=\frac12 k^2\cos\theta - v\sin\theta.
\label{eq:Fy-half}
\end{align}
Eliminate $\sin\theta$ using \eqref{eq:Fx-zero} and then use \eqref{eq:energy-base} to obtain
$\cos\theta=k^2$, and hence $\sin\theta=-2v$.
\end{proof}

We conclude as follows.

\begin{lemma}\label{lem:base-halfperiod}
Let $(x_0,y_0,\theta_0,k_0,v_0)$ denote the solution of \eqref{eq:ode-S} at $(\alpha,\varepsilon)=(0,0)$.
Then there exists a unique $L_0\in(0,\infty)$ such that
\begin{equation}\label{eq:L0-def}
L_0=\inf\{\,s>0:\ k_0(s)=-1\,\}.
\end{equation}
Moreover, $k_0'(L_0)=v_0(L_0)=0$ and $\theta_0(L_0)=0$. In particular,
\begin{equation}\label{eq:B-base-zero}
B(L_0,0,0)=\bigl(-\sin\theta_0(L_0),\,v_0(L_0)\bigr)=(0,0).
\end{equation}
\end{lemma}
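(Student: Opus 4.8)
The plan is to treat \eqref{eq:base-curv} as the classical one-degree-of-freedom conservative system it is, using the first integrals already recorded in Lemmas~\ref{lem:base-first-integral} and~\ref{lem:force-identities}, and then to read off $v_0(L_0)$ and $\theta_0(L_0)$ from those identities together with a monotonicity/quadrature argument.

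First I would record global existence of the base solution, to make sense of ``$L_0$ fixed below'' in Lemma~\ref{lem:smooth-dependence}: at $\alpha=0$ the $(k,v)$-subsystem decouples, and by \eqref{eq:energy-base} any solution satisfies $|k_0|\le 1$ and $|v_0|\le\tfrac12$, so $(k_0,v_0)$ remains in a compact set and exists for all $s\ge 0$; since then $\theta_0'=k_0$, $x_0'=-\sin\theta_0$, $y_0'=\cos\theta_0$ have bounded right-hand sides, the full solution $S(\,\cdot\,;0,0)$ exists on $[0,\infty)$. Next, monotonicity: $k_0'(0)=v_0(0)=0$ and $v_0'(0)=k_0''(0)=-\tfrac12<0$, so $v_0<0$ just to the right of $0$. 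Setting $L_0:=\inf\{s>0:k_0(s)=-1\}\in(0,+\infty]$, I claim $v_0<0$ on $(0,L_0)$: if $s_1$ were the least point of $(0,L_0)$ with $v_0(s_1)=0$, then $k_0$ would be strictly decreasing on $[0,s_1]$ and hence $k_0(s_1)\in(-1,1)$, so \eqref{eq:energy-base} would give $v_0(s_1)^2=\tfrac14(1-k_0(s_1)^4)>0$, a contradiction. Thus $k_0$ is strictly decreasing on $[0,L_0)$ and, on that interval, $v_0=-\tfrac12\sqrt{1-k_0^4}$.

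Separating variables in $k_0'=-\tfrac12\sqrt{1-k_0^4}$ gives
\[
\int_{k_0(s)}^{1}\frac{dk}{\sqrt{1-k^4}}=\frac{s}{2},\qquad s\in[0,L_0),
\]
and since $\sqrt{1-k^4}=\sqrt{(1-k)(1+k)(1+k^2)}$ the improper integral $I:=\int_{-1}^{1}\frac{dk}{\sqrt{1-k^4}}$ converges (singularities of order $\tfrac12$ at $k=\pm1$). Letting $k_0(s)$ decrease to its limit as $s\uparrow L_0$ forces that limit to be $-1$ and $L_0=2I\in(0,\infty)$; in particular $k_0>-1$ on $[0,L_0)$, $k_0(L_0)=-1$, and $L_0$ is the genuine first hitting time (hence unique). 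Then \eqref{eq:energy-base} immediately yields $v_0(L_0)^2=\tfrac14(1-1)=0$, i.e.\ $k_0'(L_0)=v_0(L_0)=0$. For the angle, \eqref{eq:cos-sin-identities} gives $\cos\theta_0(s)=k_0(s)^2\ge 0$ for all $s$, so $\theta_0(s)$ lies in the union $\bigcup_{n\in\Z}[2\pi n-\tfrac\pi2,\,2\pi n+\tfrac\pi2]$ of pairwise disjoint closed intervals; continuity of $\theta_0$ and $\theta_0(0)=0$ then force $\theta_0(s)\in[-\tfrac\pi2,\tfrac\pi2]$ for all $s\ge0$, and combining $\sin\theta_0(L_0)=-2v_0(L_0)=0$ with $\theta_0(L_0)\in[-\tfrac\pi2,\tfrac\pi2]$ gives $\theta_0(L_0)=0$. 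Finally \eqref{eq:Bdef} at $(\alpha,\varepsilon)=(0,0)$ reduces to $B(L_0,0,0)=(-\sin\theta_0(L_0),\,v_0(L_0))=(0,0)$, which is \eqref{eq:B-base-zero}.

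The one step I would write out with care is pinning down $\theta_0(L_0)=0$ rather than some other element of $2\pi\Z$: a priori the base arc could wind, and it is the sign constraint $\cos\theta_0=k_0^2\ge 0$ from Lemma~\ref{lem:force-identities} that confines $\theta_0$ to a single component $[-\tfrac\pi2,\tfrac\pi2]$. Everything else — global existence, monotonicity, convergence of the quadrature integral giving $L_0<\infty$ — is routine.
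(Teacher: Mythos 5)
Your proposal is correct and follows essentially the same route as the paper: first integral $\Rightarrow$ monotonicity of $k_0$, quadrature $\Rightarrow$ finite hitting time $L_0$, then the identities $\cos\theta_0=k_0^2$ and $\sin\theta_0=-2v_0$ plus the sign constraint $\cos\theta_0\ge0$ to pin down $\theta_0(L_0)=0$. The only substantive additions you make are the explicit global-existence observation for the decoupled $(k,v)$-subsystem (which the paper leaves implicit) and the contradiction argument showing $v_0<0$ on $(0,L_0)$ (the paper glosses this with ``the first integral forces''); both are small refinements of the same argument rather than a different strategy.
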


\begin{proof}
The base curvature satisfies \eqref{eq:base-curv} and the first integral \eqref{eq:energy-base}, namely
\[
\bigl(k_0'(s)\bigr)^2=\frac14\bigl(1-k_0(s)^4\bigr).
\]
Since $k_0''(0)=-\frac12k_0(0)^3=-\frac12<0$ and $k_0'(0)=0$, we have $k_0'(s)<0$ for all sufficiently small $s>0$,
and hence $k_0$ decreases from $k_0(0)=1$.
On any interval where $|k_0|<1$, the first integral forces
\[
k_0'(s)=-\frac12\sqrt{1-k_0(s)^4},
\]
so $k_0$ is strictly decreasing there. Separating variables yields the bound
\[
s
= \int_{0}^{s} 1\,d\tau
= \int_{k_0(s)}^{1}\frac{-2\,d\kappa}{\sqrt{1-\kappa^{4}}}
\le 2\int_{-1}^{1}\frac{d\kappa}{\sqrt{1-\kappa^{4}}}
<\infty,
\]
which implies that $k_0$ reaches the value $-1$ in finite time. (If not, then $s$ would be bounded, which means the ODE could not be continued, which is a contradiction.)
Consequently the set in \eqref{eq:L0-def} is nonempty, so $L_0\in(0,\infty)$ is well-defined.

At $s=L_0$ we have $k_0(L_0)=-1$, and thus \eqref{eq:energy-base} gives $k_0'(L_0)=0$, i.e.\ $v_0(L_0)=0$.
By Lemma~\ref{lem:force-identities} (at $(\alpha,\varepsilon)=(0,0)$) we have the identities
\[
\cos\theta_0(s)=k_0(s)^2\ge 0,\qquad \sin\theta_0(s)=-2v_0(s),
\]
hence $\cos\theta_0(L_0)=1$ and $\sin\theta_0(L_0)=0$, so $\theta_0(L_0)\in 2\pi\Z$.
Since $\theta_0(0)=0$ and $\cos\theta_0(s)\ge 0$ for all $s\in[0,L_0]$, the continuous map $\theta_0$ cannot cross beyond $\pm\pi/2$ on $[0,L_0]$.
This forces $\theta_0(L_0)=0$.

Finally, at $(\alpha,\varepsilon)=(0,0)$ one has $B=( -\sin\theta_0,\ v_0 )^{\mathsf T}$ by \eqref{eq:Bdef}, and therefore
$B(L_0,0,0)=(0,0)$ follows from $\sin\theta_0(L_0)=0$ and $v_0(L_0)=0$.
\end{proof}

\subsubsection{Solving $b_1=0$ for $L=L(\alpha,\varepsilon)$}

The following non-degeneracy in $s$ is enough to apply the Implicit Function Theorem.

\begin{lemma}\label{lem:b1-s-nondeg}
The derivative $\partial_s b_1(L_0,0,0)$ is nonzero. More precisely,
\begin{equation}\label{eq:b1s-base}
\partial_s b_1(L_0,0,0)=1.
\end{equation}
\end{lemma}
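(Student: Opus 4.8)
The plan is to compute $\partial_s b_1$ directly from the definition \eqref{eq:Bdef} and then evaluate at the base point $(L_0,0,0)$ using the identities already established for the base arc. Differentiating
\[
b_1(s,\alpha,\varepsilon)=-\sin\theta(s)+\varepsilon\bigl(y(s)\cos\theta(s)-x(s)\sin\theta(s)\bigr)
\]
in $s$ and using the ODE \eqref{eq:ode-S} (namely $\theta'=k$, $x'=-\sin\theta$, $y'=\cos\theta$), one obtains
\[
\partial_s b_1=-k\cos\theta+\varepsilon\Bigl(\cos^2\theta - x'\sin\theta - x k\cos\theta - y k\sin\theta - \text{(terms)}\Bigr),
\]
but the key observation is that the entire $\varepsilon$-bracket is multiplied by $\varepsilon$, so at $\varepsilon=0$ it drops out completely and we are left with $\partial_s b_1(s,0,0)=-k_0(s)\cos\theta_0(s)$.

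Next I would evaluate at $s=L_0$. From Lemma~\ref{lem:base-halfperiod} we have $k_0(L_0)=-1$ and $\theta_0(L_0)=0$, hence $\cos\theta_0(L_0)=1$, giving $\partial_s b_1(L_0,0,0)=-(-1)(1)=1$. This establishes both \eqref{eq:b1s-base} and the nonvanishing claim. An alternative route, which avoids even writing out the $\varepsilon$-bracket, is to note that the geometric interpretation in Remark~\ref{rem:b1-geometry} is only valid for $\varepsilon\neq 0$; but one can use the identity $\cos\theta_0 = k_0^2$ from Lemma~\ref{lem:force-identities} to rewrite $\partial_s b_1(s,0,0)=-k_0^3(s)$, which at $s=L_0$ is $-(-1)^3=1$ — a pleasant consistency check via the two different base-arc identities.

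There is essentially no obstacle here: the computation is elementary, and the only thing to be careful about is that $x(s)$ and $y(s)$ do appear in $b_1$ but always carry a factor of $\varepsilon$, so their (possibly large or a priori uncontrolled) values at $s=L_0$ are irrelevant at the base point. One should also make sure not to conflate $\partial_s b_1$ with the total derivative along a fundamental-arc branch $L=L(\alpha,\varepsilon)$; the lemma is purely about the partial derivative in the first slot, which is exactly what feeds into the Implicit Function Theorem in the next step to solve $b_1(L,\alpha,\varepsilon)=0$ for $L=L(\alpha,\varepsilon)$ near $L_0$. I would therefore keep the proof to three or four lines: differentiate, discard the $\varepsilon$-bracket at $\varepsilon=0$, substitute $k_0(L_0)=-1$ and $\theta_0(L_0)=0$, and conclude.
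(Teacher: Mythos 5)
Your proof is correct and follows essentially the same route as the paper: differentiate $b_1$ in $s$ using the ODE, observe that the entire $x,y$-dependent bracket carries an overall factor of $\varepsilon$ and therefore vanishes at $\varepsilon=0$, leaving $\partial_s b_1(s,0,0)=-k_0(s)\cos\theta_0(s)$, and then substitute $k_0(L_0)=-1$ and $\theta_0(L_0)=0$. The alternative reformulation $\partial_s b_1(s,0,0)=-k_0(s)^3$ via the base-arc identity $\cos\theta_0=k_0^2$ is a pleasant consistency check, though not needed for the conclusion.
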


\begin{proof}
Differentiate \eqref{eq:Bdef} in $s$ to obtain
\begin{align}
\partial_s b_1
&=
-(\cos\theta)\,\theta'
+\varepsilon\Bigl(
y'\cos\theta - y\sin\theta\,\theta'
-x'\sin\theta - x\cos\theta\,\theta'
\Bigr)\nonumber\\
&=
-k\cos\theta
+\varepsilon\Bigl(\cos^2\theta+\sin^2\theta-k(x\cos\theta+y\sin\theta)\Bigr)\nonumber\\
&=
-k\cos\theta+\varepsilon\Bigl(1-k(x\cos\theta+y\sin\theta)\Bigr).
\label{eq:b1s}
\end{align}
At $(L_0,0,0)$ we have $\varepsilon=0$, $\cos\theta(L_0;0,0)=1$ and $k(L_0;0,0)=-1$, so $\partial_s b_1(L_0,0,0)=1$.
\end{proof}

\begin{proposition}\label{prop:L-of-alpha-eps}
There exist $\delta_1>0$ and a unique $C^\infty$ map
\[
(\alpha,\varepsilon)\mapsto L(\alpha,\varepsilon)\in(L_0-\eta,L_0+\eta),
\qquad |\alpha|+|\varepsilon|<\delta_1,
\]
such that
\begin{equation}\label{eq:Lsolvesb1}
b_1\bigl(L(\alpha,\varepsilon),\alpha,\varepsilon\bigr)=0
\quad\text{and}\quad
L(0,0)=L_0.
\end{equation}
\end{proposition}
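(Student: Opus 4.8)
The plan is to apply the Implicit Function Theorem to the single scalar equation $b_1(L,\alpha,\varepsilon)=0$, treating $L$ as the unknown and $(\alpha,\varepsilon)$ as parameters, at the known zero $(L_0,0,0)$. All hypotheses are in hand: Lemma~\ref{lem:smooth-dependence} gives that $b_1$ is $C^\infty$ (indeed real-analytic) on $[0,L_0+\eta]\times U$; Lemma~\ref{lem:base-halfperiod} gives $b_1(L_0,0,0)=0$; and Lemma~\ref{lem:b1-s-nondeg} gives the non-degeneracy $\partial_s b_1(L_0,0,0)=1\neq 0$.

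First I would invoke the Implicit Function Theorem in the form for a scalar equation $F(L,\alpha,\varepsilon)=0$ with $\partial_L F(L_0,0,0)\neq0$: it yields $\delta_1>0$ and a unique $C^\infty$ function $L=L(\alpha,\varepsilon)$ defined for $|\alpha|+|\varepsilon|<\delta_1$, with $L(0,0)=L_0$, such that $b_1(L(\alpha,\varepsilon),\alpha,\varepsilon)=0$ for all such $(\alpha,\varepsilon)$, and such that $(L,\alpha,\varepsilon)\mapsto b_1=0$ near $(L_0,0,0)$ forces $L=L(\alpha,\varepsilon)$. Next I would shrink $\delta_1$ if necessary so that $L(\alpha,\varepsilon)\in(L_0-\eta,L_0+\eta)$ for all $(\alpha,\varepsilon)$ in the ball — this is possible by continuity of $L$ and $L(0,0)=L_0$ — which also keeps us inside the interval $[0,L_0+\eta]$ on which Lemma~\ref{lem:smooth-dependence} guarantees the solution $S$, and hence $b_1$, is defined and smooth. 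Real-analyticity of $L(\alpha,\varepsilon)$, if desired, follows from the analytic version of the Implicit Function Theorem together with the real-analytic dependence asserted in Lemma~\ref{lem:smooth-dependence}.

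There is essentially no obstacle here: this is a textbook application of the Implicit Function Theorem, and every ingredient has been prepared by the preceding lemmas. The only points requiring a word of care are (i) the bookkeeping to ensure the domain of $L$ stays within the range on which $S$ is defined — handled by shrinking $\delta_1$ — and (ii) noting that the uniqueness clause in the Proposition is the \emph{local} uniqueness delivered by the Implicit Function Theorem, namely uniqueness of $L$ within the interval $(L_0-\eta,L_0+\eta)$ for each fixed $(\alpha,\varepsilon)$, not global uniqueness of arcs of all lengths. I would state this explicitly so the reader is not misled into thinking higher ``multiples'' (values of $s$ with $k(s)=\pm1$ beyond the first half-period) have been excluded.
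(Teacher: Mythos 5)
Your proposal is exactly the paper's proof: apply the Implicit Function Theorem to $b_1(s,\alpha,\varepsilon)=0$ at $(L_0,0,0)$ using Lemma~\ref{lem:b1-s-nondeg} for the non-degeneracy $\partial_s b_1(L_0,0,0)=1\neq0$. The extra care you note about shrinking $\delta_1$ to stay in the existence window and about local versus global uniqueness is sound but implicit in the paper's terse one-line proof.
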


\begin{proof}
Apply the Implicit Function Theorem to the equation $b_1(s,\alpha,\varepsilon)=0$ at $(s,\alpha,\varepsilon)=(L_0,0,0)$,
using Lemma~\ref{lem:b1-s-nondeg}.
\end{proof}

\subsubsection{The fundamental arc}

Define the scalar shooting function
\begin{equation}\label{eq:gdef}
g(\alpha,\varepsilon):=b_2\bigl(L(\alpha,\varepsilon),\alpha,\varepsilon\bigr)
=v\bigl(L(\alpha,\varepsilon);\alpha,\varepsilon\bigr).
\end{equation}
Then $g$ is $C^\infty$ and $g(0,0)=0$ by \eqref{eq:B-base-zero} and \eqref{eq:Lsolvesb1}.

First we calculate the linearisation of $b_1$ in the $\alpha$-direction.

\begin{lemma}\label{lem:db1dalpha-base}
Let $b_1$ be defined by \eqref{eq:Bdef}, and let $S_0=(x_0,y_0,\theta_0,k_0,v_0)$ denote the base solution
of \eqref{eq:ode-S} at $(\alpha,\varepsilon)=(0,0)$. Let $L_0$ be as in Lemma~\ref{lem:base-halfperiod}.
Then
\begin{equation}\label{eq:db1dalpha-base}
\partial_\alpha b_1(L_0,0,0) = -\,\partial_\alpha\theta(L_0;0,0).
\end{equation}
More generally, for every $s$ in the common existence interval one has
\begin{equation}\label{eq:db1dalpha-eps0}
\partial_\alpha b_1(s,0,0) = -\,\cos\!\bigl(\theta_0(s)\bigr)\,\partial_\alpha\theta(s;0,0).
\end{equation}
\end{lemma}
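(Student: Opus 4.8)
The plan is to differentiate the boundary map $b_1$ in the $\alpha$-direction at $\varepsilon=0$, interchanging $\partial_\alpha$ with the $s$-dynamics, and then exploit the structure of the base solution recorded in Lemma~\ref{lem:force-identities}. Write $\partial_\alpha$ for differentiation with respect to $\alpha$ evaluated at $(\alpha,\varepsilon)=(0,0)$, and abbreviate the variational quantities $\dot x=\partial_\alpha x(s;0,0)$, etc. Differentiating the definition \eqref{eq:Bdef},
\[
\partial_\alpha b_1(s,0,0)
=-\cos\theta_0(s)\,\dot\theta(s)
+\partial_\alpha\Bigl[\varepsilon\bigl(y\cos\theta-x\sin\theta\bigr)\Bigr]_{(0,0)}.
\]
The second term carries an explicit factor $\varepsilon$, so its $\alpha$-derivative at $\varepsilon=0$ vanishes identically; only the first term survives, giving the general identity \eqref{eq:db1dalpha-eps0}. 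This is essentially immediate once one notes that $\partial_\alpha$ and the $\varepsilon$-prefactor decouple at $\varepsilon=0$.

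For the specialisation \eqref{eq:db1dalpha-base} at $s=L_0$, I would simply substitute $\cos\theta_0(L_0)=1$, which is part of the conclusion of Lemma~\ref{lem:base-halfperiod} (there $\theta_0(L_0)=0$). Thus $\partial_\alpha b_1(L_0,0,0)=-\dot\theta(L_0)=-\partial_\alpha\theta(L_0;0,0)$, as claimed. One should remark that this uses the base identity $\cos\theta_0=k_0^2\ge 0$ from Lemma~\ref{lem:force-identities} only through its endpoint value; no integration of the variational system is needed for this particular lemma.

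The only point requiring a word of care is the justification that one may differentiate $b_1(s;\alpha,\varepsilon)$ with respect to $\alpha$ under the evaluation at fixed $s$: this is precisely the smooth (indeed real-analytic) dependence guaranteed by Lemma~\ref{lem:smooth-dependence}, which makes $S(s;\alpha,\varepsilon)$ and hence $B$ jointly $C^\infty$ on $[0,L_0+\eta]\times U$, so all partial derivatives exist and commute. There is no real obstacle here; the ``hard part'' is purely bookkeeping, namely keeping track of which terms in $\partial_\alpha b_1$ carry an explicit $\varepsilon$ (and therefore drop out at $\varepsilon=0$) versus which involve genuine variational derivatives of the state $S$. Once that is organised, both \eqref{eq:db1dalpha-eps0} and \eqref{eq:db1dalpha-base} follow in a couple of lines.
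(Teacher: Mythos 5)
Your proposal is correct and follows essentially the same route as the paper: differentiate $b_1$ in $\alpha$, observe that the term carrying the explicit $\varepsilon$ prefactor drops out at $\varepsilon=0$, and then specialise at $s=L_0$ using $\theta_0(L_0)=0$ from Lemma~\ref{lem:base-halfperiod}. The paper phrases it by first restricting to the slice $\varepsilon=0$ (where $b_1=-\sin\theta$) and then differentiating, but the computation is identical.
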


\begin{proof}
Fix $s$ and set $\varepsilon=0$ in \eqref{eq:Bdef}.
The $\varepsilon$-term vanishes identically (for all $\alpha$), and we obtain
\[
\partial_\alpha b_1(s,0,0)
=
-\cos\!\bigl(\theta(s;0,0)\bigr)\,\partial_\alpha\theta(s;0,0)
=
-\cos\!\bigl(\theta_0(s)\bigr)\,\partial_\alpha\theta(s;0,0),
\]
which is \eqref{eq:db1dalpha-eps0}. Finally, at $s=L_0$ we have $\theta_0(L_0)=0$ by Lemma~\ref{lem:base-halfperiod},
so $\cos(\theta_0(L_0))=1$, and \eqref{eq:db1dalpha-base} follows.
\end{proof}

The non-degeneracy we require is the following.

\begin{lemma}\label{lem:galpha-nondeg}
One has
\begin{equation}\label{eq:galpha-equals-key}
\partial_\alpha g(0,0)
=
-\int_0^{L_0}\cos^2\theta_0(s)\,ds
\;<\;0,
\end{equation}
where $\theta_0$ denotes the base solution at $(\alpha,\varepsilon)=(0,0)$.
\end{lemma}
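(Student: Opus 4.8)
The plan is to compute $\partial_\alpha g(0,0)$ in closed form by restricting to the slice $\varepsilon=0$ (so that $\partial_\alpha g(0,0)=\tfrac{d}{d\alpha}g(\alpha,0)\big|_{\alpha=0}$) and exploiting the vector first integral $\mathcal F$ of Lemma~\ref{lem:force-identities}, which linearises the endpoint value $v(L)=k_s(L)$ far more cleanly than the scalar third-order variational equation does.

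Concretely, along $\varepsilon=0$ the last line of \eqref{eq:ode-S} reads $v'=k''=-\tfrac12 k^3-\alpha\cos\theta$, hence $k''+\tfrac12 k^3=-\alpha\cos\theta$ and \eqref{eq:force-deriv-general} gives $\mathcal F'(\,\cdot\,;\alpha)=-\alpha\cos\theta\,N$, with $N=(-\cos\theta,-\sin\theta)$. Since $S(0;\alpha,0)=(0,0,0,1,0)$ forces $\mathcal F(0;\alpha)=(0,\tfrac12)$ for every $\alpha$, integration in $s$ yields $\mathcal F(s;\alpha)=(0,\tfrac12)-\alpha\int_0^s\cos\theta(\sigma;\alpha)\,N(\sigma;\alpha)\,d\sigma$. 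Writing $L:=L(\alpha,0)$, pairing with $N(L;\alpha)$ and using $v=\langle\mathcal F,N\rangle$ (because $\langle T,N\rangle=0$ and $|N|=1$) gives
\[
g(\alpha,0)=v(L;\alpha)=\big\langle (0,\tfrac12),\,N(L;\alpha)\big\rangle-\alpha\int_0^{L}\cos\theta(\sigma;\alpha)\,\big\langle N(\sigma;\alpha),\,N(L;\alpha)\big\rangle\,d\sigma .
\]
The crucial point is that the first term vanishes \emph{identically in $\alpha$}: since $N(L;\alpha)=(-\cos\theta(L;\alpha),-\sin\theta(L;\alpha))$ one has $\langle(0,\tfrac12),N(L;\alpha)\rangle=-\tfrac12\sin\theta(L;\alpha)=-\tfrac12\,b_1(L(\alpha,0),\alpha,0)=0$ by Proposition~\ref{prop:L-of-alpha-eps} and \eqref{eq:Bdef} (recall $b_1=-\sin\theta$ when $\varepsilon=0$). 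Thus $g(\alpha,0)=-\alpha\,I(\alpha)$, where $I(\alpha)$ is the integral above, and $I$ is $C^\infty$ near $\alpha=0$ by Lemma~\ref{lem:smooth-dependence} and Proposition~\ref{prop:L-of-alpha-eps}. Hence $\partial_\alpha g(0,0)=-I(0)$; and since $\theta_0(L_0)=0$ by Lemma~\ref{lem:base-halfperiod}, we get $N(L_0;0)=(-1,0)$ and $\langle N(\sigma;0),(-1,0)\rangle=\cos\theta_0(\sigma)$, so $I(0)=\int_0^{L_0}\cos^2\theta_0\,ds$, which is \eqref{eq:galpha-equals-key}.

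Strict negativity is then immediate: $\cos^2\theta_0\ge 0$ on $[0,L_0]$, while $\cos\theta_0$ is continuous with $\cos\theta_0(0)=1$ (indeed $\cos\theta_0=k_0^2$ by Lemma~\ref{lem:force-identities}), so the integrand is positive near $s=0$ and the integral is strictly positive. I do not anticipate a real obstacle; the only delicate point — and the reason the force integral is worth setting up — is that one must differentiate \emph{after} fixing $\varepsilon=0$ and invoking $b_1(L(\alpha,0),\alpha,0)=0$, so that the boundary contribution $\langle(0,\tfrac12),N(L)\rangle$ is annihilated along the whole one-parameter family rather than only at the base point; this is exactly what removes the dependence on $\partial_\alpha L(0,0)$. (One can instead differentiate $g(\alpha,\varepsilon)=v(L(\alpha,\varepsilon);\alpha,\varepsilon)$ by the chain rule, compute $\partial_\alpha L(0,0)=\partial_\alpha\theta(L_0;0,0)$ from Lemmas~\ref{lem:b1-s-nondeg} and~\ref{lem:db1dalpha-base}, and solve the variational system using the particular solution $\dot k\equiv-\tfrac23$ of $\dot k''+\tfrac32 k_0^2\dot k=-\cos\theta_0$; this gives $\partial_\alpha g(0,0)=-L_0/3$, agreeing with \eqref{eq:galpha-equals-key} via the Pohozaev-type identity $\int_0^{L_0}k_0^4=2\int_0^{L_0}v_0^2=L_0/3$, but it is longer.)
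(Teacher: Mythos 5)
Your proof is correct, and although it relies on the same key tool as the paper's proof (the force integral $\mathcal F$ from Lemma~\ref{lem:force-identities}), it packages the argument differently in a way that is genuinely cleaner. The paper linearises $\mathcal F'$ in $\alpha$ at the base, integrates, extracts the $x$-component, and then reassembles $\partial_\alpha g(0,0)$ via the chain rule using the bookkeeping identities $\partial_\alpha L(0,0)=\partial_\alpha\theta(L_0;0,0)$ and $\partial_s v(L_0;0,0)=\tfrac12$. You instead use the identity $v=\langle\mathcal F,N\rangle$ together with the integral representation of $\mathcal F$ along the slice $\varepsilon=0$ to write $g(\alpha,0)$ directly as a boundary term plus $-\alpha\,I(\alpha)$, and then observe that the boundary term $\langle(0,\tfrac12),N(L;\alpha)\rangle=-\tfrac12\sin\theta(L(\alpha,0);\alpha,0)$ vanishes \emph{identically in $\alpha$} because Proposition~\ref{prop:L-of-alpha-eps} forces $b_1(L(\alpha,0),\alpha,0)=0$. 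Factoring out $\alpha$ makes $\partial_\alpha g(0,0)=-I(0)$ immediate, and evaluating $I(0)$ gives the integral of $\cos^2\theta_0$ at once. This sidesteps the chain-rule terms entirely, precisely because the boundary term cancels along the whole family rather than only at the base point — as you note. The trade-off is that the paper's version computes the two pieces $\partial_\alpha v(L_0;0,0)+\tfrac12\partial_\alpha\theta(L_0;0,0)$ explicitly, which may be of independent use, whereas your version hides them inside the factorised quantity $I(\alpha)$.

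One trivial typo: after substituting $b_1=-\sin\theta$ at $\varepsilon=0$ you should have $-\tfrac12\sin\theta(L;\alpha)=+\tfrac12\,b_1(L(\alpha,0),\alpha,0)$, not $-\tfrac12\,b_1$; this has no bearing on the conclusion since $b_1$ vanishes there. Your parenthetical cross-check (the closed form $-L_0/3$ via $\cos\theta_0=k_0^2$, the first integral \eqref{eq:energy-base}, and integration by parts giving $\int_0^{L_0}k_0^4=2\int_0^{L_0}v_0^2$) is also correct.
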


\begin{proof}
Define $\mathcal F$ by \eqref{eq:force-def} for general $(\alpha,\varepsilon)$. From \eqref{eq:ode-S} we have
\[
k''+\frac12k^3
=
v'+\frac12k^3
=
-\alpha\cos\theta-\alpha\varepsilon(x\cos\theta+y\sin\theta),
\]
hence by \eqref{eq:force-deriv-general},
\begin{equation}\label{eq:force-with-alpha}
\mathcal F'
=
-\alpha\Bigl(\cos\theta+\varepsilon(x\cos\theta+y\sin\theta)\Bigr)N.
\end{equation}
Differentiate \eqref{eq:force-with-alpha} with respect to $\alpha$ and then set $(\alpha,\varepsilon)=(0,0)$.
Because the parenthetical factor is multiplied by $\alpha$, all $\alpha$-derivatives of $x,y,\theta$ vanish at $\alpha=0$,
and we obtain
\begin{equation}\label{eq:force-linearised}
\partial_s\bigl(\partial_\alpha \mathcal F\bigr)(s;0,0)
=
-(\cos\theta_0(s))\,N_0(s),
\end{equation}
with $\partial_\alpha\mathcal F(0;0,0)=0$ (the initial data are independent of $\alpha$).
Integrating \eqref{eq:force-linearised} from $0$ to $L_0$ gives
\[
\partial_\alpha \mathcal F(L_0;0,0)
=
-\int_0^{L_0} \cos\theta_0(s)\,N_0(s)\,ds.
\]
Taking the $x$-component and using $N_x=-\cos\theta$ yields
\begin{equation}\label{eq:dFxdalpha-positive}
\partial_\alpha \mathcal F_x(L_0;0,0)
=
\int_0^{L_0} \cos^2\theta_0(s)\,ds
>0.
\end{equation}
On the other hand, $\mathcal F_x=-(1/2)k^2\sin\theta - v\cos\theta$, and at $s=L_0$ we have
$\sin\theta_0(L_0)=0$, $v_0(L_0)=0$, $\cos\theta_0(L_0)=1$, $k_0(L_0)^2=1$. Thus
\begin{equation}\label{eq:dFxdalpha-endpoint}
\partial_\alpha \mathcal F_x(L_0;0,0)
=
-\frac12\,\partial_\alpha\theta(L_0;0,0) \;-\;\partial_\alpha v(L_0;0,0).
\end{equation}
Combining \eqref{eq:dFxdalpha-positive}-\eqref{eq:dFxdalpha-endpoint} yields
\begin{equation}\label{eq:key-nondeg}
\partial_\alpha v(L_0;0,0)+\frac12\,\partial_\alpha\theta(L_0;0,0)
=
-\int_0^{L_0}\cos^2\theta_0(s)\,ds
\;<\;0.
\end{equation}
Finally, by \eqref{eq:gdef} and the chain rule,
\[
\partial_\alpha g(0,0)
=
\partial_\alpha v(L_0;0,0)
+\partial_s v(L_0;0,0)\,\partial_\alpha L(0,0).
\]
Differentiate $b_1(L(\alpha,\varepsilon),\alpha,\varepsilon)\equiv 0$ in $\alpha$ at $(0,0)$: since
$\partial_\alpha b_1(L_0,0,0)=-\partial_\alpha\theta(L_0;0,0)$ (Lemma \ref{lem:db1dalpha-base}), and $\partial_s b_1(L_0,0,0)=1$ by \eqref{eq:b1s-base}, we find $\partial_\alpha L(0,0)=\partial_\alpha\theta(L_0;0,0)$.
Moreover, from \eqref{eq:ode-S} at $(0,0)$,
\[
\partial_s v(L_0;0,0)=v'(L_0;0,0)=-\frac12\,k_0(L_0)^3=+\frac12,
\qquad\text{since }k_0(L_0)=-1.
\]
Therefore
\[
\partial_\alpha g(0,0)
=
\partial_\alpha v(L_0;0,0)+\frac12\,\partial_\alpha\theta(L_0;0,0),
\]
and \eqref{eq:galpha-equals-key} follows from \eqref{eq:key-nondeg}.
\end{proof}

We thus obtain existence of fundamental arcs.

\begin{proposition}\label{prop:fundamental-arc}
There exist $\delta\in(0,\delta_1)$ and unique $C^\infty$ functions
\[
\varepsilon\mapsto \alpha(\varepsilon),\qquad \varepsilon\mapsto L(\varepsilon),
\qquad |\varepsilon|<\delta,
\]
with $\alpha(0)=0$ and $L(0)=L_0$, such that the solution of \eqref{eq:ode-S} satisfies
\[
B\bigl(L(\varepsilon),\alpha(\varepsilon),\varepsilon\bigr)=(0,0)
\qquad\text{for all }|\varepsilon|<\delta.
\]
Equivalently, for each $|\varepsilon|<\delta$ the trajectory $S(\,\cdot\,;\alpha(\varepsilon),\varepsilon)$
is a fundamental arc in the sense of Definition~\ref{def:fundamental-arc}.
\end{proposition}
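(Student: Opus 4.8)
The plan is to finish the construction with one further application of the Implicit Function Theorem, this time to the scalar shooting function $g$ in the $\alpha$-variable. All the analytic input has already been assembled: $g$ is $C^\infty$ on $\{\,|\alpha|+|\varepsilon|<\delta_1\,\}$ (composition of the $C^\infty$ map $L$ from Proposition~\ref{prop:L-of-alpha-eps} with the $C^\infty$ map $B$ from Lemma~\ref{lem:smooth-dependence}), $g(0,0)=0$ by \eqref{eq:B-base-zero} and \eqref{eq:Lsolvesb1}, and $\partial_\alpha g(0,0)=-\int_0^{L_0}\cos^2\theta_0(s)\,ds<0$ by Lemma~\ref{lem:galpha-nondeg}; in particular $\partial_\alpha g(0,0)\neq 0$.

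First I would invoke the Implicit Function Theorem for $g(\alpha,\varepsilon)=0$ at $(\alpha,\varepsilon)=(0,0)$. Since $\partial_\alpha g(0,0)\neq0$, this produces some $\delta_2\in(0,\delta_1]$ and a unique $C^\infty$ function $\varepsilon\mapsto\alpha(\varepsilon)$, defined for $|\varepsilon|<\delta_2$, with $\alpha(0)=0$ and $g(\alpha(\varepsilon),\varepsilon)=0$. Next I would set $L(\varepsilon):=L(\alpha(\varepsilon),\varepsilon)$; this is $C^\infty$ as a composition of $C^\infty$ maps, and $L(0)=L(0,0)=L_0$. By continuity of $\varepsilon\mapsto\alpha(\varepsilon)$ together with $\alpha(0)=0$, there is $\delta\in(0,\delta_2]$ with $|\alpha(\varepsilon)|+|\varepsilon|<\delta_1$ for all $|\varepsilon|<\delta$, so that $L(\alpha(\varepsilon),\varepsilon)$ is well-defined and lies in $(L_0-\eta,L_0+\eta)$; in particular $S(\,\cdot\,;\alpha(\varepsilon),\varepsilon)$ exists up to $s=L(\varepsilon)$ by Lemma~\ref{lem:smooth-dependence}.

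It then remains to verify $B(L(\varepsilon),\alpha(\varepsilon),\varepsilon)=(0,0)$. The first component vanishes because $b_1\bigl(L(\alpha,\varepsilon),\alpha,\varepsilon\bigr)\equiv 0$ on $\{\,|\alpha|+|\varepsilon|<\delta_1\,\}$ by \eqref{eq:Lsolvesb1}, evaluated at $\alpha=\alpha(\varepsilon)$; the second vanishes because $b_2\bigl(L(\varepsilon),\alpha(\varepsilon),\varepsilon\bigr)=g(\alpha(\varepsilon),\varepsilon)=0$ by the definition \eqref{eq:gdef} of $g$ and the choice of $\alpha(\varepsilon)$. Uniqueness of the pair $(\alpha(\cdot),L(\cdot))$ subject to $\alpha(0)=0$, $L(0)=L_0$ follows by combining the local uniqueness of $\alpha(\varepsilon)$ furnished by the Implicit Function Theorem with the uniqueness of $(\alpha,\varepsilon)\mapsto L(\alpha,\varepsilon)$ from Proposition~\ref{prop:L-of-alpha-eps}. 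By Definition~\ref{def:fundamental-arc}, each such $S(\,\cdot\,;\alpha(\varepsilon),\varepsilon)$ is then a fundamental arc.

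There is essentially no obstacle remaining at this stage: the only substantive computation — that $\partial_\alpha g(0,0)$ is nonzero — has been carried out in Lemma~\ref{lem:galpha-nondeg} via the conserved ``force'' vector $\mathcal F$ and the endpoint identities of Lemma~\ref{lem:base-halfperiod}. The one point needing mild care is the bookkeeping of domains, namely shrinking the final $\delta$ so that the composite argument $L(\alpha(\varepsilon),\varepsilon)$ stays inside the domain of Proposition~\ref{prop:L-of-alpha-eps} (equivalently inside $(L_0-\eta,L_0+\eta)$), which is arranged by the continuity argument above. The strict sign in \eqref{eq:galpha-equals-key} is more than is needed here — only $\partial_\alpha g(0,0)\neq0$ is used — but it will be useful later when tracking the sign and size of $\alpha(\varepsilon)$ in terms of $\varepsilon$.
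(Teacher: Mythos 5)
Your proposal is correct and follows the same route as the paper's proof: apply the Implicit Function Theorem to the scalar shooting function $g(\alpha,\varepsilon)=0$ at $(0,0)$ using the non-degeneracy $\partial_\alpha g(0,0)\neq0$ from Lemma~\ref{lem:galpha-nondeg}, then define $L(\varepsilon):=L(\alpha(\varepsilon),\varepsilon)$ from Proposition~\ref{prop:L-of-alpha-eps} and check both components of $B$ vanish by construction. The extra bookkeeping you include about shrinking $\delta$ so that $(\alpha(\varepsilon),\varepsilon)$ remains in the domain of $L(\cdot,\cdot)$ is a fair (if routine) point that the paper leaves implicit.
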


\begin{proof}
By Lemma~\ref{lem:galpha-nondeg} we have $\partial_\alpha g(0,0)\neq 0$. The Implicit Function Theorem applied to
$g(\alpha,\varepsilon)=0$ at $(0,0)$ yields a unique $C^\infty$ map $\alpha=\alpha(\varepsilon)$ for $|\varepsilon|<\delta$.
Define $L(\varepsilon):=L(\alpha(\varepsilon),\varepsilon)$ using Proposition~\ref{prop:L-of-alpha-eps}. Then
$b_1(L(\varepsilon),\alpha(\varepsilon),\varepsilon)=0$ and $v(L(\varepsilon);\alpha(\varepsilon),\varepsilon)=0$ by construction,
which is exactly $B(L(\varepsilon),\alpha(\varepsilon),\varepsilon)=(0,0)$.
\end{proof}

Let us now investigate the behaviour of the parameter $\alpha(\varepsilon)$ to first order near zero.
The below proves that $\alpha(\varepsilon)$ has the sign of $-\varepsilon$ for all $\varepsilon$ sufficiently close to $0$.

\begin{lemma}\label{lem:alpha-expansion-firstorder}
Let $g$ be the shooting function defined in \eqref{eq:gdef} and $\delta>0$, 
$\alpha:(-\delta,\delta)\to\R$ with $\alpha(0)=0$ be from Proposition \ref{prop:fundamental-arc}.
Then 
\begin{equation}\label{eq:alpha-firstorder}
\alpha(\varepsilon)=\alpha'(0)\,\varepsilon+O(\varepsilon^2)
\qquad(\varepsilon\to 0),
\end{equation}
where
\begin{equation}\label{eq:alpha-prime}
\alpha'(0)=-\frac{\partial_\varepsilon g(0,0)}{\partial_\alpha g(0,0)}
=-\frac{y_0(L_0)}{2\displaystyle\int_0^{L_0}\cos^2\theta_0(s)\,ds}<0.
\end{equation}

\end{lemma}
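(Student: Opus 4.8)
The plan is to compute $\alpha'(0)$ via the implicit function theorem formula $\alpha'(0)=-\partial_\varepsilon g(0,0)/\partial_\alpha g(0,0)$, since $\alpha(\varepsilon)$ is characterised by $g(\alpha(\varepsilon),\varepsilon)\equiv 0$ with $g$ smooth and $\partial_\alpha g(0,0)\neq 0$ by Lemma~\ref{lem:galpha-nondeg}. The denominator is already known: $\partial_\alpha g(0,0)=-\int_0^{L_0}\cos^2\theta_0(s)\,ds<0$. So the Taylor expansion \eqref{eq:alpha-firstorder} is immediate from smoothness of $\alpha$, and the whole content of the lemma reduces to evaluating the numerator $\partial_\varepsilon g(0,0)$ and showing it equals $\tfrac12 y_0(L_0)$ with $y_0(L_0)>0$.

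First I would set up the $\varepsilon$-linearisation in parallel with the $\alpha$-linearisation already carried out in the proof of Lemma~\ref{lem:galpha-nondeg}. From \eqref{eq:gdef} and the chain rule, $\partial_\varepsilon g(0,0)=\partial_\varepsilon v(L_0;0,0)+\partial_s v(L_0;0,0)\,\partial_\varepsilon L(0,0)$, and $\partial_s v(L_0;0,0)=+\tfrac12$ as before. To handle $\partial_\varepsilon L(0,0)$ I would differentiate $b_1(L(\alpha,\varepsilon),\alpha,\varepsilon)\equiv 0$ in $\varepsilon$ at $(0,0)$: using $\partial_s b_1(L_0,0,0)=1$ this gives $\partial_\varepsilon L(0,0)=-\partial_\varepsilon b_1(L_0,0,0)$. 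Differentiating \eqref{eq:Bdef} in $\varepsilon$ at $(0,0)$, the explicit $\varepsilon$-prefactor contributes $y_0(L_0)\cos\theta_0(L_0)-x_0(L_0)\sin\theta_0(L_0)=y_0(L_0)$ (using $\theta_0(L_0)=0$), plus a term $-\cos\theta_0(L_0)\,\partial_\varepsilon\theta(L_0;0,0)=-\partial_\varepsilon\theta(L_0;0,0)$. For $\partial_\varepsilon v(L_0;0,0)$ I would again use the conserved quantity $\mathcal F$: from \eqref{eq:force-with-alpha}, $\mathcal F'=-\alpha(\cdots)N$ vanishes at $\alpha=0$ \emph{before} differentiating, but differentiating in $\varepsilon$ kills the term since it still carries the prefactor $\alpha$; hence $\partial_s(\partial_\varepsilon\mathcal F)(s;0,0)=0$, so $\partial_\varepsilon\mathcal F(L_0;0,0)=0$. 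Taking the $x$-component of $\mathcal F_x=-\tfrac12 k^2\sin\theta-v\cos\theta$ and linearising at $L_0$ (where $\sin\theta_0=0$, $v_0=0$, $\cos\theta_0=1$, $k_0^2=1$) gives $0=\partial_\varepsilon\mathcal F_x(L_0;0,0)=-\tfrac12\partial_\varepsilon\theta(L_0;0,0)-\partial_\varepsilon v(L_0;0,0)$, exactly as in the $\alpha$ case.

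Assembling these: $\partial_\varepsilon g(0,0)=\partial_\varepsilon v(L_0;0,0)+\tfrac12\bigl(-y_0(L_0)+\partial_\varepsilon\theta(L_0;0,0)\bigr)=\bigl(\partial_\varepsilon v(L_0;0,0)+\tfrac12\partial_\varepsilon\theta(L_0;0,0)\bigr)-\tfrac12 y_0(L_0)=-\tfrac12 y_0(L_0)$, since the bracket vanishes by the identity just derived. Dividing by $\partial_\alpha g(0,0)=-\int_0^{L_0}\cos^2\theta_0\,ds$ gives $\alpha'(0)=-\tfrac12 y_0(L_0)\big/\int_0^{L_0}\cos^2\theta_0\,ds$, which is \eqref{eq:alpha-prime} up to the sign claim. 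Finally I would show $y_0(L_0)>0$: from \eqref{eq:ode-S} at $(0,0)$, $y_0'=\cos\theta_0=k_0^2\ge 0$ by \eqref{eq:cos-sin-identities}, and $k_0$ is not identically constant, so $y_0(L_0)=y_0(0)+\int_0^{L_0}k_0(s)^2\,ds=\int_0^{L_0}k_0(s)^2\,ds>0$. Hence $\alpha'(0)<0$, so $\alpha(\varepsilon)$ has the sign of $-\varepsilon$ near $0$. I expect the main (modest) obstacle to be bookkeeping: making sure the two linearised endpoint identities are applied consistently and that the explicit $\varepsilon$-prefactor term in $b_1$ is not double-counted when it reappears through $\partial_\varepsilon L$; no genuinely new estimate is needed beyond what was used for Lemma~\ref{lem:galpha-nondeg}.
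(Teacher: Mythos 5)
Your proposal is correct and lands on the same formula by essentially the same scaffolding: the implicit-function-theorem identity $\alpha'(0)=-\partial_\varepsilon g/\partial_\alpha g$, the chain rule for $g=v(L(\alpha,\varepsilon);\alpha,\varepsilon)$, the use of $b_1\equiv 0$ to express $\partial_\varepsilon L(0,0)$, the value $\partial_s v(L_0;0,0)=\tfrac12$, and positivity of $y_0(L_0)=\int_0^{L_0}k_0^2\,ds$. The one place you diverge from the paper is in disposing of $\partial_\varepsilon v(L_0;0,0)$. The paper observes directly that at $\alpha=0$ the whole system \eqref{eq:ode-S} is $\varepsilon$-independent (since $\varepsilon$ enters only through the product $\alpha\varepsilon$), so $v(\,\cdot\,;0,\varepsilon)\equiv v_0$ and in one stroke $\partial_\varepsilon v(L_0;0,0)=\partial_\varepsilon\theta(L_0;0,0)=0$, which also makes the evaluation $\partial_\varepsilon b_1(L_0,0,0)=y_0(L_0)$ transparent. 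You instead re-run the conserved-quantity argument with $\mathcal F$ in the $\varepsilon$-direction to derive $\partial_\varepsilon v(L_0;0,0)+\tfrac12\partial_\varepsilon\theta(L_0;0,0)=0$, and then carry the unresolved $\partial_\varepsilon\theta(L_0;0,0)$ term through $\partial_\varepsilon L$ so that it cancels in the final assembly. This is valid (it is internally consistent because it is a weaker consequence of the same fact) but is more work than necessary; the $\varepsilon$-independence observation short-circuits the whole $\mathcal F$ detour and avoids the bookkeeping you correctly flagged as the main hazard.
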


\begin{proof}
Since $\alpha$ is $C^\infty$
it admits the expansion \eqref{eq:alpha-firstorder}.
Differentiating 
\begin{equation}\label{eq:g-zero-identity}
g(\alpha(\varepsilon),\varepsilon)\equiv 0\qquad\text{for }|\varepsilon|<\delta.
\end{equation}
 at $\varepsilon=0$ gives
\[
\partial_\alpha g(0,0)\,\alpha'(0)+\partial_\varepsilon g(0,0)=0,
\]
so $\alpha'(0)=-\partial_\varepsilon g(0,0)/\partial_\alpha g(0,0)$.

To compute $\partial_\varepsilon g(0,0)$, recall
\[
g(\alpha,\varepsilon)=v\bigl(L(\alpha,\varepsilon);\alpha,\varepsilon\bigr),
\qquad
b_1\bigl(L(\alpha,\varepsilon),\alpha,\varepsilon\bigr)=0.
\]
At $\alpha=0$ the curvature subsystem reduces to \eqref{eq:base-curv}, hence
$v(\,\cdot\,;0,\varepsilon)\equiv v_0(\,\cdot\,)$ is independent of $\varepsilon$, and therefore
\[
\partial_\varepsilon g(0,0)=\partial_s v_0(L_0)\,\partial_\varepsilon L(0,0).
\]
Differentiate $b_1(L(0,\varepsilon),0,\varepsilon)\equiv 0$ at $\varepsilon=0$:
\[
0=\partial_s b_1(L_0,0,0)\,\partial_\varepsilon L(0,0)+\partial_\varepsilon b_1(L_0,0,0).
\]
Using \eqref{eq:b1s-base} gives $\partial_s b_1(L_0,0,0)=1$, and since $\theta_0(L_0)=0$ one has
$\partial_\varepsilon b_1(L_0,0,0)=y_0(L_0)$. Hence $\partial_\varepsilon L(0,0)=-y_0(L_0)$.

Finally, at $(\alpha,\varepsilon)=(0,0)$ we have $v_0'(s)=-\frac12 k_0(s)^3$, so
\[
\partial_s v_0(L_0)=v_0'(L_0)=+\frac12
\qquad\text{since }k_0(L_0)=-1,
\]
and therefore $\partial_\varepsilon g(0,0)=-\frac12\,y_0(L_0)$.

On the other hand, \eqref{eq:galpha-equals-key} yields $\partial_\alpha g(0,0)<0$, 
so \eqref{eq:alpha-prime} follows. Since $y_0(L_0)>0$ (because $y_0'(s)=\cos\theta_0(s)=k_0(s)^2\ge 0$ and $k_0\not\equiv 0$),
we have $\alpha'(0)<0$, and the sign conclusions follow from \eqref{eq:alpha-firstorder}.
\end{proof}

Set $c_\varepsilon:=(-1/\varepsilon,0)$ and $\sigma(\varepsilon):=-\varepsilon\,\alpha(\varepsilon)$.
These are the centre of the homothety and the homothety parameter, respectively.
Suppose that, for a given choice of sign of $\varepsilon$, the dihedral reflection-rotation construction of
Proposition~\ref{prop:dihedral-closure} below yields a smooth closed curve $\gamma_\varepsilon$.
Then, due to \eqref{eq:ode-S}, 
\begin{equation}\label{eq:homothety-geometric}
k_{ss}+\frac12 k^3 = -\sigma(\varepsilon)\,\langle \gamma_\varepsilon-c_\varepsilon,\,N\rangle.
\end{equation}
To see this, begin by computing 
\[
\cos\theta+\varepsilon(x\cos\theta+y\sin\theta)
= -\varepsilon\langle \gamma_\varepsilon-c_\varepsilon,\,N\rangle,
\]
so that the $v$-equation becomes
\[
v'=-\frac12 k^3-\alpha(\varepsilon)\bigl(\cos\theta+\varepsilon(x\cos\theta+y\sin\theta)\bigr)
= -\frac12 k^3+\alpha(\varepsilon)\varepsilon\langle \gamma_\varepsilon-c_\varepsilon,\,N\rangle,
\]
i.e.\ \eqref{eq:homothety-geometric} with $\sigma(\varepsilon)=-\varepsilon\alpha(\varepsilon)$ since $k_{ss}=v'$.

The elastic flow $\gamma(t)$ with $\gamma_\varepsilon-c_\varepsilon$ as initial data 
is self-similar, evolving according to \eqref{eq:intro:homot}.
Indeed, we claim 
\begin{equation}
\label{eq:sss}
\gamma(t) = (4\sigma(\varepsilon)t+1)^{\frac14}\gamma(0)
=: \lambda(t)\gamma(0)\,.
\end{equation}
This follows by direct differentiation and substitution of \eqref{eq:homothety-geometric}, keeping in mind that each of the terms on the LHS of \eqref{eq:homothety-geometric} are homogenous of degree $-3$.
Thus it follows from \eqref{eq:sss} that $\sigma(\varepsilon)>0$ implies $\gamma$ is expanding (called an \emph{expander}) and if $\sigma(\varepsilon)<0$ then $\gamma$  is shrinking (called a \emph{shrinker}). 

Along the elastic flow \eqref{EF}, the elastic energy
\[
\mathcal E[\gamma]:=\frac12\int_\gamma k^2\,ds
\]
satisfies  (note that as our solution is a smooth closed curve, $\SE[\gamma]$ is finite and differentiable) 
\begin{equation}\label{eq:energy-dissipation}
\frac{d}{dt}\mathcal E[\gamma(t)]=-\int_\gamma \bigl(k_{ss}+\tfrac12 k^3\bigr)^2\,ds\le 0,
\end{equation}
with equality if and only if the solution is stationary (up to tangential reparametrisation).

If $\gamma$ satisfies \eqref{eq:intro:homot} with parameter $\sigma$, then (due to the scaling relations $k(t)=\lambda(t)^{-1}k(0)$, $ds(t)=\lambda(t)\,ds(0)$, 
\[
\mathcal E[\gamma(t)]=\frac{\mathcal E[\gamma]}{\lambda(t)}
\quad\Longrightarrow\quad
\frac{d}{dt}\mathcal E[\gamma(t)]=-\frac{\lambda'(t)}{\lambda(t)^2}\,\mathcal E[\gamma].
\]
Since $\mathcal E[\gamma]>\frac{2\pi^2}{L[\gamma]} > 0$ for a closed curve (by Poincar\'e), we conclude by \eqref{eq:energy-dissipation}that $\lambda'(t)\ge 0$ for all $t$ for which the homothetic evolution exists.

From the above discussion, an important consistency check is that 
the homothety coefficient  $\sigma(\varepsilon)$ maintains a sign for $\varepsilon$ positive and negative.

\begin{lemma}\label{lem:sigma-positive}
Let $\alpha(\varepsilon)$ and $L(\varepsilon)$ be the functions from Proposition~\ref{prop:fundamental-arc}, so that
$B(L(\varepsilon),\alpha(\varepsilon),\varepsilon)=(0,0)$ and $\alpha(0)=0$, $L(0)=L_0$.
Then
\begin{equation}\label{eq:sigma-positive-expansion}
\sigma(\varepsilon)= -\alpha'(0)\,\varepsilon^2+O(\varepsilon^3)>0
\qquad\text{for all sufficiently small }\varepsilon\neq 0.
\end{equation}
\end{lemma}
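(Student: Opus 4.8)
The plan is to read the claimed expansion off directly from the definition $\sigma(\varepsilon)=-\varepsilon\,\alpha(\varepsilon)$ together with the first-order expansion of $\alpha$ from Lemma~\ref{lem:alpha-expansion-firstorder}. First I would recall that $\alpha$ is $C^\infty$ on $(-\delta,\delta)$ with $\alpha(0)=0$, so Taylor's theorem with remainder gives $\alpha(\varepsilon)=\alpha'(0)\,\varepsilon+O(\varepsilon^2)$ as $\varepsilon\to0$, the $O(\varepsilon^2)$ bound being uniform on a slightly smaller interval $|\varepsilon|<\delta'$. Multiplying through by $-\varepsilon$ then yields $\sigma(\varepsilon)=-\varepsilon\bigl(\alpha'(0)\varepsilon+O(\varepsilon^2)\bigr)=-\alpha'(0)\,\varepsilon^2+O(\varepsilon^3)$, which is the asymptotic identity in \eqref{eq:sigma-positive-expansion}.

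For the strict positivity I would invoke the sign computation $\alpha'(0)<0$ from \eqref{eq:alpha-prime} (which in turn rests on $y_0(L_0)>0$ and $\int_0^{L_0}\cos^2\theta_0\,ds>0$). Writing $\sigma(\varepsilon)=\varepsilon^2\bigl(-\alpha'(0)+O(\varepsilon)\bigr)$, the bracketed factor converges to the strictly positive number $-\alpha'(0)$ as $\varepsilon\to0$; hence there is $\delta''\in(0,\delta')$ such that for $0<|\varepsilon|<\delta''$ the remainder satisfies $|O(\varepsilon)|<-\alpha'(0)$, forcing the bracket, and therefore $\sigma(\varepsilon)=\varepsilon^2\cdot(\text{positive})$, to be strictly positive.

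There is essentially no obstacle here: the statement is a bookkeeping corollary of Lemma~\ref{lem:alpha-expansion-firstorder}. The only point worth stating carefully is that the remainder in the expansion of $\alpha$ is genuinely $O(\varepsilon^2)$ and not merely $o(\varepsilon)$; this is what permits factoring out $\varepsilon^2$ and concluding a strict sign on a full punctured neighbourhood of $0$, and it follows immediately from the smoothness of $\alpha$. I would close by noting that this is precisely the consistency check flagged before the lemma: $\sigma(\varepsilon)$ does not change sign as $\varepsilon$ crosses $0$, so the glued closed curve $\gamma_\varepsilon$, when it exists via Proposition~\ref{prop:dihedral-closure}, is a self-similar \emph{expander} for both signs of $\varepsilon$.
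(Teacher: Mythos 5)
Your argument is correct and follows the paper's proof essentially verbatim: multiply the first-order expansion $\alpha(\varepsilon)=\alpha'(0)\varepsilon+O(\varepsilon^2)$ from Lemma~\ref{lem:alpha-expansion-firstorder} by $-\varepsilon$ and use $\alpha'(0)<0$. The additional care you take in noting that smoothness of $\alpha$ gives a genuine $O(\varepsilon^2)$ remainder, so that the sign of $\sigma(\varepsilon)$ is strict on a full punctured neighbourhood of $0$, is a welcome but minor elaboration of what the paper leaves implicit.
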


\begin{proof}
Multiplying \eqref{eq:alpha-firstorder} by $-\varepsilon$ gives
\[
\sigma(\varepsilon)=-\varepsilon\alpha(\varepsilon)
= -\alpha'(0)\,\varepsilon^2+O(\varepsilon^3).
\]
Since $\alpha'(0)<0$, the leading coefficient $-\alpha'(0)$ is strictly positive, which yields
\eqref{eq:sigma-positive-expansion}.
\end{proof}

We generate fundamental arcs for positive and negative $\varepsilon$.
The above implies that each branch is expanding, as we expect.
It could nevertheless occur that these branches of solutions are geometrically distinct.
This turns out not to be the case; these branches are equivalent, and we may follow either of them.
The following results establish this.

\begin{lemma}[Reflection symmetry exchanging $\varepsilon$]\label{lem:eps-reflection-symmetry}
Let $R:\R^2\to\R^2$ be reflection in the $y$-axis, $R(x,y)=(-x,y)$.
Fix parameters $(\alpha,\varepsilon)$ and let
\[
S(s;\alpha,\varepsilon)=(x,y,\theta,k,v)
\]
solve \eqref{eq:ode-S} on an interval $[0,L]$.
Define the transformed state
\[
\widetilde S(s):=(\widetilde x,\widetilde y,\widetilde\theta,\widetilde k,\widetilde v)
:=(-x,\ y,\ -\theta,\ -k,\ -v).
\]
Then $\widetilde S$ solves \eqref{eq:ode-S} with parameters $(-\alpha,-\varepsilon)$, i.e.
\[
\widetilde S(\,\cdot\,)=S(\,\cdot\,;-\alpha,-\varepsilon)
\quad\text{up to the choice of initial curvature sign}.
\]
Moreover, the boundary map transforms by
\begin{equation}\label{eq:B-transforms}
B\bigl(s;-\alpha,-\varepsilon\bigr)\ \text{computed on }\widetilde S
=
\begin{bmatrix}
- b_1(s;\alpha,\varepsilon)\\
- b_2(s;\alpha,\varepsilon)
\end{bmatrix}.
\end{equation}
In particular, $B(L;\alpha,\varepsilon)=(0,0)$ if and only if $B(L;-\alpha,-\varepsilon)=(0,0)$ for the reflected arc.

Equivalently, in geometric form (for $\varepsilon\neq 0$), the homothety equation
\begin{equation}\label{eq:homothety-geometric-again}
k_{ss}+\frac12 k^3 = -\sigma\,\langle \gamma-c,\,N\rangle
\end{equation}
is invariant under reflection: $R$ sends a solution with centre $c$ to a solution with centre $R(c)$ and the {same}
coefficient $\sigma$.
\end{lemma}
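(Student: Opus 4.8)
The plan is a pure verification by substitution into \eqref{eq:ode-S} and \eqref{eq:Bdef}; the only thing to organise carefully is the behaviour of each term under the state reflection $\theta\mapsto-\theta$, $k\mapsto-k$ combined with the parameter change $(\alpha,\varepsilon)\mapsto(-\alpha,-\varepsilon)$. First I would verify directly that $\widetilde S=(-x,y,-\theta,-k,-v)$ solves \eqref{eq:ode-S} with parameters $(-\alpha,-\varepsilon)$. The equations for $\widetilde x'$, $\widetilde y'$, $\widetilde\theta'$, $\widetilde k'$ hold immediately because $\sin$ is odd and $\cos$ is even. For the $\widetilde v$ equation the observation is that, under the full transformation, each of the three terms on the right-hand side of the $v$-equation picks up exactly one sign: $k^{3}$ changes sign because $k$ does; the term $\alpha\cos\theta$ changes sign because $\alpha$ does while $\cos\theta$ is even; and the term $\alpha\varepsilon(x\cos\theta+y\sin\theta)$ changes sign because $x\cos\theta+y\sin\theta$ does (both $x$ and $\sin\theta$ flip, while $y$, $\cos\theta$ and the coefficient $\alpha\varepsilon$ do not). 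Collecting these, the right-hand side at $(-\alpha,-\varepsilon)$ evaluated on $\widetilde S$ equals $-v'=\widetilde v'$. The initial data transform to $(0,0,0,-1,0)$, i.e.\ \eqref{eq:ode-S} with $k(0)$ replaced by $-k(0)$ --- geometrically just the fact that reflecting a planar curve across a line reverses the sign of its oriented curvature --- which is precisely the content of the clause ``up to the choice of initial curvature sign''; if one prefers to keep the normalisation $k(0)=1$, one composes with the further symmetry $\theta\mapsto-\theta$, $k\mapsto-k$, $v\mapsto-v$ of \eqref{eq:ode-S} at fixed parameters.

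Next I would substitute $\widetilde S$ together with $\widetilde\varepsilon=-\varepsilon$ into \eqref{eq:Bdef}. Using $\sin\widetilde\theta=-\sin\theta$, $\cos\widetilde\theta=\cos\theta$, $\widetilde x=-x$, $\widetilde y=y$ and $\widetilde v=-v$, a one-line computation gives $\widetilde b_1=\sin\theta-\varepsilon(y\cos\theta-x\sin\theta)=-b_1(s,\alpha,\varepsilon)$ and $\widetilde b_2=-v=-b_2(s,\alpha,\varepsilon)$, which is \eqref{eq:B-transforms}; the equivalence of the two vanishing conditions is then immediate. For the geometric reformulation I would start from the identity $k_{ss}+\tfrac12 k^3=-\sigma\,\langle\gamma-c_\varepsilon,N\rangle$ with $c_\varepsilon=(-1/\varepsilon,0)$ and $\sigma=-\varepsilon\alpha$, as in the derivation of \eqref{eq:homothety-geometric}. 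Under $R$ the left-hand side changes sign because the curvature reverses; on the right, $c_\varepsilon$ maps to $Rc_\varepsilon=(1/\varepsilon,0)=c_{-\varepsilon}$, and since $R$ is a linear isometry with $R=R^{-1}$ and the unit normal of the reflected curve satisfies $\widetilde N=-RN$, one gets $\langle R(\gamma-c),\widetilde N\rangle=-\langle\gamma-c,N\rangle$, so the right-hand side changes sign too. Hence the reflected curve solves the same equation with centre $Rc$ and unchanged coefficient $\sigma$, consistent with the ODE statement since $\sigma=-\varepsilon\alpha=-(-\varepsilon)(-\alpha)$.

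The only real obstacle is bookkeeping: there are two independent sources of sign changes --- the reflection of the state and the flip of the parameters --- which must be tracked separately, together with the initial-curvature normalisation. Beyond that the lemma has no analytic content.
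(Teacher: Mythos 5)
Your proof is correct and takes essentially the same approach as the paper: a direct substitution into \eqref{eq:ode-S} and \eqref{eq:Bdef} with careful sign bookkeeping for each term, followed by the centre/coefficient computation for the geometric reformulation. You provide somewhat more detail than the paper on the initial-data normalisation and on the sign tracking in the geometric form (where your observation $\widetilde N=-RN$ and the resulting $\langle R(\gamma-c),\widetilde N\rangle=-\langle\gamma-c,N\rangle$ are a clean way to close that step), but the underlying argument is the same.
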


\begin{proof}
A direct substitution shows that $(\widetilde x,\widetilde y,\widetilde\theta,\widetilde k,\widetilde v)$ satisfy
the first four equations of \eqref{eq:ode-S}. For the last equation, use
\[
\cos(\widetilde\theta)=\cos\theta,\qquad \sin(\widetilde\theta)=-\sin\theta,\qquad
\widetilde x\cos\widetilde\theta+\widetilde y\sin\widetilde\theta
=-(x\cos\theta+y\sin\theta),
\]
and $\widetilde k^3=-k^3$, $\widetilde v'=-v'$, to obtain precisely the $v$-equation with $(\alpha,\varepsilon)$ replaced by
$(-\alpha,-\varepsilon)$.

For the boundary map, compute
\[
-\sin\widetilde\theta+(-\varepsilon)\bigl(\widetilde y\cos\widetilde\theta-\widetilde x\sin\widetilde\theta\bigr)
=
-\sin(-\theta)-\varepsilon\bigl(y\cos\theta-(-x)\sin(-\theta)\bigr)
=
-\,\Bigl(-\sin\theta+\varepsilon(y\cos\theta-x\sin\theta)\Bigr),
\]
and $\widetilde v=-v$, giving \eqref{eq:B-transforms}. Zeros are therefore preserved.

Finally, for $\varepsilon\neq 0$, the centre is $c_\varepsilon=(-1/\varepsilon,0)$ and satisfies $R(c_\varepsilon)=c_{-\varepsilon}$.
Writing the forcing term as $-\sigma\langle \gamma-c_\varepsilon,N\rangle$ with $\sigma=-\varepsilon\alpha$ shows that reflection leaves
$\sigma$ unchanged and sends $c_\varepsilon$ to $c_{-\varepsilon}$, which is \eqref{eq:homothety-geometric-again}.
\end{proof}

\subsection{Abundance of seam angles}
\label{subsec:cdf:angles}

As in the jellyfish construction, the dihedral closure requires control of the angle between the two
{radial seam lines} through the homothety centre.  In the present CDF setting the base arc is a semicircle,
so this seam angle is close to $\pi$ (rather than close to $0$ as in the jellyfish case).  It is therefore
convenient to work with the {angle deficit from $\pi$}.

Along the IFT branch from Proposition~\ref{prop:cdf:fundamental-arc} define
\[
\Theta(\varepsilon):=\theta\bigl(L(\varepsilon);\alpha(\varepsilon),\varepsilon\bigr),
\qquad
\overline{\theta}(\varepsilon):=\pi-\Theta(\varepsilon).
\]
Thus $\overline{\theta}(\varepsilon)$ is the acute angle between the radial line through $c_\varepsilon$ and $\gamma(0)$ and the
radial line through $c_\varepsilon$ and $\gamma(L(\varepsilon))$.

\begin{lemma}[Angle deficit expansion]\label{lem:cdf:theta-bar-expansion}
Along the branch from Proposition~\ref{prop:cdf:fundamental-arc} one has
\begin{equation}\label{eq:cdf:theta-bar-expansion}
\overline{\theta}(\varepsilon)=\pi\,\varepsilon^2+O(\varepsilon^3)
\qquad(\varepsilon\to 0).
\end{equation}
In particular, $\overline{\theta}(\varepsilon)>0$ for all sufficiently small $\varepsilon\neq 0$.
Moreover,
\begin{equation}\label{eq:cdf:theta-bar-derivative}
\overline{\theta}'(\varepsilon)=2\pi\,\varepsilon+O(\varepsilon^2),
\qquad(\varepsilon\to 0),
\end{equation}
and hence $\overline{\theta}$ is strictly increasing on $(0,\varepsilon_\ast)$ and strictly decreasing on $(-\varepsilon_\ast,0)$
for some $\varepsilon_\ast>0$.
\end{lemma}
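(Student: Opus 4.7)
The plan is to Taylor-expand $\overline\theta$ at $\varepsilon=0$. Since the base is a semicircle with $k_0\equiv 1$, one has $\theta_0(s)=s$ and $L_0=\pi$, so $\Theta(0)=\pi$ and $\overline\theta(0)=0$. What the lemma really asserts is that $\overline\theta'(0)=0$ together with $\overline\theta''(0)=2\pi$, and my approach converts this into a calculation about the $y$-coordinate of the endpoint of the fundamental arc.

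Concretely, I would reinterpret the perpendicularity $b_1(L(\varepsilon),\alpha(\varepsilon),\varepsilon)=0$ (the CDF analogue of Remark~\ref{rem:b1-geometry}) as the statement that $\gamma(L)-c_\varepsilon$ points radially from $c_\varepsilon=(-1/\varepsilon,0)$, and on the branch selected by $\overline\theta(0)=0$ this yields
\[
\overline\theta(\varepsilon)
=-\arctan\!\Bigl(\tfrac{Y(\varepsilon)}{X(\varepsilon)+1/\varepsilon}\Bigr)
=-\varepsilon\,Y(\varepsilon)+O(\varepsilon^3),
\]
where $X(\varepsilon):=x(L(\varepsilon);\alpha(\varepsilon),\varepsilon)$ and $Y(\varepsilon):=y(L(\varepsilon);\alpha(\varepsilon),\varepsilon)$, using that $X(\varepsilon)\to -2$ is bounded. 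Because $Y(0)=\sin\pi=0$, we have $Y(\varepsilon)=Y'(0)\,\varepsilon+O(\varepsilon^2)$, and hence
$\overline\theta(\varepsilon)=-Y'(0)\,\varepsilon^2+O(\varepsilon^3)$. The lemma therefore reduces to the single identity $Y'(0)=-\pi$.

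To compute $Y'(0)$ I would apply the chain rule,
\[
Y'(0)=\cos(\theta_0(L_0))\,L'(0)+\partial_\alpha y(L_0;0,0)\,\alpha'(0)+\partial_\varepsilon y(L_0;0,0)
=-L'(0)+\partial_\alpha y(L_0;0,0)\,\alpha'(0)+\partial_\varepsilon y(L_0;0,0),
\]
where $\alpha'(0)$ comes from the CDF analogue of Lemma~\ref{lem:alpha-expansion-firstorder} and the remaining variations are solved from the first-variation system along the semicircle (which is fully explicit because the base has constant curvature). Once $Y'(0)=-\pi$ is confirmed, the expansion $\overline\theta(\varepsilon)=\pi\varepsilon^2+O(\varepsilon^3)$ follows; smoothness of $\overline\theta$ then justifies differentiating the expansion to obtain $\overline\theta'(\varepsilon)=2\pi\varepsilon+O(\varepsilon^2)$, and strict monotonicity on $(0,\varepsilon_\ast)$ and $(-\varepsilon_\ast,0)$ is immediate from the sign of the leading term. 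The main obstacle is the explicit verification $Y'(0)=-\pi$: although each first-variation ODE along the semicircle is solvable in closed form, keeping track of the cancellations between the $L'$-, $\alpha'$- and $\varepsilon$-contributions is delicate. A useful shortcut is the CDF analogue of the reflection symmetry from Lemma~\ref{lem:eps-reflection-symmetry}, which should force $\overline\theta$ to be an even function of $\varepsilon$; this would make $\overline\theta'(0)=0$ automatic and reduce the task to identifying $\overline\theta''(0)=2\pi$.
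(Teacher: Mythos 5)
Your approach is correct, and it is a genuinely different (and substantially shorter) route than the paper's. The paper establishes this lemma by invoking Lemma~\ref{lem:cdf:Theta-expansion}, whose proof is a lengthy second-order Taylor expansion of the implicit branch: it computes all second partials of $F$, derives $\alpha''(0)=16$ and $L''(0)=7\pi$ from a linear system, evaluates $\theta_{\alpha\alpha}(0,0,\pi)$, and then assembles $\Theta''(0)$ from the chain rule. You instead use the orthogonality constraint $B=0$ to write, exactly along the branch,
\[
\tan\Theta(\varepsilon)=\frac{\varepsilon\,Y(\varepsilon)}{1+\varepsilon\,X(\varepsilon)},
\qquad\text{hence}\qquad
\overline\theta(\varepsilon)=-\arctan\!\Bigl(\frac{\varepsilon Y(\varepsilon)}{1+\varepsilon X(\varepsilon)}\Bigr),
\]
where $X,Y$ are the endpoint coordinates. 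Because the arctangent argument already carries an explicit factor of $\varepsilon$ and $Y(0)=\sin\pi=0$, the quadratic order in $\varepsilon$ comes for free, and the coefficient $\pi$ reduces to the single first-order identity $Y'(0)=-\pi$. That identity follows from data the paper already computes at first order:
\[
Y'(0)=\cos(\pi)\,L'(0)+y_1(\pi)\,\alpha'(0)+0
=(-1)(2\pi)+\Bigl(-\tfrac{\pi}{2}\Bigr)(-2)=-\pi,
\]
using $L'(0)=2\pi$, $\alpha'(0)=-2$ from \eqref{eq:cdf:first-order} and $y_1(\pi)=-\pi/2$ from \eqref{eq:cdf:endpointFirstVar}. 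Thus your argument bypasses the second-order computations of Lemma~\ref{lem:cdf:Faa} and the $\alpha''$, $L''$, $\theta_{\alpha\alpha}$ machinery entirely; the extra order in $\varepsilon$ is supplied geometrically by the orthogonality identity rather than analytically by a second Taylor coefficient. This is a real simplification.

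Two small corrections. First, $X(0)=x(\pi;0,0)=\cos\pi=-1$, not $-2$; this is harmless because only boundedness of $X$ is used, but the numerical value should be fixed. Second, the reflection-symmetry shortcut you mention at the end is not automatic in the CDF setting: the symmetry in Lemma~\ref{lem:eps-reflection-symmetry} relies on the jellyfish initial data $(0,0,0,1,0)$, whereas the CDF system \eqref{eq:cdf:ode-S} starts at $(1,0,0,1,0)$, and the na\"ive reflection $(x,y,\theta,k,v)\mapsto(-x,y,-\theta,-k,-v)$ does not preserve that initial condition. The evenness of $\overline\theta$ therefore cannot simply be cited as a corollary of the jellyfish lemma; fortunately your explicit computation does not need it.
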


\begin{proof}
This is an immediate reformulation of Lemma~\ref{lem:cdf:Theta-expansion}, which asserts
$\Theta(\varepsilon)=\pi-\pi\varepsilon^2+O(\varepsilon^3)$, hence \eqref{eq:cdf:theta-bar-expansion}.
Differentiating the expansion gives \eqref{eq:cdf:theta-bar-derivative}, and the stated one-sided monotonicity follows.
\end{proof}

\begin{corollary}[One-sided reparametrisation by the deficit angle]\label{cor:cdf:eps-as-fn-of-theta}
There exist $\theta_\ast>0$ and $\varepsilon_\ast>0$ and unique $C^\infty$ functions
\[
\varepsilon=\varepsilon_\pm(\overline{\theta})
\qquad\text{for }\ \overline{\theta}\in(0,\theta_\ast),
\]
with $\varepsilon_+(\overline{\theta})\in(0,\varepsilon_\ast)$ and $\varepsilon_-(\overline{\theta})\in(-\varepsilon_\ast,0)$, such that
\[
\pi-\theta\bigl(L(\varepsilon_\pm(\overline{\theta}));\alpha(\varepsilon_\pm(\overline{\theta})),\varepsilon_\pm(\overline{\theta})\bigr)
=\overline{\theta}.
\]
Moreover,
\begin{equation}\label{eq:cdf:eps-asymp}
\varepsilon_\pm(\overline{\theta})
=
\pm\sqrt{\frac{\overline{\theta}}{\pi}}
+O(\overline{\theta})
\qquad(\overline{\theta}\to 0^+).
\end{equation}
In particular, for every integer $q$ sufficiently large there exists $\varepsilon_q\in(0,\varepsilon_\ast)$ such that
\begin{equation}\label{eq:cdf:Theta_qminus1_over_q}
\Theta(\varepsilon_q)=\pi-\frac{\pi}{q}=\frac{q-1}{q}\,\pi.
\end{equation}
\end{corollary}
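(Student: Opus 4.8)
The plan is to invert the angle-deficit map $\varepsilon\mapsto\overline{\theta}(\varepsilon)$ separately on each side of $\varepsilon=0$, exploiting the one-sided strict monotonicity supplied by Lemma~\ref{lem:cdf:theta-bar-expansion}, and then to read off the asymptotics \eqref{eq:cdf:eps-asymp} directly from the quadratic leading term $\overline{\theta}(\varepsilon)=\pi\varepsilon^2+O(\varepsilon^3)$ in \eqref{eq:cdf:theta-bar-expansion}. The final "in particular" clause is then just a matter of plugging $\overline{\theta}=\pi/q$ into the positive branch.

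First I would shrink $\varepsilon_\ast>0$ if necessary so that, along the branch of Proposition~\ref{prop:cdf:fundamental-arc}, the function $\overline{\theta}$ is $C^\infty$ on $(-\varepsilon_\ast,\varepsilon_\ast)$ with $\overline{\theta}'(\varepsilon)=\varepsilon\bigl(2\pi+O(\varepsilon)\bigr)$ by \eqref{eq:cdf:theta-bar-derivative}, hence $\overline{\theta}'>0$ on $(0,\varepsilon_\ast)$ and $\overline{\theta}'<0$ on $(-\varepsilon_\ast,0)$; these are exactly the hypotheses recorded in Lemma~\ref{lem:cdf:theta-bar-expansion}. Then $\overline{\theta}$ restricted to $(0,\varepsilon_\ast)$ is a strictly increasing continuous bijection onto an interval of the form $(0,\theta_\ast^{+})$ (the left endpoint being $0$ since $\overline{\theta}(0)=0$), and since $\overline{\theta}'$ is nonzero at every interior point the inverse function theorem applied pointwise shows the inverse $\varepsilon_+\colon(0,\theta_\ast^{+})\to(0,\varepsilon_\ast)$ is $C^\infty$. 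Running the identical argument on $(-\varepsilon_\ast,0)$ produces a $C^\infty$ inverse $\varepsilon_-\colon(0,\theta_\ast^{-})\to(-\varepsilon_\ast,0)$. I would then set $\theta_\ast:=\min\{\theta_\ast^{+},\theta_\ast^{-}\}$ and restrict, obtaining on $(0,\theta_\ast)$ the two functions $\varepsilon_\pm$ with $\pi-\Theta(\varepsilon_\pm(\overline{\theta}))=\overline{\theta}(\varepsilon_\pm(\overline{\theta}))=\overline{\theta}$ by construction; uniqueness among functions valued in $(0,\varepsilon_\ast)$, respectively $(-\varepsilon_\ast,0)$, is immediate from the strict monotonicity of $\overline{\theta}$ on each of those intervals.

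For the asymptotics I would observe that $\varepsilon_\pm(\overline{\theta})\to0$ as $\overline{\theta}\to0^{+}$ (continuity of the inverse at the left endpoint), and substitute $\varepsilon=\varepsilon_\pm(\overline{\theta})$ into $\overline{\theta}(\varepsilon)=\pi\varepsilon^{2}\bigl(1+O(\varepsilon)\bigr)$ to get $\overline{\theta}=\pi\,\varepsilon_\pm(\overline{\theta})^{2}\bigl(1+O(\varepsilon_\pm(\overline{\theta}))\bigr)$, hence $\varepsilon_\pm(\overline{\theta})^{2}=(\overline{\theta}/\pi)\bigl(1+O(\sqrt{\overline{\theta}})\bigr)$, and, taking the square root with the sign fixed by the branch, $\varepsilon_\pm(\overline{\theta})=\pm\sqrt{\overline{\theta}/\pi}+O(\overline{\theta})$, which is \eqref{eq:cdf:eps-asymp}. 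Finally, for every integer $q>\pi/\theta_\ast$ one has $\pi/q\in(0,\theta_\ast)$; setting $\varepsilon_q:=\varepsilon_+(\pi/q)\in(0,\varepsilon_\ast)$ yields $\pi-\Theta(\varepsilon_q)=\overline{\theta}(\varepsilon_q)=\pi/q$, i.e.\ $\Theta(\varepsilon_q)=\pi-\pi/q=\tfrac{q-1}{q}\pi$, which is \eqref{eq:cdf:Theta_qminus1_over_q}.

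The only genuine point of care I anticipate is the failure of smooth invertibility across $\varepsilon=0$: because $\overline{\theta}'(0)=0$ by \eqref{eq:cdf:theta-bar-derivative}, $\overline{\theta}$ is not a local diffeomorphism at the base point, so its inverse is of square-root type at $\overline{\theta}=0$. This is precisely why the statement is phrased with two one-sided branches $\varepsilon_\pm$ defined only on the open interval $(0,\theta_\ast)$, and why the leading behaviour $\pm\sqrt{\overline{\theta}/\pi}$ is non-smooth at the endpoint $\overline{\theta}=0$. Everything else is routine inverse function theorem bookkeeping already prepared by Lemma~\ref{lem:cdf:theta-bar-expansion}, so I do not expect any serious difficulty.
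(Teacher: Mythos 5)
Your argument is correct and follows the same route as the paper: one-sided strict monotonicity of $\overline{\theta}$ from Lemma~\ref{lem:cdf:theta-bar-expansion} yields bijective restrictions on $(0,\varepsilon_\ast)$ and $(-\varepsilon_\ast,0)$, the Inverse Function Theorem applied at $\varepsilon\neq 0$ gives $C^\infty$ inverses $\varepsilon_\pm$, and solving $\overline{\theta}=\pi\varepsilon^2(1+O(\varepsilon))$ for $\varepsilon$ produces \eqref{eq:cdf:eps-asymp}, after which \eqref{eq:cdf:Theta_qminus1_over_q} is immediate. You merely flesh out two details the paper's proof leaves implicit—truncating the two one-sided images to a common $(0,\theta_\ast)$ and the square-root error bookkeeping—and both are handled correctly.
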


\begin{proof}
By Lemma~\ref{lem:cdf:theta-bar-expansion}, the map $\varepsilon\mapsto\overline{\theta}(\varepsilon)$ is strictly monotone on each side of $0$.
Hence the restrictions $\overline{\theta}:(0,\varepsilon_\ast)\to(0,\theta_\ast)$ and $\overline{\theta}:(-\varepsilon_\ast,0)\to(0,\theta_\ast)$
are bijections for $\varepsilon_\ast$ small, and each has a $C^\infty$ inverse by the Inverse Function Theorem
(applied at any $\varepsilon\neq 0$, using \eqref{eq:cdf:theta-bar-derivative}).
This yields the inverses $\varepsilon_\pm(\overline{\theta})$.

For the asymptotic, rewrite \eqref{eq:cdf:theta-bar-expansion} as
$\overline{\theta}=\pi\varepsilon^2(1+O(\varepsilon))$ and solve for $\varepsilon$ on each side, giving \eqref{eq:cdf:eps-asymp}.
Finally, take $\overline{\theta}=\pi/q$ and set $\varepsilon_q:=\varepsilon_+(\pi/q)$ to obtain \eqref{eq:cdf:Theta_qminus1_over_q}.
\end{proof}

\subsection{Dihedral gluing and proof of Theorem~\ref{thm:intro:cdf-epi}}
\label{subsec:cdf:dihedral-gluing}

We now explain how fundamental arcs yield closed epicyclic shrinkers for curve diffusion flow.
The construction is formally identical to the jellyfish case (Subsection~\ref{subsec:dihedral-gluing}),
so we only record the genuinely CDF-specific input: the seam condition is expressed via the {radial seam functional} $B$
and the degenerate endpoint condition $v(L)=0$ has been replaced by $\Phi=0$ (Definition~\ref{def:cdf:fundamental-arc}).

\subsubsection{Reflection principle and dihedral concatenation}

For a line $\ell$ in $\R^2$ denote by $\sigma_\ell$ the Euclidean reflection across $\ell$.

\begin{lemma}[Reflection principle for the CDF homothetic ODE]\label{lem:reflection-principle-cdf}
Let $\gamma:[0,L]\to\R^2$ be an arc-length parametrised solution of the homothetic shrinker equation
\eqref{eq:intro:homot} on $(0,L)$. Assume that $\gamma(L)\in\ell$, that $\gamma$ meets $\ell$ orthogonally at $s=L$,
and that $k_s(L)=0$. Then the reflected arc
\[
\widetilde\gamma(s):=\sigma_\ell\bigl(\gamma(2L-s)\bigr),
\qquad s\in[L,2L],
\]
glues to $\gamma$ to give a $C^\infty$ solution on $[0,2L]$. The same holds at $s=0$.
\end{lemma}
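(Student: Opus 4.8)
The plan is to verify that the concatenated curve solves the homothetic ODE on all of $[0,2L]$ by checking that the reflected piece is itself a solution and that the two pieces match to infinite order at the seam $s=L$. Since reflection $\sigma_\ell$ is an isometry of $\R^2$ and the homothetic shrinker equation \eqref{eq:intro:homot} is invariant under Euclidean isometries (curvature, arc-length, the normal, and the support function $\langle\gamma-c,N\rangle$ all transform equivariantly, with the homothety centre $c$ mapped to $\sigma_\ell(c)$), the reparametrised-and-reflected arc $\widetilde\gamma(s)=\sigma_\ell(\gamma(2L-s))$ on $[L,2L]$ is again an arc-length solution of the same equation; the orientation reversal $s\mapsto 2L-s$ is harmless because the equation involves only even-order arc-length derivatives of $k$ (namely $k$ and $k_{ss}$), so it is preserved under $s\maps-s$. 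It remains to check $C^\infty$ matching at $s=L$. First I would note that the hypotheses $\gamma(L)\in\ell$ and that $\gamma$ meets $\ell$ orthogonally pin down the zeroth and first jets: $\widetilde\gamma(L)=\sigma_\ell(\gamma(L))=\gamma(L)$, and $\widetilde\gamma\,'(L)=-\,d\sigma_\ell(\gamma'(L))=\gamma'(L)$ since $\gamma'(L)$ is the unit tangent, which is orthogonal to $\ell$ hence negated by $d\sigma_\ell$ — the extra sign from the chain rule on $2L-s$ restores it. Thus $\gamma$ and $\widetilde\gamma$ agree as $C^1$ curves at the seam.

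For the higher jets I would argue by uniqueness for the ODE system \eqref{eq:ode-S} (equivalently the fifth-order system for $(\gamma,\theta,k,v)$). The matching of the full state $S(L)$ of $\gamma$ with the state of $\widetilde\gamma$ at $s=L$ reduces to four scalar identities: equality of position (two components), of the angle $\theta$ modulo the reflection, of $k$, and of $v=k_s$. Position and tangent are handled above. The decomposition of $\gamma'(L)$ relative to $\ell$ gives $\theta$-matching. For curvature: under a reflection combined with orientation reversal, $k\mapsto k$ (both the reflection and the orientation flip each negate $k$, so the composite fixes it), hence $\widetilde k(L)=k(L)$ automatically with no hypothesis needed. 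Finally $\widetilde v(L)=\widetilde k_s(L)$: orientation reversal sends $k_s\mapsto -k_s$ and the reflection-induced sign on $k$ contributes another factor, so $\widetilde k_s(s)=k_s(2L-s)$ — wait, I must be careful here; the net effect is that $\widetilde k_s(L)=-k_s(L)$ unless $k_s(L)=0$, which is exactly the hypothesis imposed. So the assumption $k_s(L)=0$ is precisely what forces $v$-matching, and then the full five-dimensional state of the two arcs coincides at $s=L$.

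With all five components of the state matching at $s=L$, uniqueness of solutions to the (analytic) ODE system \eqref{eq:ode-S}, backed by the smooth/analytic dependence recorded in Lemma~\ref{lem:smooth-dependence}, forces $\widetilde\gamma$ to be the real-analytic continuation of $\gamma$ past $s=L$; in particular the concatenation is $C^\infty$ (indeed real-analytic) on a neighbourhood of $L$, and hence on all of $[0,2L]$. The identical argument at $s=0$ — using that the fundamental arc's initial data $(0,0,0,1,0)$ place $\gamma(0)$ on the $y$-axis, met orthogonally, with $v(0)=0$ — gives smooth gluing of the backward reflection there. I expect the only real subtlety is bookkeeping the two competing sign flips (the isometry $d\sigma_\ell$ acting on the tangent/normal frame, and the reparametrisation $s\mapsto 2L-s$) consistently across the components $\theta,k,v$; once that bookkeeping is pinned down, the statement follows from ODE uniqueness with no further estimates. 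It is worth remarking that this is the ODE incarnation of the classical Schwarz-type reflection principle for the (elliptic, here ODE) Euler–Lagrange system, and that the three conditions — endpoint on $\ell$, orthogonality, $k_s=0$ — are exactly the defining conditions $B(L,\alpha,\varepsilon)=(0,0)$ of a fundamental arc (cf. Remark~\ref{rem:b1-geometry}), which is why the gluing is available precisely along the IFT branch constructed above.
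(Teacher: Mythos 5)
Your proposal is correct and follows essentially the same route as the paper's: invariance of the homothetic equation under the reflection, matching of the full jet at the seam (position and tangent from orthogonality, curvature from the cancelling sign flips, $k_s$ from the hypothesis $k_s(L)=0$), and then propagation of the matching to all orders via ODE uniqueness rather than the paper's "differentiate the Frenet system" phrasing. One small caution: your claim that the orientation reversal is "harmless because the equation involves only even-order derivatives of $k$" is misleading as stated (under $s\mapsto 2L-s$ alone the curvature flips sign, as does $\nu$, so it is the \emph{combined} reversal-plus-reflection that fixes $k$, which you do handle correctly a few lines later), and both your argument and the paper's implicitly use that the line $\ell$ passes through the homothety centre $c_\varepsilon$ so that $\sigma_\ell$ maps the equation to itself rather than to the equation with a moved centre.
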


\begin{proof}
The proof is identical to Lemma~\ref{lem:reflection-principle-jellyfish}: orthogonality gives matching of the tangent,
$k_s(L)=0$ gives even continuation of curvature across the seam, and higher regularity follows by differentiating the Frenet system.
Invariance of \eqref{eq:intro:homot} under Euclidean reflections yields that the reflected arc satisfies the same equation.
\end{proof}

Fix $\overline{\theta}\in(0,\theta_\ast)$ and choose $\varepsilon=\varepsilon_+(\overline{\theta})$ from
Corollary~\ref{cor:cdf:eps-as-fn-of-theta}, together with the corresponding fundamental arc
$\gamma:[0,L]\to\R^2$ given by Proposition~\ref{prop:cdf:fundamental-arc}.
Set $c_\varepsilon:=(-1/\varepsilon,0)$ and let $\ell_0$ (resp.\ $\ell_1$) be the line through $c_\varepsilon$ and $\gamma(0)$
(resp.\ through $c_\varepsilon$ and $\gamma(L)$).
By \eqref{eq:cdf:B-is-RT} and $B(\alpha(\varepsilon),\varepsilon,L(\varepsilon))=0$, the endpoint condition $B=0$
is precisely the radial orthogonality condition $R(L)\cdot T(L)=0$, i.e.\ $\gamma$ meets $\ell_1$ orthogonally at $s=L$.
Moreover, since $\Phi=0$ and $\alpha\neq 0$, we have $v(L)=k_s(L)=0$ by \eqref{eq:cdf:vPhi}. The same conditions hold at $s=0$
by the normalisation $v(0)=0$ and the fact that $R(0)$ is radial.

Define the doubled arc $\Gamma:[0,2L]\to\R^2$ by reflecting $\gamma$ across $\ell_1$ at $s=L$:
\[
\Gamma(s):=
\begin{cases}
\gamma(s), & s\in[0,L],\\
\sigma_{\ell_1}\!\big(\gamma(2L-s)\big), & s\in[L,2L].
\end{cases}
\]
By Lemma~\ref{lem:reflection-principle-cdf}, $\Gamma$ is $C^\infty$ and satisfies \eqref{eq:intro:homot} on $[0,2L]$.

Let $\rho:=\sigma_{\ell_1}\circ\sigma_{\ell_0}$. Since $\ell_0$ and $\ell_1$ are distinct lines meeting at $c_\varepsilon$,
$\rho$ is a rotation about $c_\varepsilon$ through angle $2\Theta$, where $\Theta=\theta(L)$ is the seam angle between $\ell_0$ and $\ell_1$.
In particular, if $\Theta=\frac{p}{q}\pi$ with $\gcd(p,q)=1$, then $\rho^q=\mathrm{Id}$.

\begin{proposition}[Dihedral closure from a fundamental arc]\label{prop:cdf:dihedral-closure}
Assume the seam angle satisfies
\[
\frac{\Theta}{\pi}\in\Q,
\qquad\text{equivalently}\qquad
\Theta=\frac{p}{q}\,\pi \ \text{ for coprime integers }p,q\ge 1.
\]
Then the concatenation
\[
\gamma_{p/q}:=\Gamma_0\#\Gamma_1\#\cdots\#\Gamma_{q-1},
\qquad
\Gamma_j:=\rho^j\circ\Gamma,
\]
defines a $C^\infty$ closed immersed curve $\gamma_{p/q}:\S\to\R^2$ satisfying \eqref{eq:intro:homot}.
Moreover, $\gamma_{p/q}$ is invariant under the dihedral group generated by $\rho$ and $\sigma_{\ell_0}$, hence has dihedral symmetry
of order $q$.
\end{proposition}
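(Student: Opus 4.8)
The plan is to verify the three claims in turn: (i) the concatenation is a well-defined closed curve; (ii) it is $C^\infty$ across all seams; (iii) it satisfies \eqref{eq:intro:homot} and has the stated dihedral symmetry. The starting point is the doubled arc $\Gamma:[0,2L]\to\R^2$ already produced above, which is $C^\infty$ and solves \eqref{eq:intro:homot}, together with the observations that $\Gamma(0)=\gamma(0)\in\ell_0$ and $\Gamma(2L)=\gamma(0)$ lies on $\sigma_{\ell_1}(\ell_0)=\ell_0'$, where $\ell_0'$ is the image of $\ell_0$ under reflection in $\ell_1$; and that $\Gamma$ meets $\ell_0$ orthogonally at $s=0$ and meets $\ell_0'$ orthogonally at $s=2L$, with $k_s=0$ at both endpoints (the $s=0$ data by the normalisation $v(0)=0$ and radiality of $R(0)$; the $s=2L$ data by applying the reflection to the $s=0$ data).

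First I would set up the indexing. Define $\Gamma_j:=\rho^j\circ\Gamma$ for $j=0,\dots,q-1$ and show that $\Gamma_j$ runs from $\rho^j\gamma(0)$ to $\rho^j\Gamma(2L)$. The key bookkeeping fact is that $\rho=\sigma_{\ell_1}\circ\sigma_{\ell_0}$ is rotation about $c_\varepsilon$ by angle $2\Theta$, so $\rho(\ell_0)=\sigma_{\ell_1}(\ell_0)=\ell_0'$; hence $\rho^j\Gamma(2L)=\rho^j(\text{point on }\ell_0')=\rho^{j+1}(\text{point on }\ell_0)=\Gamma_{j+1}(0)$, using that $\rho$ fixes $c_\varepsilon$ and rotates $\ell_0$ to $\ell_0'=\rho\ell_0$. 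Thus consecutive pieces match at endpoints as \emph{points}, and after $q$ steps $\rho^q=\mathrm{Id}$ (since $\Theta=\tfrac pq\pi$ gives $\rho^q$ a rotation by $2p\pi$), so $\Gamma_{q-1}$ ends exactly at $\Gamma_0(0)=\gamma(0)$: the concatenation closes up as a loop $\gamma_{p/q}:\S\to\R^2$, arc-length parametrised with total length $2qL$.

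Next, the $C^\infty$-matching at the $q$ joining seams. At the seam between $\Gamma_j$ and $\Gamma_{j+1}$ the curve passes through the line $\rho^{j}\ell_0'=\rho^{j+1}\ell_0$, meeting it orthogonally from the $\Gamma_j$ side (push forward the $s=2L$ orthogonality of $\Gamma$ by the isometry $\rho^j$) and orthogonally from the $\Gamma_{j+1}$ side (push forward the $s=0$ orthogonality of $\Gamma$ by $\rho^{j+1}$), with $k_s=0$ on both sides; Lemma~\ref{lem:reflection-principle-cdf} (or rather its proof, which is local) then gives the $C^\infty$ gluing at that seam — indeed the two half-arcs are mirror images in that seam line, so the situation is exactly the reflection-principle scenario. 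Since \eqref{eq:intro:homot} is invariant under Euclidean isometries, each $\Gamma_j=\rho^j\circ\Gamma$ solves it, and hence so does $\gamma_{p/q}$ on all of $\S$. Finally, for the symmetry statement: $\rho$ permutes the pieces cyclically ($\rho\Gamma_j=\Gamma_{j+1 \bmod q}$), so $\rho$ is a symmetry of order $q$; and $\sigma_{\ell_0}$ fixes $\Gamma_0$ setwise (it is the reflection realising the doubling of $\gamma$ at $s=0$, after relabelling — more precisely $\sigma_{\ell_0}\circ\Gamma(s)=\Gamma(-s)$ extended periodically) and conjugates $\rho$ to $\rho^{-1}$, so together they generate a copy of the dihedral group $D_q$ acting on $\gamma_{p/q}$.

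The main obstacle I anticipate is purely organisational rather than deep: carefully tracking which seam line each $\Gamma_j$ meets at its two endpoints, and confirming that the orthogonality data and the $k_s=0$ data transported by the isometries $\rho^j$ line up so that \emph{every} seam — not just the original one at $s=L$ — is a genuine instance of the reflection principle. One subtlety worth stating explicitly is that the pieces $\Gamma_j$ should be taken with \emph{alternating orientation of the doubling}, or equivalently one checks directly that $\Gamma_{j+1}$ near its start is the $\sigma_{\ell}$-reflection of $\Gamma_j$ near its end across the common seam line $\ell=\rho^{j+1}\ell_0$; this is what makes Lemma~\ref{lem:reflection-principle-cdf} applicable and is the one place a sign/orientation error could creep in. Everything else (closure, arc-length parametrisation, preservation of \eqref{eq:intro:homot}, the $D_q$-action) then follows formally.
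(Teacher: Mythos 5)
Your proposal is correct and follows the same strategy as the paper's own (very terse) proof, which simply cites the identical argument from the jellyfish case, Proposition~\ref{prop:dihedral-closure}: isometry invariance of \eqref{eq:intro:homot}, the reflection principle at each transported seam, closure from $\rho^q=\mathrm{Id}$, and the $D_q$-action generated by $\rho$ and $\sigma_{\ell_0}$; you usefully make explicit the mirror-image check $\sigma_{\rho^{j+1}\ell_0}(\Gamma_j(2L-t))=\Gamma_{j+1}(t)$ that the paper leaves implicit. One slip: you wrote ``$\Gamma(2L)=\gamma(0)$'' where you mean $\Gamma(2L)=\sigma_{\ell_1}(\gamma(0))$, and ``$\sigma_{\ell_0}$ fixes $\Gamma_0$ setwise'' is imprecise (it sends $\Gamma_0$ to the last piece), though your ``more precisely $\sigma_{\ell_0}\circ\gamma_{p/q}(s)=\gamma_{p/q}(-s)$'' clause states the correct fact.
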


\begin{proof}
This is identical to the proof of Proposition~\ref{prop:dihedral-closure} in the jellyfish section:
each $\Gamma_j$ is a rigid motion image of $\Gamma$ and hence satisfies \eqref{eq:intro:homot};
seam matching follows from the reflection principle and the fact that $\rho$ maps $\ell_0$ to $\ell_1$;
and $\rho^q=\mathrm{Id}$ yields closure.
\end{proof}

\begin{proof}[Proof of Theorem~\ref{thm:intro:cdf-epi}]
Choose $q_0$ so large that $\pi/q_0<\theta_\ast$ (from Corollary~\ref{cor:cdf:eps-as-fn-of-theta}).
For each integer $q\ge q_0$, set $\overline{\theta}=\pi/q$ and obtain $\varepsilon_q$ with
$\Theta(\varepsilon_q)=\frac{q-1}{q}\pi$ by \eqref{eq:cdf:Theta_qminus1_over_q}.
Then Proposition~\ref{prop:cdf:dihedral-closure} produces a smooth closed curve $\gamma_{(q-1)/q}$ with dihedral symmetry of order $q$
satisfying the homothetic shrinker equation \eqref{eq:intro:homot}, hence a self-similarly shrinking solution of curve diffusion flow.
Geometric distinctness is proved in Subsection~\ref{subsec:cdf:distinctness} below.
\end{proof}

\subsection{Maximality of the symmetry order and geometric distinctness}
\label{subsec:cdf:distinctness}

As in the jellyfish case, geometric distinctness follows by showing that there are no additional ``hidden'' reflection axes.
For curve diffusion flow, the decisive obstruction is the absence of {interior radial seams} on the fundamental arc.

\begin{lemma}[No interior radial seams on a fundamental arc]\label{lem:cdf:no-interior-seams}
Let $|\varepsilon|$ be sufficiently small and let $\gamma:[0,L(\varepsilon)]\to\R^2$ be the fundamental arc from
Proposition~\ref{prop:cdf:fundamental-arc}. Then the radial seam function
\[
s\longmapsto
-\sin\theta(s)+\varepsilon\bigl(y(s)\cos\theta(s)-x(s)\sin\theta(s)\bigr)
\]
vanishes only at $s=0$ and $s=L(\varepsilon)$. Equivalently, $R(s)\cdot T(s)\neq 0$ for all $s\in(0,L(\varepsilon))$.
\end{lemma}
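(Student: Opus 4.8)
The plan is a perturbation argument off the semicircular base arc at $\varepsilon=0$, splitting the arc-length interval $[0,L(\varepsilon)]$ into one bulk region, where the seam function is bounded away from $0$, and two short endpoint regions, where it is controlled by the sign of its $s$-derivative at the seam. Write
\[
f_\varepsilon(s):=-\sin\theta(s)+\varepsilon\bigl(y(s)\cos\theta(s)-x(s)\sin\theta(s)\bigr)
\]
for the radial seam function evaluated along the fundamental arc $S(\,\cdot\,;\alpha(\varepsilon),\varepsilon)$ of Proposition~\ref{prop:cdf:fundamental-arc}. For $\varepsilon\neq 0$ one has $f_\varepsilon(s)=\varepsilon\,\langle\gamma(s)-c_\varepsilon,\,T(s)\rangle$ exactly as in Remark~\ref{rem:b1-geometry}, so the two formulations in the statement are equivalent and it suffices to prove $f_\varepsilon$ has no zero in $(0,L(\varepsilon))$. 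Two zeros are forced: $f_\varepsilon(0)=0$ by the normalisation $\theta(0)=x(0)=y(0)=0$, and $f_\varepsilon(L(\varepsilon))=0$ because the endpoint condition $B(L(\varepsilon),\alpha(\varepsilon),\varepsilon)=0$ contains the radial orthogonality $\langle\gamma(L(\varepsilon))-c_\varepsilon,T(L(\varepsilon))\rangle=0$ via \eqref{eq:cdf:B-is-RT}. The remaining task is to exclude any further zero.

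At $\varepsilon=0$ the picture is explicit: $f_0(s)=-\sin\theta_0(s)$, where $\theta_0$ is the turning angle of the base semicircle, which increases strictly from $0$ to $\pi$ on $[0,L_0]$ (with $k_0\equiv 1$ and $L_0=\pi$). Hence $f_0<0$ strictly on $(0,L_0)$, and the two endpoint zeros are simple. Indeed, differentiating $f_\varepsilon$ and using $x'=-\sin\theta$, $y'=\cos\theta$, $\theta'=k$ (compare \eqref{eq:b1s}) gives
\[
f_\varepsilon'(s)=-k(s)\cos\theta(s)+\varepsilon\bigl(1-k(s)(x(s)\cos\theta(s)+y(s)\sin\theta(s))\bigr),
\]
so $f_0'(0)=-k_0(0)\cos\theta_0(0)=-1<0$ and $f_0'(L_0)=-k_0(L_0)\cos\theta_0(L_0)=+1>0$.

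Now the perturbation. By smooth dependence on parameters (the CDF analogue of Lemma~\ref{lem:smooth-dependence}), the map $(s,\varepsilon)\mapsto\bigl(f_\varepsilon(s),f_\varepsilon'(s)\bigr)$ is continuous on $[0,L_0+\eta]\times(-\delta,\delta)$, and $L(\varepsilon)\to L_0$, $\alpha(\varepsilon)\to 0$ as $\varepsilon\to 0$. For the endpoint regions, since $f_0'(0)=-1$ and $f_\varepsilon'(L(\varepsilon))\to f_0'(L_0)=+1$, continuity supplies $\delta_1,\delta_2>0$ and $\varepsilon_1>0$ such that for $|\varepsilon|<\varepsilon_1$ one has $f_\varepsilon'\le-\tfrac12$ on $[0,\delta_1]$ and $f_\varepsilon'\ge\tfrac12$ on $[L(\varepsilon)-\delta_2,L(\varepsilon)]$; integrating from the two forced zeros gives $f_\varepsilon(s)\le-\tfrac12 s<0$ on $(0,\delta_1]$ and $f_\varepsilon(s)\le-\tfrac12(L(\varepsilon)-s)<0$ on $[L(\varepsilon)-\delta_2,L(\varepsilon))$. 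For the bulk, set $\delta_0:=\min(\delta_1,\delta_2)$: on the fixed compact interval $[\delta_0,L_0-\tfrac12\delta_0]$ we have $\theta_0\in(0,\pi)$ and hence $f_0\le-c_0<0$ for some $c_0>0$, so by uniform continuity $f_\varepsilon<0$ there for $|\varepsilon|$ small, and since $L(\varepsilon)\to L_0$ the interval $[\delta_0,L(\varepsilon)-\delta_0]$ lies inside $[\delta_0,L_0-\tfrac12\delta_0]$ once $|\varepsilon|$ is small. Finally $(0,\delta_1]\cup[\delta_0,L(\varepsilon)-\delta_0]\cup[L(\varepsilon)-\delta_2,L(\varepsilon))\supseteq(0,L(\varepsilon))$ and $f_\varepsilon<0$ on each piece, which is the assertion.

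The only genuine subtlety is the bookkeeping with the $\varepsilon$-dependent interval $[0,L(\varepsilon)]$: the endpoint half-widths $\delta_1,\delta_2$ must be fixed before the bulk cutoff $\delta_0$, the parameter $\varepsilon$ is taken small only at the very end, and one needs $f_0$ to stay strictly negative slightly past $s=L_0-\delta_0$ while keeping $\theta_0\in(0,\pi)$, in order to absorb the case $L(\varepsilon)>L_0$. An alternative that sidesteps the covering argument altogether is to peel off the two simple zeros smoothly in $\varepsilon$ via the parametrised Hadamard lemma, writing $f_\varepsilon(s)=s\,(s-L(\varepsilon))\,g_\varepsilon(s)$ with $g$ continuous on a neighbourhood of $[0,L_0]\times\{0\}$ and $g_0>0$ on $[0,L_0]$, so that $g_\varepsilon>0$ on $[0,L(\varepsilon)]$ for small $\varepsilon$ and $f_\varepsilon$ has constant sign on $(0,L(\varepsilon))$; since this needs the smooth-factoring lemma, the elementary covering argument above is the one I would write out.
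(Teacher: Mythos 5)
Your proof is correct, and the approach is the same in spirit as the paper's: a perturbation off the base semicircle at $\varepsilon=0$, using that the base seam function $f_0(s)=-\sin s$ has exactly two simple zeros on $[0,\pi]$ together with smooth dependence on parameters. The difference is in rigour. The paper's proof reasons that ``no new zero can appear unless a multiple root forms,'' a structural-stability heuristic that it does not actually substantiate, whereas your three-region covering --- derivative control near the endpoints via $f_0'(0)=-1$ and $f_0'(L_0)=+1$, a uniform negative bound on a fixed compact bulk interval, $\delta_0=\min(\delta_1,\delta_2)$ to make the pieces overlap, and $\varepsilon$ taken small only at the very end so the bulk absorbs the $\varepsilon$-dependence of $L(\varepsilon)$ --- is precisely the quantitative argument the paper leaves implicit. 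Your version is longer to state but is the one that genuinely closes the gap, and the alternative you sketch (parametrised Hadamard factorisation) would do the same job at the cost of a black-boxed lemma. One small slip to correct: for the CDF system \eqref{eq:cdf:ode-S} the normalisation is $x(0)=1$, not $x(0)=0$ as you write (you appear to have copied the jellyfish initial condition from \eqref{eq:ode-S}); fortunately $f_\varepsilon(0)=0$ still holds because $\sin\theta(0)=0$ annihilates the $x(0)$-term in the seam function, so the conclusion is unaffected.
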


\begin{proof}
For the base semicircle \eqref{eq:cdf:circle} at $(\alpha,\varepsilon)=(0,0)$, the seam function equals $-\sin s$,
which has exactly two simple zeros on $[0,\pi]$ (at $s=0$ and $s=\pi$) and is strictly negative on $(0,\pi)$.

By smooth dependence of solutions on parameters (as in Lemma~\ref{lem:smooth-dependence} of the jellyfish section),
the seam function depends continuously on $(s,\alpha,\varepsilon)$ on compact subsets.
Hence, for $|\varepsilon|$ sufficiently small, the two simple zeros persist near $0$ and near $L(\varepsilon)$,
and no new zero can appear unless a multiple root forms, i.e.\ the seam function and its $s$-derivative vanish simultaneously.
Since $-\sin s$ has no multiple root on $[0,\pi]$, such a bifurcation is excluded for small perturbations.
Therefore there are no interior zeros.
\end{proof}

\begin{lemma}[No extra reflection axes]\label{lem:cdf:no-extra-axes}
Let $\gamma_{p/q}$ be a closed curve obtained by the dihedral gluing in Proposition~\ref{prop:cdf:dihedral-closure}.
Then the full symmetry group of $\gamma_{p/q}$ is exactly the dihedral group $D_q$; in particular,
$\gamma_{p/q}$ does {not} admit dihedral symmetry of order $\hat q$ for any $\hat q>q$.
\end{lemma}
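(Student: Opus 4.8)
The plan is to show that any Euclidean symmetry of $\gamma_{p/q}$ must fix the homothety centre $c_\varepsilon$, and then that the only such symmetries are the ones already present in $D_q$. First I would observe that the homothetic shrinker equation \eqref{eq:intro:homot} singles out the centre $c_\varepsilon$ geometrically: it is the unique point about which $\gamma_{p/q}$ evolves by scaling under curve diffusion flow, so any isometry $g$ with $g(\gamma_{p/q})=\gamma_{p/q}$ (as a set) must satisfy $g(c_\varepsilon)=c_\varepsilon$ (this uses that $\sigma(\varepsilon)\neq 0$, so the flow is genuinely non-stationary and the centre is intrinsic). Hence the full symmetry group is a subgroup of $O(2)$ acting about $c_\varepsilon$, i.e.\ a finite dihedral or cyclic group $D_N$ or $C_N$.

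Next I would argue that every symmetry axis must be one of the radial seam lines. Any reflection axis $\ell$ in the symmetry group passes through $c_\varepsilon$ and maps $\gamma_{p/q}$ to itself. Restricting attention to a point $p^\ast\in\gamma_{p/q}\cap\ell$ where $\ell$ meets the curve: at such a point the curve meets $\ell$ orthogonally (since $\ell$ is a symmetry axis, the tangent is either along $\ell$ or perpendicular; the former is excluded by the immersion being transverse to the radial direction away from seams, which is exactly the content of Lemma~\ref{lem:cdf:no-interior-seams}). Thus $p^\ast$ is a point where the radial seam function $R\cdot T$ vanishes. By Lemma~\ref{lem:cdf:no-interior-seams}, on each fundamental-arc piece $\Gamma_j=\rho^j\circ\Gamma$ the seam function vanishes only at the two endpoints, which are precisely the $q$ seam points lying on $\ell_0,\ell_1,\dots$ i.e.\ on the $q$ (or $2q$, counting the half-arc midpoints produced by reflection) radial lines already in the $D_q$-orbit of $\ell_0$. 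Therefore $\ell$ is one of these lines, and the reflection $\sigma_\ell\in D_q$.

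From this, counting finishes the argument: the symmetry group contains $D_q$ and every reflection in it lies in $D_q$, so the group is $D_q$ itself (a dihedral group $D_N$ with $N>q$ would contain reflection axes not among the $q$ allowed ones, and a larger cyclic group $C_N$ with $N>q$ would, together with any one reflection of $D_q$, generate a $D_N$ with too many axes). In particular $\gamma_{p/q}$ does not admit dihedral symmetry of order $\hat q>q$, proving the lemma; combined with the fact that distinct $q$ give distinct symmetry orders, this also yields the geometric distinctness of the family $\{\gamma^e_m\}$.

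The main obstacle is making the ``centre is intrinsic'' step rigorous: one must rule out the possibility that $\gamma_{p/q}$ happens to coincide (as a set) with another homothetic shrinker centred at a different point, and also handle the degenerate comparison with multiply-covered circles, which are homothetic shrinkers about \emph{every} point. The cleanest route is to note that $\gamma_{p/q}$ is not a circle (its curvature is non-constant, since the fundamental arc is a genuine perturbation of a semicircle with $\alpha(\varepsilon)\neq 0$ forcing $k$ to vary), and then invoke uniqueness of the centre for non-circular homothetic solutions — which follows because $c_\varepsilon$ is recovered from the curve by $c_\varepsilon = \gamma(s) + \big(k_{ss}+\tfrac12 k^3\big)^{-1}\!\big(\text{normal component data}\big)$ wherever $k_{ss}+\tfrac12 k^3\neq 0$, a relation any symmetry must preserve. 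A careful statement of this ``the defining equation determines $c$'' fact, valid for the CDF equation $k_{ss} = -\sigma\,\langle\gamma-c,N\rangle$ with $\sigma\neq 0$ and $k$ non-constant, is the one piece that needs to be written out in full rather than quoted.
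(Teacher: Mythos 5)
Your argument follows the same core route as the paper: any extra reflection axis would produce an interior zero of the radial seam function, contradicting Lemma~\ref{lem:cdf:no-interior-seams}. What you add, correctly, is the preliminary step that every Euclidean symmetry of $\gamma_{p/q}$ must fix the homothety centre $c_\varepsilon$ — a step the paper's proof passes over silently with the phrase ``about the same centre $c_\varepsilon$.'' Your observation that this requires an argument is a genuine improvement in rigour, and your proposed route (rule out the circle degeneracy using $\alpha(\varepsilon)\neq 0$ so $k$ is non-constant, then recover $c$ from the shrinker equation $k_{ss}=-\sigma\langle\gamma-c,N\rangle$ with $\sigma\neq 0$) is the right one. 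You rightly flag that this sub-argument should be written out; for the record, the clean version is: if both $k_{ss}=-\sigma_1\langle\gamma-c_1,N\rangle$ and $k_{ss}=-\sigma_2\langle\gamma-c_2,N\rangle$ hold along a closed curve whose tangent angle sweeps all directions, then subtracting and expanding $\langle c,N\rangle$ in terms of $\cos\theta,\sin\theta$ shows either $\sigma_1=\sigma_2$ and $c_1=c_2$, or $\langle\gamma,N\rangle$ is an affine function of $(\cos\theta,\sin\theta)$, which forces the curve to be a circle.

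One point where you have the right conclusion but a confused justification: at a fixed point $p^\ast$ of a reflection $\sigma_\ell$, you assert the tangent cannot be \emph{parallel} to $\ell$ because ``the immersion is transverse to the radial direction away from seams, which is exactly the content of Lemma~\ref{lem:cdf:no-interior-seams}.'' That is not what the lemma says — it says $R\cdot T\neq 0$, i.e.\ the tangent is never \emph{orthogonal} to the radial direction in the interior, which is the opposite of excluding tangency to $\ell$. The actual reason the tangent at a fixed point of a reflection is perpendicular to the axis is parametrization-invariance: writing $\sigma_\ell(\gamma(s))=\gamma(s_0-s)$ for the orientation-reversing symmetry and differentiating at the fixed parameter $s=s_0/2$ gives $\sigma_\ell(T)=-T$, forcing $T\perp\ell$; the tangential-contact case simply does not arise. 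With that correction your counting argument (a cyclic $C_N$ with $N>q$ together with any reflection in $D_q$ would generate too many axes) is sound, and your proof is, if anything, slightly more careful than the paper's.
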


\begin{proof}
The curve is invariant under $D_q$ by construction.
If it had dihedral symmetry of order $\hat q>q$ about the same centre $c_\varepsilon$, then there would exist a reflection axis
$\ell$ strictly between two adjacent seam rays used in the construction.
At any fixed point of a reflection, the curve meets the axis orthogonally; in centre coordinates this is exactly the radial seam
condition $R\cdot T=0$.
Such an interior seam would occur on the fundamental arc portion between adjacent seam rays, contradicting
Lemma~\ref{lem:cdf:no-interior-seams}.
\end{proof}

For epicyclic shrinkers it is convenient to use two similarity invariants: the dihedral symmetry order $q$ and the
rotation index (turning number), determined by the rational parameter $\Theta=\frac{p}{q}\pi$.

\begin{lemma}[Rotation index]\label{lem:cdf:turning-number}
Let $\gamma_{p/q}$ be the closed curve from Proposition~\ref{prop:cdf:dihedral-closure} with $\Theta=\frac{p}{q}\pi$.
Then the total turning of the tangent is $2q\Theta=2p\pi$. In particular, the rotation index of $\gamma_{p/q}$ equals $p$.
\end{lemma}

\begin{proof}
Along the fundamental arc the tangent angle increases by $\Theta$, doubling increases it by $2\Theta$, and concatenating $q$ rotated copies
increases it by $2q\Theta=2p\pi$. The rotation index is total turning divided by $2\pi$.
\end{proof}

\begin{corollary}[Non-equivalence of epicyclic shrinkers]\label{cor:cdf:non-equivalence}
Let $\gamma_{p/q}$ and $\gamma_{\hat p/\hat q}$ be closed epicyclic shrinkers produced by Proposition~\ref{prop:cdf:dihedral-closure}
with coprime pairs $(p,q)$ and $(\hat p,\hat q)$. If $(p,q)\neq(\hat p,\hat q)$, then $\gamma_{p/q}$ is not equivalent to
$\gamma_{\hat p/\hat q}$ under any similarity transformation.
\end{corollary}

\begin{proof}
Similarities preserve the rotation index and conjugate the full symmetry group.
If $p\neq \hat p$, then Lemma~\ref{lem:cdf:turning-number} rules out similarity equivalence.
If $p=\hat p$ but $q\neq \hat q$, then Lemma~\ref{lem:cdf:no-extra-axes} implies the full symmetry groups are $D_q$ and $D_{\hat q}$,
which have different orders $2q\neq 2\hat q$ and hence cannot be conjugate. Therefore no similarity can map one curve to the other.
\end{proof}

\subsection{An abundance of angles}

In order for us to produce a smooth closed curve, we require the total change in angle (which we call the seam angle, see below) along a fundamental arc to be non-trivial.

\begin{lemma}\label{lem:theta-bar-expansion}
Let $\alpha(\varepsilon)$ and $L(\varepsilon)$ be the functions provided by Proposition~\ref{prop:fundamental-arc}, so that
\[
B\bigl(L(\varepsilon),\alpha(\varepsilon),\varepsilon\bigr)=(0,0),
\qquad \alpha(0)=0,\quad L(0)=L_0.
\]
Define the \emph{seam angle}
\begin{equation}\label{eq:theta-bar-def}
\overline{\theta}(\varepsilon)
:=
\theta\bigl(L(\varepsilon);\alpha(\varepsilon),\varepsilon\bigr).
\end{equation}
Then $\overline{\theta}$ is $C^\infty$ with $\overline{\theta}(0)=0$, and
\begin{equation}\label{eq:theta-bar-firstorder}
\overline{\theta}(\varepsilon)=\overline{\theta}'(0)\,\varepsilon+O(\varepsilon^2)
\qquad(\varepsilon\to 0),
\end{equation}
where
\begin{equation}\label{eq:theta-bar-derivative}
\overline{\theta}'(0)=y_0(L_0)>0.
\end{equation}
In particular, $\varepsilon\mapsto \overline{\theta}(\varepsilon)$ is strictly monotone for $|\varepsilon|$ sufficiently small.
\end{lemma}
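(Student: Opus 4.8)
The regularity and initial value are immediate. By Proposition~\ref{prop:fundamental-arc} the map $\varepsilon\mapsto(L(\varepsilon),\alpha(\varepsilon),\varepsilon)$ is $C^\infty$ on $(-\delta,\delta)$, and by Lemma~\ref{lem:smooth-dependence} the map $(s,\alpha,\varepsilon)\mapsto\theta(s;\alpha,\varepsilon)$ is $C^\infty$; composing gives that $\overline{\theta}$ is $C^\infty$. Evaluating at $\varepsilon=0$ and using $L(0)=L_0$, $\alpha(0)=0$ together with Lemma~\ref{lem:base-halfperiod} yields $\overline{\theta}(0)=\theta(L_0;0,0)=\theta_0(L_0)=0$. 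The expansion \eqref{eq:theta-bar-firstorder} is then just the first-order Taylor formula for a $C^\infty$ function vanishing at $0$, so everything reduces to computing $\overline{\theta}'(0)$.

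For that computation the efficient route is to use the defining relation $b_1=0$ directly, rather than differentiating the $\theta$-equation of \eqref{eq:ode-S} through the parameter $\alpha$. Since $B(L(\varepsilon),\alpha(\varepsilon),\varepsilon)=(0,0)$ by Proposition~\ref{prop:fundamental-arc}, the first component of \eqref{eq:Bdef} evaluated at $s=L(\varepsilon)$ reads
\[
\sin\overline{\theta}(\varepsilon)
=\varepsilon\Bigl(y\bigl(L(\varepsilon);\alpha(\varepsilon),\varepsilon\bigr)\cos\overline{\theta}(\varepsilon)
-x\bigl(L(\varepsilon);\alpha(\varepsilon),\varepsilon\bigr)\sin\overline{\theta}(\varepsilon)\Bigr).
\]
Both sides are $C^\infty$ in $\varepsilon$; differentiating at $\varepsilon=0$ and using $\overline{\theta}(0)=0$, the left side gives $\cos\overline{\theta}(0)\,\overline{\theta}'(0)=\overline{\theta}'(0)$, while on the right the explicit prefactor $\varepsilon$ kills all derivatives of the bracket, leaving only its value at $\varepsilon=0$, namely $y_0(L_0)\cos 0-x_0(L_0)\sin 0=y_0(L_0)$ (here $x_0,y_0$ are the base solution and $L(0)=L_0$, $\alpha(0)=0$). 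Hence $\overline{\theta}'(0)=y_0(L_0)$, which is \eqref{eq:theta-bar-derivative}. Positivity follows from Lemma~\ref{lem:force-identities}: along the base arc $y_0'(s)=\cos\theta_0(s)=k_0(s)^2\ge 0$, so $y_0$ is nondecreasing on $[0,L_0]$, and since $k_0(0)=1$ it is strictly increasing near $s=0$; therefore $y_0(L_0)>y_0(0)=0$. Finally, $\overline{\theta}\in C^\infty$ makes $\overline{\theta}'$ continuous, so $\overline{\theta}'(0)=y_0(L_0)>0$ forces $\overline{\theta}'(\varepsilon)>0$ for all $|\varepsilon|$ sufficiently small, giving strict monotonicity on that range.

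The only delicate point is the identification $\overline{\theta}'(0)=y_0(L_0)$. The argument above makes it short because the relation $b_1=0$ isolates $\sin\overline{\theta}$ with an explicit factor of $\varepsilon$ multiplying everything else. The alternative, differentiating $\overline{\theta}(\varepsilon)=\theta(L(\varepsilon);\alpha(\varepsilon),\varepsilon)$ by the chain rule, also works — using $\partial_s\theta(L_0;0,0)=k_0(L_0)=-1$, the identities $\partial_\alpha L(0,0)=\partial_\alpha\theta(L_0;0,0)$ and $\partial_\varepsilon L(0,0)=-y_0(L_0)$ from the proofs of Lemmas~\ref{lem:galpha-nondeg} and \ref{lem:alpha-expansion-firstorder}, and $\partial_\varepsilon\theta(\cdot;0,0)\equiv 0$ (the curvature subsystem is $\varepsilon$-independent at $\alpha=0$) — but there one must notice that the two contributions proportional to the unknown quantity $\partial_\alpha\theta(L_0;0,0)$ cancel, which is less transparent than the $b_1=0$ route.
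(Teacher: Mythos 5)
Your proof is correct, and it takes a genuinely different (and arguably cleaner) route than the paper's. The paper differentiates $\overline{\theta}(\varepsilon)=\theta(L(\varepsilon);\alpha(\varepsilon),\varepsilon)$ directly by the chain rule, which produces $\overline{\theta}'(0)=\theta_s L'(0)+\theta_\alpha\,\alpha'(0)+\theta_\varepsilon$; this forces one to compute $L'(0)$ via the identity $b_1(L(\alpha,\varepsilon),\alpha,\varepsilon)\equiv 0$, extract $\partial_\alpha L(0,0)=\theta_\alpha(L_0;0,0)$ and $\partial_\varepsilon L(0,0)=-y_0(L_0)$, and then observe that the two contributions carrying the unknown $\theta_\alpha(L_0;0,0)$ cancel, leaving $\overline{\theta}'(0)=-\partial_\varepsilon L(0,0)=y_0(L_0)$. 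You sidestep all of that by restricting the seam identity $b_1=0$ to the branch, rewriting it as $\sin\overline{\theta}(\varepsilon)=\varepsilon\bigl(y\cos\overline{\theta}-x\sin\overline{\theta}\bigr)\big|_{s=L(\varepsilon)}$ and differentiating; the explicit prefactor $\varepsilon$ on the right eliminates every first-order contribution except the bracket's value at $\varepsilon=0$, which gives $\overline{\theta}'(0)=y_0(L_0)$ in one stroke, with no cancellation to spot. The trade-off is that your route is tied to the specific algebraic form of $b_1$ (which happens to factor $\varepsilon$ out cleanly), whereas the paper's chain-rule computation is the template reused verbatim in the CDF and ideal-flow sections where no such clean factorisation is available and one has to push to second order. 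Both are valid; yours is more transparent here, the paper's is more uniform across the three constructions. Your positivity argument and the monotonicity conclusion match the paper's exactly.
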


\begin{proof}
Smoothness follows from Lemma~\ref{lem:smooth-dependence} and Proposition~\ref{prop:fundamental-arc}.
We compute the derivative using the chain rule.  Write all partial derivatives at the base point
\[
(s,\alpha,\varepsilon)=(L_0,0,0),
\qquad\text{so that}\qquad
\theta_0(L_0)=0,\ \ k_0(L_0)=-1,\ \ v_0(L_0)=0.
\]
Differentiating \eqref{eq:theta-bar-def} gives
\begin{equation}\label{eq:theta-bar-chain}
\overline{\theta}'(0)
=
\theta_s(L_0;0,0)\,L'(0)+\theta_\alpha(L_0;0,0)\,\alpha'(0)+\theta_\varepsilon(L_0;0,0).
\end{equation}
Since $\theta_s=k$, we have $\theta_s(L_0;0,0)=k_0(L_0)=-1$.

Next, $\theta_\varepsilon(L_0;0,0)=0$: indeed, at $\alpha=0$ the curvature subsystem coincides with \eqref{eq:base-curv} and
is independent of $\varepsilon$, hence $k(\,\cdot\,;0,\varepsilon)\equiv k_0$ and therefore
$\theta(\,\cdot\,;0,\varepsilon)\equiv \theta_0$ for all $\varepsilon$.

It remains to compute $L'(0)$ and eliminate the $\alpha'(0)$ term.  Recall that $L(\alpha,\varepsilon)$ is defined by
$b_1(L(\alpha,\varepsilon),\alpha,\varepsilon)=0$ (Proposition~\ref{prop:L-of-alpha-eps}) and that
$L(\varepsilon)=L(\alpha(\varepsilon),\varepsilon)$. Thus
\begin{equation}\label{eq:Lprime}
L'(0)=\partial_\alpha L(0,0)\,\alpha'(0)+\partial_\varepsilon L(0,0).
\end{equation}
Differentiate the identity $b_1(L(\alpha,\varepsilon),\alpha,\varepsilon)\equiv 0$ in $\alpha$ at $(0,0)$.
At $\varepsilon=0$ one has $b_1=-\sin\theta$, hence
\[
0=\partial_s b_1(L_0,0,0)\,\partial_\alpha L(0,0)+\partial_\alpha b_1(L_0,0,0)
= \partial_s b_1(L_0,0,0)\,\partial_\alpha L(0,0)-\theta_\alpha(L_0;0,0).
\]
Using $\partial_s b_1(L_0,0,0)=1$ from \eqref{eq:b1s-base} yields
\begin{equation}\label{eq:Lalpha-equals-thetaalpha}
\partial_\alpha L(0,0)=\theta_\alpha(L_0;0,0).
\end{equation}
Substituting \eqref{eq:Lprime} and \eqref{eq:Lalpha-equals-thetaalpha} into \eqref{eq:theta-bar-chain}, and using
$\theta_s(L_0;0,0)=-1$ and $\theta_\varepsilon(L_0;0,0)=0$, we obtain the cancellation
\[
\overline{\theta}'(0)
=
(-1)\Bigl(\theta_\alpha(L_0;0,0)\alpha'(0)+\partial_\varepsilon L(0,0)\Bigr)
+\theta_\alpha(L_0;0,0)\alpha'(0)
=
-\partial_\varepsilon L(0,0).
\]
Finally, compute $\partial_\varepsilon L(0,0)$ from $b_1(L(0,\varepsilon),0,\varepsilon)\equiv 0$.
Differentiating in $\varepsilon$ at $\varepsilon=0$ gives
\[
0=\partial_s b_1(L_0,0,0)\,\partial_\varepsilon L(0,0)+\partial_\varepsilon b_1(L_0,0,0).
\]
Again $\partial_s b_1(L_0,0,0)=1$, and since $\theta_0(L_0)=0$ we have
$\partial_\varepsilon b_1(L_0,0,0)=y_0(L_0)\cos\theta_0(L_0)-x_0(L_0)\sin\theta_0(L_0)=y_0(L_0)$.
Hence $\partial_\varepsilon L(0,0)=-y_0(L_0)$, and therefore \eqref{eq:theta-bar-derivative} holds.

Positivity follows from $y_0'(s)=\cos\theta_0(s)=k_0(s)^2\ge 0$ and $k_0\not\equiv 0$ on $(0,L_0)$, so
$y_0(L_0)=\int_0^{L_0}k_0(s)^2\,ds>0$.
The first-order expansion \eqref{eq:theta-bar-firstorder} follows.
\end{proof}

We thus conclude that our fundamental arcs turn through an abundance of seam angles.

\begin{corollary}\label{cor:eps-as-fn-of-theta}
There exist $\theta_\ast>0$,  $\varepsilon_\ast>0$ and a unique $C^\infty$ function
\[
\varepsilon=\varepsilon(\overline{\theta})\qquad\text{for }|\overline{\theta}|<\theta_\ast,
\]
with $\varepsilon(0)=0$, such that
\[
\theta\bigl(L(\varepsilon(\overline{\theta})),\alpha(\varepsilon(\overline{\theta})),\varepsilon(\overline{\theta})\bigr)
=\overline{\theta}.
\]
Moreover,
\begin{equation}\label{eq:eps-asymp}
\varepsilon(\overline{\theta})
=
\frac{1}{y_0(L_0)}\,\overline{\theta}+O(\overline{\theta}^2)
\qquad(\overline{\theta}\to 0).
\end{equation}
\end{corollary}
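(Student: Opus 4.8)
The plan is to invert the seam-angle map $\overline\theta(\varepsilon)$ from Lemma~\ref{lem:theta-bar-expansion} using the Inverse Function Theorem, and then read off the asymptotic of the inverse from the first-order expansion of $\overline\theta$. First I would record that, by Lemma~\ref{lem:theta-bar-expansion}, $\overline\theta$ is $C^\infty$ on $(-\delta,\delta)$, $\overline\theta(0)=0$, and $\overline\theta'(0)=y_0(L_0)>0$. The strict positivity of the derivative at $0$ is the crucial input: it guarantees, via the Inverse Function Theorem, that $\overline\theta$ restricts to a $C^\infty$ diffeomorphism from a neighbourhood $(-\varepsilon_\ast,\varepsilon_\ast)$ of $0$ onto a neighbourhood $(-\theta_\ast,\theta_\ast)$ of $0$. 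Denote by $\varepsilon=\varepsilon(\overline\theta)$ the $C^\infty$ inverse; it is unique as a continuous inverse defined near $0$ with $\varepsilon(0)=0$, and it satisfies
\[
\theta\bigl(L(\varepsilon(\overline\theta)),\alpha(\varepsilon(\overline\theta)),\varepsilon(\overline\theta)\bigr)
=\overline\theta(\varepsilon(\overline\theta))
=\overline\theta
\]
by the very definition \eqref{eq:theta-bar-def} of $\overline\theta$.

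For the asymptotic \eqref{eq:eps-asymp}, I would differentiate the identity $\overline\theta\bigl(\varepsilon(\overline\theta)\bigr)\equiv\overline\theta$ at $\overline\theta=0$: the chain rule gives $\overline\theta'(0)\,\varepsilon'(0)=1$, so $\varepsilon'(0)=1/\overline\theta'(0)=1/y_0(L_0)$ by \eqref{eq:theta-bar-derivative}. Since $\varepsilon$ is $C^\infty$ with $\varepsilon(0)=0$, Taylor's theorem yields $\varepsilon(\overline\theta)=\varepsilon'(0)\,\overline\theta+O(\overline\theta^2)=\tfrac{1}{y_0(L_0)}\overline\theta+O(\overline\theta^2)$ as $\overline\theta\to0$, which is precisely \eqref{eq:eps-asymp}.

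Everything here is routine given Lemma~\ref{lem:theta-bar-expansion}; there is no real obstacle, since the monotonicity and the sign of $\overline\theta'(0)$ have already been established. The only point requiring a word of care is uniqueness: one should restrict to a small enough interval $(-\theta_\ast,\theta_\ast)$ so that $\overline\theta$ is genuinely injective on the corresponding $\varepsilon$-interval (possible because $\overline\theta'$ stays positive near $0$ by continuity), which forces any continuous inverse through the origin to coincide with $\varepsilon(\overline\theta)$. With that caveat, shrinking $\theta_\ast$ and $\varepsilon_\ast$ as needed, the corollary follows immediately.
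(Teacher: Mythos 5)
Your proposal is correct and takes essentially the same route as the paper: both apply the Inverse Function Theorem to $\overline\theta(\varepsilon)$ using the non-vanishing derivative $\overline\theta'(0)=y_0(L_0)$ from Lemma~\ref{lem:theta-bar-expansion}, then obtain \eqref{eq:eps-asymp} by inverting the first-order Taylor expansion. Your chain-rule computation of $\varepsilon'(0)$ and the remark on uniqueness are just a slightly more explicit spelling-out of the same argument.
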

\begin{proof}
By Lemma~\ref{lem:theta-bar-expansion}, $\overline{\theta}'(0)=y_0(L_0)\neq 0$, so the Inverse Function Theorem
applies to the map $\varepsilon\mapsto\overline{\theta}(\varepsilon)$ and yields a local $C^\infty$ inverse
$\varepsilon=\varepsilon(\overline{\theta})$ defined for $|\overline{\theta}|$ small. The expansion \eqref{eq:eps-asymp}
follows by inverting the first-order Taylor expansion \eqref{eq:theta-bar-firstorder}.
The final statement follows by taking $\overline{\theta}=\pm\pi/m$ with $m$ large enough.
\end{proof}

In particular, for all integers $m$ sufficiently large (so that $\pi/m<\theta_\ast$) there exists a unique
$\varepsilon_m$ with $0 < \varepsilon_m < \varepsilon_\ast$ such that the corresponding fundamental arc satisfies
$\overline{\theta}(\varepsilon_m)=\pi/m$.
This yields the seam angle required for the dihedral gluing in
Proposition~\ref{prop:dihedral-closure}.

\subsection{Dihedral gluing and proof of Theorem~\ref{thm:intro:jellyfish}}
\label{subsec:dihedral-gluing}

We now explain how fundamental arcs yield jellyfish.
We refer the reader to Figure \ref{EFpic} for a visual representation, and for a different example, the leftmost plot in Figure \ref{fig:intro:representatives}.

\begin{figure}[t]
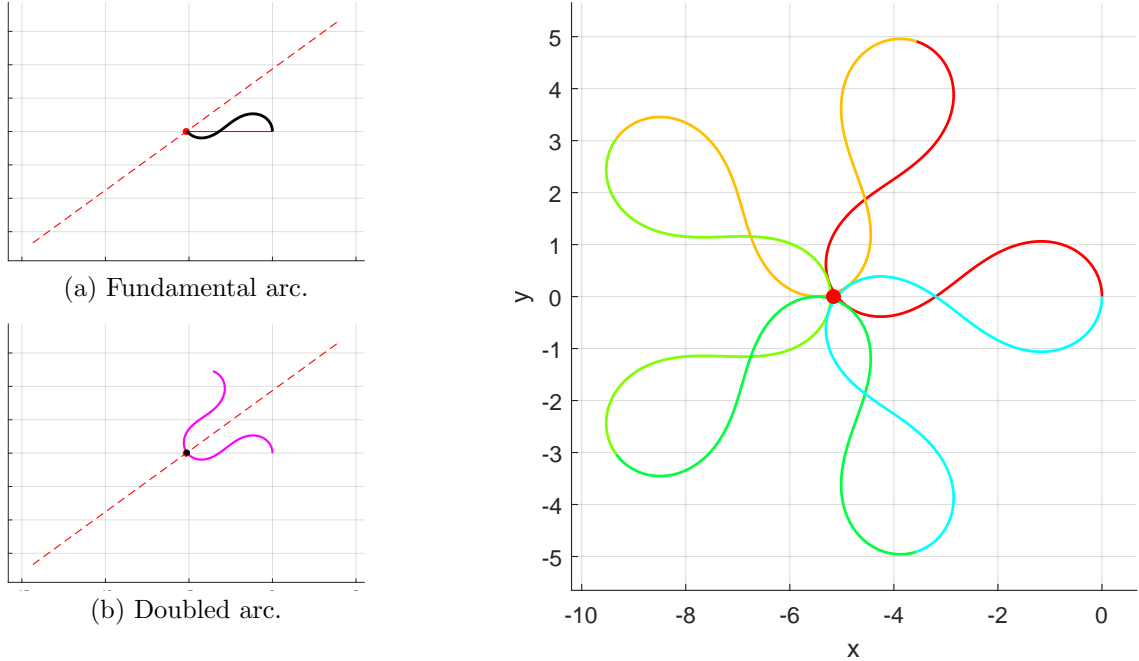

\centering

\begin{minipage}[t]{0.29\textwidth}
  \vspace{0pt}\centering 

  \begin{subfigure}[t]{\linewidth}
    \centering
  \incfig{fig7_fundamental_arc_1_5.pdf}{\linewidth}{8mm}
  \caption{Fundamental arc.}
  \label{fig:fef:1_5:fundamental}
  \end{subfigure}

  \vspace{2mm}

  \begin{subfigure}[t]{\linewidth}
    \centering
  \incfig{fig7_doubled_arc_1_5.pdf}{\linewidth}{8mm}
  \caption{Doubled arc.}
  \label{fig:fef:1_5:doubled}
  \end{subfigure}
\end{minipage}\hfill
\begin{minipage}[t]{0.67\textwidth}
  \vspace{0pt}\centering 

  \begin{subfigure}[t]{\linewidth}
    \centering
\incfigl{fig7_closed_1_5.pdf}{0.75\linewidth}{0}{0}
\label{fig:fef:1_7}
  \end{subfigure}
\end{minipage}
\caption{Free elastic flow jellyfish solution with $\omega=1$ and $\varepsilon=0.19378$ (to five significant figures).
The closed curve is assembled from $5$ doubled arcs.}
\label{EFpic}
\end{figure}

\subsubsection{Reflection principle and dihedral concatenation}

For a line $\ell$ in $\R^2$ denote by $\sigma_\ell$ the Euclidean reflection across $\ell$.
We shall use the following reflection principle for solutions of the homothetic ODE system; it is a
direct consequence of the even/odd continuation of the Frenet data and the invariance of \eqref{eq:intro:homot}
under Euclidean isometries.

\begin{lemma}\label{lem:reflection-principle-jellyfish}
Let $\gamma:[0,L]\to\R^2$ be an arc-length parametrised solution of the homothetic expander equation \eqref{eq:intro:homot}
(for the elastic flow) on $(0,L)$.
Assume that $\gamma(L)\in\ell$ and $\gamma$ meets $\ell$ orthogonally at $s=L$, and additionally $k_s(L)=0$.
Then the reflected arc $\widetilde\gamma(s):=\sigma_\ell(\gamma(2L-s))$ for $s\in[L,2L]$ glues to $\gamma$ to give a
$C^\infty$ solution on $[0,2L]$. The same holds at $s=0$.
\end{lemma}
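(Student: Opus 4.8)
The plan is to localise at the seam $s=L$ (the case $s=0$ is identical, with $\ell$ replaced by the radial line through the homothety centre $c$ and $\gamma(0)$) and to reduce everything to a soft ODE argument. Write $A:=D\sigma_\ell$ for the linear part of the reflection (orthogonal, with $\det A=-1$); the affine part of $\sigma_\ell$ cancels in any difference of position vectors. I would prove two things: (a) $\widetilde\gamma(s):=\sigma_\ell(\gamma(2L-s))$ again satisfies the homothetic expander equation \eqref{eq:intro:homot}, in its geometric form $k_{ss}+\tfrac12 k^3=-\sigma\langle\gamma-c,N\rangle$ (cf.\ \eqref{eq:homothety-geometric}), on $(L,2L)$; and (b) the concatenation $\Gamma$ of $\gamma|_{[0,L]}$ with $\widetilde\gamma|_{[L,2L]}$ matches to order three at $s=L$. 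Smoothness then follows not from checking all derivatives but from uniqueness for the fourth-order ODE for $\gamma$ equivalent to that geometric form: $\gamma$ extends past $L$ to a real-analytic solution $\gamma_{\mathrm{ext}}$, and by (a) the arc $\widetilde\gamma$ solves the same ODE near $L$ with, by (b), the same $4$-jet at $s=L$; hence $\widetilde\gamma\equiv\gamma_{\mathrm{ext}}$ there, so $\Gamma$ is locally the real-analytic curve $\gamma_{\mathrm{ext}}$ and in particular $C^\infty$ across the seam.

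For (a) I would first record the even/odd continuation of the Frenet data under the composition of the reversal $s\mapsto 2L-s$ with $\sigma_\ell$. The reversal flips the unit tangent and the sign of the signed curvature; the orientation-reversing reflection flips the sign of the signed curvature a second time. Tracking this gives
\[
\widetilde\gamma'(s)=-A\,T(2L-s),\qquad N_{\widetilde\gamma}(s)=A\,N(2L-s),\qquad k_{\widetilde\gamma}(s)=k(2L-s),
\]
so the curvature is continued \emph{evenly} across the seam while $\gamma'$ is continued \emph{oddly}. For the invariance, the key point is that the seam line $\ell$ passes through the homothety centre $c$ (in the construction $\ell$ is the radial seam line of Remark~\ref{rem:b1-geometry}, so $b_1(L)=0$ places $c$ on $\ell$); hence $\sigma_\ell$ fixes $c$, so $\widetilde\gamma(s)-c=A\bigl(\gamma(2L-s)-c\bigr)$ and, $A$ being orthogonal, $\langle\widetilde\gamma(s)-c,\,N_{\widetilde\gamma}(s)\rangle=\langle\gamma(2L-s)-c,\,N(2L-s)\rangle$. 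Combining this with $k_{\widetilde\gamma,ss}(s)=k_{ss}(2L-s)$ (the two sign flips from differentiating $k(2L-s)$ twice cancel) and $k_{\widetilde\gamma}^3=k^3$, the geometric equation passes to $\widetilde\gamma$ with the same $c$ and $\sigma$. Conceptually this is just invariance of \eqref{eq:intro:homot} under Euclidean isometries fixing $c$, together with invariance under orientation reversal of the arc-length parameter, the equation being of even order in $s$ with no explicit $s$-dependence.

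For (b), orthogonality at $s=L$ means $T(L)\perp\ell$, equivalently $A\,T(L)=-T(L)$ and hence $A\,N(L)=N(L)$. Then $\gamma(L)\in\ell$ gives $\widetilde\gamma(L)=\gamma(L)$; orthogonality gives $\widetilde\gamma'(L)=-A\,T(L)=T(L)=\gamma'(L)$ and, since $\gamma''=kN$, also $\widetilde\gamma''(L)=A\bigl(k(L)N(L)\bigr)=k(L)N(L)=\gamma''(L)$; and since $\gamma'''=k_sN-k^2T$, the hypothesis $k_s(L)=0$ makes $\gamma'''(L)=-k(L)^2T(L)$ purely tangential, so $\widetilde\gamma'''(L)=-A\,\gamma'''(L)=\gamma'''(L)$. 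This matches $\Gamma$ to order three, which is one less than the order of the ODE, so the uniqueness argument of the first paragraph applies; away from $s=L$ the curve $\Gamma$ is manifestly smooth, whence $\Gamma\in C^\infty$ on $[0,2L]$ (indeed real-analytic). The argument at $s=0$ is the same, reflecting across the radial line through $c$ and $\gamma(0)$ and using $k_s(0)=0$; for the fundamental arcs of Proposition~\ref{prop:fundamental-arc} this and the corresponding orthogonality hold by construction, from the normalisation $v(0)=0$ and the identity $b_1(0)=0$. I expect the only genuinely delicate point to be the sign bookkeeping in the Frenet continuation: one must check that the two orientation reversals (in $s$ and in $\sigma_\ell$) conspire to make $k$ even across the seam — it is exactly this evenness that lets the even-order equation survive the gluing — while everything else is the orthogonality hypotheses plus the standard ODE uniqueness theorem.
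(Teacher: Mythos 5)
Your proof is correct and follows the same overall strategy as the paper's: even continuation of the curvature under the combined reversal--reflection, seam matching from orthogonality and $k_s(L)=0$, invariance of the homothetic equation under $\sigma_\ell$, and ODE regularity across the seam. Two of your details are worth noting as improvements over the paper's terse sketch: you replace the vague ``higher regularity follows by differentiating the Frenet system'' with a clean uniqueness argument for the fourth-order ODE (match the $3$-jet at $s=L$, then both pieces equal the analytic extension), and you explicitly flag the hypothesis that the seam line must pass through the homothety centre --- a condition the lemma leaves implicit (it holds automatically for the radial seam lines used in the gluing, via $b_1(L)=0$ in Remark~\ref{rem:b1-geometry}) but is genuinely required for invariance of \eqref{eq:intro:homot} under $\sigma_\ell$.
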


\begin{proof}
Orthogonality implies the tangent reflects with matching first derivative, and the condition $k_s(L)=0$ gives the
evenness of curvature across the seam; higher regularity follows by differentiating the Frenet system and using the
smoothness of the ODE. Invariance of \eqref{eq:intro:homot} under $\sigma_\ell$ yields that the reflected arc also satisfies
the same equation.
\end{proof}

Fix $\overline{\theta}\in(0,\overline{\theta}_\ast)$ and choose $\varepsilon=\varepsilon(\overline{\theta})$ from Corollary~\ref{cor:eps-as-fn-of-theta}, together with the
corresponding fundamental arc $\gamma:[0,L]\to\R^2$ given by Proposition~\ref{prop:fundamental-arc}.
Set $c_\varepsilon:=(-1/\varepsilon,0)$. Let $\ell_0$ (resp.\ $\ell_1$) be the line through $c_\varepsilon$ and $\gamma(0)$
(resp.\ through $c_\varepsilon$ and $\gamma(L)$). By Remark~\ref{rem:b1-geometry} and $v(0)=v(L)=0$, the hypotheses of
Lemma~\ref{lem:reflection-principle-jellyfish} hold at both endpoints with $\ell=\ell_0$ and $\ell=\ell_1$.

Define the length-$2L$ block $\Gamma:[0,2L]\to\R^2$ by reflecting $\gamma$ across $\ell_1$ at $s=L$:
\[
\Gamma(s):=
\begin{cases}
\gamma(s), & s\in[0,L],\\
\sigma_{\ell_1}\!\big(\gamma(2L-s)\big), & s\in[L,2L].
\end{cases}
\]
By Lemma~\ref{lem:reflection-principle-jellyfish}, $\Gamma$ is $C^\infty$ and satisfies \eqref{eq:intro:homot} on $[0,2L]$.

Let $\rho:=\sigma_{\ell_1}\circ\sigma_{\ell_0}$. Since $\ell_0$ and $\ell_1$ are distinct lines meeting at $c_\varepsilon$,
$\rho$ is a rotation about $c_\varepsilon$ through angle $2\overline{\theta}$, where $\overline{\theta}$ is the (acute) angle between $\ell_0$ and $\ell_1$.
In particular, if $\overline{\theta}=\pi/m$ then $\rho^m=\mathrm{Id}$.

\begin{proposition}[Dihedral closure from a fundamental arc]\label{prop:dihedral-closure}
Assume $\overline{\theta}=\pi/m$ for some integer $m\ge 2$. Then the concatenation
\[
\gamma_m^j := \Gamma_0\#\Gamma_1\#\cdots\#\Gamma_{m-1},
\qquad
\Gamma_j:=\rho^j\circ\Gamma,
\]
defines a $C^\infty$ closed immersed curve $\gamma_m^j:\S\to\R^2$ satisfying \eqref{eq:intro:homot} for the elastic flow.
Moreover, $\gamma_m^j$ is invariant under the dihedral group generated by $\rho$ and $\sigma_{\ell_0}$, hence has dihedral symmetry
of order $m$.
\end{proposition}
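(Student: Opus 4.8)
The plan is to verify, in turn, that $\gamma_m^j$ is a well-defined $C^\infty$ closed immersion of $\S$, that it solves \eqref{eq:intro:homot}, and that it is invariant under the dihedral group $G=\langle\rho,\sigma_{\ell_0}\rangle$. Closure and continuity come first. The block $\Gamma$ is unit-speed on $[0,2L]$ with $\Gamma(0)=\gamma(0)\in\ell_0$ and $\Gamma(2L)=\sigma_{\ell_1}(\gamma(0))=\sigma_{\ell_1}\sigma_{\ell_0}(\gamma(0))=\rho(\gamma(0))$ (using $\sigma_{\ell_0}(\gamma(0))=\gamma(0)$), so $\Gamma_j=\rho^j\circ\Gamma$ ends at $\rho^{j+1}(\gamma(0))$, precisely where $\Gamma_{j+1}$ begins; since $\rho^m=\mathrm{Id}$ the last block $\Gamma_{m-1}$ ends at $\gamma(0)=\Gamma_0(0)$. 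Hence the concatenation is a continuous, unit-speed, closed curve parametrised by $s\in\R/(2mL)\Z$.

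The heart of the argument is smoothness. Inside each $\Gamma_j$ this is inherited from $\Gamma$, which is $C^\infty$ on $[0,2L]$ --- including across the block's midpoint seam --- by Lemma~\ref{lem:reflection-principle-jellyfish} applied to $\gamma$ at $s=L$, together with $\rho^j$ being a rigid motion. At the junctions between consecutive blocks the mechanism is the \emph{second} application of the reflection principle, now at $s=0$ of the fundamental arc; this is exactly what the endpoint data $v(0)=0$ and the radial-orthogonality condition of Remark~\ref{rem:b1-geometry} license. By $\rho^j$-equivariance it suffices to glue $\Gamma_0$ to $\Gamma_1$ at $s=2L$: for $s$ slightly above $2L$ one has $\Gamma(s)=\sigma_{\ell_1}(\gamma(2L-s))$ with argument $2L-s<0$, and by the ``same holds at $s=0$'' clause of Lemma~\ref{lem:reflection-principle-jellyfish} the arc $\gamma$ continues $C^\infty$ as $u\mapsto\sigma_{\ell_0}(\gamma(-u))$ for $u<0$; hence $\Gamma$ continues $C^\infty$ past $s=2L$ as $\sigma_{\ell_1}\sigma_{\ell_0}(\gamma(s-2L))=\rho(\gamma(s-2L))=\Gamma_1(s-2L)$. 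So $\Gamma_0\#\Gamma_1$ is $C^\infty$ across its seam; rotating by $\rho^j$ gives the same at every interior junction, and the closing junction is identical, its continuation being $\rho^{m-1}\!\circ(\rho\circ\Gamma)=\rho^m\circ\Gamma=\Gamma_0$. Therefore $\gamma_m^j$ descends to a $C^\infty$ map $\S\to\R^2$, and being of unit speed it is an immersion.

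It remains to check \eqref{eq:intro:homot} and the symmetry. Each $\Gamma_j=\rho^j\circ\Gamma$ is a rigid-motion image of $\Gamma$, which satisfies \eqref{eq:intro:homot} on $[0,2L]$ by Lemma~\ref{lem:reflection-principle-jellyfish}; since the homothetic equation \eqref{eq:homothety-geometric} is invariant under Euclidean isometries and $\rho$ fixes the centre $c_\varepsilon$, each $\Gamma_j$ solves it with the same centre and coefficient $\sigma(\varepsilon)$, and, both sides being continuous along the $C^\infty$ curve, the equation passes from the block interiors to all of $\S$. For the symmetry, invariance under $\rho$ is immediate because $\gamma_m^j(s+2L)=\rho(\gamma_m^j(s))$ for all $s$ (the rotation advances the blocks cyclically, $\rho^m=\mathrm{Id}$). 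For $\sigma_{\ell_0}$ I would prove the sharper identity $\gamma_m^j(-s)=\sigma_{\ell_0}\bigl(\gamma_m^j(s)\bigr)$: both sides are $C^\infty$ curves on $\S$ solving the same geometric homothetic ODE --- the left because \eqref{eq:homothety-geometric} is invariant under $s\mapsto-s$ and $\gamma_m^j$ is $2mL$-periodic, the right because $\sigma_{\ell_0}$ is an isometry fixing $c_\varepsilon$ --- so by uniqueness for the associated first-order Frenet system it is enough to match the Frenet data at $s=0$, which is a short sign check using $\gamma(0)\in\ell_0$, $T(0)\perp\ell_0$, $k(0)=1$ and $k_s(0)=v(0)=0$; alternatively one verifies the identity directly on $[0,2L]$ by bookkeeping with $\rho=\sigma_{\ell_1}\sigma_{\ell_0}$, $\rho^m=\mathrm{Id}$ and $\sigma_{\ell_0}\rho\sigma_{\ell_0}=\rho^{-1}$. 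Then the image of $\gamma_m^j$ is fixed by $\rho$ and $\sigma_{\ell_0}$, hence by $G$; since $\rho$ is a rotation of order $m$ about $c_\varepsilon$ and $\sigma_{\ell_0}$ a reflection in a line through $c_\varepsilon$ with $\sigma_{\ell_0}\rho\sigma_{\ell_0}=\rho^{-1}$, $G\cong D_m$ has order $2m$ and $\gamma_m^j$ has dihedral symmetry of order $m$. I expect the junction-smoothness step to be the main obstacle; everything else is bookkeeping with Euclidean isometries, the key structural fact being that the two boundary conditions $v(0)=v(L)=0$ are exactly what is needed to invoke the reflection principle at \emph{both} ends of the fundamental arc.
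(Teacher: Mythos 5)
Your proof is correct and follows the same approach as the paper's (reflection-principle smoothness at block interiors and at junctions, rigid-motion invariance of the homothetic equation, closure via $\rho^m=\mathrm{Id}$, and dihedral invariance by construction); the paper compresses this into a few sentences, while you spell out the junction-smoothness bookkeeping and the $\sigma_{\ell_0}$-invariance identity $\gamma_m^j(-s)=\sigma_{\ell_0}(\gamma_m^j(s))$ explicitly.
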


\begin{proof}
Each $\Gamma_j$ is $C^\infty$ and satisfies \eqref{eq:intro:homot} because \eqref{eq:intro:homot} is invariant under
Euclidean isometries. The seam matching at the joins follows from the reflection principle at $s=0$ for $\Gamma$ and the fact that
$\rho$ maps $\ell_0$ to $\ell_1$ and preserves orthogonality. Since $\rho^m=\mathrm{Id}$ when $\overline{\theta}=\pi/m$, the concatenation closes.
Dihedral invariance is immediate from the construction.
\end{proof}

\subsubsection{Maximality of the symmetry order and non-equivalence for different $m$}

For our jellyfish curves geometric distinctness follows
from a monotonicity property of the curvature on the fundamental arc.

\begin{lemma}[Monotone curvature on the fundamental arc]\label{lem:v-negative}
Let $\varepsilon>0$ be sufficiently small and let $S(\,\cdot\,;\alpha(\varepsilon),\varepsilon)$ be the fundamental arc given by
Proposition~\ref{prop:fundamental-arc}, defined on $[0,L(\varepsilon)]$.
Then
\[
v(s;\alpha(\varepsilon),\varepsilon)<0\qquad\text{for all }s\in(0,L(\varepsilon)).
\]
Equivalently, $k(\,\cdot\,;\alpha(\varepsilon),\varepsilon)$ is strictly decreasing on $[0,L(\varepsilon)]$.
\end{lemma}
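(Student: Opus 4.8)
The plan is to split the open interval $(0,L(\varepsilon))$ into three pieces --- a fixed right-neighbourhood of $s=0$, a fixed left-neighbourhood of the (moving) endpoint $s=L(\varepsilon)$, and the compact bulk in between --- and to show $v<0$ on each of the three, uniformly for $|\varepsilon|$ small; since the three pieces cover $(0,L(\varepsilon))$ once $\varepsilon$ is small, the conclusion follows. The strict monotonicity of $k$ is then immediate from $k'=v$: if $v<0$ on $(0,L(\varepsilon))$, then for $0\le a<b\le L(\varepsilon)$ one has $k(b)-k(a)=\int_a^b v\,ds<0$.

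First, near $s=0$. The initial data in \eqref{eq:ode-S} force $v(0;\alpha,\varepsilon)=0$ for all parameters, while the $v$-equation gives $v'(0;0,0)=-\tfrac12 k_0(0)^3=-\tfrac12<0$. By the continuous (indeed real-analytic) dependence of the solution on $(s,\alpha,\varepsilon)$ from Lemma~\ref{lem:smooth-dependence}, there exist $\delta_0>0$ and a neighbourhood of $(0,0)$ on which $v'(s;\alpha,\varepsilon)<-\tfrac14$ for all $s\in[0,\delta_0]$; integrating from $0$ yields $v(s;\alpha,\varepsilon)<0$ for $s\in(0,\delta_0]$. Next, near $s=L(\varepsilon)$. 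For a fundamental arc, Proposition~\ref{prop:fundamental-arc} gives $b_2(L(\varepsilon),\alpha(\varepsilon),\varepsilon)=v(L(\varepsilon);\alpha(\varepsilon),\varepsilon)=0$, and at the base point the $v$-equation gives $v'(L_0;0,0)=-\tfrac12 k_0(L_0)^3=+\tfrac12>0$ since $k_0(L_0)=-1$. Hence there are $\delta_1>0$ and a neighbourhood of $(0,0)$ on which $v'>\tfrac14$ for $s\in[L_0-\delta_1,L_0+\delta_1]$; since $L(\varepsilon)\to L_0$ as $\varepsilon\to0$, we have $L(\varepsilon)\in(L_0-\delta_1,L_0+\delta_1)$ for $\varepsilon$ small, and integrating $v'$ backwards from $L(\varepsilon)$ (using $v(L(\varepsilon))=0$) gives $v(s;\alpha(\varepsilon),\varepsilon)<0$ for $s\in[L_0-\delta_1,L(\varepsilon))$.

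Finally, the bulk. Shrink $\delta_0,\delta_1$ so that $\delta_0<L_0-\delta_1$. On the compact interval $[\delta_0,L_0-\delta_1]$ the base first integral \eqref{eq:energy-base} gives $v_0=-\tfrac12\sqrt{1-k_0^4}$, and since $k_0$ is strictly decreasing from $1$ to $-1$ on $[0,L_0]$ (Lemma~\ref{lem:base-halfperiod}) we have $k_0\in(-1,1)$ there, hence $v_0\le -c$ for some $c>0$. By Lemma~\ref{lem:smooth-dependence} together with $\alpha(\varepsilon)\to0$, the state $S(\,\cdot\,;\alpha(\varepsilon),\varepsilon)$ converges uniformly on $[0,L_0+\eta]$ to $S_0$; in particular $v(\,\cdot\,;\alpha(\varepsilon),\varepsilon)\to v_0$ uniformly on $[\delta_0,L_0-\delta_1]$, so $v<-c/2<0$ there for $\varepsilon$ small. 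Combining the three regions --- which cover $(0,L(\varepsilon))$ once $L(\varepsilon)\in(L_0-\delta_1,L_0+\delta_1)$ --- gives $v<0$ on $(0,L(\varepsilon))$.

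The only genuinely delicate point is the behaviour at the two endpoints: on the base arc $v_0$ vanishes at $s=0$ and $s=L_0$, so a na\"ive continuity argument in the bulk does not extend to the closed interval, and a small perturbation could a priori push $v$ to positive values just inside $(0,L(\varepsilon))$. This is handled precisely by the strict signs of $v'$ at the two endpoints of the base solution (negative at $s=0$, positive at $s=L_0$), which persist under small perturbation and pin down the sign of $v$ in the two boundary layers; note that it is essential here that the right endpoint condition is the curvature-derivative condition $v(L(\varepsilon))=0$ supplied by the fundamental arc, and that $L(\varepsilon)$ stays inside the fixed interval $[L_0-\delta_1,L_0+\delta_1]$ where $v'>0$.
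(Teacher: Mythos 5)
Your proposal is correct and follows essentially the same three-region strategy as the paper's own proof: uniform continuity and compactness give a uniform negative bound on the interior $[\tau,L_0-\tau]$, while the strict signs $v'(0;0,0)=-\tfrac12<0$ and $v'(L_0;0,0)=+\tfrac12>0$, together with the endpoint conditions $v(0)=0$ and $v(L(\varepsilon))=0$, control the two boundary layers. If anything, your writeup makes the boundary-layer step slightly more explicit (bounding $v'$ and integrating from the vanishing endpoint), which is exactly what the paper's terser phrase ``by continuity in parameters, after shrinking $\tau$'' is invoking.
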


\begin{proof}
For the base arc at $(\alpha,\varepsilon)=(0,0)$ we have $v_0(s)<0$ for all $s\in(0,L_0)$.
Fix $\tau\in(0,L_0/2)$. By continuity and compactness there exists $\eta_\tau>0$ such that
\[
v_0(s)\le -\eta_\tau\qquad\text{for all } s\in[\tau,L_0-\tau].
\]
By smooth dependence of solutions on parameters (Lemma~\ref{lem:smooth-dependence}), the map
$(s,\alpha,\varepsilon)\mapsto v(s;\alpha,\varepsilon)$ is continuous on a neighbourhood of
$[\tau,L_0-\tau]\times\{(0,0)\}$, hence there is $\delta_\tau>0$ such that
\[
v(s;\alpha,\varepsilon)\le -\frac{\eta_\tau}{2}
\qquad\text{for all }s\in[\tau,L_0-\tau],\ \ |\alpha|+|\varepsilon|<\delta_\tau.
\]
Now choose $\varepsilon$ small so that $(\alpha(\varepsilon),\varepsilon)$ lies in this neighbourhood and
$|L(\varepsilon)-L_0|<\tau$ (which holds for $\varepsilon$ sufficiently small since $L(\varepsilon)\to L_0$).
Then we already have $v<0$ on $[\tau,L(\varepsilon)-\tau]$.

It remains to control the two short end intervals $(0,\tau)$ and $(L(\varepsilon)-\tau,L(\varepsilon))$.
Since $v_0'(0)<0$, there exists $\tau>0$ such that $v_0(s)<0$ for all $s\in(0,\tau]$; by continuity in parameters,
after shrinking $\tau$ and $\delta_\tau$ if necessary we obtain $v(s;\alpha(\varepsilon),\varepsilon)<0$ for all
$s\in(0,\tau]$. An identical argument near $s=L_0$ using $v_0'(L_0)>0$ yields negativity on
$[L(\varepsilon)-\tau,L(\varepsilon))$ as well (recall $v(L(\varepsilon);\alpha(\varepsilon),\varepsilon)=0$ by construction).
Combining these intervals proves $v<0$ on $(0,L(\varepsilon))$.
\end{proof}

The key idea is that there can not be any additional `hidden' axes of reflection symmetry for fundamental arcs.
Their constructed symmetry order is maximal.

\begin{lemma}\label{lem:no-extra-axes}
Let $\gamma_m^j$ be a closed curve obtained from a fundamental arc by the dihedral gluing procedure in
Proposition~\ref{prop:dihedral-closure}, with prescribed seam angle $\overline\theta=\pi/m$.
Then the full symmetry group of $\gamma_m^j$ is exactly the dihedral group $D_m$; in particular,
$\gamma_m^j$ does {not} admit dihedral symmetry of order $\hat m$ for any $\hat m>m$.
\end{lemma}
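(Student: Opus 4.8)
\textbf{Proof strategy for Lemma~\ref{lem:no-extra-axes}.}
The plan is to argue that any symmetry of $\gamma_m^j$ must fix the homothety centre $c_\varepsilon$, and then that the only reflection axes through $c_\varepsilon$ that are symmetry axes are the $m$ seam rays $\ell_0,\ell_1,\dots$ used in the construction; counting these forces the symmetry group to be exactly $D_m$. First I would observe that $c_\varepsilon$ is an intrinsic feature of the curve: since $\gamma_m^j$ satisfies the homothetic equation \eqref{eq:homothety-geometric} with a strictly positive coefficient $\sigma(\varepsilon)$ (Lemma~\ref{lem:sigma-positive}) and is not a circle (its curvature is nonconstant, by Lemma~\ref{lem:v-negative}), the centre is uniquely determined by the curve — indeed it is the unique point $c$ for which $k_{ss}+\tfrac12 k^3 + \sigma\langle\gamma-c,N\rangle$ vanishes identically for some $\sigma$. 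Hence every element of the isometry group of $\gamma_m^j$ permutes the (unique) centre, i.e.\ fixes $c_\varepsilon$, so the full symmetry group is a finite subgroup of $O(2)$ fixing $c_\varepsilon$, hence cyclic or dihedral about $c_\varepsilon$.

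Next I would identify the reflection axes. Suppose $\ell$ is an axis of a reflection symmetry $\sigma_\ell$ of $\gamma_m^j$; then $\ell$ passes through $c_\varepsilon$, and at each point where $\gamma_m^j$ meets $\ell$ the curve is orthogonal to $\ell$ and has $k_s = 0$ there (evenness of the curvature across a mirror seam, as in Lemma~\ref{lem:reflection-principle-jellyfish}). In the centre coordinates this is exactly the radial seam condition $\langle\gamma - c_\varepsilon,\,T\rangle = 0$ together with $v = k_s = 0$. So it suffices to show that, along the assembled curve $\gamma_m^j$, the only points where both the radial-seam function $b_1$ (equivalently $\langle\gamma - c_\varepsilon,T\rangle$) and $v$ vanish are the $m$ endpoints $\gamma(0),\gamma(L),\gamma(2L),\dots$ of the doubled blocks — these being the fixed points of the $m$ reflections already present in $D_m$. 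Because $\gamma_m^j$ is built from rigid-motion copies of the doubled arc $\Gamma$, and $\Gamma$ in turn from two reflected copies of the fundamental arc $\gamma:[0,L]\to\R^2$, this reduces to a statement about the fundamental arc: I claim that on the open fundamental arc $s\in(0,L(\varepsilon))$ one never has $v(s;\alpha(\varepsilon),\varepsilon)=0$. But this is immediate from Lemma~\ref{lem:v-negative}, which gives $v<0$ strictly on $(0,L(\varepsilon))$. Thus the only candidate mirror points on each fundamental-arc piece are its endpoints, and after gluing these coincide with the $m$ seam rays; no extra axis can occur strictly between two adjacent seam rays.

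To finish, I would assemble the count: the reflections of $\gamma_m^j$ are in bijection with the mirror rays through $c_\varepsilon$, and we have just shown there are exactly $m$ of them (the seam rays $\ell_0,\rho\ell_0,\dots,\rho^{m-1}\ell_0$, equivalently the reflection axes $\ell_j$). A finite subgroup of $O(2)$ containing exactly $m$ reflections is $D_m$, so the full symmetry group of $\gamma_m^j$ is precisely $D_m$; in particular it contains no dihedral subgroup of order $\hat m > m$, and $\gamma_m^j$ does not admit dihedral symmetry of order $\hat m>m$ for any such $\hat m$.

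\textbf{Main obstacle.} The genuinely substantive step is the uniqueness of the homothety centre: one must rule out that $\gamma_m^j$ simultaneously satisfies \eqref{eq:homothety-geometric} for two different centres (which would be the case only if the curve were a circle), so that an a priori ``hidden'' symmetry could not move the centre to a different point and still preserve the curve. This is where nonconstancy of the curvature — guaranteed by Lemma~\ref{lem:v-negative} via $v<0$ on the fundamental arc — does the real work: if $k_{ss}+\tfrac12 k^3 = -\sigma\langle\gamma-c,N\rangle = -\sigma'\langle\gamma-c',N\rangle$ with $(c,\sigma)\neq(c',\sigma')$, subtracting gives an affine-in-$\gamma$ relation among $1$, $N$-components that forces $k$ to be constant along the curve, a contradiction. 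Everything else — the reduction of mirror points to the radial-seam-plus-$v=0$ conditions, and the exclusion of interior zeros of $v$ — is either the reflection principle already proved or a direct appeal to Lemma~\ref{lem:v-negative}.
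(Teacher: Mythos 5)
Your proposal is correct and rests on the same core mechanism as the paper: any putative extra reflection axis through $c_\varepsilon$ would produce a mirror point strictly inside a fundamental arc, forcing $k_s=0$ there, which contradicts Lemma~\ref{lem:v-negative}. The one genuine addition in your version is that you justify why any symmetry of $\gamma^j_m$ must fix $c_\varepsilon$ in the first place, by arguing that the homothety centre is an intrinsic invariant of a non-circular solution of \eqref{eq:homothety-geometric}; the paper's proof simply posits the hypothetical extra dihedral symmetry ``about the same centre $c$'' without spelling out why the centre cannot move. That observation tightens the argument but does not change its route, so the two proofs should be regarded as essentially the same.
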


\begin{proof}
By construction, $\gamma_m^j$ is invariant under the dihedral group $D_m$ generated by reflections across two adjacent
seam lines through the homothety centre $c$ meeting at angle $\pi/m$. Thus the full symmetry group contains $D_m$.

Suppose for contradiction that $\gamma_m^j$ has dihedral symmetry of order $\hat m>m$ about the same centre $c$.
Then the set of reflection axes of $D_{\hat m}$ consists of $2\hat m$ distinct lines through $c$, spaced by angles $\pi/\hat m$.
Since $\hat m>m$, there exists at least one such reflection axis $\ell$ strictly between the two {adjacent} seam lines
used in the construction (their angle is $\pi/m$), i.e.\ $\ell$ meets the interior of the corresponding fundamental wedge.

Let $\Gamma$ denote the piece of $\gamma_m^j$ lying in this fundamental wedge between the adjacent seam lines.
The reflection symmetry across $\ell$ fixes the set $\Gamma\cap\ell$. Choose a point $p\in \Gamma\cap\ell$ lying strictly inside
the wedge (such a point exists because $\ell$ passes through the wedge interior and $\Gamma$ connects its boundary lines).

Parametrise $\Gamma$ by arclength so that $p=\Gamma(s_\ast)$ with $s_\ast\in(0,L)$, where $L$ is the length of the fundamental arc.
Reflection across $\ell$ reverses the orientation along $\Gamma$ while fixing $p$, hence the curvature function
is even about $s_\ast$:
\[
k(s_\ast+t)=k(s_\ast-t)\qquad\text{for all $t$ sufficiently small.}
\]
Therefore $k_s(s_\ast)=0$, i.e.\ $v(s_\ast)=0$.

This contradicts Lemma~\ref{lem:v-negative}, which asserts that on each fundamental arc segment between adjacent seam points we have
$v<0$ in the interior. Hence no such additional axis exists and the full symmetry group is precisely $D_m$.
\end{proof}

\begin{corollary}[Non-equivalence for different symmetry orders]\label{cor:non-equivalence-m}
If $m\neq \hat m$, then $\gamma_m^j$ is not equivalent to $\gamma_{\hat m}^j$ under any similarity transformation.
\end{corollary}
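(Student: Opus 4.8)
The plan is to reduce the non-equivalence statement for $\gamma^j_m$ and $\gamma^j_{\hat m}$ to the symmetry-order obstruction already established in Lemma~\ref{lem:no-extra-axes}. The essential point is that a similarity transformation of the plane is a composition of a translation, a rotation, a reflection, and a homothety, and every such map is a diffeomorphism that carries curves to curves, preserving the set of symmetries up to conjugation: if $\Psi$ is a similarity and $\Psi(\gamma^j_m)=\gamma^j_{\hat m}$ (as unparametrised immersed curves), then $\Psi$ conjugates the full symmetry group of $\gamma^j_m$ onto that of $\gamma^j_{\hat m}$.

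First I would recall that, by Lemma~\ref{lem:no-extra-axes}, the full symmetry group of $\gamma^j_m$ is exactly the dihedral group $D_m$ (of order $2m$), and likewise that of $\gamma^j_{\hat m}$ is $D_{\hat m}$ (of order $2\hat m$). Next I would observe that conjugation by a group isomorphism preserves the order of the group; hence if $\gamma^j_m$ and $\gamma^j_{\hat m}$ were similarity-equivalent we would need $|D_m|=|D_{\hat m}|$, i.e.\ $2m=2\hat m$, contradicting $m\neq\hat m$. This already gives the conclusion, so the proof is genuinely short.

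For a slightly more self-contained argument (should one wish not to invoke the ``full symmetry group'' language abstractly), I would instead argue via the turning number and a direct symmetry-axis count: a similarity preserves the rotation index, and it maps a reflection axis of $\gamma^j_m$ through its homothety centre to a reflection axis of $\gamma^j_{\hat m}$ through the corresponding centre; since $\gamma^j_m$ has exactly $m$ such axes (Lemma~\ref{lem:no-extra-axes}) and $\gamma^j_{\hat m}$ has exactly $\hat m$, equality forces $m=\hat m$. Either route is routine once Lemma~\ref{lem:no-extra-axes} is in hand.

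The only mild subtlety — and the step I would be most careful about — is the bookkeeping of what ``equivalent under a similarity transformation'' means for immersed (as opposed to embedded) closed curves: one should state that equivalence is up to orientation-preserving reparametrisation of $\S$ as well, and check that the symmetry group is well-defined in that sense. But this is purely a matter of definitions; there is no real analytic obstacle, and the heavy lifting has already been done in Lemmas~\ref{lem:v-negative} and~\ref{lem:no-extra-axes}. I therefore expect the entire proof to be two or three sentences, essentially: ``Similarities conjugate symmetry groups; $\gamma^j_m$ has symmetry group $D_m$ by Lemma~\ref{lem:no-extra-axes}; $D_m\cong D_{\hat m}$ forces $m=\hat m$.''
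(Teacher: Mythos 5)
Your argument matches the paper's proof: both invoke Lemma~\ref{lem:no-extra-axes} to pin the full symmetry group down to $D_m$, then use that a similarity conjugates the symmetry group (preserving its order $2m$) to conclude $m=\hat m$. Your alternative axis-counting remark is a minor rephrasing of the same idea, so there is nothing substantively different here.
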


\begin{proof}
A similarity $\Phi$ conjugates the full symmetry group: $\mathrm{Sym}(\Phi(\gamma))=\Phi\,\mathrm{Sym}(\gamma)\,\Phi^{-1}$.
In particular, the abstract group (hence its order) is preserved.
By Lemma~\ref{lem:no-extra-axes}, the full symmetry group of $\gamma_m^j$ is exactly $D_m$, while that of $\gamma_{\hat m}^j$
is exactly $D_{\hat m}$. Since $|D_m|=2m\neq 2\hat m=|D_{\hat m}|$, these groups are non-isomorphic, hence cannot be conjugate.
Therefore no similarity can map $\gamma_m^j$ to $\gamma_{\hat m}^j$.
\end{proof}

\subsubsection{Proof of Theorem~\ref{thm:intro:jellyfish}}

\begin{proof}[Proof of Theorem~\ref{thm:intro:jellyfish}]
By Corollary~\ref{cor:eps-as-fn-of-theta} there exists $\overline{\theta}_\ast>0$ such that for every $\overline{\theta}\in(0,\overline{\theta}_\ast)$
there is a fundamental arc for \eqref{eq:ode-S} with that value of $\overline{\theta}$.
Choose $m_0\in\N$ such that $\pi/m_0<\overline{\theta}_\ast$. Then for every integer $m>m_0$ we may set $\overline{\theta}=\pi/m\in(0,\overline{\theta}_\ast)$
and obtain a corresponding fundamental arc. Proposition~\ref{prop:dihedral-closure} then produces a smooth closed curve
$\gamma_m^j$ with dihedral symmetry of order $m$ satisfying the homothetic expander equation \eqref{eq:intro:homot}
for the elastic flow, hence a self-similarly expanding solution of \eqref{EF}.
This proves existence for all $m>m_0$.
Finally, Corollary~\ref{cor:non-equivalence-m} shows that $\gamma_m^j$ and $\gamma_{\hat m}^j$ are not equivalent under similarity
whenever $m\neq \hat m$, as claimed.
\end{proof}

\section{Epicyclic shrinkers for curve diffusion flow}
\label{sec:cdf-epicycles}

In this section we prove Theorem~\ref{thm:intro:cdf-epi}.  Our strategy is similar to that of the jellyfish construction
(Section~\ref{sec:jellyfish-expanders}), with some additional complications.
One essential difference is that for curve diffusion flow the endpoint condition
$k_s(L)=0$ is {degenerate} along the base family $\alpha=0$, and must be replaced by an equivalent smooth
condition after dividing out the trivial factor $\alpha$.
A second technical difficulty comes in showing that the fundamental arcs turn through a non-trivial angle.

\subsection{The fundamental arc}
\label{subsec:cdf-fundamental-arc}

\subsubsection{The ODE and boundary data}

Fix parameters $(\alpha,\varepsilon)$ near $(0,0)$.  We consider the state vector
\[
S(s;\alpha,\varepsilon):=(x(s),y(s),\theta(s),k(s),v(s))\in\R^5
\]
solving
\begin{equation}\label{eq:cdf:ode-S}
\left\{
\begin{aligned}
x'&=-\sin\theta,\\
y'&=\phantom{-}\cos\theta,\\
\theta'&=k,\\
k'&=v,\\
v'&=\alpha\cos\theta+\alpha\varepsilon\bigl(x\cos\theta+y\sin\theta\bigr),
\end{aligned}
\right.
\qquad
S(0;\alpha,\varepsilon)=(1,0,0,1,0).
\end{equation}
As in the jellyfish section we introduce the seam functional
\begin{equation}\label{eq:cdf:Bdef}
B(\alpha,\varepsilon,L)
:=
-\sin\theta(L)+\varepsilon\bigl(y(L)\cos\theta(L)-x(L)\sin\theta(L)\bigr),
\end{equation}
so that the endpoint seam condition is $B(\alpha,\varepsilon,L)=0$.

The second endpoint condition is $v(L)=0$ (equivalently $k_s(L)=0$).  However, unlike the free elastic flow case,
this condition is {automatically satisfied} whenever $\alpha=0$, hence cannot be used directly as a nondegenerate
shooting equation.  Indeed integrating the last equation of \eqref{eq:cdf:ode-S} gives the factorisation
\begin{equation}\label{eq:cdf:v-factor}
v(L;\alpha,\varepsilon)
=
\alpha\int_0^L\Big(\cos\theta+\varepsilon(x\cos\theta+y\sin\theta)\Big)\,ds.
\end{equation}
We define
\begin{equation}\label{eq:cdf:PhiDef}
\Phi(\alpha,\varepsilon,L)
:=
-\int_0^L\Big(\cos\theta+\varepsilon(x\cos\theta+y\sin\theta)\Big)\,ds,
\end{equation}
so that \eqref{eq:cdf:v-factor} becomes
\begin{equation}\label{eq:cdf:vPhi}
v(L;\alpha,\varepsilon)= -\alpha\,\Phi(\alpha,\varepsilon,L).
\end{equation}
Thus, any solution with $\alpha\neq 0$ satisfies $\Phi(\alpha,\varepsilon,L)=0$ and thus may not be circular.
We make the following definition.

\begin{definition}\label{def:cdf:fundamental-arc}
A \emph{fundamental arc} is a solution of \eqref{eq:cdf:ode-S} for which there exists $L>0$ and $\alpha\neq 0$ such that
\begin{equation}\label{eq:cdf:fundamental-conditions}
\Phi(\alpha,\varepsilon,L)=0
\qquad\text{and}\qquad
B(\alpha,\varepsilon,L)=0.
\end{equation}
Equivalently, $v(L)=0$ and $B(\alpha,\varepsilon,L)=0$ with $\alpha\neq 0$.
\end{definition}

\paragraph{Smooth dependence.}
As in Section~\ref{sec:jellyfish-expanders}, the right-hand side of \eqref{eq:cdf:ode-S} is smooth in $(S,\alpha,\varepsilon)$, hence
$(s,\alpha,\varepsilon)\mapsto S(s;\alpha,\varepsilon)$ is $C^\infty$ on a uniform interval for parameters near $(0,0)$.
Consequently, $(\alpha,\varepsilon,L)\mapsto(\Phi(\alpha,\varepsilon,L),B(\alpha,\varepsilon,L))$ is $C^\infty$.

\subsubsection{The base arc: the semicircle}

\begin{lemma}[Base semicircle]\label{lem:cdf:base-semicircle}
At $\alpha=0$ (for any $\varepsilon$) the unique solution of \eqref{eq:cdf:ode-S} is the unit circle parametrisation
\begin{equation}\label{eq:cdf:circle}
x(s)=\cos s,\qquad y(s)=\sin s,\qquad \theta(s)=s,\qquad k(s)=1,\qquad v(s)=0.
\end{equation}
Moreover, setting $L_0:=\pi$ one has $B(0,0,L_0)=0$ and $\Phi(0,0,L_0)=0$.
\end{lemma}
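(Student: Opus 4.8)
The plan is to integrate \eqref{eq:cdf:ode-S} by hand once $\alpha=0$ is imposed, and then substitute the resulting explicit curve into \eqref{eq:cdf:Bdef} and \eqref{eq:cdf:PhiDef}. There is no heavy machinery involved; everything follows from the cascade structure of the system.

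When $\alpha=0$ the right-hand side of the last equation in \eqref{eq:cdf:ode-S} vanishes identically, since every term carries a factor $\alpha$; thus $v'\equiv 0$, and the initial condition $v(0)=0$ forces $v\equiv 0$. The system then integrates in cascade: $k'=v=0$ with $k(0)=1$ gives $k\equiv 1$; next $\theta'=k=1$ with $\theta(0)=0$ gives $\theta(s)=s$; and finally $x'=-\sin s$, $y'=\cos s$ with $(x(0),y(0))=(1,0)$ give $x(s)=\cos s$, $y(s)=\sin s$. This is \eqref{eq:cdf:circle}. Uniqueness is immediate from standard ODE theory: the right-hand side of \eqref{eq:cdf:ode-S} is smooth (indeed real-analytic) in the state, so the Picard--Lindel\"of theorem gives a unique solution of the initial value problem, and we have just displayed one. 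This argument is valid for every $\varepsilon$, since $\varepsilon$ enters \eqref{eq:cdf:ode-S} only through the term $\alpha\varepsilon(x\cos\theta+y\sin\theta)$, which also vanishes at $\alpha=0$.

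For the endpoint quantities, set $L_0:=\pi$ and evaluate \eqref{eq:cdf:circle} at $s=\pi$, so that $\theta(\pi)=\pi$, $x(\pi)=-1$, $y(\pi)=0$. Substituting into \eqref{eq:cdf:Bdef} with $\varepsilon=0$ gives $B(0,0,\pi)=-\sin\pi=0$; in fact the $\varepsilon$-term is $\varepsilon(y(\pi)\cos\pi-x(\pi)\sin\pi)=0$ as well, so $B(0,\varepsilon,\pi)=0$ for all $\varepsilon$. Substituting into \eqref{eq:cdf:PhiDef} with $\varepsilon=0$ leaves the integrand $\cos\theta(s)=\cos s$, and $\int_0^\pi\cos s\,ds=\sin\pi-\sin 0=0$, so $\Phi(0,0,\pi)=0$.

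There is no genuine obstacle here; it is purely a matter of reading off the cascade and computing two elementary integrals. The point worth emphasising---and the reason the statement is phrased with $\varepsilon=0$ in the $\Phi$ and $B$ assertions---is that the vanishing of $\Phi$ relies on $\varepsilon=0$: along the circle one has $x\cos\theta+y\sin\theta=\cos^2 s+\sin^2 s=1$, whence $\Phi(0,\varepsilon,\pi)=-\int_0^\pi(\cos s+\varepsilon)\,ds=-\pi\varepsilon\neq 0$ for $\varepsilon\neq 0$. This is exactly the degeneracy of the naive shooting problem at the semicircle that motivates the factorisation \eqref{eq:cdf:vPhi} and the division by $\alpha$, and hence the more delicate bifurcation analysis of the following subsections.
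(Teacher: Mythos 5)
Your proof is correct and takes essentially the same route as the paper's: set $\alpha=0$, observe $v'\equiv 0$ hence $v\equiv 0$, integrate the cascade to get the unit-circle parametrisation, and substitute at $L_0=\pi$ to verify $B(0,0,\pi)=0$ and $\Phi(0,0,\pi)=0$. Your extra observations --- the explicit appeal to Picard--Lindel\"of for uniqueness, and the remark that $\Phi(0,\varepsilon,\pi)=-\pi\varepsilon\neq 0$ for $\varepsilon\neq0$ --- are accurate and usefully foreshadow the division-by-$\alpha$ normalisation, but they do not change the substance of the argument.
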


\begin{proof}
If $\alpha=0$ then $v'\equiv 0$, hence $v\equiv 0$ by the initial condition.  Thus $k\equiv 1$ and $\theta(s)=s$, and integrating
$x'=-\sin s$, $y'=\cos s$ with $(x(0),y(0))=(1,0)$ yields \eqref{eq:cdf:circle}.
At $L_0=\pi$ we have $\sin\theta(\pi)=0$ and $y(\pi)\cos\theta(\pi)-x(\pi)\sin\theta(\pi)=0$, hence $B(0,0,\pi)=0$.
Also \eqref{eq:cdf:PhiDef} reduces to $\Phi(0,\varepsilon,L)=-\sin L-\varepsilon L$, so $\Phi(0,0,\pi)=0$.
\end{proof}

\subsubsection{Implicit function theorem for $(\alpha,L)$ as functions of $\varepsilon$}

Set
\begin{equation}\label{eq:cdf:Fdef}
F(\alpha,\varepsilon,L)
:=
\begin{bmatrix}
\Phi(\alpha,\varepsilon,L)\\
B(\alpha,\varepsilon,L)
\end{bmatrix}.
\end{equation}
Then $F(0,0,\pi)=(0,0)$ by Lemma~\ref{lem:cdf:base-semicircle}.
To facilitate application of the Implicit Function Theorem, we require a non-degeneracy condition, for which we require calculation of several derivatives.

\begin{lemma}\label{lem:cdf:jacobian}
At $(\alpha,\varepsilon,L)=(0,0,\pi)$ one has
\[
\Phi_\alpha=\frac{\pi}{2},\qquad \Phi_L=1,\qquad \Phi_\varepsilon=-\pi,
\qquad
B_\alpha=\pi,\qquad B_L=1,\qquad B_\varepsilon=0,
\]
and therefore
\begin{equation}\label{eq:cdf:det}
D_{(\alpha,L)}F(0,0,\pi)
=
\begin{bmatrix}
\frac{\pi}{2} & 1\\[1mm]
\pi & 1
\end{bmatrix},
\qquad
\det D_{(\alpha,L)}F(0,0,\pi)=-\frac{\pi}{2}\neq 0.
\end{equation}
\end{lemma}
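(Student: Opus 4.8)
The plan is to evaluate all six partial derivatives against the explicit base trajectory of Lemma~\ref{lem:cdf:base-semicircle}, namely $(x,y,\theta,k,v)=(\cos s,\sin s,s,1,0)$ on $[0,\pi]$, exploiting one structural observation: at $\alpha=0$ the $v$-equation in \eqref{eq:cdf:ode-S} reads $v'\equiv 0$, so the base trajectory is independent of $\varepsilon$. This trivialises the $\varepsilon$-derivatives and reduces the $\alpha$-derivatives to a single linear variational problem.

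First I would handle the $L$- and $\varepsilon$-derivatives. Differentiating \eqref{eq:cdf:PhiDef} in its upper limit and \eqref{eq:cdf:Bdef} in $L$ (using $\theta'=k$) and evaluating at $(\alpha,\varepsilon,L)=(0,0,\pi)$ with $\theta(\pi)=\pi$, $k(\pi)=1$ gives $\Phi_L=-\cos\pi=1$ and $B_L=-k(\pi)\cos\pi=1$. For the $\varepsilon$-derivatives, the $\varepsilon$-independence of the base trajectory means only the explicit $\varepsilon$ in \eqref{eq:cdf:PhiDef} and \eqref{eq:cdf:Bdef} contributes, so $\Phi_\varepsilon=-\int_0^\pi(x\cos\theta+y\sin\theta)\,ds=-\int_0^\pi 1\,ds=-\pi$ (since $x\cos\theta+y\sin\theta=\cos^2 s+\sin^2 s=1$ on the base circle) and $B_\varepsilon=y(\pi)\cos\theta(\pi)-x(\pi)\sin\theta(\pi)=0$.

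The $\alpha$-derivatives are the only ones requiring work. I would introduce the first variation $\dot S=(\dot x,\dot y,\dot\theta,\dot k,\dot v):=\partial_\alpha S|_{(\alpha,\varepsilon)=(0,0)}$ and linearise \eqref{eq:cdf:ode-S}, observing that since the entire forcing in the $v$-equation carries an overall factor of $\alpha$, differentiating once in $\alpha$ and setting $\alpha=\varepsilon=0$ reduces the $v$-equation to $\dot v'=\cos s$ (the $\varepsilon$-term drops for the same reason). Together with $\dot k'=\dot v$, $\dot\theta'=\dot k$ and zero initial data, this triangular cascade integrates explicitly to $\dot v=\sin s$, $\dot k=1-\cos s$, $\dot\theta=s-\sin s$. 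Then $\Phi_\alpha=\int_0^\pi\sin\theta\,\dot\theta\,ds=\int_0^\pi\sin s\,(s-\sin s)\,ds$ and $B_\alpha=-\cos\theta(\pi)\,\dot\theta(\pi)=\dot\theta(\pi)=\pi$; using $\int_0^\pi s\sin s\,ds=\pi$ and $\int_0^\pi\sin^2 s\,ds=\tfrac\pi2$ gives $\Phi_\alpha=\tfrac\pi2$. Assembling \eqref{eq:cdf:Fdef} then yields the matrix in \eqref{eq:cdf:det}, whose determinant is $\tfrac\pi2-\pi=-\tfrac\pi2\neq 0$.

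There is no serious obstacle here — the statement is an exercise in ODE perturbation theory. The only points demanding care are bookkeeping: keeping track of exactly which terms survive after setting $\alpha=0$ and $\varepsilon=0$ (in particular that the $\alpha$-factored forcing collapses the $\dot v$-equation to forcing by the bare $\cos s$ on the base circle, and that every cross-term in the $\alpha$-derivative of the $\varepsilon$-part vanishes), and correctly evaluating the two elementary integrals $\int_0^\pi s\sin s\,ds$ and $\int_0^\pi\sin^2 s\,ds$ that produce the value $\pi/2$ for $\Phi_\alpha$.
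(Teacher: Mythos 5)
Your proposal is correct and follows essentially the same route as the paper: exploit the explicit base semicircle, note that at $\alpha=0$ the trajectory is $\varepsilon$-independent so only the explicit $\varepsilon$ in $\Phi$ and $B$ contributes, integrate the triangular first-variation cascade $\dot v'=\cos s$, $\dot k'=\dot v$, $\dot\theta'=\dot k$ to get $\dot\theta=s-\sin s$, and then read off each entry. The only cosmetic difference is that the paper first records the closed form $\Phi(0,\varepsilon,L)=-\sin L-\varepsilon L$ before differentiating, whereas you differentiate under the integral directly; the arithmetic and the conclusion are identical.
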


\begin{proof}
We work at the base point $(\alpha,\varepsilon)=(0,0)$ and use the base solution from
Lemma~\ref{lem:cdf:base-semicircle}
Let us begin by differentiating $\Phi$ with respect to $L$ and $\varepsilon$.
By definition \eqref{eq:cdf:PhiDef},
 at $\alpha=0$ the solution is exactly the base circle for all $\varepsilon$, hence $\theta(s)=s$ and
\[
x\cos\theta+y\sin\theta=\cos s\cos s+\sin s\sin s=1.
\]
Therefore for all $(\varepsilon,L)$,
\begin{equation}\label{eq:cdf:Phi-alpha0}
\Phi(0,\varepsilon,L)= -\int_0^L(\cos s+\varepsilon)\,ds=-\sin L-\varepsilon L.
\end{equation}
Differentiating \eqref{eq:cdf:Phi-alpha0} yields
\[
\Phi_L(0,0,\pi)=-\cos\pi=1,
\qquad
\Phi_\varepsilon(0,0,\pi)=-\pi.
\]
Now we differentiate $B$ with respect to $L$ and $\varepsilon$. 
From \eqref{eq:cdf:Bdef} at $\varepsilon=0$ we have $B(\alpha,0,L)=-\sin\theta(L)$, hence by the chain rule
\[
B_L(\alpha,0,L)= -\cos\theta(L)\,\partial_L\theta(L).
\]
Since $\partial_L\theta(L)=\theta_s(L)=k(L)$, evaluating at the base point gives
\[
B_L(0,0,\pi)= -\cos\theta_0(\pi)\,k_0(\pi)= -\cos\pi\cdot 1=1.
\]
Next, differentiate $B$ with respect to $\varepsilon$ and then evaluate at $(0,0,\pi)$.  The explicit $\varepsilon$-dependence is linear,
and at the base circle one has
\[
y_0(\pi)\cos\theta_0(\pi)-x_0(\pi)\sin\theta_0(\pi)=\sin\pi\cos\pi-\cos\pi\sin\pi=0.
\]
Moreover, $\theta_\varepsilon(0,0,\cdot)\equiv 0$ because $\varepsilon$ enters \eqref{eq:cdf:ode-S} only multiplied by $\alpha$ and the base has $\alpha=0$.
Therefore
\[
B_\varepsilon(0,0,\pi)=0.
\]

Finally we calculate the $\alpha$-variation at $(\alpha,\varepsilon)=(0,0)$. 
Set
\[
x_1:=\partial_\alpha x\big|_{(0,0)},\quad
y_1:=\partial_\alpha y\big|_{(0,0)},\quad
\theta_1:=\partial_\alpha\theta\big|_{(0,0)},\quad
k_1:=\partial_\alpha k\big|_{(0,0)},\quad
v_1:=\partial_\alpha v\big|_{(0,0)}.
\]
Differentiating \eqref{eq:cdf:ode-S} with respect to $\alpha$ at $(\alpha,\varepsilon)=(0,0)$ (where $\theta_0(s)=s$, $k_0\equiv 1$, $v_0\equiv 0$)
gives the linear variational system
\begin{equation}\label{eq:cdf:firstVar}
\left\{
\begin{aligned}
x_1'&=-(\cos\theta_0)\,\theta_1=-\cos s\,\theta_1,\\
y_1'&=-(\sin\theta_0)\,\theta_1=-\sin s\,\theta_1,\\
\theta_1'&=k_1,\\
k_1'&=v_1,\\
v_1'&=\cos\theta_0=\cos s,
\end{aligned}
\right.
\qquad
x_1(0)=y_1(0)=\theta_1(0)=k_1(0)=v_1(0)=0,
\end{equation}
where in the last equation we used that at $(\alpha,\varepsilon)=(0,0)$ the term
$\alpha\varepsilon(x\cos\theta+y\sin\theta)$ contributes no first variation in $\alpha$.

Integrating \eqref{eq:cdf:firstVar} from the bottom yields
\[
v_1(s)=\int_0^s \cos t\,dt=\sin s,
\qquad
k_1(s)=\int_0^s v_1(t)\,dt=\int_0^s \sin t\,dt=1-\cos s,
\]
\begin{equation}
\label{eq:cdf:theta1-explicit}
\theta_1(s)=\int_0^s k_1(t)\,dt=\int_0^s (1-\cos t)\,dt=s-\sin s.
\end{equation}
Substituting $\theta_1$ into the first two equations of \eqref{eq:cdf:firstVar} and integrating gives explicit expressions
\begin{equation}\label{eq:cdf:x1y1}
x_1(s)=\int_0^s\bigl(-\cos t\,(t-\sin t)\bigr)\,dt
=-s\sin s+\frac12\sin^2 s-\cos s+1,
\end{equation}
\begin{equation}\label{eq:cdf:y1y1}
y_1(s)=\int_0^s\bigl(-\sin t\,(t-\sin t)\bigr)\,dt
=s\cos s+\frac{s}{2}-\sin s-\frac14\sin(2s).
\end{equation}
In particular, at $s=\pi$,
\begin{equation}\label{eq:cdf:endpointFirstVar}
\theta_1(\pi)=\pi,\qquad k_1(\pi)=2,\qquad x_1(\pi)=2,\qquad y_1(\pi)=-\frac{\pi}{2}.
\end{equation}
Differentiate \eqref{eq:cdf:PhiDef} in $\alpha$ and evaluate at $\varepsilon=0$:
\[
\Phi(\alpha,0,L)=-\int_0^L \cos\theta\,ds,
\qquad
\Phi_\alpha(\alpha,0,L)
=-\int_0^L(-\sin\theta\,\theta_\alpha)\,ds
=\int_0^L \sin\theta\,\theta_\alpha\,ds.
\]
At $\alpha=0$ we have $\theta=\theta_0=s$ and $\theta_\alpha=\theta_1=s-\sin s$, hence
\begin{align*}
\Phi_\alpha(0,0,\pi)
&=\int_0^\pi \sin s\,(s-\sin s)\,ds
=\int_0^\pi s\sin s\,ds-\int_0^\pi \sin^2 s\,ds.
\end{align*}
Integration by parts gives $\int_0^\pi s\sin s\,ds=\pi$, and $\int_0^\pi\sin^2 s\,ds=\pi/2$, hence
\[
\Phi_\alpha(0,0,\pi)=\pi-\frac{\pi}{2}=\frac{\pi}{2}.
\]
At $\varepsilon=0$ one has $B(\alpha,0,L)=-\sin\theta(L)$, so
\[
B_\alpha(\alpha,0,L)= -\cos\theta(L)\,\theta_\alpha(L).
\]
Evaluating at $(\alpha,L)=(0,\pi)$ gives
\[
B_\alpha(0,0,\pi)= -\cos\theta_0(\pi)\,\theta_1(\pi)= -\cos\pi\cdot \pi=\pi.
\]
Collecting the derivatives obtained finishes the proof.
\end{proof}

This immediately implies the following existence and uniqueness result.

\begin{proposition}\label{prop:cdf:fundamental-arc}
There exist $\varepsilon_0>0$ and unique $C^\infty$ functions
\[
\varepsilon\mapsto \alpha(\varepsilon),\qquad \varepsilon\mapsto L(\varepsilon),
\qquad |\varepsilon|<\varepsilon_0,
\]
with $\alpha(0)=0$, $L(0)=\pi$, such that
\begin{equation}\label{eq:cdf:Fzero}
F\bigl(\alpha(\varepsilon),\varepsilon,L(\varepsilon)\bigr)=(0,0).
\end{equation}
Moreover,
\begin{equation}\label{eq:cdf:first-order}
\alpha'(0)=-2,\qquad L'(0)=2\pi,
\end{equation}
so in particular $\alpha(\varepsilon)\neq 0$ for all sufficiently small $\varepsilon\neq 0$.
\end{proposition}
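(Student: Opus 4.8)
The plan is to apply the Implicit Function Theorem to $F(\alpha,\varepsilon,L)=(0,0)$ at the base point $(\alpha,\varepsilon,L)=(0,0,\pi)$, solving for the pair $(\alpha,L)$ as a function of the scalar $\varepsilon$. Both hypotheses are already available: $F(0,0,\pi)=(0,0)$ by Lemma~\ref{lem:cdf:base-semicircle}, and the partial Jacobian $D_{(\alpha,L)}F(0,0,\pi)$ is invertible with determinant $-\pi/2\neq 0$ by Lemma~\ref{lem:cdf:jacobian}. Since $F$ is $C^\infty$ in $(\alpha,\varepsilon,L)$ on a neighbourhood of the base point (the smooth-dependence remark following Definition~\ref{def:cdf:fundamental-arc}), the IFT yields $\varepsilon_0>0$ and unique $C^\infty$ maps $\varepsilon\mapsto\alpha(\varepsilon)$, $\varepsilon\mapsto L(\varepsilon)$ on $(-\varepsilon_0,\varepsilon_0)$ with $\alpha(0)=0$, $L(0)=\pi$, and $F(\alpha(\varepsilon),\varepsilon,L(\varepsilon))\equiv(0,0)$. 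This delivers \eqref{eq:cdf:Fzero}.

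For the first-order data in \eqref{eq:cdf:first-order}, I would differentiate the identity $F(\alpha(\varepsilon),\varepsilon,L(\varepsilon))\equiv(0,0)$ in $\varepsilon$ and evaluate at $\varepsilon=0$, giving
\[
D_{(\alpha,L)}F(0,0,\pi)\begin{bmatrix}\alpha'(0)\\[1mm] L'(0)\end{bmatrix}
=-\begin{bmatrix}\Phi_\varepsilon(0,0,\pi)\\[1mm] B_\varepsilon(0,0,\pi)\end{bmatrix}
=-\begin{bmatrix}-\pi\\[1mm] 0\end{bmatrix}.
\]
Inverting the $2\times2$ matrix $\begin{bmatrix}\pi/2 & 1\\ \pi & 1\end{bmatrix}$ (determinant $-\pi/2$) and applying it to $(\pi,0)^{\mathsf T}$ then produces $(\alpha'(0),L'(0))=(-2,2\pi)$, using the values collected in Lemma~\ref{lem:cdf:jacobian}. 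Finally, from $\alpha(\varepsilon)=-2\varepsilon+O(\varepsilon^2)$ one obtains a (possibly smaller) $\varepsilon_0>0$ on which $\alpha(\varepsilon)\neq0$ whenever $\varepsilon\neq0$; replacing $\varepsilon_0$ by this value finishes the proof.

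There is no serious obstacle here: the analytic content is entirely contained in the Jacobian computation of Lemma~\ref{lem:cdf:jacobian}, and what remains is the mechanical IFT invocation plus one $2\times2$ linear solve. The only points demanding care are keeping the sign conventions in $F=(\Phi,B)^{\mathsf T}$ consistent (so that the inverse matrix acts on $-\partial_\varepsilon F$ in the correct order), and recording that $\alpha'(0)=-2\neq0$, since this non-vanishing is precisely what guarantees that the constructed fundamental arcs are non-circular (and hence genuinely new) for all small $\varepsilon\neq0$.
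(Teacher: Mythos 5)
Your proof is correct and takes essentially the same route as the paper: apply the Implicit Function Theorem at $(0,0,\pi)$ using the invertibility from Lemma~\ref{lem:cdf:jacobian}, then differentiate the identity $F(\alpha(\varepsilon),\varepsilon,L(\varepsilon))\equiv(0,0)$ at $\varepsilon=0$ and solve the resulting $2\times2$ linear system to get $(\alpha'(0),L'(0))=(-2,2\pi)$. The final observation that $\alpha'(0)\neq0$ forces $\alpha(\varepsilon)\neq0$ for small $\varepsilon\neq0$ matches the paper's intent as well.
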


\begin{proof}
By Lemma~\ref{lem:cdf:jacobian}, $D_{(\alpha,L)}F(0,0,\pi)$ is invertible, hence the Implicit Function Theorem yields unique
$C^\infty$ functions $(\alpha(\varepsilon),L(\varepsilon))$ solving \eqref{eq:cdf:Fzero}.

Differentiating \eqref{eq:cdf:Fzero} at $\varepsilon=0$ gives
\[
F_\varepsilon+F_\alpha\,\alpha'(0)+F_L\,L'(0)=0.
\]
Using Lemma~\ref{lem:cdf:jacobian} this becomes
\[
\begin{bmatrix}
\frac{\pi}{2} & 1\\[1mm]
\pi & 1
\end{bmatrix}
\begin{bmatrix}\alpha'(0)\\ L'(0)\end{bmatrix}
=
\begin{bmatrix}\pi\\ 0\end{bmatrix},
\]
which yields \eqref{eq:cdf:first-order}.
\end{proof}

The family constructed in Proposition \ref{prop:cdf:fundamental-arc} consists of fundamental arcs.

\subsection{An abundance of angles}
\label{subsec:cdf:turning-angle}

As in the jellyfish construction, the dihedral closure requires control of the angle between the two
{radial seam lines} through the homothety centre.  In the present CDF setting the base arc is a semicircle,
so this seam angle is close to $\pi$ (rather than close to $0$ as in the jellyfish case).  It is therefore
convenient to work with the {angle deficit from $\pi$}.

Along the IFT branch from Proposition~\ref{prop:cdf:fundamental-arc} define
\[
\Theta(\varepsilon):=\theta\bigl(L(\varepsilon);\alpha(\varepsilon),\varepsilon\bigr),
\qquad
\overline{\theta}(\varepsilon):=\pi-\Theta(\varepsilon).
\]
Thus $\overline{\theta}(\varepsilon)$ is the acute angle between the radial line through $c_\varepsilon$ and $\gamma(0)$ and the
radial line through $c_\varepsilon$ and $\gamma(L(\varepsilon))$.

In order to create a wealth of examples, we require the endpoint tangent angle $\Theta$  to not be equal to $\pi$ along the family of fundamental arcs generated by Proposition \ref{prop:cdf:fundamental-arc}.
It turns out that $\Theta$ vanishes at first-order, which necessitates a lengthier calculation when compared to the previous case of jellyfish.
Nevertheless, it does move at second-order.

\begin{lemma}\label{lem:cdf:Theta-expansion}
Along the branch from Proposition~\ref{prop:cdf:fundamental-arc}, the endpoint tangent angle
admits the expansion
\begin{equation}\label{eq:cdf:Theta-expansion}
\Theta(\varepsilon)=\pi-\pi\,\varepsilon^2+O(\varepsilon^3)
\qquad(\varepsilon\to 0).
\end{equation}
In particular, $\Theta$ is continuous and non-constant in a neighbourhood of $\varepsilon=0$.
\end{lemma}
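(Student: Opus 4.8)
The plan is to sidestep any second-order variational or second-order implicit-function computation by exploiting the matching identity $B\equiv 0$ that cuts out the branch of Proposition~\ref{prop:cdf:fundamental-arc}. Set $X(\varepsilon):=x\bigl(L(\varepsilon);\alpha(\varepsilon),\varepsilon\bigr)$ and $Y(\varepsilon):=y\bigl(L(\varepsilon);\alpha(\varepsilon),\varepsilon\bigr)$, so $\Theta(\varepsilon)=\theta\bigl(L(\varepsilon);\alpha(\varepsilon),\varepsilon\bigr)$; by Lemma~\ref{lem:cdf:base-semicircle} the base arc is the unit semicircle, hence $\Theta(0)=\pi$, $X(0)=-1$, $Y(0)=0$. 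First I would record $\Theta'(0)=0$ directly: since $\theta_s=k$, the base has $k_0\equiv 1$, and $\varepsilon$ enters \eqref{eq:cdf:ode-S} only through the product $\alpha\varepsilon$ (hence $\partial_\varepsilon S(\,\cdot\,;0,0)\equiv 0$), the chain rule gives $\Theta'(0)=k_0(\pi)\,L'(0)+\theta_1(\pi)\,\alpha'(0)=1\cdot 2\pi+\pi\cdot(-2)=0$, using $L'(0)=2\pi$, $\alpha'(0)=-2$ from Proposition~\ref{prop:cdf:fundamental-arc} and $\theta_1(\pi)=\partial_\alpha\theta(\pi;0,0)=\pi$ from \eqref{eq:cdf:endpointFirstVar}.

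The heart of the argument is the following reduction. By \eqref{eq:cdf:Bdef}, the identity $B\bigl(\alpha(\varepsilon),\varepsilon,L(\varepsilon)\bigr)=0$ — which holds for all small $\varepsilon$, since $F\equiv 0$ along the branch of Proposition~\ref{prop:cdf:fundamental-arc} — reads
\[
\sin\Theta(\varepsilon)=\varepsilon\,G(\varepsilon),\qquad
G(\varepsilon):=Y(\varepsilon)\cos\Theta(\varepsilon)-X(\varepsilon)\sin\Theta(\varepsilon),
\]
where $G$ is smooth and $G(0)=Y(0)\cos\pi-X(0)\sin\pi=0$. Writing $\Theta(\varepsilon)=\pi+\psi(\varepsilon)$ with $\psi$ smooth, $\psi(0)=0$, $\psi'(0)=0$, and using $\sin(\pi+\psi)=-\sin\psi$ together with $\psi=O(\varepsilon^2)$ (so $\sin\psi=\psi+O(\varepsilon^6)$) and $\varepsilon G(\varepsilon)=G'(0)\,\varepsilon^2+O(\varepsilon^3)$ (as $G(0)=0$), the identity collapses to $\Theta(\varepsilon)=\pi-G'(0)\,\varepsilon^2+O(\varepsilon^3)$. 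Differentiating $G$ at $\varepsilon=0$, every cross-term vanishes because $\sin\Theta(0)=0$, $\Theta'(0)=0$ and $Y(0)=0$, leaving $G'(0)=Y'(0)\cos\Theta(0)=-Y'(0)$. Hence $\Theta(\varepsilon)=\pi+Y'(0)\,\varepsilon^2+O(\varepsilon^3)$.

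It then remains to evaluate $Y'(0)$, which is once more just a first-variation computation: with $y_s=\cos\theta$, $\theta_0(\pi)=\pi$ and $\partial_\varepsilon y(\,\cdot\,;0,0)\equiv 0$, the chain rule gives $Y'(0)=\cos\pi\cdot L'(0)+y_1(\pi)\,\alpha'(0)=(-1)(2\pi)+\bigl(-\tfrac{\pi}{2}\bigr)(-2)=-\pi$, using $y_1(\pi)=\partial_\alpha y(\pi;0,0)=-\pi/2$ from \eqref{eq:cdf:endpointFirstVar}. This yields $\Theta(\varepsilon)=\pi-\pi\,\varepsilon^2+O(\varepsilon^3)$, and the asserted continuity and non-constancy near $\varepsilon=0$ are then immediate.

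I expect the main obstacle to be resisting the temptation to attack $\Theta''(0)$ head-on, by a triple chain rule on $\varepsilon\mapsto\theta\bigl(L(\varepsilon);\alpha(\varepsilon),\varepsilon\bigr)$: that route forces one to compute the second variations $\theta_{\alpha\alpha}$, $\theta_{\alpha\varepsilon}$ along the base circle together with the second-order implicit-function data $\alpha''(0)$, $L''(0)$ — a long, if routine, detour. The shortcut above works precisely because the single matching relation $B\equiv 0$, together with the vanishings $G(0)=0$ and $\Theta'(0)=0$, packages all of that information into the one first-variation endpoint value $y_1(\pi)$ already recorded in \eqref{eq:cdf:endpointFirstVar}. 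The one point that needs care is the order of operations: $\psi'(0)=0$ (equivalently $\Theta'(0)=0$) must be in hand before invoking $\sin\psi=\psi+O(\psi^3)$, since that expansion is only useful once $\psi=O(\varepsilon)$ is known.
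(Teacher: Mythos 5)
Your proposal is correct, and it takes a genuinely different route from the paper's own proof of Lemma~\ref{lem:cdf:Theta-expansion}. The paper computes $\Theta''(0)$ head-on: it assembles the second variations $\theta_{\alpha\alpha}(0,0,\pi)$, $\theta_{\alpha L}$, $\theta_{\varepsilon\alpha}$, the mixed second derivatives $\Phi_{\alpha\alpha}$, $B_{\alpha\alpha}$, $\Phi_{\varepsilon\alpha}$, $B_{\varepsilon\alpha}$, and the second-order implicit-function data $\alpha''(0)=16$, $L''(0)=7\pi$ before substituting into a five-term chain-rule formula — a long computation outsourced in part to Lemma~\ref{lem:cdf:Faa}. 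You instead exploit the constraint $B\bigl(\alpha(\varepsilon),\varepsilon,L(\varepsilon)\bigr)\equiv 0$ to write $\sin\Theta(\varepsilon)=\varepsilon\,G(\varepsilon)$ with $G(0)=0$; once $\Theta'(0)=0$ is in hand (a clean first-order chain-rule step), the quadratic coefficient of $\Theta$ collapses to the single first-variation endpoint value $Y'(0)=y_1(\pi)\alpha'(0)+y_s(\pi)L'(0)=-\pi$, and all the second-order data is packaged automatically by the matching identity. This buys a substantial reduction in computational overhead, and the cancellations are structurally transparent rather than a numerical coincidence. Interestingly, your approach is precisely the one the paper adopts for the \emph{ideal-flow} analogue (the proof of Lemma~\ref{lem:ideal:Theta-expansion} introduces $J(\varepsilon)$ and derives $\delta(\varepsilon)=-\varepsilon J(\varepsilon)+O(\varepsilon^3)$ from $B\equiv 0$ in exactly this way), so in effect you have noticed that the cleaner ideal-flow argument specialises back to the CDF case; in hindsight the paper could have used it in both places. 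All your intermediate values ($\theta_1(\pi)=\pi$, $y_1(\pi)=-\pi/2$, $\alpha'(0)=-2$, $L'(0)=2\pi$, $\Theta'(0)=0$, $Y'(0)=-\pi$) check against the paper's.
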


\begin{proof}
We work throughout at the base point
$(\alpha,\varepsilon,L)=(0,0,\pi)$. 

Differentiate ${\Theta}(\varepsilon)=\theta\bigl(L(\varepsilon);\alpha(\varepsilon),\varepsilon\bigr)$ once:
\begin{equation}\label{eq:cdf:ThetaPrime-chain}
{\Theta}'(0)
=
\theta_\varepsilon+\theta_\alpha\,\alpha'(0)+\theta_L\,L'(0)
\quad\text{evaluated at }(0,0,\pi).
\end{equation}
At $(0,0,\pi)$ we have $\theta_L=\theta_s(\pi)=k_0(\pi)=1$ and, from  Lemma~\ref{lem:cdf:jacobian},
\[
\theta_\alpha(0,0,\pi)=\theta_1(\pi)=\pi.
\]
Moreover, $\theta_\varepsilon(0,0,\pi)=0$ because $\varepsilon$ enters \eqref{eq:cdf:ode-S} only through the product
$\alpha\varepsilon$, and the base has $\alpha=0$.
Using \eqref{eq:cdf:first-order}, namely $\alpha'(0)=-2$ and $L'(0)=2\pi$, we obtain
\[
{\Theta}'(0)=0+\pi(-2)+1\cdot(2\pi)=0.
\]
To compute ${\Theta}''(0)$ we differentiate the identity $F(\alpha(\varepsilon),\varepsilon,L(\varepsilon))\equiv 0$ twice.
Write
\[
A:=D_{(\alpha,L)}F(0,0,\pi)
=
\begin{bmatrix}\Phi_\alpha & \Phi_L\\ B_\alpha & B_L\end{bmatrix}(0,0,\pi)
=
\begin{bmatrix}\frac{\pi}{2} & 1\\[0.7mm]\pi & 1\end{bmatrix}
\]
from Lemma~\ref{lem:cdf:jacobian}.
Differentiating once gives
\begin{equation}\label{eq:cdf:firstDiffF}
F_\varepsilon + F_\alpha\,\alpha'(0)+F_L\,L'(0)=0,
\end{equation}
which was already used to obtain \eqref{eq:cdf:first-order}.
Differentiating \eqref{eq:cdf:firstDiffF} once more yields, at $\varepsilon=0$,
\begin{equation}\label{eq:cdf:secondDiffF}
A
\begin{bmatrix}\alpha''(0)\\[0.3mm]L''(0)\end{bmatrix}
=
-\,S,
\end{equation}
where
\begin{equation}\label{eq:cdf:Sdef}
S:=
\Big(
F_{\varepsilon\varepsilon}
+2F_{\varepsilon\alpha}\alpha'(0)
+2F_{\varepsilon L}L'(0)
+F_{\alpha\alpha}\bigl(\alpha'(0)\bigr)^2
+2F_{\alpha L}\alpha'(0)L'(0)
+F_{LL}\bigl(L'(0)\bigr)^2
\Big)_{(0,0,\pi)} .
\end{equation}
We compute $\theta_{\varepsilon\alpha}$ by considering the $\alpha$-variation at $\alpha=0$ but with $\varepsilon$ free.
Differentiate the last equation in \eqref{eq:cdf:ode-S} with respect to $\alpha$ at $\alpha=0$:
\[
(\partial_\alpha v)' = \cos\theta_0 + \varepsilon\bigl(x_0\cos\theta_0+y_0\sin\theta_0\bigr)
= \cos s + \varepsilon,
\]
since $(x_0,y_0,\theta_0)=(\cos s,\sin s,s)$ and $x_0\cos\theta_0+y_0\sin\theta_0\equiv 1$.
With $v_1:=\partial_\alpha v|_{\alpha=0}$ and $v_1(0)=0$, this gives
\[
v_1(s)=\sin s+\varepsilon s.
\]
Integrating upward as in Lemma~\ref{lem:cdf:jacobian} gives
\[
k_1(s)=\int_0^s v_1(t)\,dt = 1-\cos s+\varepsilon\frac{s^2}{2},
\qquad
\theta_1(s)=\int_0^s k_1(t)\,dt = s-\sin s+\varepsilon\frac{s^3}{6}.
\]
Therefore
\begin{equation}\label{eq:cdf:theta_ea}
\theta_{\varepsilon\alpha}(0,0,\pi)
=
\partial_\varepsilon\theta_1(\pi)\Big|_{\varepsilon=0}
=
\frac{\pi^3}{6}.
\end{equation}

Introduce the auxiliary quantities
\[
P:=x\cos\theta+y\sin\theta,
\qquad
M:=y\cos\theta-x\sin\theta.
\]
Along the base arc, $P\equiv 1$ and $M\equiv 0$.

\paragraph{Mixed derivative of $\Phi$.}
From \eqref{eq:cdf:PhiDef},
\[
\Phi_\varepsilon
=
-\int_0^L\bigl(-\sin\theta\,\theta_\varepsilon + P+\varepsilon P_\varepsilon\bigr)\,ds.
\]
At the base point, $\theta_\varepsilon=0$ and $P\equiv 1$, so $\Phi_\varepsilon(0,0,\pi)=-\pi$.
Differentiate $\Phi_\varepsilon$ with respect to $\alpha$ and then evaluate at $(0,0,\pi)$:
\begin{align*}
\Phi_{\varepsilon\alpha}(0,0,\pi)
&=
-\int_0^\pi\Big(-\sin s\,\theta_{\varepsilon\alpha}(s) + P_\alpha(s)\Big)\,ds\\
&=
\int_0^\pi \sin s\,\theta_{\varepsilon\alpha}(s)\,ds
-\int_0^\pi P_\alpha(s)\,ds.
\end{align*}
At $\varepsilon=0$, the $\alpha$-variation computed in Lemma~\ref{lem:cdf:jacobian} gives
$x_1,y_1,\theta_1$ and hence
\[
P_\alpha = x_1\cos s+y_1\sin s.
\]
Using the explicit formulas \eqref{eq:cdf:x1y1}, \eqref{eq:cdf:y1y1}, one simplifies to
\begin{equation}\label{eq:cdf:Palpha}
P_\alpha(s)=\frac{s\sin s}{2}+\cos s-1.
\end{equation}
Consequently,
\[
\int_0^\pi P_\alpha(s)\,ds
=
\frac12\int_0^\pi s\sin s\,ds +\int_0^\pi \cos s\,ds-\int_0^\pi 1\,ds
=\frac12\pi+0-\pi=-\frac{\pi}{2}.
\]
Next, $\theta_{\varepsilon\alpha}(s)=s^3/6$ along the base, so
\[
\int_0^\pi \sin s\,\theta_{\varepsilon\alpha}(s)\,ds
=
\frac16\int_0^\pi s^3\sin s\,ds.
\]
Combining these computations gives
\begin{equation}\label{eq:cdf:Phi_ea}
\Phi_{\varepsilon\alpha}(0,0,\pi)
=
\frac{1}{6}\pi(\pi^2-6)+\frac{\pi}{2}
=
\frac{\pi(\pi^2-3)}{6}.
\end{equation}

\paragraph{Mixed derivative of $B$.}
From \eqref{eq:cdf:Bdef},
\[
B_\varepsilon
=
-\cos\theta(L)\,\theta_\varepsilon(L)+M(L)+\varepsilon M_\varepsilon(L),
\]
and at the base point $M(\pi)=0$ and $\theta_\varepsilon(\pi)=0$, hence $B_\varepsilon(0,0,\pi)=0$.
Differentiating once more in $\alpha$ gives
\begin{equation}\label{eq:cdf:Bea_split}
B_{\varepsilon\alpha}(0,0,\pi)
=
-\cos\pi\cdot \theta_{\varepsilon\alpha}(0,0,\pi)
+M_\alpha(0,0,\pi).
\end{equation}
We already have $\theta_{\varepsilon\alpha}(0,0,\pi)=\pi^3/6$ from \eqref{eq:cdf:theta_ea}.
For $M_\alpha$, note that at $\varepsilon=0$,
\[
M=y\cos\theta-x\sin\theta,
\]
so differentiating in $\alpha$ and evaluating at the base point gives
\[
M_\alpha(\pi)
=
y_1(\pi)\cos\pi - y_0(\pi)\sin\pi\,\theta_1(\pi)
-x_1(\pi)\sin\pi - x_0(\pi)\cos\pi\,\theta_1(\pi)= -y_1(\pi)-\theta_1(\pi).
\]
Using \eqref{eq:cdf:endpointFirstVar} gives $y_1(\pi)=-\pi/2$ and $\theta_1(\pi)=\pi$, hence
\[
M_\alpha(\pi)= -\Bigl(-\frac{\pi}{2}\Bigr)-\pi=-\frac{\pi}{2}.
\]
Substituting into \eqref{eq:cdf:Bea_split} yields
\begin{equation}\label{eq:cdf:B_ea}
B_{\varepsilon\alpha}(0,0,\pi)
=
\frac{\pi^3}{6}-\frac{\pi}{2}
=
\frac{\pi(\pi^2-3)}{6}.
\end{equation}
In particular, $\Phi_{\varepsilon\alpha}(0,0,\pi)=B_{\varepsilon\alpha}(0,0,\pi)$.

We next record the other second derivatives appearing in \eqref{eq:cdf:Sdef}.

\paragraph{The derivative $F_{\varepsilon L}$.}
From \eqref{eq:cdf:Phi-alpha0}, $\Phi(0,\varepsilon,L)=-\sin L-\varepsilon L$, hence
\[
\Phi_{\varepsilon L}(0,0,\pi)=\partial_L(-L)\big|_{L=\pi}=-1.
\]
Also, along the base circle $B(0,\varepsilon,L)\equiv -\sin L$ (since $M\equiv 0$ and $\theta=s$), so $B_{\varepsilon L}(0,0,\pi)=0$.
Therefore
\begin{equation}\label{eq:cdf:FepsL}
F_{\varepsilon L}(0,0,\pi)=\begin{bmatrix}-1\\[0.3mm]0\end{bmatrix}.
\end{equation}

\paragraph{The derivative $F_{\alpha L}$.}
At $\varepsilon=0$, $\Phi_\alpha=\int_0^L\sin\theta\,\theta_\alpha\,ds$, so
\[
\Phi_{\alpha L}(0,0,\pi)=\sin\theta_0(\pi)\,\theta_1(\pi)=0.
\]
Moreover, at $\varepsilon=0$, $B(\alpha,0,L)=-\sin\theta(L)$, hence
\[
B_{\alpha L}(0,0,\pi)
=
-\cos\theta_0(\pi)\,\theta_{\alpha L}(0,0,\pi)
=
(+1)\,\theta_{\alpha L}(0,0,\pi).
\]
Since $\theta_\alpha' = k_\alpha$, we have $\theta_{\alpha L}=\partial_L\theta_\alpha(L)=\theta_{\alpha s}(L)=k_\alpha(L)=k_1(L)$,
and \eqref{eq:cdf:endpointFirstVar} gives $k_1(\pi)=2$. Therefore
\begin{equation}\label{eq:cdf:FalphaL}
F_{\alpha L}(0,0,\pi)=\begin{bmatrix}0\\[0.3mm]2\end{bmatrix}.
\end{equation}

\paragraph{The derivative $F_{\varepsilon\varepsilon}$ and $F_{LL}$.}
At $\alpha=0$ both $\Phi(0,\varepsilon,L)$ and $B(0,\varepsilon,L)$ are affine in $\varepsilon$, hence
$F_{\varepsilon\varepsilon}(0,0,\pi)=0$.
Also, $\partial_{LL}(-\sin L)=\sin L$ vanishes at $L=\pi$, and the $L$-dependence through the solution does not contribute at $\alpha=0$,
so $F_{LL}(0,0,\pi)=0$.

\paragraph{The derivative $F_{\alpha\alpha}$.}
These are obtained by differentiating the variational system \eqref{eq:cdf:firstVar} once more in $\alpha$ and evaluating at $(0,0)$.
One finds the endpoint values
\begin{equation}\label{eq:cdf:Faa_values}
F_{\alpha\alpha}(0,0,\pi)
=
\begin{bmatrix}\Phi_{\alpha\alpha}\\[0.3mm]B_{\alpha\alpha}\end{bmatrix}(0,0,\pi)
=
\begin{bmatrix}
\frac{\pi(2\pi^2-39)}{12}\\[1mm]
\frac{\pi(2\pi^2-27)}{12}
\end{bmatrix}.
\end{equation}
The computation is standard but lengthy; we include the full derivation in Lemma \ref{lem:cdf:Faa}.

Using \eqref{eq:cdf:Faa_values} together with the identities already derived, we may now evaluate $S$.
Recall that $\alpha'(0)=-2$ and $L'(0)=2\pi$ from \eqref{eq:cdf:first-order}.
Let
\[
C:=\frac{\pi(\pi^2-3)}{6}.
\]
Then \eqref{eq:cdf:Phi_ea}-\eqref{eq:cdf:B_ea} give
\begin{equation}\label{eq:cdf:Fea}
F_{\varepsilon\alpha}(0,0,\pi)=\begin{bmatrix}C\\[0.3mm]C\end{bmatrix}.
\end{equation}
Substituting \eqref{eq:cdf:Fea}, \eqref{eq:cdf:FepsL}, \eqref{eq:cdf:FalphaL}, and \eqref{eq:cdf:Faa_values} into \eqref{eq:cdf:Sdef},
using $F_{\varepsilon\varepsilon}=F_{LL}=0$, yields
\[
S_1=-4C-4\pi+4\Phi_{\alpha\alpha},
\qquad
S_2=-4C-16\pi+4B_{\alpha\alpha}.
\]
Since
\[
4C=\frac{2}{3}\pi(\pi^2-3)=\frac{2}{3}\pi^3-2\pi,
\quad
4\Phi_{\alpha\alpha}=\frac{\pi(2\pi^2-39)}{3}=\frac{2}{3}\pi^3-13\pi,
\quad
4B_{\alpha\alpha}=\frac{\pi(2\pi^2-27)}{3}=\frac{2}{3}\pi^3-9\pi,
\]
we obtain
\[
S_1=\Bigl(-\frac{2}{3}\pi^3+2\pi\Bigr)-4\pi+\Bigl(\frac{2}{3}\pi^3-13\pi\Bigr)=-15\pi,
\qquad
S_2=\Bigl(-\frac{2}{3}\pi^3+2\pi\Bigr)-16\pi+\Bigl(\frac{2}{3}\pi^3-9\pi\Bigr)=-23\pi.
\]
Hence $S=(-15\pi,-23\pi)^{\mathsf T}$, and \eqref{eq:cdf:secondDiffF} becomes
\begin{equation}\label{eq:cdf:linear_system_a2l2}
\begin{bmatrix}\frac{\pi}{2}&1\\[0.5mm]\pi&1\end{bmatrix}
\begin{bmatrix}\alpha''(0)\\[0.3mm]L''(0)\end{bmatrix}
=
\begin{bmatrix}15\pi\\[0.3mm]23\pi\end{bmatrix}.
\end{equation}
Subtracting the second equation from twice the first gives $L''(0)=7\pi$, and substituting back yields $\alpha''(0)=16$.

Now we may calculate the second derivative ${\Theta}''(0)$.
Differentiate ${\Theta}(\varepsilon)=\theta(\alpha(\varepsilon),\varepsilon,L(\varepsilon))$ twice at $\varepsilon=0$:
\begin{align}\label{eq:cdf:ThetaSecond-chain}
{\Theta}''(0)
&=
\theta_{\alpha\alpha}\bigl(\alpha'(0)\bigr)^2
+2\theta_{\alpha L}\alpha'(0)L'(0)
+\theta_\alpha\,\alpha''(0)
+\theta_L\,L''(0)
+2\theta_{\varepsilon\alpha}\alpha'(0),
\end{align}
where we used that $\theta_\varepsilon(0,0,\pi)=0$ and $\theta_{\varepsilon L}(0,0,\pi)=\theta_{\varepsilon\varepsilon}(0,0,\pi)=0$
at the base point (again because $\varepsilon$ enters only through $\alpha\varepsilon$).
We now list the required values:
\[
\theta_\alpha(0,0,\pi)=\theta_1(\pi)=\pi,\qquad
\theta_L(0,0,\pi)=k_0(\pi)=1,\qquad
\theta_{\alpha L}(0,0,\pi)=k_1(\pi)=2,
\]
\[
\theta_{\varepsilon\alpha}(0,0,\pi)=\frac{\pi^3}{6},
\qquad
\alpha'(0)=-2,\quad L'(0)=2\pi,\quad \alpha''(0)=16,\quad L''(0)=7\pi.
\]
Finally, $\theta_{\alpha\alpha}(0,0,\pi)$ is obtained from the second $\alpha$-variation above:
since $\theta_{\alpha\alpha}(0,0,\pi)=\theta_2(\pi)$ and one computes $\theta_2(\pi)=\frac{\pi(2\pi^2-27)}{12}$,
we have
\begin{equation}\label{eq:cdf:theta_aa_value}
\theta_{\alpha\alpha}(0,0,\pi)=\frac{\pi(2\pi^2-27)}{12}.
\end{equation}
Substituting these values into \eqref{eq:cdf:ThetaSecond-chain} gives
\begin{align*}
{\Theta}''(0)
&=
\frac{\pi(2\pi^2-27)}{12}\cdot 4
+2\cdot 2\cdot(-2)\cdot(2\pi)
+\pi\cdot 16
+1\cdot(7\pi)
+2\cdot\frac{\pi^3}{6}\cdot(-2)\\
&=
\frac{\pi(2\pi^2-27)}{3}-16\pi+16\pi+7\pi-\frac{2\pi^3}{3}\\
&=
\Bigl(\frac{2\pi^3}{3}-9\pi\Bigr)+7\pi-\frac{2\pi^3}{3}
=-2\pi.
\end{align*}
Therefore ${\Theta}''(0)=-2\pi$, and the Taylor expansion gives
\[
{\Theta}(\varepsilon)={\Theta}(0)+\frac12{\Theta}''(0)\,\varepsilon^2+O(\varepsilon^3)
=\pi-\pi\varepsilon^2+O(\varepsilon^3),
\]
which is \eqref{eq:cdf:Theta-expansion}. In particular, ${\Theta}$ is continuous and non-constant near $\varepsilon=0$.
\end{proof}

\begin{lemma}[Second $\alpha$-variation of the boundary map]\label{lem:cdf:Faa}
Let $\Phi$ and $B$ be defined by \eqref{eq:cdf:PhiDef} and \eqref{eq:cdf:Bdef}, and let
$F=(\Phi,B)^{\mathsf T}$ as in \eqref{eq:cdf:Fdef}.  Then at the base point $(\alpha,\varepsilon,L)=(0,0,\pi)$ one has
\begin{equation}\label{eq:cdf:Faa_values}
F_{\alpha\alpha}(0,0,\pi)
=
\begin{bmatrix}\Phi_{\alpha\alpha}\\[0.3mm]B_{\alpha\alpha}\end{bmatrix}(0,0,\pi)
=
\begin{bmatrix}
\frac{\pi(2\pi^2-39)}{12}\\[1mm]
\frac{\pi(2\pi^2-27)}{12}
\end{bmatrix}.
\end{equation}
\end{lemma}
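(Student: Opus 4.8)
The plan is to express $\Phi_{\alpha\alpha}$ and $B_{\alpha\alpha}$ at the base point entirely in terms of the first and second $\alpha$-variations of the tangent angle along the base semicircle, and then to obtain the second variation by integrating one further variational system.

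First I would note that $\varepsilon$ enters \eqref{eq:cdf:ode-S} only through the product $\alpha\varepsilon$, so the term $\alpha\varepsilon(x\cos\theta+y\sin\theta)$ contributes nothing to any pure $\alpha$-derivative evaluated at $\varepsilon=0$. Hence I may set $\varepsilon=0$ throughout and work with the reduced data $\Phi(\alpha,0,L)=-\int_0^L\cos\theta\,ds$ and $B(\alpha,0,L)=-\sin\theta(L)$. Writing $\theta_1:=\partial_\alpha\theta|_{(0,0)}=s-\sin s$ (from Lemma~\ref{lem:cdf:jacobian}) and $\theta_2:=\partial_\alpha^2\theta|_{(0,0)}$, differentiating twice in $\alpha$ and using $\theta_0(s)=s$ gives, at $L=\pi$,
\[
B_{\alpha\alpha}(0,0,\pi)=\sin\pi\cdot\theta_1(\pi)^2-\cos\pi\cdot\theta_2(\pi)=\theta_2(\pi),
\qquad
\Phi_{\alpha\alpha}(0,0,\pi)=\int_0^\pi\!\bigl(\cos s\,(s-\sin s)^2+\sin s\,\theta_2(s)\bigr)\,ds.
\]
So the problem reduces to computing the function $\theta_2$.

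Next I would compute $\theta_2$ by differentiating the first variational system \eqref{eq:cdf:firstVar} once more in $\alpha$ at $(\alpha,\varepsilon)=(0,0)$. From $v'=\alpha\cos\theta$ one gets $v_2'=-2\sin\theta_0\,\theta_1=-2\sin s\,(s-\sin s)$ with $v_2(0)=0$, and comparing with the equation $y_1'=-\sin s\,(s-\sin s)$ already integrated in Lemma~\ref{lem:cdf:jacobian} shows $v_2=2y_1$. Integrating upward, $k_2(s)=\int_0^s v_2$ and $\theta_2(s)=\int_0^s k_2$, produces a closed form
\[
\theta_2(s)=-2s\cos s+6\sin s+\tfrac16 s^3+\tfrac18\sin 2s-\tfrac{17}{4}s,
\]
and evaluation at $s=\pi$ gives $\theta_2(\pi)=2\pi+\tfrac{\pi^3}{6}-\tfrac{17}{4}\pi=\tfrac{\pi(2\pi^2-27)}{12}$, which is the asserted value of $B_{\alpha\alpha}(0,0,\pi)$ (consistent with \eqref{eq:cdf:theta_aa_value}).

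Finally I would substitute this closed form for $\theta_2$ into the integral for $\Phi_{\alpha\alpha}(0,0,\pi)$ and evaluate term by term using the elementary primitives $\int_0^\pi s^2\cos s\,ds=-2\pi$, $\int_0^\pi s\sin s\,ds=\pi$, $\int_0^\pi s\sin 2s\,ds=-\tfrac{\pi}{2}$, $\int_0^\pi s^3\sin s\,ds=\pi^3-6\pi$, $\int_0^\pi\sin^2 s\,ds=\tfrac{\pi}{2}$, and $\int_0^\pi\cos s\,\sin^2 s\,ds=0$. The $\cos s\,(s-\sin s)^2$ contribution then comes out to $-\tfrac{3\pi}{2}$ and the $\sin s\,\theta_2(s)$ contribution to $\tfrac{\pi^3}{6}-\tfrac{7\pi}{4}$, whose sum is $\tfrac{\pi^3}{6}-\tfrac{13\pi}{4}=\tfrac{\pi(2\pi^2-39)}{12}$, as claimed. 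The only real obstacle is bookkeeping: the two successive integrations producing $\theta_2$, together with the several integrations by parts in the last step, leave ample room for sign and coefficient slips, but nothing conceptual is involved — the shortcut $v_2=2y_1$ and the vanishing of $\sin\theta_0(\pi)$ keep the computation short.
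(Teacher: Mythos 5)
Your proposal is correct and follows essentially the same route as the paper: differentiate the ODE twice in $\alpha$ at the base point, integrate the second variational system to obtain $\theta_2$, and substitute into the formulas for $B_{\alpha\alpha}$ and $\Phi_{\alpha\alpha}$. The only (cosmetic) difference is your observation $v_2=2y_1$, which shortens the first integration slightly; the paper simply integrates $v_2'=-2\sin s\,(s-\sin s)$ directly.
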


\begin{proof}
Recall the first $\alpha$-variation \eqref{eq:cdf:firstVar}.
Write
\[
(x_2,y_2,\theta_2,k_2,v_2):=\partial_{\alpha\alpha}(x,y,\theta,k,v)\big|_{(\alpha,\varepsilon)=(0,0)}.
\]
and differentiate \eqref{eq:cdf:ode-S} twice in $\alpha$, then evaluate at $(0,0)$ to find
\begin{equation}\label{eq:cdf:secondVar}
\left\{
\begin{aligned}
x_2'&=-\cos s\,\theta_2+\sin s\,\theta_1^2,\\
y_2'&=-\sin s\,\theta_2-\cos s\,\theta_1^2,\\
\theta_2'&=k_2,\\
k_2'&=v_2,\\
v_2'&=-2\sin s\,\theta_1,
\end{aligned}
\right.
\qquad
x_2(0)=y_2(0)=\theta_2(0)=k_2(0)=v_2(0)=0,
\end{equation}
where $\theta_1$ is given by \eqref{eq:cdf:theta1-explicit}.  (The factor $2$ in the last line comes from
differentiating $v'=\alpha\cos\theta$ twice: $(\alpha\cos\theta)_{\alpha\alpha}\big|_{\alpha=0}=-2\sin\theta_0\,\theta_1$.)

Integrating the last three equations in \eqref{eq:cdf:secondVar} (again from the bottom) and substituting
$\theta_1(s)=s-\sin s$ yields the explicit expressions
\begin{align}
v_2(s)
&=\int_0^s\!\bigl(-2\sin t\,(t-\sin t)\bigr)\,dt
=2s\cos s+s-2\sin s-\frac12\sin(2s),
\label{eq:cdf:v2-explicit}\\
k_2(s)
&=\int_0^s v_2(t)\,dt
=\frac{s^2}{2}+2s\sin s-\frac12\sin^2 s+4\cos s-4,
\label{eq:cdf:k2-explicit}\\
\theta_2(s)
&=\int_0^s k_2(t)\,dt
=\frac{s^3}{6}-2s\cos s-\frac{17}{4}s+6\sin s+\frac18\sin(2s).
\label{eq:cdf:theta2-explicit}
\end{align}
In particular,
\begin{equation}\label{eq:cdf:theta2-pi}
\theta_2(\pi)=\frac{\pi(2\pi^2-27)}{12}.
\end{equation}

When $\varepsilon=0$ we have $B(\alpha,0,L)=-\sin\theta(L;\alpha,0)$, hence
\[
B_{\alpha\alpha}(0,0,\pi)
=\Bigl(\sin\theta_0(\pi)\,\theta_1(\pi)^2-\cos\theta_0(\pi)\,\theta_2(\pi)\Bigr)
= -\cos\pi\cdot \theta_2(\pi)=\theta_2(\pi),
\]
since $\sin\theta_0(\pi)=\sin\pi=0$.  Using \eqref{eq:cdf:theta2-pi} gives
\[
B_{\alpha\alpha}(0,0,\pi)=\frac{\pi(2\pi^2-27)}{12}.
\]

When $\varepsilon=0$ the definition \eqref{eq:cdf:PhiDef} reduces to
\[
\Phi(\alpha,0,\pi)=-\int_0^\pi \cos\theta(s;\alpha,0)\,ds.
\]
Differentiating twice in $\alpha$ and evaluating at $\alpha=0$ (so $\theta_0(s)=s$) yields
\begin{equation}\label{eq:cdf:Phi_aa_integral}
\Phi_{\alpha\alpha}(0,0,\pi)
=\int_0^\pi \Bigl(\cos s\,\theta_1(s)^2+\sin s\,\theta_2(s)\Bigr)\,ds.
\end{equation}
We evaluate the two terms separately.

First, using $\theta_1=s-\sin s$,
\begin{align*}
\int_0^\pi \cos s\,\theta_1^2\,ds
&=\int_0^\pi \cos s\,(s-\sin s)^2\,ds
\\&=\int_0^\pi \bigl(s^2\cos s-2s\sin s\cos s+\sin^2 s\cos s\bigr)\,ds
\\&=-\frac{3\pi}{2}.
\end{align*}

Second, using \eqref{eq:cdf:theta2-explicit},
\begin{align*}
\int_0^\pi \sin s\,\theta_2(s)\,ds
&=\int_0^\pi \sin s\Bigl(\frac{s^3}{6}-2s\cos s-\frac{17}{4}s+6\sin s+\frac18\sin(2s)\Bigr)\,ds
\\&=\frac{\pi(\pi^2-6)}{6}+\frac{\pi}{2}-\frac{17\pi}{4}+3\pi
=\frac{\pi(2\pi^2-21)}{12}.
\end{align*}
Substituting into \eqref{eq:cdf:Phi_aa_integral} yields
\[
\Phi_{\alpha\alpha}(0,0,\pi)
=-\frac{3\pi}{2}+\frac{\pi(2\pi^2-21)}{12}
=\frac{\pi(2\pi^2-39)}{12}.
\]
Together with the computation of $B_{\alpha\alpha}(0,0,\pi)$ above, this proves \eqref{eq:cdf:Faa_values}.
\end{proof}

Now we may conclude the required abundance of angles.

\begin{corollary}\label{cor:cdf:eps-as-fn-of-theta}
There exist $\theta_\ast>0$ and $\varepsilon_\ast>0$ and unique $C^\infty$ functions
\[
\varepsilon=\varepsilon_\pm(\overline{\theta})
\qquad\text{for }\ \overline{\theta}\in(0,\theta_\ast),
\]
with $\varepsilon_+(\overline{\theta})\in(0,\varepsilon_\ast)$ and $\varepsilon_-(\overline{\theta})\in(-\varepsilon_\ast,0)$, such that
\[
\pi-\theta\bigl(L(\varepsilon_\pm(\overline{\theta}));\alpha(\varepsilon_\pm(\overline{\theta})),\varepsilon_\pm(\overline{\theta})\bigr)
=\overline{\theta}.
\]
Moreover,
\begin{equation}\label{eq:cdf:eps-asymp}
\varepsilon_\pm(\overline{\theta})
=
\pm\sqrt{\frac{\overline{\theta}}{\pi}}
+O(\overline{\theta})
\qquad(\overline{\theta}\to 0^+).
\end{equation}
In particular, for every integer $q$ sufficiently large there exists $\varepsilon_q\in(0,\varepsilon_\ast)$ such that
\begin{equation}\label{eq:cdf:Theta_qminus1_over_q}
\Theta(\varepsilon_q)=\pi-\frac{\pi}{q}=\frac{q-1}{q}\,\pi.
\end{equation}
\end{corollary}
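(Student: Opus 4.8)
The plan is to derive the corollary entirely from the second-order expansion $\Theta(\varepsilon)=\pi-\pi\varepsilon^2+O(\varepsilon^3)$ of Lemma~\ref{lem:cdf:Theta-expansion}, equivalently $\overline{\theta}(\varepsilon):=\pi-\Theta(\varepsilon)=\pi\varepsilon^2+O(\varepsilon^3)$. The one point that requires care — and the only genuine obstacle inside this proof — is that $\overline{\theta}$ is \emph{not} invertible at $\varepsilon=0$: it vanishes to second order, so $\overline{\theta}'(0)=0$ and the naive Inverse Function Theorem at the base point does not apply. The remedy, exactly as in the jellyfish construction, is to invert one side of $\varepsilon=0$ at a time, applying the Inverse Function Theorem only at interior points, where $\overline{\theta}'\neq0$.

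Concretely, I would first differentiate the expansion to obtain $\overline{\theta}'(\varepsilon)=2\pi\varepsilon+O(\varepsilon^2)$, so there is $\varepsilon_\ast>0$, which we may take smaller than the $\varepsilon_0$ of Proposition~\ref{prop:cdf:fundamental-arc}, with $\overline{\theta}'(\varepsilon)>0$ on $(0,\varepsilon_\ast)$ and $\overline{\theta}'(\varepsilon)<0$ on $(-\varepsilon_\ast,0)$. Hence $\overline{\theta}$ is $C^\infty$, strictly increasing on $[0,\varepsilon_\ast)$ and strictly decreasing on $(-\varepsilon_\ast,0]$, with $\overline{\theta}(0)=0$ and $\overline{\theta}>0$ on $(-\varepsilon_\ast,\varepsilon_\ast)\setminus\{0\}$. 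Let $\theta_\ast>0$ be any number smaller than both $\lim_{\varepsilon\to\varepsilon_\ast^-}\overline{\theta}(\varepsilon)$ and $\lim_{\varepsilon\to-\varepsilon_\ast^+}\overline{\theta}(\varepsilon)$. Then each restriction $\overline{\theta}\colon(0,\varepsilon_\ast)\to(0,\theta_\ast)$ and $\overline{\theta}\colon(-\varepsilon_\ast,0)\to(0,\theta_\ast)$ is a strictly monotone continuous surjection, and since $\overline{\theta}'$ is nonzero at every interior point the Inverse Function Theorem produces $C^\infty$ local inverses which, by strict monotonicity, patch to unique $C^\infty$ functions $\varepsilon_+\colon(0,\theta_\ast)\to(0,\varepsilon_\ast)$ and $\varepsilon_-\colon(0,\theta_\ast)\to(-\varepsilon_\ast,0)$. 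By construction $\pi-\theta\bigl(L(\varepsilon_\pm(\overline{\theta}));\alpha(\varepsilon_\pm(\overline{\theta})),\varepsilon_\pm(\overline{\theta}))\bigr)=\overline{\theta}$, and uniqueness follows from injectivity of $\overline{\theta}$ on each side.

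For the asymptotics I would rewrite the expansion along either branch as $\overline{\theta}=\pi\varepsilon^2(1+O(\varepsilon))$, take square roots to get $|\varepsilon|=\sqrt{\overline{\theta}/\pi}\,(1+O(\varepsilon))$, and then use $\varepsilon=O(\sqrt{\overline{\theta}})$ to absorb the correction, yielding $\varepsilon_\pm(\overline{\theta})=\pm\sqrt{\overline{\theta}/\pi}+O(\overline{\theta})$ as $\overline{\theta}\to0^+$, which is \eqref{eq:cdf:eps-asymp}. Finally, choosing $q_0$ with $\pi/q_0<\theta_\ast$, for each integer $q\ge q_0$ set $\overline{\theta}=\pi/q\in(0,\theta_\ast)$ and $\varepsilon_q:=\varepsilon_+(\pi/q)\in(0,\varepsilon_\ast)$; then $\pi-\Theta(\varepsilon_q)=\pi/q$, i.e.\ $\Theta(\varepsilon_q)=\tfrac{q-1}{q}\pi$, which is \eqref{eq:cdf:Theta_qminus1_over_q}. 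The substance of the corollary was already absorbed into Lemma~\ref{lem:cdf:Theta-expansion}; what remains here is only the one-sided inversion forced by the second-order vanishing of $\overline{\theta}$ at $0$, plus the routine bookkeeping of the square-root expansion.
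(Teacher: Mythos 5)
The proposal is correct and takes essentially the same approach as the paper: it re-derives the one-sided monotonicity of $\overline{\theta}$ from the second-order expansion (which is precisely the content of Lemma~\ref{lem:cdf:theta-bar-expansion}, cited in the paper's proof), applies the Inverse Function Theorem at interior points $\varepsilon\neq0$ to get local inverses that patch by monotonicity, and inverts the quadratic expansion to obtain the asymptotics before specialising to $\overline{\theta}=\pi/q$. The only cosmetic difference is that you justify the intermediate derivative expansion inline (noting $\overline{\theta}$ is $C^\infty$ so the Taylor expansion can be differentiated) rather than quoting Lemma~\ref{lem:cdf:theta-bar-expansion} directly.
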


\begin{proof}
By Lemma~\ref{lem:cdf:theta-bar-expansion}, the map $\varepsilon\mapsto\overline{\theta}(\varepsilon)$ is strictly monotone on each side of $0$.
Hence the restrictions $\overline{\theta}:(0,\varepsilon_\ast)\to(0,\theta_\ast)$ and $\overline{\theta}:(-\varepsilon_\ast,0)\to(0,\theta_\ast)$
are bijections for $\varepsilon_\ast$ small, and each has a $C^\infty$ inverse by the Inverse Function Theorem
(applied at any $\varepsilon\neq 0$, using \eqref{eq:cdf:theta-bar-derivative}).
This yields the inverses $\varepsilon_\pm(\overline{\theta})$.

For the asymptotic, rewrite \eqref{eq:cdf:theta-bar-expansion} as
$\overline{\theta}=\pi\varepsilon^2(1+O(\varepsilon))$ and solve for $\varepsilon$ on each side, giving \eqref{eq:cdf:eps-asymp}.
Finally, take $\overline{\theta}=\pi/q$ and set $\varepsilon_q:=\varepsilon_+(\pi/q)$ to obtain \eqref{eq:cdf:Theta_qminus1_over_q}.
\end{proof}

\subsection{Dihedral gluing and proof of Theorem~\ref{thm:intro:cdf-epi}}
\label{subsec:cdf:dihedral-gluing}

We now explain how fundamental arcs yield closed epicyclic shrinkers for curve diffusion flow.
The construction is formally identical to the jellyfish case (Subsection~\ref{subsec:dihedral-gluing}),
so we only record the genuinely CDF-specific input: the seam condition is expressed via the {radial seam functional} $B$
and the degenerate endpoint condition $v(L)=0$ has been replaced by $\Phi=0$ (Definition~\ref{def:cdf:fundamental-arc}).
We present Figure \ref{fig:cdf:4_5} for a visualisation of the procedure, and for a different example, the middle plot in Figure \ref{fig:intro:representatives}

\begin{figure}[t]
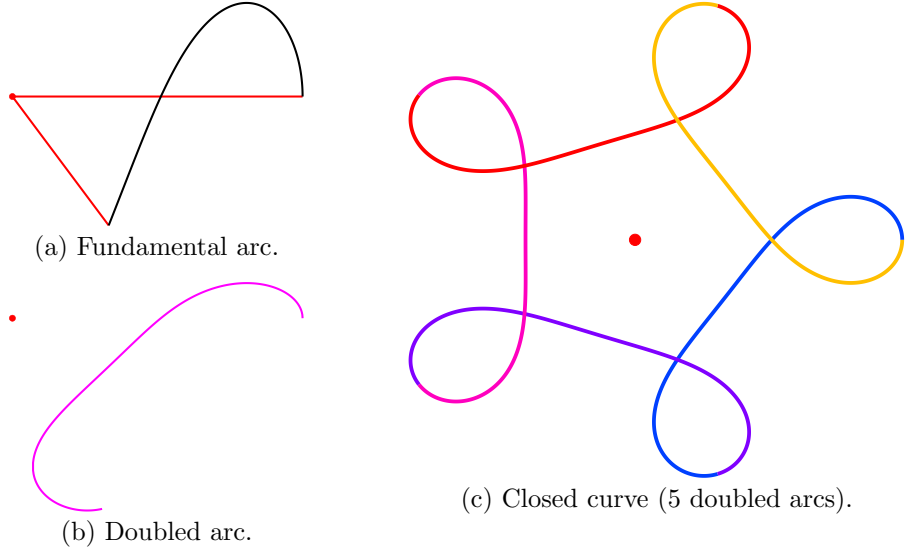

\centering

\begin{minipage}[t]{0.39\textwidth}
  \vspace{0pt}\centering 

  \begin{subfigure}[t]{\linewidth}
    \centering
    \incfig{fig1_fundamental_arc_4_5.pdf}{0.61\linewidth}{0}
    \caption{Fundamental arc.}
  \end{subfigure}

  \vspace{2mm}

  \begin{subfigure}[t]{\linewidth}
    \centering
    \incfig{fig1_doubled_arc_4_5.pdf}{0.61\linewidth}{0}
    \caption{Doubled arc.}
  \end{subfigure}
\end{minipage}
\begin{minipage}[t]{0.40\textwidth}
  \vspace{0pt}\centering 

  \begin{subfigure}[t]{\linewidth}
    \centering
    \incfig{fig1_closed_4_5.pdf}{\linewidth}{0}
    \caption{Closed curve (5 doubled arcs).}
  \end{subfigure}
\end{minipage}

\caption{Curve diffusion flow (CDF) epicyclic shrinker for the dihedral gluing data $p/q=4/5$:
$\omega=4$ and $\varepsilon=0.19672$ (to five significant figures). The closed curve is assembled from $5$ doubled arcs.}
\label{fig:cdf:4_5}
\end{figure}

\subsubsection{Reflection principle and dihedral concatenation}

\begin{lemma}\label{lem:reflection-principle-cdf}
Let $\gamma:[0,L]\to\R^2$ be an arc-length parametrised solution of the homothetic shrinker equation
\eqref{eq:intro:homot} on $(0,L)$. Assume that $\gamma(L)\in\ell$, that $\gamma$ meets $\ell$ orthogonally at $s=L$,
and that $k_s(L)=0$. Then the reflected arc
\[
\widetilde\gamma(s):=\sigma_\ell\bigl(\gamma(2L-s)\bigr),
\qquad s\in[L,2L],
\]
glues to $\gamma$ to give a $C^\infty$ solution on $[0,2L]$. The same holds at $s=0$.
\end{lemma}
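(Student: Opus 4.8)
The plan is to deduce the lemma from two ingredients: (i) invariance of the homothetic shrinker equation \eqref{eq:intro:homot} under the reflection $\sigma_\ell$ composed with arc-length reversal; and (ii) the observation that the three hypotheses $\gamma(L)\in\ell$, orthogonality of $\gamma$ with $\ell$ at $s=L$, and $k_s(L)=0$ force the entire first-order Frenet state of the reflected arc to coincide with that of $\gamma$ at the seam. Granting both, uniqueness for the first-order ODE system equivalent to \eqref{eq:intro:homot} closes the argument, exactly as in Lemma~\ref{lem:reflection-principle-jellyfish}.

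For (i) I would first note that in the dihedral construction $\ell$ is the line through the homothety centre $c_\varepsilon$ and $\gamma(L)$, so $\sigma_\ell$ fixes $c_\varepsilon$. Since \eqref{eq:intro:homot} is geometric — invariant under Euclidean isometries, with $c_\varepsilon$ its only non-intrinsic datum — the reflection $\sigma_\ell$ sends a solution with centre $c_\varepsilon$ and coefficient $\sigma$ to a solution with the same centre and the same $\sigma$. Reversing the arc-length parameter also preserves \eqref{eq:intro:homot}: under $s\mapsto 2L-s$ the scalar curvature and $\langle\gamma-c_\varepsilon,N\rangle$ each change sign (with the standard normal convention), so $k_{ss}=-\sigma\langle\gamma-c_\varepsilon,N\rangle$ is unchanged. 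Composing the two, $\widetilde\gamma(s)=\sigma_\ell(\gamma(2L-s))$ solves \eqref{eq:intro:homot} on $(L,2L)$ with centre $c_\varepsilon$ and coefficient $\sigma$.

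For (ii) I would compute the seam data at $s=L$. Because $\gamma(L)\in\ell$ and $\sigma_\ell$ fixes $\ell$ pointwise, $\widetilde\gamma(L)=\gamma(L)$. Differentiating, $\widetilde\gamma\,'(s)=-D\sigma_\ell\bigl[\gamma'(2L-s)\bigr]$; since the unit tangent $T(L)=\gamma'(L)$ is orthogonal to $\ell$, the linear reflection $D\sigma_\ell$ negates it, whence $\widetilde\gamma\,'(L)=T(L)=\gamma'(L)$ and the tangents (equivalently the angles $\theta$) match. For the curvature, the orientation-reversing isometry $\sigma_\ell$ and the orientation-reversing reparametrisation $s\mapsto 2L-s$ each flip the sign of the scalar curvature, so together they leave it invariant: $\widetilde k(s)=k(2L-s)$. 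Hence $\widetilde k(L)=k(L)$ and $\widetilde k_s(L)=-k_s(L)=0$, the last equality being precisely the hypothesis $k_s(L)=0$. Therefore the full ODE state $(x,y,\theta,k,k_s)$ of $\widetilde\gamma$ coincides with that of $\gamma$ at $s=L$.

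It remains to combine (i) and (ii): $\gamma|_{[0,L]}$ and $\widetilde\gamma|_{[L,2L]}$ solve the same first-order real-analytic system (same centre, same $\sigma$) with identical value at $s=L$, so uniqueness of solutions shows that the concatenation is a single solution on $[0,2L]$, which is therefore $C^\infty$ — indeed real-analytic — across the seam. The statement at $s=0$ follows identically, using $\gamma(0)\in\ell_0$, orthogonality at $s=0$, and $v(0)=k_s(0)=0$. The step I expect to demand the most care is the curvature bookkeeping in (ii): one must confirm that the two orientation reversals genuinely compensate, so that the scalar curvature extends \emph{evenly} across the seam and $k_s(L)=0$ is exactly its $C^1$-compatibility condition. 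Were the signs to fail to cancel, matching the curvature at the seam would instead force $k(L)=0$, and the gluing — hence the whole construction — would break down.
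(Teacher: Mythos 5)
Your proof is correct and takes essentially the same approach as the paper's (very terse) proof, which simply refers to the identical argument in Lemma~\ref{lem:reflection-principle-jellyfish}: invariance of \eqref{eq:intro:homot} under reflection, matching of tangent/curvature seam data, and higher regularity from the ODE. Your explicit sign bookkeeping (showing the orientation reversals from $\sigma_\ell$ and $s\mapsto 2L-s$ cancel so that $\widetilde k(s)=k(2L-s)$, making the curvature extend evenly and $k_s(L)=0$ the exact $C^1$-compatibility condition), together with your observation that $\ell$ must pass through the homothety centre for the reflection to preserve the equation, carefully spells out what the paper leaves implicit.
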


\begin{proof}
The proof is similar to  Lemma~\ref{lem:reflection-principle-jellyfish}.
\end{proof}

\begin{remark}
\label{eq:cdf:B-is-RT}
Fix $\overline{\theta}\in(0,\theta_\ast)$ and choose $\varepsilon=\varepsilon_+(\overline{\theta})$ from
Corollary~\ref{cor:cdf:eps-as-fn-of-theta}, together with the corresponding fundamental arc
$\gamma:[0,L]\to\R^2$ given by Proposition~\ref{prop:cdf:fundamental-arc}.
Set $c_\varepsilon:=(-1/\varepsilon,0)$ and let $\ell_0$ (resp.\ $\ell_1$) be the line through $c_\varepsilon$ and $\gamma(0)$
(resp.\ through $c_\varepsilon$ and $\gamma(L)$).
The endpoint condition $B=0$
is precisely the radial orthogonality condition $R(L)\cdot T(L)=0$, i.e.\ $\gamma$ meets $\ell_1$ orthogonally at $s=L$.
Moreover, since $\Phi=0$ and $\alpha\neq 0$, we have $v(L)=k_s(L)=0$ by \eqref{eq:cdf:vPhi}. The same conditions hold at $s=0$
by the normalisation $v(0)=0$ and the fact that $R(0)$ is radial.

Define the doubled arc $\Gamma:[0,2L]\to\R^2$ by reflecting $\gamma$ across $\ell_1$ at $s=L$:
\[
\Gamma(s):=
\begin{cases}
\gamma(s), & s\in[0,L],\\
\sigma_{\ell_1}\!\big(\gamma(2L-s)\big), & s\in[L,2L].
\end{cases}
\]
By Lemma~\ref{lem:reflection-principle-cdf}, $\Gamma$ is $C^\infty$ and satisfies \eqref{eq:intro:homot} on $[0,2L]$.
\end{remark}

Let $\rho:=\sigma_{\ell_1}\circ\sigma_{\ell_0}$. Since $\ell_0$ and $\ell_1$ are distinct lines meeting at $c_\varepsilon$,
$\rho$ is a rotation about $c_\varepsilon$ through angle $2\Theta$, where $\Theta=\theta(L)$ is the seam angle between $\ell_0$ and $\ell_1$.
In particular, if $\Theta=\frac{p}{q}\pi$ with $\gcd(p,q)=1$, then $\rho^q=\mathrm{Id}$.

\begin{proposition}\label{prop:cdf:dihedral-closure}
Assume the seam angle satisfies
\[
\frac{\Theta}{\pi}\in\Q,
\qquad\text{equivalently}\qquad
\Theta=\frac{p}{q}\,\pi \ \text{ for coprime integers }p,q\ge 1.
\]
Then the concatenation
\[
\gamma_{p/q}:=\Gamma_0\#\Gamma_1\#\cdots\#\Gamma_{q-1},
\qquad
\Gamma_j:=\rho^j\circ\Gamma,
\]
defines a $C^\infty$ closed immersed curve $\gamma_{p/q}:\S\to\R^2$ satisfying \eqref{eq:intro:homot}.
Moreover, $\gamma_{p/q}$ is invariant under the dihedral group generated by $\rho$ and $\sigma_{\ell_0}$, hence has dihedral symmetry
of order $q$.
\end{proposition}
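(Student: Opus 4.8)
The plan is to verify three things in turn: (i) each rotated block $\Gamma_j=\rho^j\circ\Gamma$ solves the homothetic shrinker equation \eqref{eq:intro:homot}; (ii) consecutive blocks glue $C^\infty$ across their common seam; and (iii) the concatenation closes after exactly $q$ blocks. The whole argument is the image of the jellyfish proof (Proposition~\ref{prop:dihedral-closure}) under the obvious dictionary, the only substitution being that the endpoint condition $v(L)=0$ is replaced by $\Phi=0$, which by \eqref{eq:cdf:vPhi} together with $\alpha\neq0$ again forces $k_s(L)=0$. So I will only indicate how each point reduces to results already in hand. For (i): \eqref{eq:intro:homot} is invariant under Euclidean isometries and each $\rho^j$ is a rigid motion; since $\Gamma$ itself satisfies \eqref{eq:intro:homot} on $[0,2L]$ (Lemma~\ref{lem:reflection-principle-cdf} applied at $s=L$, using the fundamental-arc conditions $B=0$, $\Phi=0$ of Definition~\ref{def:cdf:fundamental-arc}), so does every $\Gamma_j$.

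For (iii): $\rho=\sigma_{\ell_1}\circ\sigma_{\ell_0}$ is the composition of the reflections in the two distinct lines $\ell_0,\ell_1$ through $c_\varepsilon$, which meet at angle $\Theta=\theta(L)$, hence $\rho$ is the rotation about $c_\varepsilon$ by $2\Theta$. When $\Theta=\frac pq\pi$ with $\gcd(p,q)=1$ we get $\rho^q=\mathrm{Id}$ (rotation by $2p\pi$), so $\Gamma_q=\rho^q\circ\Gamma=\Gamma_0$; the cyclic concatenation $\Gamma_0\#\cdots\#\Gamma_{q-1}$ therefore returns to its starting point with matching Frenet data, defining a closed immersed curve $\gamma_{p/q}\colon\S\to\R^2$.

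For (ii) it suffices, by $\rho$-equivariance of the construction, to check smoothness at the single seam between $\Gamma_0$ and $\Gamma_1$ at parameter $s=2L$; every other seam is the image of this one under some $\rho^j$. There $\Gamma_0$ equals $\sigma_{\ell_1}(\gamma(2L-\cdot\,))$, while $\Gamma_1=\rho\circ\Gamma_0$ emanates from $\rho(\gamma(0))=\sigma_{\ell_1}(\gamma(0))$, using $\gamma(0)\in\ell_0$. Applying $\sigma_{\ell_1}$ to both sides, matching the two blocks at this seam becomes precisely the reflection-principle statement at $s=0$ for $\gamma$, namely that reflecting $\gamma$ across $\ell_0$ at $s=0$ produces a $C^\infty$ solution; this holds because $\gamma(0)\in\ell_0$, $\gamma$ meets $\ell_0$ orthogonally there (the radial normalisation $R(0)\cdot T(0)=0$), and $k_s(0)=v(0)=0$. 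Together with the interior $C^\infty$ gluing at $s=L$ inside each $\Gamma_j$ (Lemma~\ref{lem:reflection-principle-cdf} again), this yields $\gamma_{p/q}\in C^\infty(\S,\R^2)$. Finally, $\rho$ maps $\gamma_{p/q}$ to itself by construction, and $\sigma_{\ell_0}$ does too — it exchanges the first half of each $\Gamma_j$ with the second half of $\Gamma_{j-1}$, which is the same reflection-principle computation at $s=0$ — so the group generated by $\rho$ and $\sigma_{\ell_0}$ is $D_q$ and acts by symmetries, giving dihedral symmetry of order $q$.

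The main obstacle is the bookkeeping in step (ii): one must track that $\rho$ sends $\ell_0$ to $\ell_1$, that it preserves both the orthogonality condition and the vanishing of $k_s$ at the seam, and that the parametrisation orientations align so that the curvature continues as an even function across each seam. Once this is arranged, there is no new analytic content beyond the reflection principle and isometry invariance of \eqref{eq:intro:homot}, and the proposition follows exactly as in the jellyfish case.
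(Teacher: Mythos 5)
Your proof is correct and follows the same three-step approach the paper uses (isometry invariance of \eqref{eq:intro:homot}, seam matching via the reflection principle with $\rho$ mapping $\ell_0$ to $\ell_1$, and $\rho^q=\mathrm{Id}$ for closure), and you correctly identify that the only CDF-specific input is recovering $k_s(L)=0$ from $\Phi=0$ and $\alpha\neq 0$ via \eqref{eq:cdf:vPhi}. The paper's proof is a one-line reference to the jellyfish case; yours just spells out the same bookkeeping.
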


\begin{proof}
This is similar to the proof of Proposition~\ref{prop:dihedral-closure} in the jellyfish section:
each $\Gamma_j$ is a rigid motion image of $\Gamma$ and hence satisfies \eqref{eq:intro:homot};
seam matching follows from the reflection principle and the fact that $\rho$ maps $\ell_0$ to $\ell_1$;
and $\rho^q=\mathrm{Id}$ yields closure.
\end{proof}

\subsection{Maximality of the symmetry order and geometric distinctness}
\label{subsec:cdf:distinctness}

As in the jellyfish case, geometric distinctness follows by showing that there are no additional ``hidden'' reflection axes.
For curve diffusion flow, the decisive obstruction is the absence of {interior radial seams} on the fundamental arc.

\begin{lemma}\label{lem:cdf:no-interior-seams}
Let $|\varepsilon|$ be sufficiently small and let $\gamma:[0,L(\varepsilon)]\to\R^2$ be the fundamental arc from
Proposition~\ref{prop:cdf:fundamental-arc}. Then the radial seam function
\[
s\longmapsto
-\sin\theta(s)+\varepsilon\bigl(y(s)\cos\theta(s)-x(s)\sin\theta(s)\bigr)
\]
vanishes only at $s=0$ and $s=L(\varepsilon)$. Equivalently, $R(s)\cdot T(s)\neq 0$ for all $s\in(0,L(\varepsilon))$.
\end{lemma}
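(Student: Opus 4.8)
The plan is to show that the radial seam function, which I write as $b_1(s;\alpha,\varepsilon):=-\sin\theta(s)+\varepsilon\bigl(y(s)\cos\theta(s)-x(s)\sin\theta(s)\bigr)$ following \eqref{eq:cdf:Bdef}, is strictly negative on the open interval $(0,L(\varepsilon))$; then its only zeros on $[0,L(\varepsilon)]$ are the two endpoints. Throughout I work along the fundamental-arc branch $\alpha=\alpha(\varepsilon)$, $L=L(\varepsilon)$ of Proposition~\ref{prop:cdf:fundamental-arc}, for which $\alpha(\varepsilon)\to0$ and $L(\varepsilon)\to\pi$ as $\varepsilon\to0$, and I use the smooth (hence uniform-on-compacts) dependence of $S(\cdot;\alpha,\varepsilon)$ on parameters.

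The first step records the two facts that pin the endpoints. From the initial data $S(0;\alpha,\varepsilon)=(1,0,0,1,0)$ in \eqref{eq:cdf:ode-S} one gets $b_1(0;\alpha,\varepsilon)=0$ identically, and from the fundamental-arc condition $B(\alpha(\varepsilon),\varepsilon,L(\varepsilon))=0$ one gets $b_1(L(\varepsilon);\alpha(\varepsilon),\varepsilon)=0$. Next I compute the $s$-derivative exactly as in the jellyfish case: using $\theta'=k$, $x'=-\sin\theta$, $y'=\cos\theta$,
\[
\partial_s b_1=-k\cos\theta+\varepsilon\bigl(1-k(x\cos\theta+y\sin\theta)\bigr).
\]
At $s=0$ this equals $-1$ for every $(\alpha,\varepsilon)$, and at the base point $(s;\alpha,\varepsilon)=(\pi;0,0)$ it equals $+1$ (there $x=-1$, $y=0$, $\theta=\pi$, $k=1$). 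By smooth dependence and $L(\varepsilon)\to\pi$, there are $\delta\in(0,\pi/2)$ and $\varepsilon_1>0$ such that along the branch $\partial_s b_1\le-\tfrac12$ on $[0,\delta]$ and $\partial_s b_1\ge\tfrac12$ on $[L(\varepsilon)-\delta,L(\varepsilon)]$ whenever $|\varepsilon|<\varepsilon_1$.

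The decisive step is a three-interval decomposition of $(0,L(\varepsilon))$. On the middle interval $[\delta,L(\varepsilon)-\delta]$ the base seam function is $-\sin s\le-\sin\delta<0$ (Lemma~\ref{lem:cdf:base-semicircle}), so by uniform closeness $b_1(\cdot;\alpha(\varepsilon),\varepsilon)\le-\tfrac12\sin\delta<0$ there for $|\varepsilon|$ small. On $(0,\delta]$, integrating $\partial_s b_1\le-\tfrac12$ from $b_1(0)=0$ gives $b_1(s)\le-s/2<0$. On $[L(\varepsilon)-\delta,L(\varepsilon))$, integrating $\partial_s b_1\ge\tfrac12$ backward from $b_1(L(\varepsilon))=0$ gives $b_1(s)\le-(L(\varepsilon)-s)/2<0$. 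Since $L(\varepsilon)>2\delta$ for small $\varepsilon$, these three intervals cover $(0,L(\varepsilon))$, so $b_1<0$ throughout the interior; with the endpoint zeros this shows the vanishing set is exactly $\{0,L(\varepsilon)\}$. The equivalence with $R(s)\cdot T(s)\neq0$ is then immediate from the geometric identity recorded in Remark~\ref{eq:cdf:B-is-RT}.

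I expect the only real care to be at the right endpoint: unlike $s=0$ it moves with $\varepsilon$, so one must use that $b_1(L(\varepsilon);\alpha(\varepsilon),\varepsilon)=0$ holds identically along the branch (not merely at $\varepsilon=0$) before integrating the derivative bound backward; everything else is routine uniform continuity on the compact parameter/arc-length set supplied by Proposition~\ref{prop:cdf:fundamental-arc}. One could instead argue that no interior zero can appear without first creating a double root of $b_1$, impossible since $-\sin s$ has only simple zeros on $[0,\pi]$; but the sign decomposition above is cleaner and makes the dependence on the endpoint derivative signs explicit.
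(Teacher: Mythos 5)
Your proof is correct and reaches the same conclusion as the paper, but by a genuinely different (and more quantitative) route. The paper argues topologically: the base seam function $-\sin s$ has only simple zeros on $[0,\pi]$, so by smooth dependence the two endpoint zeros persist, and a new interior zero could only be born through a multiple root, which is excluded for small perturbations. You instead give an explicit sign estimate, decomposing $(0,L(\varepsilon))$ into two short end intervals (controlled by integrating the endpoint derivative bounds $\partial_s b_1(0)=-1$ and $\partial_s b_1(\pi;0,0)=+1$ from the pinned endpoint zeros) and a middle interval (controlled by uniform closeness to $-\sin s\le -\sin\delta$). The gain of your argument is that it yields the explicit strict inequality $b_1<0$ on the interior with a quantitative lower bound, and it mirrors exactly the strategy the paper used for the jellyfish analogue (Lemma~\ref{lem:v-negative}), which is a nice consistency. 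One small bookkeeping point: your bound $-\sin s\le-\sin\delta$ on the middle interval $[\delta,L(\varepsilon)-\delta]$ only holds if $L(\varepsilon)\le\pi$, but $L'(0)=2\pi>0$ means $L(\varepsilon)>\pi$ for small $\varepsilon>0$, so the right end of the middle interval can stray past $\pi-\delta$. This is easily repaired (e.g.\ take the middle interval to be $[\delta,\pi-\delta]$ independent of $\varepsilon$, and widen the right end-interval bound $\partial_s b_1\ge\tfrac12$ to a fixed set $[\pi-2\delta,\pi+\delta]$ which for small $\varepsilon$ contains $[\pi-\delta,L(\varepsilon)]$), and indeed you flagged that the moving right endpoint is the only delicate point; with that adjustment the argument is complete.
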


\begin{proof}
For the base semicircle \eqref{eq:cdf:circle} at $(\alpha,\varepsilon)=(0,0)$, the seam function equals $-\sin s$,
which has exactly two simple zeros on $[0,\pi]$ (at $s=0$ and $s=\pi$) and is strictly negative on $(0,\pi)$.

By smooth dependence of solutions on parameters (as in Lemma~\ref{lem:smooth-dependence} of the jellyfish section),
the seam function depends continuously on $(s,\alpha,\varepsilon)$ on compact subsets.
Hence, for $|\varepsilon|$ sufficiently small, the two simple zeros persist near $0$ and near $L(\varepsilon)$,
and no new zero can appear unless a multiple root forms, i.e.\ the seam function and its $s$-derivative vanish simultaneously.
Since $-\sin s$ has no multiple root on $[0,\pi]$, such a bifurcation is excluded for small perturbations.
Therefore there are no interior zeros.
\end{proof}

\begin{lemma}\label{lem:cdf:no-extra-axes}
Let $\gamma_{p/q}$ be a closed curve obtained by the dihedral gluing in Proposition~\ref{prop:cdf:dihedral-closure}.
Then the full symmetry group of $\gamma_{p/q}$ is exactly the dihedral group $D_q$; in particular,
$\gamma_{p/q}$ does {not} admit dihedral symmetry of order $\hat q$ for any $\hat q>q$.
\end{lemma}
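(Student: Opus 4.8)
The plan is to run the same argument as Lemma~\ref{lem:no-extra-axes} of the jellyfish section, with the monotonicity input Lemma~\ref{lem:v-negative} there replaced by the no–interior–seams statement Lemma~\ref{lem:cdf:no-interior-seams}. By construction $\gamma_{p/q}$ carries the dihedral group $D_q$ generated by the rotation $\rho$ about $c_\varepsilon$ and the reflection $\sigma_{\ell_0}$, and the $q$ reflection axes of this $D_q$ are exactly the radial lines through $c_\varepsilon$ on which the seam points of the gluing lie; because $\Theta=\frac pq\pi$ with $\gcd(p,q)=1$, these axes occupy the angles $j\pi/q$, $j=0,\dots,q-1$, relative to $\ell_0$. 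First I would record the structural reduction. Since $\alpha(\varepsilon)\neq 0$, the fundamental arc is \emph{not} a circular arc — if $v\equiv 0$ then $k\equiv 1$, hence $\theta(s)=s$, and then $v'=\alpha(\cos s+\varepsilon)\not\equiv 0$, a contradiction — so $\gamma_{p/q}$ is \emph{not} a multiply covered circle and its full Euclidean symmetry group is finite. Being finite and containing the reflections of $D_q$, this group is a dihedral group $D_{\hat q}$ with $q\mid\hat q$, and its centre coincides with the centre $c_\varepsilon$ of $D_q$ (the unique fixed point of any nontrivial rotation of $D_q\subseteq D_{\hat q}$).

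Next I would argue by contradiction, assuming $\hat q>q$. Then $D_{\hat q}$ has a reflection axis $\ell$ — a radial line through $c_\varepsilon$ — whose angle is not of the form $j\pi/q$, so $\ell$ passes through none of the (finitely many) seam points of $\gamma_{p/q}$. The image of $\gamma_{p/q}$ is connected and invariant under $\sigma_\ell$, hence meets the fixed line $\ell$; since $\ell$ avoids every seam point, any such intersection point $P$ lies in the \emph{interior} of one of the fundamental-arc halves making up $\gamma_{p/q}$ — either a copy of the arc $\gamma$ from Proposition~\ref{prop:cdf:fundamental-arc}, or an isometric reflected copy of it, the connecting isometry fixing $c_\varepsilon$. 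Parametrising by arc length the half-arc $\Gamma$ through $P$, we have $P=\Gamma(s_\ast)$ with $s_\ast\in(0,L)$.

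At $P$ the reasoning of Lemma~\ref{lem:no-extra-axes} applies: $\sigma_\ell$ fixes $P$ and is an orientation-reversing symmetry of $\gamma_{p/q}$ (a reflection negates the rotation index $p\ge 1$ of Lemma~\ref{lem:cdf:turning-number}), so $\Gamma$ is locally symmetric across $\ell$ and its unit tangent at $P$ is orthogonal to $\ell$. As $\ell$ is the radial line through $c_\varepsilon$ and $P$, the vector $\Gamma(s_\ast)-c_\varepsilon$ points along $\ell$, so $R(s_\ast)\cdot T(s_\ast)=0$ at this interior parameter. Since the gluing isometries fix $c_\varepsilon$ and lie in $O(2)$, they carry the radial seam function $R\cdot T$ of $\gamma$ to that of any reflected copy up to reparametrisation and sign; so in either case we have produced an interior zero of the radial seam function of the fundamental arc of Proposition~\ref{prop:cdf:fundamental-arc}, contradicting Lemma~\ref{lem:cdf:no-interior-seams}. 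Hence $\hat q=q$ and the full symmetry group is exactly $D_q$.

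I expect the only real obstacle to be the bookkeeping hidden in ``$\ell$ passes through none of the seam points'': one must verify that the reflection axes of the constructed $D_q$ are precisely the radial seam lines of the gluing (so the seam points sit at the angles $j\pi/q$ relative to $\ell_0$), and then that a reflection axis of a strictly larger dihedral group must land at an angle avoiding all of them — which is exactly where $\gcd(p,q)=1$ enters. Everything else (finiteness and dihedral structure of the symmetry group, the common centre $c_\varepsilon$, orientation-reversal of the symmetry, and the local reflection argument at $P$) is routine and mirrors the jellyfish case.
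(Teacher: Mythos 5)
Your proof is correct and follows the same route as the paper: a would-be extra reflection axis of a larger dihedral group must miss the seam rays, its fixed point on the curve lies in the interior of a fundamental arc and is met orthogonally, which is precisely an interior zero of the radial seam function, contradicting Lemma~\ref{lem:cdf:no-interior-seams}. You supply somewhat more justification than the paper does (finiteness of the symmetry group via $\alpha(\varepsilon)\neq 0$, the common centre $c_\varepsilon$, and the role of $\gcd(p,q)=1$ in locating the seam axes), but the argument is the same in substance.
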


\begin{proof}
The curve is invariant under $D_q$ by construction.
If it had dihedral symmetry of order $\hat q>q$ about the same centre $c_\varepsilon$, then there would exist a reflection axis
$\ell$ strictly between two adjacent seam rays used in the construction.
At any fixed point of a reflection, the curve meets the axis orthogonally; in centre coordinates this is exactly the radial seam
condition $R\cdot T=0$.
Such an interior seam would occur on the fundamental arc portion between adjacent seam rays, contradicting
Lemma~\ref{lem:cdf:no-interior-seams}.
\end{proof}

For epicyclic shrinkers it is convenient to use two similarity invariants: the dihedral symmetry order $q$ and the
rotation index (turning number), determined by the rational parameter $\Theta=\frac{p}{q}\pi$.

\begin{lemma}[Rotation index]\label{lem:cdf:turning-number}
Let $\gamma_{p/q}$ be the closed curve from Proposition~\ref{prop:cdf:dihedral-closure} with $\Theta=\frac{p}{q}\pi$.
Then the total turning of the tangent is $2q\Theta=2p\pi$. In particular, the rotation index of $\gamma_{p/q}$ equals $p$.
\end{lemma}

\begin{proof}
Along the fundamental arc the tangent angle increases by $\Theta$, doubling increases it by $2\Theta$, and concatenating $q$ rotated copies
increases it by $2q\Theta=2p\pi$. The rotation index is total turning divided by $2\pi$.
\end{proof}

\begin{corollary}[Non-equivalence of epicyclic shrinkers]\label{cor:cdf:non-equivalence}
Let $\gamma_{p/q}$ and $\gamma_{\hat p/\hat q}$ be closed epicyclic shrinkers produced by Proposition~\ref{prop:cdf:dihedral-closure}
with coprime pairs $(p,q)$ and $(\hat p,\hat q)$. If $(p,q)\neq(\hat p,\hat q)$, then $\gamma_{p/q}$ is not equivalent to
$\gamma_{\hat p/\hat q}$ under any similarity transformation.
\end{corollary}

\begin{proof}
Similarities preserve the rotation index and conjugate the full symmetry group.
If $p\neq \hat p$, then Lemma~\ref{lem:cdf:turning-number} rules out similarity equivalence.
If $p=\hat p$ but $q\neq \hat q$, then Lemma~\ref{lem:cdf:no-extra-axes} implies the full symmetry groups are $D_q$ and $D_{\hat q}$,
which have different orders $2q\neq 2\hat q$ and hence cannot be conjugate. Therefore no similarity can map one curve to the other.
\end{proof}

\begin{proof}[Proof of Theorem~\ref{thm:intro:cdf-epi}]
Choose $q_0$ so large that $\pi/q_0<\theta_\ast$ (from Corollary~\ref{cor:cdf:eps-as-fn-of-theta}).
For each integer $q\ge q_0$, set $\overline{\theta}=\pi/q$ and obtain $\varepsilon_q$ with
$\Theta(\varepsilon_q)=\frac{q-1}{q}\pi$ by \eqref{eq:cdf:Theta_qminus1_over_q}.
Then Proposition~\ref{prop:cdf:dihedral-closure} produces a smooth closed curve $\gamma_{(q-1)/q}$ with dihedral symmetry of order $q$
satisfying the homothetic shrinker equation \eqref{eq:intro:homot}, hence a self-similarly shrinking solution of curve diffusion flow.
Geometric distinctness is proved in Corollary \ref{cor:cdf:non-equivalence}.
\end{proof}

\section{Epicyclic expanders for the ideal flow}
\label{sec:ideal-epicycles}

In this section we prove Theorem~\ref{thm:intro:if-epi}.  The strategy follows the curve-diffusion epicyclic
shrinkers construction (Section~\ref{sec:cdf-epicycles}): we (i) produce a one-parameter family of
{fundamental arcs} by an Implicit Function Theorem, (ii) show the terminal turning angle varies over an interval
and hence attains infinitely many rational multiples of $\pi$, and (iii) glue by the same reflection/dihedral
concatenation to obtain infinitely many closed homothetic solutions.  The  genuinely new feature is that the ideal
flow is two orders higher, so the seam conditions require matching higher curvature jets.

\subsection{A complex identity and a first integral}
\label{subsec:ideal:complex-identity}

We identify $\R^2\simeq\C$ and use the {normal-angle} convention from the previous sections:
\[
N=e^{i\theta}=(\cos\theta,\sin\theta),\qquad \theta_s=k,
\qquad
T=iN=(-\sin\theta,\cos\theta),
\qquad
\gamma_s=T,\ \ \gamma=x+iy.
\]
Define the curvature quantities
\[
M:=k_{sss},\qquad N:=kk_{ss}-\frac12 k_s^2,
\qquad
\SQ:=(M+iN)e^{-i\theta},
\]
and the operator
\[
\SF[k] = k_{s^4} + k_{ss}k^2 - \frac12 k_s^2 k\,.
\]

\begin{lemma}\label{lem:ideal:Qs}
One has
\begin{equation}\label{eq:ideal:Qs}
\partial_s\SQ=\SF[k]\,e^{-i\theta}.
\end{equation}
\end{lemma}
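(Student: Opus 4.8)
This is a direct computation; the content is simply that differentiating $\SQ=(M+iN)e^{-i\theta}$ produces, after the $e^{-i\theta}$-rotation is accounted for, a purely \emph{real} multiple of $e^{-i\theta}$, namely $\SF[k]\,e^{-i\theta}$. The plan is to differentiate, record $M_s$ and $N_s$ separately, and watch the imaginary terms cancel.

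First I would differentiate using $\partial_s e^{-i\theta}=-i\theta_s e^{-i\theta}=-ik\,e^{-i\theta}$, which gives
\[
\partial_s\SQ=\bigl(M_s+iN_s-ik(M+iN)\bigr)e^{-i\theta}
=\bigl(M_s+iN_s-ikM+kN\bigr)e^{-i\theta}.
\]
Next, from $M=k_{sss}$ we have $M_s=k_{ssss}$, and from $N=kk_{ss}-\tfrac12 k_s^2$ the product rule gives
\[
N_s=k_sk_{ss}+kk_{sss}-k_sk_{ss}=kk_{sss}=kM,
\]
the point being that the two first-order terms $k_sk_{ss}$ cancel exactly. Substituting $N_s=kM$ into the expression above, the imaginary contributions $iN_s-ikM=ikM-ikM=0$ annihilate, and we are left with
\[
\partial_s\SQ=\bigl(k_{ssss}+kN\bigr)e^{-i\theta}.
\]
Finally I would expand $kN=k^2k_{ss}-\tfrac12 k k_s^2$, so that $k_{ssss}+kN=k_{s^4}+k_{ss}k^2-\tfrac12 k_s^2 k=\SF[k]$, which is exactly \eqref{eq:ideal:Qs}.

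There is no genuine obstacle here: the only points requiring any care are the sign bookkeeping in $-ik(M+iN)=-ikM+kN$ and the cancellation of the $k_sk_{ss}$ terms in $N_s$, which is precisely what forces the right-hand side to be real and identifies it with the ideal-flow operator $\SF[k]$.
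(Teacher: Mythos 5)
Your proof is correct and follows exactly the same route as the paper's: differentiate $\SQ=(M+iN)e^{-i\theta}$ using $\theta_s=k$, invoke the identity $N_s=kM$ (with the $k_sk_{ss}$ cancellation), watch the imaginary terms vanish, and identify $M_s+kN=\SF[k]$. The only difference is that you spell out the intermediate steps that the paper compresses into ``a direct computation gives.''
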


\begin{proof}
A direct computation gives $N_s=kM$ and $\SF[k]=M_s+kN$.
Differentiating $\SQ=(M+iN)e^{-i\theta}$ and using $\theta_s=k$ yields
\[
\SQ_s=(M_s+iN_s-ik(M+iN))e^{-i\theta}=(M_s+kN)e^{-i\theta}=\SF[k]e^{-i\theta}.
\]
\end{proof}

\subsection{The fundamental arc}
\label{subsec:ideal:fundamental-arc}

\subsubsection{The homothetic ODE system and parameters}

Fix parameters $(\alpha,\varepsilon)$ near $(0,0)$.
As in the previous epicyclic sections, the (distant) epicycle centre is
\[
c_\varepsilon:=\Bigl(-\frac{1}{\varepsilon},0\Bigr).
\]
We introduce the curvature jet variables
\[
p:=k_s,\qquad q:=k_{ss},\qquad M:=k_{sss},\qquad N:=kq-\frac12p^2,
\qquad \SQ:=(M+iN)e^{-i\theta}.
\]
For homothetic solutions we prescribe the first integral in the form
\begin{equation}\label{eq:ideal:Q_s_homothetic}
\SQ_s
=
-\alpha\Bigl(\cos\theta+\varepsilon(x\cos\theta+y\sin\theta)\Bigr)e^{-i\theta}
\qquad(\Longleftrightarrow\ \SF[k]=-\alpha(\cos\theta+\varepsilon\,\gamma\cdot N)).
\end{equation}

\subsubsection{A $7$-equation first-order system}

We take state variables
\[
S(s;\alpha,\varepsilon,b):=(x,y,\theta,k,p,\SQ)\in\R^5\times\C
\qquad (7\text{ scalar ODEs}),
\]
and define auxiliary real variables via
\begin{equation}\label{eq:ideal:MN_from_Q}
M:=\Re(\SQ e^{i\theta}),\qquad N:=\Im(\SQ e^{i\theta}),
\qquad
q:=\frac{N+\frac12p^2}{k}.
\end{equation}
(These are well-defined provided $k\neq 0$, which holds on a uniform interval for parameters near the base semicircle.)

The homothetic shooting system is
\begin{equation}\label{eq:ideal:homothetic_xyQ_ode}
\left\{
\begin{aligned}
x_s&=-\sin\theta,\\
y_s&=\cos\theta,\\
\theta_s&=k,\\
k_s&=p,\\
p_s&=q=\dfrac{\Im(\SQ e^{i\theta})+\frac12p^2}{k},\\[2mm]
\SQ_s&=
-\alpha\Bigl(\cos\theta+\varepsilon(x\cos\theta+y\sin\theta)\Bigr)e^{-i\theta}.
\end{aligned}
\right.
\end{equation}

\begin{lemma}\label{lem:ideal:xyQ_consistency}
Let $(x,y,\theta,k,p,\SQ)$ solve \eqref{eq:ideal:homothetic_xyQ_ode} on an interval where $k\neq 0$,
and define $M,N,q$ by \eqref{eq:ideal:MN_from_Q}. Then $p=k_s$, $q=k_{ss}$, $M=k_{sss}$, and the curvature satisfies
\[
\SF[k]=-\alpha\Bigl(\cos\theta+\varepsilon(x\cos\theta+y\sin\theta)\Bigr)
\qquad\text{on the interval.}
\]
\end{lemma}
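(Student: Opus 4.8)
The plan is to read off $p=k_s$ and $q=k_{ss}$ directly from the ODE, rewrite the algebraic definition of $q$ as the genuine curvature identity $N=kk_{ss}-\tfrac12k_s^2$, and then recover $M=k_{sss}$ and the displayed fourth-order equation by splitting the $\SQ$-equation into its real and imaginary parts after multiplication by $e^{i\theta}$.

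First I would observe that regularity is not an issue: on the open set $\{k\neq0\}$ the right-hand side of \eqref{eq:ideal:homothetic_xyQ_ode} is a smooth function of $(x,y,\theta,k,p,\SQ)$, so any solution is $C^\infty$ there, and in particular $\SQ$, $\theta$ and $k$ are smooth. The fourth and fifth equations of \eqref{eq:ideal:homothetic_xyQ_ode} give $k_s=p$ and $p_s=q$, whence $p=k_s$ and $q=k_{ss}$. Feeding these into the definition $q=(N+\tfrac12p^2)/k$ from \eqref{eq:ideal:MN_from_Q} and multiplying through by $k$ (legitimate since $k\neq0$) yields
\[
N=kq-\tfrac12p^2=kk_{ss}-\tfrac12k_s^2,
\]
which is precisely the curvature quantity $N$ of Subsection~\ref{subsec:ideal:complex-identity}. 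Differentiating this identity gives $N_s=k_sk_{ss}+kk_{sss}-k_sk_{ss}=kk_{sss}$.

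Next I would exploit the structure of the $\SQ$-equation. From the algebraic relation $\SQ e^{i\theta}=M+iN$ and $\theta_s=k$, the same differentiation as in the proof of Lemma~\ref{lem:ideal:Qs} (now retaining the imaginary part) gives
\[
\SQ_s e^{i\theta}=(M_s+kN)+i(N_s-kM).
\]
But the sixth equation of \eqref{eq:ideal:homothetic_xyQ_ode} says $\SQ_s e^{i\theta}=-\alpha\bigl(\cos\theta+\varepsilon(x\cos\theta+y\sin\theta)\bigr)$, which is real. Comparing imaginary parts forces $N_s=kM$; together with $N_s=kk_{sss}$ and $k\neq0$ this gives $M=k_{sss}$ (and hence $M_s=k_{ssss}$ after one further differentiation). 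Comparing real parts gives
\[
M_s+kN=-\alpha\bigl(\cos\theta+\varepsilon(x\cos\theta+y\sin\theta)\bigr),
\]
and since $M_s+kN=k_{ssss}+k^2k_{ss}-\tfrac12kk_s^2=\SF[k]$, this is exactly the asserted identity.

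I do not expect any genuine obstacle: the lemma is a consistency check confirming that the first-order system \eqref{eq:ideal:homothetic_xyQ_ode}, together with the auxiliary definitions \eqref{eq:ideal:MN_from_Q}, faithfully encodes the homothetic fourth-order curvature equation on $\{k\neq0\}$. The one point needing care is the logical order: one must first establish $p=k_s$ and $q=k_{ss}$, and hence that $N$ already has its Frenet form, before using the imaginary part of the $\SQ$-equation to promote $M$ to $k_{sss}$ — identifying $M$ prematurely would be circular.
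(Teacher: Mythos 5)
Your proof is correct and follows essentially the same route as the paper: read off $p=k_s$, $q=k_{ss}$ from the ODE, extract $N_s=kM$ from the imaginary part of the $\SQ$-equation, combine with the algebraic relation between $N$, $k$, $q$ to identify $M=k_{sss}$, and let the real part deliver the curvature equation. The only cosmetic difference is that the paper differentiates $q=(N+\tfrac12p^2)/k$ directly and cancels to get $q_s=M$, whereas you differentiate the equivalent rearrangement $N=kq-\tfrac12p^2$ and then divide by $k$.
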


\begin{proof}
From \eqref{eq:ideal:homothetic_xyQ_ode} we have $k_s=p$ and $p_s=q$, hence $p=k_s$ and $q=k_{ss}$.
Write $\SQ e^{i\theta}=M+iN$. Differentiating and using $\theta_s=k$ gives
\[
(M+iN)_s=\SQ_s e^{i\theta} + i\theta_s \SQ e^{i\theta}
= -\alpha\Bigl(\cos\theta+\varepsilon(x\cos\theta+y\sin\theta)\Bigr) + i k (M+iN),
\]
so $N_s=kM$. Differentiating $q=\frac{N+\frac12p^2}{k}$ and using $k_s=p$, $p_s=q$ yields $q_s=M$, hence $M=k_{sss}$.
Finally \eqref{eq:ideal:Q_s_homothetic} and Lemma~\ref{lem:ideal:Qs} imply the claimed identity for $\SF[k]$.
\end{proof}

\subsubsection{Initial conditions and endpoint seam data}

We normalise as for the base semicircle:
\begin{equation}\label{eq:ideal:IC}
x(0)=1,\qquad y(0)=0,\qquad \theta(0)=0,\qquad k(0)=1,\qquad p(0)=0,\qquad \SQ(0)= i b,
\end{equation}
where $b\in\R$ is a shooting parameter (so $M(0)=0$ and $N(0)=b$, hence $q(0)=b$).

Let $L>0$ be the (unknown) arc length. We impose the endpoint seam conditions
\begin{equation}\label{eq:ideal:endpoint_conditions}
V(\alpha,\varepsilon,b,L):=p(L)=0,
\qquad
U(\alpha,\varepsilon,b,L):=M(L)=\Re(\SQ e^{i\theta})(L)=0,
\qquad
B(\alpha,\varepsilon,b,L)=0,
\end{equation}
where $B$ is the same radial seam functional used previously (now based at $c_\varepsilon$):
\begin{equation}\label{eq:ideal:Bdef}
B(\alpha,\varepsilon,b,L)
:=
-\sin\theta(L)
+\varepsilon\bigl(y(L)\cos\theta(L)-x(L)\sin\theta(L)\bigr).
\end{equation}
Indeed, with $R:=\gamma-c_\varepsilon$ one computes as in Remark \ref{eq:cdf:B-is-RT} that
\[
R(L)\cdot T(L)=\frac{1}{\varepsilon}\,B(\alpha,\varepsilon,b,L),
\]
so $B=0$ is equivalent to orthogonality of the tangent to the radius from $c_\varepsilon$ at the endpoint.

\subsubsection{A reduced endpoint functional}

As in Section~\ref{sec:cdf-epicycles}, the base family at $\alpha=0$ forces a degeneracy.
We therefore replace the condition $V=0$ by a smooth reduced equation obtained by dividing out the trivial
$\alpha$-factor.  Set
\[
G(\alpha,\varepsilon,b,L):=V(\alpha,\varepsilon,b,L)+U(\alpha,\varepsilon,b,L),
\]
and define
\begin{equation}\label{eq:ideal:Phi_def}
\Phi(\alpha,\varepsilon,b,L)
:=
\begin{cases}
\dfrac{G(\alpha,\varepsilon,b,L)-G(0,\varepsilon,b,L)}{\alpha}, & \alpha\neq 0,\\[2mm]
\partial_\alpha G(0,\varepsilon,b,L), & \alpha=0.
\end{cases}
\end{equation}
Smooth dependence of ODE solutions on parameters implies that $\Phi$ is $C^\infty$.

We take as endpoint map
\begin{equation}\label{eq:ideal:F_def}
F(\alpha,\varepsilon,b,L):=
\begin{bmatrix}
\Phi(\alpha,\varepsilon,b,L)\\[0.5mm]
U(\alpha,\varepsilon,b,L)\\[0.5mm]
B(\alpha,\varepsilon,b,L)
\end{bmatrix}.
\end{equation}

\begin{definition}\label{def:ideal:fundamental-arc}
A \emph{fundamental arc for the ideal epicycle problem} is a solution of
\eqref{eq:ideal:homothetic_xyQ_ode}, \eqref{eq:ideal:IC} for which there exist $L>0$ and $\alpha\neq 0$ such that
\[
F(\alpha,\varepsilon,b,L)=0.
\]
Equivalently, \eqref{eq:ideal:endpoint_conditions} holds with $\alpha\neq 0$.
\end{definition}

\subsection{Base semicircle and the reduced equation at $\alpha=0$}
\label{subsec:ideal:base}

\begin{lemma}\label{lem:ideal:base-semicircle}
At $(\alpha,\varepsilon,b)=(0,0,0)$ the unique solution of
\eqref{eq:ideal:homothetic_xyQ_ode}, \eqref{eq:ideal:IC} 
\begin{equation}\label{eq:ideal:base_semicircle}
x(s)=\cos s,\qquad y(s)=\sin s,\qquad \theta(s)=s,\qquad k\equiv 1,\qquad p\equiv 0,\qquad \SQ\equiv 0.
\end{equation}
In particular, with $L_0:=\pi$ one has $F(0,0,0,L_0)=0$.
\end{lemma}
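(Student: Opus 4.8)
The plan is: (i) pin down the solution at $(\alpha,\varepsilon,b)=(0,0,0)$ by a short bootstrap, using uniqueness for \eqref{eq:ideal:homothetic_xyQ_ode} on the region $\{k\neq 0\}$; and then (ii) read off the three components of $F$ at $L=\pi$, the only non-obvious one being the reduced functional $\Phi$.

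For (i): setting $(\alpha,\varepsilon,b)=(0,0,0)$ in \eqref{eq:ideal:homothetic_xyQ_ode}, the $\SQ$-equation collapses to $\SQ_s=0$ with $\SQ(0)=ib=0$, so $\SQ\equiv 0$ on the maximal interval where $k\neq 0$. Hence $\Im(\SQ e^{i\theta})\equiv 0$ and the $p$-equation becomes $p_s=\tfrac{p^2}{2k}$ with $p(0)=0$; since $k(0)=1\neq 0$, uniqueness forces $p\equiv 0$ locally, whence $k_s=p\equiv 0$ and $k\equiv 1$. As $k$ never vanishes this propagates to all of $[0,\infty)$. Then $\theta_s=k$ with $\theta(0)=0$ gives $\theta(s)=s$, and integrating $x_s=-\sin\theta$, $y_s=\cos\theta$ from $(1,0)$ gives $x=\cos s$, $y=\sin s$, which is \eqref{eq:ideal:base_semicircle}. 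Uniqueness of this solution is the usual Picard–Lindel\"of statement, applicable because the right-hand side of \eqref{eq:ideal:homothetic_xyQ_ode} is real-analytic on $\{k\neq 0\}$ and the base trajectory stays in $k=1>0$.

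For (ii): since $\theta(\pi)=\pi$ and $\varepsilon=0$ we get $B(0,0,0,\pi)=-\sin\pi=0$, and since $\SQ\equiv 0$ we get $U(0,0,0,\pi)=M(\pi)=\Re(\SQ(\pi)e^{i\pi})=0$. For the reduced functional, $\Phi(0,0,0,\pi)=\partial_\alpha G(0,0,0,\pi)$ with $G=V+U=p(L)+M(L)$, so I would compute the first $\alpha$-variation along the base; write $\dot{(\,\cdot\,)}:=\partial_\alpha(\,\cdot\,)\big|_{(\alpha,\varepsilon,b)=(0,0,0)}$. Differentiating the $\SQ$-equation gives $\partial_s\dot\SQ=-\cos\theta\,e^{-i\theta}\big|_{\mathrm{base}}=-\cos s\,e^{-is}=-\tfrac12(1+e^{-2is})$ with $\dot\SQ(0)=0$, so
\[
\dot\SQ(s)=-\frac{s}{2}+\frac{i}{4}\bigl(1-e^{-2is}\bigr),
\qquad
\dot\SQ(s)\,e^{is}=\Bigl(-\frac{s}{2}\cos s-\frac{\sin s}{2}\Bigr)+i\Bigl(-\frac{s}{2}\sin s\Bigr).
\]
Because $\SQ\equiv 0$, $p\equiv 0$, $k\equiv 1$ and $\theta=s$ on the base, linearising the $M$- and $p$-equations gives $\dot M(s)=\Re\bigl(\dot\SQ(s)e^{is}\bigr)$ and $\partial_s\dot p=\Im\bigl(\dot\SQ(s)e^{is}\bigr)$ with $\dot p(0)=0$. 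Evaluating at $s=\pi$ and using $\int_0^\pi t\sin t\,dt=\pi$ yields $\dot M(\pi)=\pi/2$ and $\dot p(\pi)=-\pi/2$, hence $\Phi(0,0,0,\pi)=\dot p(\pi)+\dot M(\pi)=0$. This establishes $F(0,0,0,\pi)=0$.

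I expect the vanishing of $\Phi$ at the base point to be the only step requiring any work; but it collapses to a single explicit integration of $\dot\SQ$ together with the elementary integral $\int_0^\pi t\sin t\,dt=\pi$, so it is short. All other assertions are forced immediately by the structure of \eqref{eq:ideal:homothetic_xyQ_ode}, \eqref{eq:ideal:IC} at $(\alpha,\varepsilon,b)=(0,0,0)$.
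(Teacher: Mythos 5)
Your proof is correct and somewhat more complete than the paper's own. For part (i) your argument coincides with the paper's: at $\alpha=0$, $b=0$ one gets $\SQ\equiv 0$, then $p\equiv 0$, $k\equiv 1$, $\theta(s)=s$, and finally $(x,y)=(\cos s,\sin s)$ by integration; your extra care about uniqueness of $p\equiv 0$ from the Riccati-type equation $p_s=\tfrac{p^2}{2k}$ is a small but welcome tightening. For part (ii), you correctly flag the one non-trivial component: since $\Phi$ is defined at $\alpha=0$ by $\partial_\alpha G$, the claim $\Phi(0,0,0,\pi)=0$ requires a first $\alpha$-variation and is not ``immediate'' in the naive sense. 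The paper's proof of this lemma does dismiss the endpoint conditions as immediate, but in fact the $\Phi$ vanishing is only established explicitly in the subsequent Lemma~\ref{lem:ideal:Phi-alpha0}, where the real jet variational system is integrated to get $v_\alpha(s)=-\tfrac12(\sin s-s\cos s)$ and $u_\alpha(s)=-\tfrac12(\sin s+s\cos s)$, whence $v_\alpha(\pi)+u_\alpha(\pi)=-\tfrac{\pi}{2}+\tfrac{\pi}{2}=0$. Your computation reaches the same numbers via the complex variable: integrating $\partial_s\dot\SQ=-\cos s\,e^{-is}$ and taking real and imaginary parts of $\dot\SQ\, e^{is}$ gives $\dot M(\pi)=\tfrac{\pi}{2}$ and $\dot p(\pi)=-\tfrac{\pi}{2}$, which are exactly $u_\alpha(\pi)$ and $v_\alpha(\pi)$ under the identifications $p=v=k_s$, $M=u=k_{sss}$, both ultimately reducing to $\int_0^\pi t\sin t\,dt=\pi$. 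The upshot is that your version makes the lemma genuinely self-contained, whereas the paper implicitly relies on material proved one lemma later; the complex $\SQ$-formulation is a modest notational gain, not a different idea.
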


\begin{proof}
At $\alpha=0$ the equation for $\SQ$ in \eqref{eq:ideal:homothetic_xyQ_ode} gives $\SQ\equiv ib$,
so with $b=0$ we have $\SQ\equiv 0$, hence $M=N\equiv 0$ and $q\equiv 0$. Therefore $p\equiv 0$, $k\equiv 1$,
$\theta(s)=s$ and $(x,y)=(\cos s,\sin s)$ by integration of $x_s=-\sin s$, $y_s=\cos s$ with $(x(0),y(0))=(1,0)$.
The endpoint conditions at $L_0=\pi$ are immediate.
\end{proof}

We note the following explicit formula for $\Phi$ at $\alpha=0$.

\begin{lemma}\label{lem:ideal:Phi-alpha0}
For $(\varepsilon,L)$ near $(0,\pi)$ one has
\begin{equation}\label{eq:ideal:Phi_alpha0_formula}
\Phi(0,\varepsilon,0,L)=-(\sin L+\varepsilon L).
\end{equation}
\end{lemma}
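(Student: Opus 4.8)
The plan is to compute $\Phi(0,\varepsilon,0,L)$, which by \eqref{eq:ideal:Phi_def} equals $\partial_\alpha G(0,\varepsilon,0,L)$, by linearising the shooting system \eqref{eq:ideal:homothetic_xyQ_ode}--\eqref{eq:ideal:IC} in $\alpha$ about the base semicircle of Lemma~\ref{lem:ideal:base-semicircle}, and to exploit the fact that the combination $G=V+U=p(L)+M(L)$ is arranged precisely so that the awkward $M$-contributions cancel against each other, leaving only the explicit affine term.

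I would introduce $\dot X:=\partial_\alpha X|_{(\alpha,b)=(0,0)}$ for the first $\alpha$-variation along the branch $\alpha=0$, $b=0$ (with $\varepsilon$ held fixed), where the underlying solution is the base semicircle $\theta(s)=s$, $k\equiv1$, $p\equiv0$, $\SQ\equiv0$ and $x\cos\theta+y\sin\theta\equiv1$. Since the initial data \eqref{eq:ideal:IC} do not depend on $\alpha$, every variation vanishes at $s=0$ (in particular $\dot\SQ(0)=0$). Differentiating \eqref{eq:ideal:homothetic_xyQ_ode}: because the bracket in the $\SQ_s$-equation is multiplied by $\alpha$, only its base value survives, giving $\dot\SQ_s=-(\cos s+\varepsilon)e^{-is}$; from $M=\Re(\SQ e^{i\theta})$, $N=\Im(\SQ e^{i\theta})$ and $\SQ\equiv0$ at the base one gets $\dot M=\Re(\dot\SQ e^{is})$, $\dot N=\Im(\dot\SQ e^{is})$; and linearising $p_s=q=(N+\tfrac12 p^2)/k$ at $p=0$, $N=0$, $k=1$ gives $\dot p_s=\dot N=\Im(\dot\SQ e^{is})$. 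The key observation is that these three identities involve only $\dot\SQ$, so the position and angle variations $\dot x,\dot y,\dot\theta,\dot k$ never enter. Setting $W:=\dot\SQ e^{is}=\dot M+i\dot N$ converts the $\SQ$-equation into the scalar linear ODE
\begin{equation*}
W_s=iW-(\cos s+\varepsilon),\qquad W(0)=0 .
\end{equation*}

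The cancellation is then immediate: rewriting the ODE as $W=-iW_s-i(\cos s+\varepsilon)$ and integrating over $[0,L]$ gives $\int_0^L W\,ds=-iW(L)-i(\sin L+\varepsilon L)$; taking imaginary parts and using $\dot p(L)=\int_0^L\dot p_s\,ds=\Im\int_0^L W\,ds$ together with $\Im(-iW(L))=-\Re W(L)=-\dot M(L)$ yields
\begin{equation*}
\dot p(L)=-\dot M(L)-(\sin L+\varepsilon L).
\end{equation*}
Hence $\Phi(0,\varepsilon,0,L)=\dot G(L)=\dot p(L)+\dot M(L)=-(\sin L+\varepsilon L)$, which is \eqref{eq:ideal:Phi_alpha0_formula}. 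I do not expect a genuine obstacle here; the only points demanding care are the bookkeeping that $\dot p$ and $\dot M$ decouple from the remaining variations and the base-point identity $\dot p_s=\dot N$, both of which become routine once the semicircle is substituted.
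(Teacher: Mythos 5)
Your proof is correct, and it takes a genuinely different route from the paper's. The paper works entirely with the real jet variables $v=k_s$, $w=k_{ss}$, $u=k_{sss}$, derives the linearised system $v_\alpha'=w_\alpha$, $w_\alpha'=u_\alpha$, $u_\alpha'+w_\alpha=-(\cos s+\varepsilon)$, recognises the resonant forced oscillator $w_\alpha''+w_\alpha=-(\cos s+\varepsilon)$, solves it explicitly with homogeneous initial data, integrates to get $v_\alpha$ and differentiates to get $u_\alpha$, and then verifies by direct substitution that $v_\alpha(L)+u_\alpha(L)=-\sin L-\varepsilon L$. You instead exploit the complex structure built into $\SQ$: setting $W=\dot\SQ e^{is}=\dot M+i\dot N$ turns the whole linearisation into the single first-order complex ODE $W_s=iW-(\cos s+\varepsilon)$, and the algebraic rearrangement $W=-iW_s-i(\cos s+\varepsilon)$ plus one integration extracts $\dot p(L)+\dot M(L)$ directly, without ever producing the individual components. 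Your derivation is shorter and makes the reason for the cancellation in $G=V+U$ transparent; on the other hand the paper's explicit formulas for $v_\alpha,w_\alpha,u_\alpha$ are reused verbatim in the proof of Lemma~\ref{lem:ideal:Jacobian}, so the longer explicit solve is not wasted work in the paper's overall organisation. Both the decoupling of the $W$-equation from the position/angle variations and the base-point identity $\dot p_s=\dot N$ are correctly justified in your argument (the quotient rule contributions from $\dot\theta$, $\dot p$, $\dot k$ all vanish because $\SQ$, $p$, and the numerator $N+\tfrac12 p^2$ are zero along the base semicircle).
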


\begin{proof}
Recall that $G:=V+U$ with $V(\alpha,\varepsilon,b,L)=p(L)=k_s(L)$ and
$U(\alpha,\varepsilon,b,L)=M(L)=k_{sss}(L)$ (Lemma~\ref{lem:ideal:xyQ_consistency}). 
For $b=0$ we have (using \eqref{eq:ideal:Phi_def})
\begin{equation}\label{eq:Phi_alpha0_is_Galpha}
\Phi(0,\varepsilon,0,L)=\partial_\alpha G(0,\varepsilon,0,L)
=\partial_\alpha\bigl(p(L)+M(L)\bigr)\Big|_{\alpha=0,b=0}.
\end{equation}

Fix $\varepsilon$ and consider the solution of the homothetic system at $\alpha=0$ and $b=0$.
Since the equation for $\SQ$ in \eqref{eq:ideal:homothetic_xyQ_ode} is multiplied by $\alpha$, at $\alpha=0$ it reduces to
$\SQ_s\equiv 0$; with $\SQ(0)=ib=0$ we obtain $\SQ\equiv 0$, hence $M\equiv N\equiv 0$ and therefore $q\equiv 0$.
Consequently $p\equiv 0$, $k\equiv 1$, $\theta(s)=s$, and $\gamma(s)=(\cos s,\sin s)$ on the common existence interval
(Lemma~\ref{lem:ideal:base-semicircle}). In particular, along this base solution the endpoint function $G(0,\varepsilon,0,L)$
vanishes identically.

To compute the $\alpha$-derivative in \eqref{eq:Phi_alpha0_is_Galpha}, we write the curvature jets
\[
v:=k_s,\qquad w:=k_{ss},\qquad u:=k_{sss}.
\]
From Lemma~\ref{lem:ideal:Qs} and the definition $\SQ=(u+i(kw-\tfrac12v^2))e^{-i\theta}$, the curvature equation for a homothetic
solution can be written purely in real form as
\begin{equation}\label{eq:ideal:real_jet_eq}
u_s+k\Bigl(kw-\tfrac12v^2\Bigr)=-\alpha\Bigl(\cos\theta+\varepsilon(x\cos\theta+y\sin\theta)\Bigr),
\end{equation}
together with the jet identities
\begin{equation}\label{eq:ideal:jet_identities}
v_s=w,\qquad w_s=u.
\end{equation}
Differentiate \eqref{eq:ideal:real_jet_eq}-\eqref{eq:ideal:jet_identities} with respect to $\alpha$ at $\alpha=0$ (with $\varepsilon$
fixed) along the base semicircle. Denoting $\partial_\alpha|_{\alpha=0}$ by a subscript $\alpha$, we obtain
\begin{equation}\label{eq:ideal:var_system}
v_\alpha'=w_\alpha,\qquad w_\alpha'=u_\alpha,\qquad
u_\alpha'+w_\alpha= -\Bigl(\cos\theta+\varepsilon(x\cos\theta+y\sin\theta)\Bigr)\Big|_{\text{base}},
\end{equation}
with initial conditions $v_\alpha(0)=w_\alpha(0)=u_\alpha(0)=0$ (since $v(0)=w(0)=u(0)=0$ for all $\alpha$ by the initial data).
On the base semicircle, $\theta(s)=s$ and $x\cos\theta+y\sin\theta\equiv 1$, hence the forcing in \eqref{eq:ideal:var_system} is
$-(\cos s+\varepsilon)$, and therefore
\begin{equation}\label{eq:ideal:forced_oscillator}
w_\alpha''+w_\alpha=-(\cos s+\varepsilon),\qquad w_\alpha(0)=0,\quad w_\alpha'(0)=u_\alpha(0)=0.
\end{equation}

We solve \eqref{eq:ideal:forced_oscillator} explicitly.
A resonant particular solution for $-\cos s$ is $-\tfrac12 s\sin s$, and a particular solution for the constant forcing
$-\varepsilon$ is $-\varepsilon(1-\cos s)$. Both satisfy the initial conditions, hence
\begin{equation}\label{eq:ideal:walpha_explicit}
w_\alpha(s)= -\frac12 s\sin s-\varepsilon(1-\cos s).
\end{equation}
Integrating $v_\alpha'=w_\alpha$ with $v_\alpha(0)=0$ and differentiating $u_\alpha=w_\alpha'$ yield
\begin{align}
v_\alpha(s)
&=\int_0^s w_\alpha(t)\,dt
= -\frac12(\sin s-s\cos s)-\varepsilon(s-\sin s), \label{eq:ideal:valpha_explicit}\\[1mm]
u_\alpha(s)
&=w_\alpha'(s)
= -\frac12(\sin s+s\cos s)-\varepsilon\sin s. \label{eq:ideal:ualpha_explicit}
\end{align}
Therefore, for any $L$ in the common existence interval,
\begin{equation}\label{eq:ideal:Galphacalc}
\partial_\alpha\bigl(p(L)+M(L)\bigr)\Big|_{\alpha=0,b=0}
=v_\alpha(L)+u_\alpha(L)
= -\sin L-\varepsilon L.
\end{equation}
Combining \eqref{eq:Phi_alpha0_is_Galpha} and \eqref{eq:ideal:Galphacalc} gives \eqref{eq:ideal:Phi_alpha0_formula}.
\end{proof}

\subsection{Construction of a fundamental arc}
\label{subsec:ideal:IFT}

\begin{lemma}\label{lem:ideal:Jacobian}
At $(\alpha,\varepsilon,b,L)=(0,0,0,\pi)$ the Jacobian of $F$ with respect to $(\alpha,b,L)$ is invertible.
More precisely,
\begin{equation}\label{eq:ideal:Jacobian}
D_{(\alpha,b,L)}F(0,0,0,\pi)
=
\begin{bmatrix}
\Phi_\alpha(0,0,0,\pi) & \frac{5\pi}{8} & 1\\[1mm]
\frac{\pi}{2} & 0 & 0\\[1mm]
-\frac{\pi}{2} & \pi & 1
\end{bmatrix},
\qquad
\det D_{(\alpha,b,L)}F(0,0,0,\pi)=-\frac{3\pi^2}{16}\neq 0.
\end{equation}
In particular, $D_{(\alpha,b,L)}F(0,0,0,\pi)$ is invertible. 
\end{lemma}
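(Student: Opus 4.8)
The plan is to evaluate, at the base semicircle of Lemma~\ref{lem:ideal:base-semicircle}, the eight numerical entries of $D_{(\alpha,b,L)}F(0,0,0,\pi)$ displayed in \eqref{eq:ideal:Jacobian} and then read off the determinant; the ninth entry $\Phi_\alpha(0,0,0,\pi)$ will turn out to be irrelevant and may be left symbolic. The organising observation is that the middle ($U$-)row is $(U_\alpha,U_b,U_L)=(\tfrac{\pi}{2},0,0)$. Indeed $U=M(L)=k_{sss}(L)$ by Lemma~\ref{lem:ideal:xyQ_consistency}; the base has $k\equiv1$, so $U_L=\partial_s M(\pi)=k_{s^4}(\pi)=0$; at $\alpha=0$ the $\SQ$-equation in \eqref{eq:ideal:homothetic_xyQ_ode} forces $\SQ\equiv ib$, hence $M(L)=\Re\bigl(ib\,e^{i\theta(L)}\bigr)=-b\sin\theta(L)$ and $U_b\big|_{b=0}=-\sin\theta_0(\pi)=0$; and $U_\alpha=u_\alpha(\pi)=\tfrac{\pi}{2}$ from \eqref{eq:ideal:ualpha_explicit}. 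Consequently $\det D_{(\alpha,b,L)}F$ does not involve $\Phi_\alpha$ (which would in any case require the second $\alpha$-variation, since $\Phi_\alpha(0,\cdot)=\tfrac12\,\partial_\alpha^2 G(0,\cdot)$ by Taylor-expanding the difference quotient in \eqref{eq:ideal:Phi_def}), and a cofactor expansion along the middle row reduces the determinant to $\tfrac{\pi}{2}$ times a $2\times2$ determinant in $\Phi_b,\Phi_L,B_b,B_L$.

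The remaining numerical entries are read off from first-variation data that is already in hand. From Lemma~\ref{lem:ideal:Phi-alpha0}, $\Phi(0,\varepsilon,0,L)=-(\sin L+\varepsilon L)$, so $\Phi_L(0,0,0,\pi)=-\cos\pi=1$. Since at $\varepsilon=0$ one has $B=-\sin\theta(L)$, it follows that $B_L=-\cos\theta_0(\pi)\,k_0(\pi)=1$, $B_\alpha=-\cos\theta_0(\pi)\,\theta_\alpha(\pi)=\theta_\alpha(\pi)=-\tfrac{\pi}{2}$ (the last value coming from two integrations of the explicit formula \eqref{eq:ideal:valpha_explicit}), and $B_b=-\cos\theta_0(\pi)\,\theta_b(\pi)=\theta_b(\pi)$. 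Here $\theta_b$ is the $b$-linearisation along the $\alpha=0$ branch: there $\SQ\equiv ib$ forces $k_{sss}=-b\sin\theta$ and $k\,k_{ss}-\tfrac12k_s^2=b\cos\theta$, and the linearised jet system integrates cleanly to $p_b=\sin s$, $k_b=1-\cos s$, $\theta_b=s-\sin s$, giving $\theta_b(\pi)=\pi$ and hence $B_b=\pi$.

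The genuinely laborious entry --- and the step I expect to be the main obstacle --- is $\Phi_b(0,0,0,\pi)$, a mixed second variation. I would work from $\Phi(0,\varepsilon,b,L)=\partial_\alpha G(0,\varepsilon,b,L)$ with $G=p(L)+M(L)=k_s(L)+k_{sss}(L)$. The $\alpha$-linearisation along the $\alpha=0$ branch is explicit: because the bracket in the $\SQ$-equation carries the factor $\alpha$, the first variation satisfies $(\SQ_\alpha)_s=-\bigl(\cos\theta+\varepsilon(x\cos\theta+y\sin\theta)\bigr)e^{-i\theta}$ with $\SQ_\alpha(0)=0$, hence
\[
\SQ_\alpha(s)=-\int_0^s\bigl(\cos\theta+\varepsilon(x\cos\theta+y\sin\theta)\bigr)e^{-i\theta}\,d\sigma ,
\]
the integrand evaluated along the $\alpha=0$ solution; from this $\partial_\alpha M(L)$ is a boundary expression in $\SQ_\alpha(L)$ and $\theta_\alpha(L)$, while $\partial_\alpha p(L)$ comes from integrating the linearisation of $p_s=\bigl(\Im(\SQ e^{i\theta})+\tfrac12p^2\bigr)/k$. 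I would then differentiate in $b$ at $b=0$, substituting $\theta_b=s-\sin s$, $k_b=1-\cos s$, $p_b=\sin s$ from the previous paragraph, and collapse the result to elementary integrals over $[0,\pi]$. The delicacy is precisely in carrying the two coupled variations through the resonant forced oscillator and the $1/k$ nonlinearity without sign or bookkeeping errors; the outcome is $\Phi_b(0,0,0,\pi)=\tfrac{5\pi}{8}$.

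With all entries assembled into the matrix of \eqref{eq:ideal:Jacobian}, the cofactor expansion along the middle row yields the stated value of $\det D_{(\alpha,b,L)}F(0,0,0,\pi)$, which is nonzero; invertibility of the Jacobian is then immediate, and this is exactly the input required to run the Implicit Function Theorem in the subsequent construction of the ideal-flow fundamental arc.
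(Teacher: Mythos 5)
Your proposal is correct and takes essentially the same route as the paper: expand the determinant along the middle ($U$-)row so that $\Phi_\alpha$ never enters, and read off the remaining eight entries from first-variation data, with $\Phi_b$ the one genuinely nontrivial mixed second variation. Two small remarks. Your computation of $U_b=0$ via the closed form $M=\Re(ib\,e^{i\theta})=-b\sin\theta$ at $\alpha=0$ is a pleasant shortcut; the paper instead solves the linearised jet system $v_s=w$, $w_s=u$, $u_s=-w$ explicitly to get $u_b(s)=-\sin s$, reaching the same conclusion. For $\Phi_b$ you correctly identify the mixed $(\alpha,b)$-variation as the hard entry and sketch the same strategy (linearise the $\SQ$-equation, then differentiate in $b$); the paper carries this out in real jet variables and exploits the cancellation $S'=\mathrm{RHS}$ with $S:=v_{\alpha b}+u_{\alpha b}$ in \eqref{eq:ideal:Sprime} to reduce everything to a single integral \eqref{eq:ideal:S_generalL}, giving $\Phi_b(0,0,0,\pi)=\tfrac{5\pi}{8}$. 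Your complex-variable bookkeeping is equivalent in substance, though you leave the final computation as an outline rather than carrying it through.
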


\begin{proof}
At $\alpha=0$, the smooth extension is justified by the integral representation
\begin{equation}\label{eq:ideal:Phi_integral_rep}
\Phi(\alpha,\varepsilon,b,L)=\int_0^1 \partial_\alpha G(t\alpha,\varepsilon,b,L)\,dt,
\end{equation}
which follows from the fundamental theorem of calculus in the $\alpha$-variable and smoothness of $G$.

We compute the three rows of $D_{(\alpha,b,L)}F$ at the base point $(\alpha,\varepsilon,b,L)=(0,0,0,\pi)$.

\smallskip
\noindent\emph{Row 1: derivatives of $\Phi$.}
Differentiating \eqref{eq:ideal:Phi_integral_rep} under the integral sign gives, for $\xi\in\{b,L\}$,
\begin{equation}\label{eq:ideal:Phi_xi_formula}
\Phi_\xi(\alpha,\varepsilon,b,L)=\int_0^1 \partial_{\alpha\xi}G(t\alpha,\varepsilon,b,L)\,dt
\quad\Longrightarrow\quad
\Phi_\xi(0,\varepsilon,b,L)=\partial_{\alpha\xi}G(0,\varepsilon,b,L).
\end{equation}
In particular,
\begin{equation}\label{eq:ideal:Phi_L_from_G}
\Phi_L(0,0,0,\pi)=\partial_{\alpha L}G(0,0,0,\pi),
\qquad
\Phi_b(0,0,0,\pi)=\partial_{\alpha b}G(0,0,0,\pi).
\end{equation}

\smallskip
\emph{Computation of $\Phi_L(0,0,0,\pi)$.}
By Lemma~\ref{lem:ideal:Phi-alpha0}, for $(\varepsilon,L)$ near $(0,\pi)$,
\[
\Phi(0,\varepsilon,0,L)=-(\sin L+\varepsilon L),
\]
hence differentiating in $L$ and evaluating at $(\varepsilon,L)=(0,\pi)$ yields
\begin{equation}\label{eq:ideal:Phi_L_value_again}
\Phi_L(0,0,0,\pi)=-\cos\pi=1.
\end{equation}

\smallskip
\emph{Computation of $\Phi_b(0,0,0,\pi)$.}
We compute $\partial_{\alpha b}G(0,0,0,\pi)$ directly from the curvature-jet subsystem, as follows.
Along the base semicircle at $(\alpha,\varepsilon,b)=(0,0,0)$ we have
\[
k\equiv 1,\qquad \theta(s)=s,\qquad v:=k_s\equiv 0,\qquad w:=k_{ss}\equiv 0,\qquad u:=k_{sss}\equiv 0,
\]
and $G=V+U=v(L)+u(L)$.

\emph{(i) $b$-variation at $\alpha=0$.}
At $\alpha=0$ the jet equations reduce to the homogeneous linear system
\[
v_s=w,\qquad w_s=u,\qquad u_s=-w
\]
(the last identity is the $\alpha=0$ specialisation of $u_s+k(kw-\tfrac12v^2)=0$ with $k\equiv 1$).
Varying $b$ changes the initial condition $w(0)=q(0)$, so with $(v_b(0),w_b(0),u_b(0))=(0,1,0)$ we obtain
\begin{equation}\label{eq:ideal:b_variation}
w_b(s)=\cos s,\qquad u_b(s)=-\sin s,\qquad v_b(s)=\sin s.
\end{equation}
Consequently $k_b'(s)=v_b(s)$ with $k_b(0)=0$ gives $k_b(s)=1-\cos s$, and hence
$\theta_b'(s)=k_b(s)$ with $\theta_b(0)=0$ gives $\theta_b(s)=s-\sin s$.

\emph{(ii) $\alpha$-variation at $b=0$ (with $\varepsilon=0$).}
At $(\varepsilon,b)=(0,0)$, linearising the forcing in the homothetic equation yields (as in the proof of
Lemma~\ref{lem:ideal:Phi-alpha0})
\begin{equation}\label{eq:ideal:alpha_variation}
w_\alpha(s)=-\frac12 s\sin s,\qquad
v_\alpha(s)=-\frac12(\sin s-s\cos s),\qquad
u_\alpha(s)=-\frac12(\sin s+s\cos s),
\end{equation}
and integrating $k_\alpha'(s)=v_\alpha(s)$ with $k_\alpha(0)=0$ gives
\begin{equation}\label{eq:ideal:k_alpha}
k_\alpha(s)=\cos s-1+\frac12 s\sin s.
\end{equation}

\emph{(iii) Mixed $(\alpha,b)$-variation.}
Differentiate the $\alpha$-linearised equation
\begin{equation}\label{eq:ideal:alpha_lin_u_eq}
u_\alpha'+w_\alpha=-\cos\theta
\end{equation}
with respect to $b$ and evaluate along the base semicircle (so $\theta=s$).
The left-hand side becomes $u_{\alpha b}'+w_{\alpha b}$.
For the right-hand side, the $b$-dependence enters through $\theta=\theta(\alpha,b)$, hence
\[
\partial_b(-\cos\theta)=\sin s\,\theta_b(s).
\]
In addition, when deriving \eqref{eq:ideal:alpha_lin_u_eq} from
$u_s+k(kw-\tfrac12v^2)=-\alpha\cos\theta$, the product $k(kw-\tfrac12 v^2)$ contributes further
mixed terms upon differentiating in $b$ (because $k_b\not\equiv 0$). A direct differentiation gives
\begin{equation}\label{eq:ideal:mixed_u_eq}
u_{\alpha b}'+w_{\alpha b}
=
-\,2k_b w_\alpha \;-\;2k_\alpha w_b \;+\; v_b v_\alpha \;+\; \sin s\,\theta_b,
\end{equation}
with homogeneous initial data $v_{\alpha b}(0)=w_{\alpha b}(0)=u_{\alpha b}(0)=0$.
Together with
\[
v_{\alpha b}'=w_{\alpha b},\qquad w_{\alpha b}'=u_{\alpha b},
\]
set
\[
S:=v_{\alpha b}+u_{\alpha b}.
\]
Then using \eqref{eq:ideal:mixed_u_eq} we obtain the cancellation
\begin{equation}\label{eq:ideal:Sprime}
S'(s)=v_{\alpha b}'(s)+u_{\alpha b}'(s)=w_{\alpha b}(s)+\bigl(-w_{\alpha b}(s)+\mathrm{RHS}(s)\bigr)=\mathrm{RHS}(s),
\qquad S(0)=0,
\end{equation}
where $\mathrm{RHS}(s)$ denotes the right-hand side of \eqref{eq:ideal:mixed_u_eq}.

Substitute the explicit expressions
\eqref{eq:ideal:b_variation}, \eqref{eq:ideal:alpha_variation}, \eqref{eq:ideal:k_alpha},
and $\theta_b(s)=s-\sin s$, then simplify:
\begin{align*}
\mathrm{RHS}(s)
&=
-2(1-\cos s)\Bigl(-\frac12 s\sin s\Bigr)
-2\Bigl(\cos s-1+\frac12 s\sin s\Bigr)\cos s
\\&\qquad 
+(\sin s)\Bigl(-\frac12(\sin s-s\cos s)\Bigr)
+\sin s\,(s-\sin s).
\\&=2s\sin s-\frac34\,s\sin(2s)+2\cos s-\frac14\cos(2s)-\frac74.
\end{align*}
Integrating \eqref{eq:ideal:Sprime} from $0$ to $L$ gives
\begin{equation}\label{eq:ideal:S_generalL}
S(L)=\partial_{\alpha b}G(0,0,0,L)
=
-2L\cos L+\frac38 L\cos(2L)-\frac74 L +4\sin L-\frac{5}{16}\sin(2L).
\end{equation}
Evaluating at $L=\pi$ yields
\begin{equation}\label{eq:ideal:Phi_b_value_again}
\Phi_b(0,0,0,\pi)=\partial_{\alpha b}G(0,0,0,\pi)=S(\pi)
=\Bigl(2\pi+\frac{3\pi}{8}-\frac{7\pi}{4}\Bigr)=\frac{5\pi}{8}.
\end{equation}

Combining \eqref{eq:ideal:Phi_L_value_again} and \eqref{eq:ideal:Phi_b_value_again} gives the first row
$\bigl(\Phi_\alpha(0,0,0,\pi),\,\frac{5\pi}{8},\,1\bigr)$.

\smallskip
\noindent\emph{Row 2: derivatives of $U$.}
Recall $U(\alpha,\varepsilon,b,L)=M(L)$ and, by Lemma~\ref{lem:ideal:xyQ_consistency}, $M=k_{sss}=u$ on any interval where
the reconstruction is valid. Along the base semicircle $u\equiv 0$.

\emph{Derivative in $\alpha$.}
Along the base semicircle and with $\varepsilon=0$, the $\alpha$-variation $u_\alpha$ is given in
\eqref{eq:ideal:alpha_variation}, hence
\begin{equation}\label{eq:ideal:U_alpha_value_again}
U_\alpha(0,0,0,\pi)=u_\alpha(\pi)=-\frac12(\sin\pi+\pi\cos\pi)=\frac{\pi}{2}.
\end{equation}

\emph{Derivative in $b$.}
From \eqref{eq:ideal:b_variation}, $u_b(s)=-\sin s$, hence $u_b(\pi)=0$ and
\begin{equation}\label{eq:ideal:U_b_value_again}
U_b(0,0,0,\pi)=0.
\end{equation}

\emph{Derivative in $L$.}
Since $U$ is evaluation at $s=L$, we have $U_L(0,0,0,\pi)=u_s(\pi)$ along the base.
But $u\equiv 0$ on the base semicircle, hence $u_s(\pi)=0$ and
\begin{equation}\label{eq:ideal:U_L_value_again}
U_L(0,0,0,\pi)=0.
\end{equation}
This gives the second row $\bigl(\frac{\pi}{2},\,0,\,0\bigr)$.

\smallskip
\noindent\emph{Row 3: derivatives of $B$.}
At $\varepsilon=0$ we have $B(\alpha,0,b,L)=-\sin\theta(L)$, so at the base point
\begin{align*}
B_\xi(0,0,0,\pi)&=-\cos\theta(\pi)\,\theta_\xi(\pi)=\theta_\xi(\pi)
\qquad(\xi\in\{\alpha,b\}),
\qquad\\
B_L(0,0,0,\pi)&=-\cos\theta(\pi)\,\theta_s(\pi)=-\cos\pi\cdot k(\pi)=1,
\end{align*}
since $\theta(\pi)=\pi$ and $k(\pi)=1$ on the base.

\emph{Derivative in $\alpha$.}
Using $k_\alpha$ from \eqref{eq:ideal:k_alpha} and $\theta_\alpha'(s)=k_\alpha(s)$ with $\theta_\alpha(0)=0$,
\[
\theta_\alpha(\pi)=\int_0^\pi k_\alpha(s)\,ds
=\int_0^\pi(\cos s-1)\,ds+\frac12\int_0^\pi s\sin s\,ds
=0-\pi+\frac12\cdot\pi=-\frac{\pi}{2}.
\]
Thus
\begin{equation}\label{eq:ideal:B_alpha_value_again}
B_\alpha(0,0,0,\pi)=-\frac{\pi}{2}.
\end{equation}

\emph{Derivative in $b$.}
From the $b$-variation above, $k_b(s)=1-\cos s$, hence
\[
\theta_b(\pi)=\int_0^\pi k_b(s)\,ds=\int_0^\pi(1-\cos s)\,ds=\pi,
\]
and therefore
\begin{equation}\label{eq:ideal:B_b_value_again}
B_b(0,0,0,\pi)=\pi.
\end{equation}
This gives the third row $\bigl(-\frac{\pi}{2},\,\pi,\,1\bigr)$.

\smallskip
\noindent\emph{Determinant and invertibility.}
Collecting the three rows yields the matrix in \eqref{eq:ideal:Jacobian}.
Expanding the determinant along the second row (which has two zeros) gives
\[
\det D_{(\alpha,b,L)}F(0,0,0,\pi)
=\frac{\pi}{2}\,
\det\begin{bmatrix}
\frac{5\pi}{8} & 1\\[0.5mm]
\pi & 1
\end{bmatrix}
=\frac{\pi}{2}\Bigl(\frac{5\pi}{8}-\pi\Bigr)
=-\frac{3\pi^2}{16}\neq 0.
\]
Hence the Jacobian is invertible. 
\end{proof}

We thus conclude the existence of fundamental arcs.

\begin{proposition}\label{prop:ideal:fundamental-arc}
There exist $\varepsilon_0>0$ and unique $C^\infty$ functions
\[
\varepsilon\longmapsto \alpha(\varepsilon),\qquad
\varepsilon\longmapsto b(\varepsilon),\qquad
\varepsilon\longmapsto L(\varepsilon),
\qquad |\varepsilon|<\varepsilon_0,
\]
with $(\alpha(0),b(0),L(0))=(0,0,\pi)$ such that
\begin{equation}\label{eq:ideal:Fzero}
F\bigl(\alpha(\varepsilon),\varepsilon,b(\varepsilon),L(\varepsilon)\bigr)=0.
\end{equation}
In particular, for each $|\varepsilon|<\varepsilon_0$ the corresponding solution of
\eqref{eq:ideal:homothetic_xyQ_ode}-\eqref{eq:ideal:IC} is a smooth fundamental arc
in the sense of Definition~\ref{def:ideal:fundamental-arc}.
\end{proposition}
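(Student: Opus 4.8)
The plan is a direct application of the Implicit Function Theorem to the endpoint map $F$ of \eqref{eq:ideal:F_def}, with $\varepsilon$ as the parameter and $(\alpha,b,L)$ as the unknowns. Three inputs are required, all already established. First, $F$ is $C^\infty$ near $(0,0,0,\pi)$: for parameters close to the base point the curvature $k$ stays close to $1$ on a uniform interval $[0,\pi+\eta]$, so the reconstruction of Lemma~\ref{lem:ideal:xyQ_consistency} is valid and $U$, $B$ are smooth evaluations of a smoothly parametrised ODE solution, while $\Phi$ extends smoothly across $\alpha=0$ via the integral representation $\Phi(\alpha,\varepsilon,b,L)=\int_0^1\partial_\alpha G(t\alpha,\varepsilon,b,L)\,dt$ used in the proof of Lemma~\ref{lem:ideal:Jacobian}. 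Second, $F(0,0,0,\pi)=0$ by Lemma~\ref{lem:ideal:base-semicircle}. Third, the partial Jacobian $D_{(\alpha,b,L)}F(0,0,0,\pi)$ has determinant $-3\pi^2/16\neq0$, hence is invertible, by Lemma~\ref{lem:ideal:Jacobian}.

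Granting these, the Implicit Function Theorem yields $\varepsilon_0>0$ and unique $C^\infty$ maps $\varepsilon\mapsto(\alpha(\varepsilon),b(\varepsilon),L(\varepsilon))$ on $(-\varepsilon_0,\varepsilon_0)$ with $(\alpha(0),b(0),L(0))=(0,0,\pi)$ satisfying $F(\alpha(\varepsilon),\varepsilon,b(\varepsilon),L(\varepsilon))\equiv 0$, which is \eqref{eq:ideal:Fzero}. By Lemma~\ref{lem:ideal:xyQ_consistency} the associated state $(x,y,\theta,k,p,\SQ)$ reconstructs a genuine solution of the homothetic curvature equation $\SF[k]=-\alpha(\cos\theta+\varepsilon\,\gamma\cdot N)$, so it is a fundamental arc in the sense of Definition~\ref{def:ideal:fundamental-arc} provided $\alpha(\varepsilon)\neq0$.

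It remains to secure $\alpha(\varepsilon)\neq0$ for small $\varepsilon\neq0$. Differentiating $F(\alpha(\varepsilon),\varepsilon,b(\varepsilon),L(\varepsilon))\equiv 0$ at $\varepsilon=0$ and using $F_\varepsilon(0,0,0,\pi)=(-\pi,0,0)^{\mathsf T}$ — which holds because $\Phi(0,\varepsilon,0,L)=-(\sin L+\varepsilon L)$ by Lemma~\ref{lem:ideal:Phi-alpha0}, and $U$, $B$ are $\varepsilon$-independent at $\alpha=b=0$ since $\varepsilon$ enters \eqref{eq:ideal:homothetic_xyQ_ode} only through the product $\alpha\varepsilon$ — together with the second row $(\tfrac{\pi}{2},0,0)$ of the Jacobian, the linearised system forces $\alpha'(0)=0$ (the values $b'(0)$, $L'(0)$ then follow from the remaining two rows). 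Thus first-order information is insufficient, and one must go to second order: differentiating twice and extracting the $U$-row isolates $\tfrac{\pi}{2}\,\alpha''(0)$ in terms of the already-computed second variations of the jet variables along the base semicircle, and one checks the resulting value is nonzero. Shrinking $\varepsilon_0$ if needed, this yields $\alpha(\varepsilon)\neq0$ for $0<|\varepsilon|<\varepsilon_0$, completing the proof.

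The main obstacle is not inside this proposition, which is a clean application of the Implicit Function Theorem, but in the input it consumes — above all the determinant computation of Lemma~\ref{lem:ideal:Jacobian}, where resonant forcing in the linearised jet equations (e.g.\ $w_\alpha''+w_\alpha=-(\cos s+\varepsilon)$) generates the secular terms that make the $3\times3$ determinant nonzero. Within the present argument, the only delicate point is the degeneracy $\alpha'(0)=0$: unlike the curve-diffusion case (where $\alpha'(0)=-2$), here the first-order expansion does not detect non-circularity, so a second-order computation — entirely analogous to, but lighter than, the $\Theta''(0)$ computation in Lemma~\ref{lem:cdf:Theta-expansion} — is needed to rule out the trivial circle branch.
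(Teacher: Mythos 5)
Your core argument is exactly the paper's: the proof of Proposition~\ref{prop:ideal:fundamental-arc} is a one-line invocation of the Implicit Function Theorem, citing Lemma~\ref{lem:ideal:base-semicircle} for $F(0,0,0,\pi)=0$ and Lemma~\ref{lem:ideal:Jacobian} for invertibility of $D_{(\alpha,b,L)}F(0,0,0,\pi)$, and your discussion of the smooth extension of $\Phi$ across $\alpha=0$ via the integral representation and of the reconstruction in Lemma~\ref{lem:ideal:xyQ_consistency} correctly fills in the unwritten details.

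Where you diverge is in insisting, via Definition~\ref{def:ideal:fundamental-arc}, that $\alpha(\varepsilon)\neq 0$ be verified for small $\varepsilon\neq 0$. Your derivation of $\alpha'(0)=0$ from the $U$-row $\bigl(\tfrac{\pi}{2},0,0\bigr)$ of the Jacobian and $F_\varepsilon(0,0,0,\pi)=(-\pi,0,0)^{\mathsf T}$ is correct (it matches the paper's later Lemma~\ref{lem:ideal:first-order}, which also gives $b'(0)=-8/3$, $L'(0)=8\pi/3$). However, you then assert without computation that $\alpha''(0)\neq 0$; this is a real gap in your argument, and closing it would require a second-variation computation of $U$ in $\varepsilon$, $b$, $L$ along the base semicircle analogous to, though lighter than, the $\Theta''(0)$ computation in Lemma~\ref{lem:cdf:Theta-expansion}. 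It is worth noting that the paper's own proof of this proposition does not address $\alpha(\varepsilon)\neq 0$ either: Lemma~\ref{lem:ideal:first-order} deduces non-circularity from $b(\varepsilon)\neq 0$, which is the geometric input the dihedral gluing actually uses, but which is not literally the condition ``$\alpha\neq 0$'' appearing in Definition~\ref{def:ideal:fundamental-arc}. So you have correctly spotted a loose end, but neither you nor the paper as written pins down the quadratic coefficient of $\alpha(\varepsilon)$; your proof would need to either carry out that computation or, as the paper implicitly does, replace the requirement $\alpha\neq 0$ by the sufficient non-degeneracy $b\neq 0$.
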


\begin{proof}
By Lemma~\ref{lem:ideal:base-semicircle}, $F(0,0,0,\pi)=0$, and by Lemma~\ref{lem:ideal:Jacobian}
the Jacobian $D_{(\alpha,b,L)}F(0,0,0,\pi)$ is invertible.  The Implicit Function Theorem gives the claim.
\end{proof}

We now show that the fundamental arc is not circular.

\begin{lemma}\label{lem:ideal:first-order}
Along the branch of solutions given by  Proposition~\ref{prop:ideal:fundamental-arc} one has
\begin{equation}\label{eq:ideal:first-order}
\alpha'(0)=0,\qquad b'(0)=-\frac{8}{3},\qquad L'(0)=\frac{8\pi}{3}.
\end{equation}
In particular, $b(\varepsilon)\neq 0$ for all sufficiently small $\varepsilon\neq 0$, so the corresponding
fundamental arc is not a circular arc.
\end{lemma}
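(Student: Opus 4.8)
The plan is to differentiate the defining identity $F\bigl(\alpha(\varepsilon),\varepsilon,b(\varepsilon),L(\varepsilon)\bigr)\equiv 0$ once at $\varepsilon=0$ and solve the resulting $3\times 3$ linear system using the Jacobian already computed in Lemma~\ref{lem:ideal:Jacobian}. By the chain rule,
\[
D_{(\alpha,b,L)}F(0,0,0,\pi)\begin{bmatrix}\alpha'(0)\\ b'(0)\\ L'(0)\end{bmatrix}
= -\,F_\varepsilon(0,0,0,\pi),
\]
so the only genuinely new ingredient is the column $F_\varepsilon(0,0,0,\pi)=\bigl(\Phi_\varepsilon,\,U_\varepsilon,\,B_\varepsilon\bigr)(0,0,0,\pi)$.

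First I would record these three $\varepsilon$-partials at the base point. For $\Phi_\varepsilon$ I use the explicit formula $\Phi(0,\varepsilon,0,L)=-(\sin L+\varepsilon L)$ from Lemma~\ref{lem:ideal:Phi-alpha0}, which differentiates to $\Phi_\varepsilon(0,0,0,\pi)=-\pi$. For $U_\varepsilon$ and $B_\varepsilon$ the key structural observation is that $\varepsilon$ enters \eqref{eq:ideal:homothetic_xyQ_ode} only through the product $\alpha\varepsilon$; since the base has $\alpha=0$, every $\varepsilon$-derivative of $(x,y,\theta,k,p,\SQ)$ vanishes identically along the base semicircle. In particular $M_\varepsilon\equiv 0$, hence $U_\varepsilon(0,0,0,\pi)=0$; and since $\theta_\varepsilon(\pi)=0$ together with $\sin\theta_0(\pi)=0$, $y_0(\pi)=0$, $x_0(\pi)=-1$, the linear $\varepsilon$-term in \eqref{eq:ideal:Bdef} also contributes nothing, so $B_\varepsilon(0,0,0,\pi)=0$. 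Thus $F_\varepsilon(0,0,0,\pi)=(-\pi,0,0)^{\mathsf T}$.

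Next I would solve the linear system $D_{(\alpha,b,L)}F(0,0,0,\pi)\,(\alpha'(0),b'(0),L'(0))^{\mathsf T}=(\pi,0,0)^{\mathsf T}$. The second row of the Jacobian is $(\tfrac{\pi}{2},0,0)$, which immediately forces $\alpha'(0)=0$; in particular the (undetermined) entry $\Phi_\alpha(0,0,0,\pi)$ in the first row never enters the computation. The third row then gives $\pi b'(0)+L'(0)=0$, and substituting $L'(0)=-\pi b'(0)$ into the first row yields $\tfrac{5\pi}{8}b'(0)+L'(0)=\pi$, i.e.\ $-\tfrac{3\pi}{8}b'(0)=\pi$, whence $b'(0)=-\tfrac{8}{3}$ and $L'(0)=\tfrac{8\pi}{3}$, as claimed.

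Finally, for the non-circularity statement: since $b$ is $C^\infty$ with $b(0)=0$ and $b'(0)=-\tfrac83\neq 0$, the Taylor expansion $b(\varepsilon)=-\tfrac83\varepsilon+O(\varepsilon^2)$ shows $b(\varepsilon)\neq 0$ for all sufficiently small $\varepsilon\neq 0$. By the initial data \eqref{eq:ideal:IC} together with \eqref{eq:ideal:MN_from_Q} one has $k_{ss}(0)=q(0)=b(\varepsilon)\neq 0$, which is incompatible with the constant curvature of a circular arc. I do not expect a serious obstacle in this lemma: once Lemma~\ref{lem:ideal:Jacobian} is in hand the argument is linear algebra, and the only point deserving a moment's care is the clean justification that $U_\varepsilon$ and $B_\varepsilon$ vanish at the base point, which follows from the fact that $\varepsilon$ is always paired with the trivial factor $\alpha$.
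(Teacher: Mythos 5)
Your proof is correct and follows the same route as the paper's: differentiate the implicit identity once, compute $F_\varepsilon(0,0,0,\pi)=(-\pi,0,0)^{\mathsf T}$ (using Lemma~\ref{lem:ideal:Phi-alpha0} for the first component and the $\alpha\varepsilon$-structure of the ODE for the other two), and solve the $3\times 3$ system from Lemma~\ref{lem:ideal:Jacobian}. The only difference is that you carry out the linear algebra and the non-circularity conclusion ($k_{ss}(0)=b(\varepsilon)\neq 0$) more explicitly than the paper does, which simply asserts these steps.
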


\begin{proof}
Differentiate \eqref{eq:ideal:Fzero} at $\varepsilon=0$ to obtain the linear system
\[
D_{(\alpha,b,L)}F(0,0,0,\pi)
\begin{bmatrix}\alpha'(0)\\ b'(0)\\ L'(0)\end{bmatrix}
=-F_\varepsilon(0,0,0,\pi).
\]
By Lemma~\ref{lem:ideal:Phi-alpha0}, $\Phi_\varepsilon(0,0,0,\pi)=-\pi$, and at $\alpha=0$ the ODE is
$\varepsilon$-independent, hence $U_\varepsilon(0,0,0,\pi)=B_\varepsilon(0,0,0,\pi)=0$.
Solving the resulting $3\times 3$ system using \eqref{eq:ideal:Jacobian} yields \eqref{eq:ideal:first-order}.
Finally, $b(\varepsilon)=q(0)$, so $b(\varepsilon)\neq 0$ implies $k$ is not constant.
\end{proof}


\begin{figure}[t]
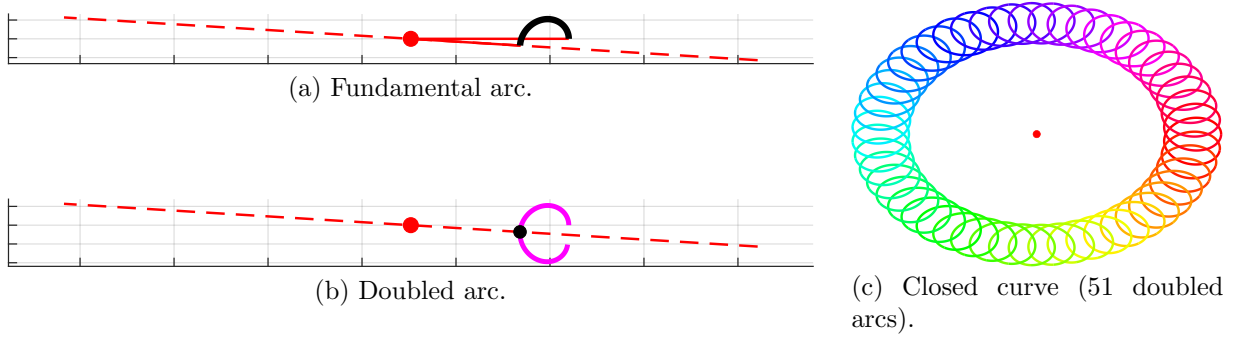

\centering

\begin{minipage}[t]{0.69\textwidth}
  \vspace{4pt}\centering

  \begin{subfigure}[t]{\linewidth}
    \centering
    \incfigl{ideal_fig1_fundamental_arc_50_51.pdf}{0.95\linewidth}{8mm}{10mm}
    \caption{Fundamental arc.}
  \end{subfigure}

  \vspace{12mm}

  \begin{subfigure}[t]{\linewidth}
    \centering
    \incfigl{ideal_fig2_doubled_arc_50_51.pdf}{0.95\linewidth}{8mm}{10mm}
    \caption{Doubled arc.}
  \end{subfigure}
\end{minipage}\hfill
\begin{minipage}[t]{0.30\textwidth}
  \vspace{0pt}\centering

  \begin{subfigure}[t]{\linewidth}
    \centering
    \incfig{ideal_fig3_closed_50_51.pdf}{\linewidth}{0}
    \caption{Closed curve (51 doubled arcs).}
  \end{subfigure}
\end{minipage}

\caption{Ideal-flow epicyclic shrinker with $\omega=50$ and $\varepsilon=0.13507$ (to five significant figures).
The closed curve is assembled from $51$ doubled arcs.}
\label{fig:ideal:50_51}
\end{figure}

\subsection{Terminal turning angle and abundance of rational angles}
\label{subsec:ideal:turning-angle}

Define the terminal tangent-angle
\begin{equation}\label{eq:ideal:ThetaDef}
\Theta(\varepsilon):=\theta\bigl(L(\varepsilon);\alpha(\varepsilon),\varepsilon,b(\varepsilon)\bigr).
\end{equation}

We require, as with curve diffusion flow, expansion of $\Theta$ to second-order.

\begin{lemma}\label{lem:ideal:Theta-expansion}
Along the branch of solutions given by Proposition \ref{prop:ideal:fundamental-arc},
\begin{equation}\label{eq:ideal:Theta-expansion}
\Theta(\varepsilon)=\pi-\frac{4\pi}{3}\,\varepsilon^2+O(\varepsilon^3)
\qquad(\varepsilon\to 0).
\end{equation}
In particular, $\Theta$ is continuous and non-constant in a neighbourhood of $\varepsilon=0$.
\end{lemma}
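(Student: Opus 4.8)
The plan is to follow the template of Lemma~\ref{lem:cdf:Theta-expansion}: establish $\Theta(0)=\pi$, $\Theta'(0)=0$, and $\Theta''(0)=-\tfrac{8\pi}{3}$, whence \eqref{eq:ideal:Theta-expansion} follows from the second-order Taylor polynomial $\Theta(\varepsilon)=\pi+\tfrac12\Theta''(0)\,\varepsilon^2+O(\varepsilon^3)$. The value $\Theta(0)=\pi$ is immediate from the base semicircle (Lemma~\ref{lem:ideal:base-semicircle}). For $\Theta'(0)$, differentiate $\Theta(\varepsilon)=\theta\bigl(L(\varepsilon);\alpha(\varepsilon),\varepsilon,b(\varepsilon)\bigr)$ by the chain rule; all partials are evaluated at the base point $(0,0,0,\pi)$, where $\theta_L=\theta_s=k_0(\pi)=1$, where $\theta_\varepsilon=0$ because $\varepsilon$ enters \eqref{eq:ideal:homothetic_xyQ_ode} only through the product $\alpha\varepsilon$ and $\alpha=0$ on the base, and where $\theta_\alpha(\pi)=-\tfrac{\pi}{2}$ and $\theta_b(\pi)=\pi$ were computed already in the proof of Lemma~\ref{lem:ideal:Jacobian}. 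Combining with $\alpha'(0)=0$, $b'(0)=-\tfrac83$, $L'(0)=\tfrac{8\pi}{3}$ from Lemma~\ref{lem:ideal:first-order} produces the cancellation $\Theta'(0)=\pi\cdot(-\tfrac83)+1\cdot\tfrac{8\pi}{3}=0$.

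For $\Theta''(0)$ I differentiate the chain rule once more. Two observations collapse the expression: every term carrying a factor $\alpha'(0)=0$ drops (in particular all $\theta_{\alpha\alpha}$, $\theta_{\alpha b}$, $\theta_{\alpha L}$, $\theta_{\varepsilon\alpha}$ contributions), and every $\theta$-derivative mixed with $\varepsilon$ other than $\theta_{\varepsilon\alpha}$ vanishes because at $\alpha=0$ the solution of \eqref{eq:ideal:homothetic_xyQ_ode} is independent of $\varepsilon$. What remains is
\[
\Theta''(0)=\theta_{bb}\,\bigl(b'(0)\bigr)^2+2\,\theta_{bL}\,b'(0)L'(0)+\theta_{LL}\,\bigl(L'(0)\bigr)^2+\theta_\alpha\,\alpha''(0)+\theta_b\,b''(0)+\theta_L\,L''(0),
\]
all partials at $(0,0,0,\pi)$. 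The easy ingredients are $\theta_{bL}=\partial_s\theta_b(\pi)=k_b(\pi)=2$ (using $k_b(s)=1-\cos s$ from the proof of Lemma~\ref{lem:ideal:Jacobian}) and $\theta_{LL}=\partial_s^2\theta_0(\pi)=p_0(\pi)=0$. The quantity $\theta_{bb}(0,0,0,\pi)$ is obtained by differentiating the curvature--jet subsystem of \eqref{eq:ideal:homothetic_xyQ_ode} twice in $b$ along the base semicircle, integrating the resulting (linear, elementarily forced) system with the first $b$-variations $k_b=1-\cos s$, $\theta_b=s-\sin s$ as data, and evaluating at $s=\pi$; this is the ideal-flow analogue of Lemma~\ref{lem:cdf:Faa}.

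The remaining inputs $\alpha''(0),b''(0),L''(0)$ come from differentiating $F\bigl(\alpha(\varepsilon),\varepsilon,b(\varepsilon),L(\varepsilon)\bigr)\equiv0$ twice: with $A:=D_{(\alpha,b,L)}F(0,0,0,\pi)$ the invertible matrix of \eqref{eq:ideal:Jacobian}, one obtains $A\,(\alpha''(0),b''(0),L''(0))^{\mathsf T}=-\Sigma$, where (after discarding all $\alpha'(0)$-terms)
\[
\Sigma=F_{\varepsilon\varepsilon}+2F_{\varepsilon b}\,b'(0)+2F_{\varepsilon L}\,L'(0)+F_{bb}\,\bigl(b'(0)\bigr)^2+2F_{bL}\,b'(0)L'(0)+F_{LL}\,\bigl(L'(0)\bigr)^2
\]
at the base point. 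Each entry of the second derivatives of $\Phi$, $U=M(L)=k_{sss}(L)$, and $B$ is computed from the first and second variational systems in $b$ and $L$ (the $L$-derivatives being mere evaluations at $s=L$) and from the closed form $\Phi(0,\varepsilon,0,L)=-(\sin L+\varepsilon L)$ of Lemma~\ref{lem:ideal:Phi-alpha0}. Solving the $3\times3$ system for $(\alpha''(0),b''(0),L''(0))$ and substituting into the displayed formula for $\Theta''(0)$ gives $\Theta''(0)=-\tfrac{8\pi}{3}$, hence \eqref{eq:ideal:Theta-expansion}; in particular $\Theta$ is continuous and non-constant near $\varepsilon=0$.

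I expect the main obstacle to be bookkeeping rather than ideas. Because the ideal flow is two orders higher than curve diffusion, the second variational systems for the curvature jets $(k,v,w,u)=(k,k_s,k_{ss},k_{sss})$ carry more cross terms --- notably the $-2k_b\,w_\alpha-2k_\alpha\,w_b+v_b\,v_\alpha$ type contributions already visible in equation \eqref{eq:ideal:mixed_u_eq} of the proof of Lemma~\ref{lem:ideal:Jacobian} --- and one must track these consistently through $\Phi_{bb}$, $U_{bb}$, $B_{bb}$, the mixed $(\varepsilon,b)$ and $(b,L)$ pieces, and $\theta_{bb}$, including the resonant particular solutions forced by $\cos s$. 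There is no conceptual difficulty beyond this: every variational system is linear with polynomial-times-trigonometric forcing, so all integrals are elementary, and the final step is a single invertible $3\times3$ solve.
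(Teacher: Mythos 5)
Your proposal is a legitimate plan that would, if carried out carefully, yield the stated expansion, and the first-order part (showing $\Theta'(0)=0$ from $\theta_\varepsilon=0$, $\alpha'(0)=0$, $\theta_b(\pi)=\pi$, $\theta_L(\pi)=1$ together with $b'(0)=-\tfrac83$, $L'(0)=\tfrac{8\pi}{3}$) matches the paper exactly. But for the quadratic coefficient you and the paper part ways, and the paper's route is substantially lighter.

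You propose the direct second-order chain rule, exactly as the paper does in the CDF case (Lemma~\ref{lem:cdf:Theta-expansion}): compute $\alpha''(0),b''(0),L''(0)$ from the second-order implicit differentiation of $F\equiv 0$, plus the second $b$-variation $\theta_{bb}$ and the mixed jets, and assemble $\Theta''(0)$. The paper instead \emph{never computes any second derivatives at all}. It exploits the fact that the constraint $B\equiv 0$ along the branch reads $\sin\Theta(\varepsilon)=\varepsilon\,J(\varepsilon)$ with $J(\varepsilon)=(\gamma\cdot T)\big|_{s=L(\varepsilon)}$; since $\Theta(0)=\pi$, $\Theta'(0)=0$ and $J(0)=0$, one has $\Theta(\varepsilon)-\pi=-J'(0)\,\varepsilon^2+O(\varepsilon^3)$, and computing $J'(0)$ is a \emph{first-order} chain-rule calculation (only $J_b(\pi)=-\tfrac{\pi}{2}$ and $b'(0)$ are needed, since $J_\varepsilon=J_L=0$ and $\alpha'(0)=0$). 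This avoids the second-order IFT, all the $F_{bb}$, $F_{bL}$, $F_{\varepsilon b}$ terms, and the second variational jet system entirely. Your own remark that the main obstacle is ``bookkeeping rather than ideas'' is accurate, but it also flags that you have not actually derived $\Theta''(0)=-\tfrac{8\pi}{3}$ --- you assert the value consistent with the claimed expansion rather than producing it from the calculations you outline. As written, therefore, the proposal is a proof \emph{strategy} rather than a completed proof; the paper's trick of reading the quadratic coefficient off the seam identity is what turns the argument into a short, checkable computation, and you would do well to notice that $B=0$ along the branch can be used in this way instead of differentiating $F\equiv 0$ twice.
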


\begin{proof}
Along the branch from Proposition~\ref{prop:ideal:fundamental-arc} we set
\[
\Theta(\varepsilon):=\theta\bigl(L(\varepsilon);\alpha(\varepsilon),\varepsilon,b(\varepsilon)\bigr),
\]
where $(\alpha(\varepsilon),b(\varepsilon),L(\varepsilon))$ is the unique smooth solution curve with
$(\alpha(0),b(0),L(0))=(0,0,\pi)$.

Differentiate $\Theta(\varepsilon)=\theta(L(\varepsilon);\alpha(\varepsilon),\varepsilon,b(\varepsilon))$ at $\varepsilon=0$:
\begin{equation}\label{eq:ideal:Theta_prime_chain}
\Theta'(0)
=\theta_\varepsilon+\theta_\alpha\,\alpha'(0)+\theta_b\,b'(0)+\theta_L\,L'(0),
\end{equation}
where all partial derivatives are evaluated at $(\alpha,\varepsilon,b,L)=(0,0,0,\pi)$.
At $\alpha=0$ the ODE system is independent of $\varepsilon$, hence
\begin{equation}\label{eq:ideal:theta_eps_zero_again}
\theta_\varepsilon(0,0,0,\pi)=0.
\end{equation}
Moreover, the first-order expansion along the branch gives
\begin{equation}\label{eq:ideal:first_order_values_again}
\alpha'(0)=0,\qquad b'(0)=-\frac{8}{3},\qquad L'(0)=\frac{8\pi}{3}.
\end{equation}
Finally, along the base semicircle $k\equiv 1$ so $\theta_L(0,0,0,\pi)=\theta_s(\pi)=k(\pi)=1$, and from the $b$-variation
(computed in \eqref{eq:ideal:b_variation}) we have $\theta_b(\pi)=\pi$. Substituting into \eqref{eq:ideal:Theta_prime_chain} yields
\[
\Theta'(0)=\theta_b(\pi)\,b'(0)+\theta_L(\pi)\,L'(0)=\pi\Bigl(-\frac{8}{3}\Bigr)+1\cdot\frac{8\pi}{3}=0.
\]
Thus
\begin{equation}\label{eq:ideal:Theta_prime_zero}
\Theta'(0)=0.
\end{equation}

Along the branch, the seam condition $B(\alpha(\varepsilon),\varepsilon,b(\varepsilon),L(\varepsilon))=0$ reads
\begin{equation}\label{eq:ideal:seam_B0}
0=-\sin\Theta(\varepsilon)
+\varepsilon\,J(\varepsilon),
\qquad
J(\varepsilon):=
\bigl(y\cos\theta-x\sin\theta\bigr)\big|_{s=L(\varepsilon)}
=\bigl(\gamma\cdot T\bigr)\big|_{s=L(\varepsilon)}.
\end{equation}
At $\varepsilon=0$ we are on the base semicircle, hence $\Theta(0)=\pi$ and $J(0)=0$ (indeed $\gamma\cdot T\equiv 0$ on the circle).
Set $\delta(\varepsilon):=\Theta(\varepsilon)-\pi$. Then \eqref{eq:ideal:Theta_prime_zero} implies $\delta(\varepsilon)=O(\varepsilon^2)$.
Using $-\sin(\pi+\delta)=\sin\delta=\delta+O(\delta^3)$, the identity \eqref{eq:ideal:seam_B0} becomes
\begin{equation}\label{eq:ideal:delta_eq}
\delta(\varepsilon)=-\,\varepsilon\,J(\varepsilon)+O(\varepsilon^3).
\end{equation}
Since $J(0)=0$, we may write $J(\varepsilon)=J'(0)\varepsilon+O(\varepsilon^2)$, and substituting into \eqref{eq:ideal:delta_eq} yields
\begin{equation}\label{eq:ideal:delta_from_Jprime}
\delta(\varepsilon)=-J'(0)\,\varepsilon^2+O(\varepsilon^3).
\end{equation}
Therefore, the claimed quadratic term in $\Theta$ reduces to computing $J'(0)$.

Differentiate $J(\varepsilon)$ along the branch at $\varepsilon=0$:
\begin{equation}\label{eq:ideal:Jprime_chain}
J'(0)=J_\varepsilon+J_\alpha\,\alpha'(0)+J_b\,b'(0)+J_L\,L'(0),
\end{equation}
again evaluated at $(\alpha,\varepsilon,b,L)=(0,0,0,\pi)$.
As noted above, at $\alpha=0$ the ODE is independent of $\varepsilon$, hence
\begin{equation}\label{eq:ideal:J_eps_zero}
J_\varepsilon(0,0,0,\pi)=0.
\end{equation}
Also $\alpha'(0)=0$ by \eqref{eq:ideal:first_order_values_again}, so
\begin{equation}\label{eq:ideal:Jprime_reduced_again}
J'(0)=J_b(0,0,0,\pi)\,b'(0)+J_L(0,0,0,\pi)\,L'(0).
\end{equation}

We now compute $J_b$ and $J_L$ at the base semicircle.
Along the base semicircle, $\gamma(s)=(\cos s,\sin s)$, $\theta(s)=s$, and $T(s)=(-\sin s,\cos s)$, so
$J(s)=\gamma(s)\cdot T(s)\equiv 0$. Hence
\begin{equation}\label{eq:ideal:J_L_zero_again}
J_L(0,0,0,\pi)=\partial_L\bigl(J(L)\bigr)\Big|_{L=\pi}=J_s(\pi)=0.
\end{equation}

For $J_b$, write at fixed $s$:
\begin{equation}\label{eq:ideal:J_b_split}
J_b=\gamma_b\cdot T+\gamma\cdot T_b.
\end{equation}
Since $T=(-\sin\theta,\cos\theta)$, we have $T_b=(-\cos\theta\,\theta_b,-\sin\theta\,\theta_b)$.
Evaluating at $s=\pi$ on the base semicircle gives $\gamma(\pi)=(-1,0)$, $T(\pi)=(0,-1)$, $\theta(\pi)=\pi$, hence
\begin{equation}\label{eq:ideal:J_b_pi_reduce_again}
J_b(\pi)=\gamma_b(\pi)\cdot T(\pi)+\gamma(\pi)\cdot T_b(\pi)
=-\,y_b(\pi)\;-\;\theta_b(\pi).
\end{equation}
We have already shown that $\theta_b(\pi)=\pi$. It remains to compute $y_b(\pi)$.
Differentiate $y_s=\cos\theta$ in $b$ at the base: $y_b'=-\sin s\,\theta_b(s)$ with $y_b(0)=0$.
Using $\theta_b(s)=s-\sin s$ yields
\begin{align*}
y_b(\pi)&=\int_0^\pi -\sin s\,(s-\sin s)\,ds
\\&=-\int_0^\pi s\sin s\,ds+\int_0^\pi \sin^2 s\,ds
=-\pi+\frac{\pi}{2}=-\frac{\pi}{2}
.
\end{align*}
Substituting into \eqref{eq:ideal:J_b_pi_reduce_again} gives
\begin{equation}\label{eq:ideal:J_b_value_again}
J_b(0,0,0,\pi)=J_b(\pi)= -\Bigl(-\frac{\pi}{2}\Bigr)-\pi=-\frac{\pi}{2}.
\end{equation}

Combining \eqref{eq:ideal:Jprime_reduced_again}, \eqref{eq:ideal:J_L_zero_again}, \eqref{eq:ideal:J_b_value_again} with
$b'(0)=-\frac{8}{3}$ and $L'(0)=\frac{8\pi}{3}$ yields
\begin{equation}\label{eq:ideal:Jprime_value_again}
J'(0)=\Bigl(-\frac{\pi}{2}\Bigr)\Bigl(-\frac{8}{3}\Bigr)=\frac{4\pi}{3}.
\end{equation}

Insert \eqref{eq:ideal:Jprime_value_again} into \eqref{eq:ideal:delta_from_Jprime}:
\[
\Theta(\varepsilon)
=\pi+\delta(\varepsilon)
=\pi-\frac{4\pi}{3}\,\varepsilon^2+O(\varepsilon^3),
\]
which is exactly \eqref{eq:ideal:Theta-expansion}. Continuity of $\Theta$ is immediate from smooth dependence of the ODE solutions
and of $(\alpha(\varepsilon),b(\varepsilon),L(\varepsilon))$ on $\varepsilon$. Since the quadratic coefficient is nonzero, $\Theta$
is non-constant on any neighbourhood of $\varepsilon=0$.
\end{proof}

We thus conclude, as before, an abundance of angles.

\begin{corollary}\label{cor:ideal:rational-angles}
There exist $\varepsilon_1\in(0,\varepsilon_0)$ and an open interval $I\subset\R$ such that
$\Theta\bigl((-\varepsilon_1,\varepsilon_1)\bigr)\supset I$.
Consequently, there are infinitely many $\varepsilon\in(-\varepsilon_1,\varepsilon_1)$ such that
\[
\frac{\Theta(\varepsilon)}{\pi}\in\Q,
\qquad\text{equivalently }\quad
\Theta(\varepsilon)=\frac{p}{q}\,\pi \ \text{ for coprime integers }p,q\ge 1.
\]
\end{corollary}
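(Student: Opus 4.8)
The plan is to deduce the statement directly from the second-order expansion in Lemma~\ref{lem:ideal:Theta-expansion} together with continuity of $\Theta$ and the intermediate value theorem; no new analytic input is required. By Lemma~\ref{lem:ideal:Theta-expansion} the map $\varepsilon\mapsto\Theta(\varepsilon)$ is continuous near $0$, satisfies $\Theta(0)=\pi$, and obeys the expansion \eqref{eq:ideal:Theta-expansion} with strictly negative quadratic coefficient $-\tfrac{4\pi}{3}$. Hence one may fix $\varepsilon_1\in(0,\varepsilon_0)$ small enough that $\Theta$ is continuous on $(-\varepsilon_1,\varepsilon_1)$ and the $O(\varepsilon^3)$ remainder in \eqref{eq:ideal:Theta-expansion} is dominated by $\tfrac{4\pi}{3}\varepsilon^2$ there, so that $\Theta(\varepsilon)<\pi$ for all $\varepsilon$ with $0<|\varepsilon|<\varepsilon_1$ (indeed $\Theta'(\varepsilon)=-\tfrac{8\pi}{3}\varepsilon+O(\varepsilon^2)$, so after shrinking $\varepsilon_1$ the map $\Theta$ is even strictly monotone on each of $(-\varepsilon_1,0)$ and $(0,\varepsilon_1)$, exactly as in the curve-diffusion case of Corollary~\ref{cor:cdf:eps-as-fn-of-theta}, though only the weaker inequality is needed below).

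Next I would extract the open interval. Fix any $\varepsilon_\ast\in(0,\varepsilon_1)$ and put $c:=\pi-\Theta(\varepsilon_\ast)>0$. Since $\Theta$ is continuous on the compact interval $[0,\varepsilon_\ast]$ with $\Theta(0)=\pi$ and $\Theta(\varepsilon_\ast)=\pi-c$, the intermediate value theorem yields $[\pi-c,\pi]\subset\Theta([0,\varepsilon_\ast])$, so the open interval $I:=(\pi-c,\pi)$ satisfies $I\subset\Theta\bigl((-\varepsilon_1,\varepsilon_1)\bigr)$. This proves the first assertion.

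Finally I would invoke density of the rationals: the set $\{\,(p/q)\pi:\ p,q\in\N\,\}=\pi\,\Q_{>0}$ is dense in $(0,\infty)$, so $I$ contains infinitely many pairwise distinct points of this form. For each such value $(p/q)\pi\in I$ with $\gcd(p,q)=1$, the inclusion $I\subset\Theta\bigl((-\varepsilon_1,\varepsilon_1)\bigr)$ furnishes at least one $\varepsilon\in(-\varepsilon_1,\varepsilon_1)$ with $\Theta(\varepsilon)=(p/q)\pi$, and distinct target values force distinct $\varepsilon$. Therefore there are infinitely many $\varepsilon\in(-\varepsilon_1,\varepsilon_1)$ with $\Theta(\varepsilon)/\pi\in\Q$, as claimed. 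There is no substantial obstacle in this argument — all the genuine work was done in proving Lemma~\ref{lem:ideal:Theta-expansion} — and the only point demanding mild care is the first step, namely choosing $\varepsilon_1$ so that the $O(\varepsilon^3)$ remainder in \eqref{eq:ideal:Theta-expansion} cannot overturn the strict inequality $\Theta(\varepsilon)<\pi$.
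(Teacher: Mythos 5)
Your proof is correct. The paper does not actually display a proof of Corollary~\ref{cor:ideal:rational-angles}, but its analogous curve-diffusion statement, Corollary~\ref{cor:cdf:eps-as-fn-of-theta}, is proved via the Inverse Function Theorem: there the authors establish strict one-sided monotonicity of the deficit angle $\overline\theta(\varepsilon)=\pi\varepsilon^2+O(\varepsilon^3)$ and invert it to get unique $C^\infty$ branches $\varepsilon_\pm(\overline\theta)$, which is a slightly stronger conclusion than what Corollary~\ref{cor:ideal:rational-angles} actually claims. Your argument deliberately uses less: continuity, non-constancy from the second-order expansion in Lemma~\ref{lem:ideal:Theta-expansion}, and the intermediate value theorem to show the image contains an open interval, then density of $\pi\Q_{>0}$. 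Since the ideal-flow corollary only asserts that $\Theta\bigl((-\varepsilon_1,\varepsilon_1)\bigr)$ contains an open interval and hence infinitely many rational multiples of $\pi$ (it does not assert a smooth parametrisation by the angle, nor uniqueness of $\varepsilon$ for each rational angle), your lighter-weight IVT approach is exactly calibrated to the statement and avoids the extra monotonicity argument. Both routes are valid; the monotonicity route would additionally give (as in the CDF case) a well-defined one-sided reparametrisation $\varepsilon_\pm(\overline\theta)$, which the paper does not need here. The only small point worth flagging is that your observation ``distinct target values force distinct $\varepsilon$'' is what guarantees infinitely many $\varepsilon$'s rather than just infinitely many target angles -- you did state this correctly.
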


\subsection{Dihedral gluing and proof of Theorem~\ref{thm:intro:if-epi}}
\label{subsec:ideal:gluing}

The gluing argument is identical to Section~\ref{subsec:cdf:dihedral-gluing}, once one notes that the ideal homothetic equation
is invariant under Euclidean isometries fixing $c_\varepsilon$, and that the seam conditions are now
\[
(\gamma-c_\varepsilon)\cdot T=0,\qquad k_s=0,\qquad k_{sss}=0
\]
at each seam point (equivalently $B=0$, $V=0$, $U=0$).

\begin{lemma}\label{lem:ideal:reflection-principle}
Let $\gamma:[0,L]\to\R^2$ be an arclength-parametrised solution of the ideal-flow homothetic ODE on $(0,L)$.
Assume $\gamma(L)$ lies on a line $\ell$ through $c_\varepsilon$, $\gamma$ meets $\ell$ orthogonally at $s=L$, and
\[
k_s(L)=0,\qquad k_{sss}(L)=0.
\]
Then reflecting $\gamma$ across $\ell$ and reversing arclength glues to a $C^\infty$ solution on $[0,2L]$.
The same holds at $s=0$.
\end{lemma}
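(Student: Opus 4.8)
The plan is to imitate the reflection arguments of Lemma~\ref{lem:reflection-principle-jellyfish} and Lemma~\ref{lem:reflection-principle-cdf}: build the reflected-and-reversed arc, check that it glues to $\gamma$ with matching \emph{full state} at the seam, and invoke ODE uniqueness so that the concatenation is a single smooth solution. The one genuinely new feature is that the ideal homothetic system is fourth order in $k$, so the state now carries the curvature $2$-jet through $\SQ=(M+iN)e^{-i\theta}$ with $M=k_{sss}$, $N=kk_{ss}-\tfrac12k_s^2$; matching it across the seam is exactly what the extra hypothesis $k_{sss}(L)=0$ (on top of $k_s(L)=0$) provides.

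\textbf{Step 1: invariance.} I would first note that the ideal homothetic equation, in the geometric form $\SF[k]=-\alpha\varepsilon\langle\gamma-c_\varepsilon,N\rangle$ (equivalently the system \eqref{eq:ideal:homothetic_xyQ_ode}, by Lemma~\ref{lem:ideal:xyQ_consistency}), is invariant under (i) the orientation-reversing reparametrisation $s\mapsto 2L-s$, because each term of $\SF[k]=k_{ssss}+k^2k_{ss}-\tfrac12kk_s^2$ carries an even total number of $s$-derivatives; and (ii) any Euclidean reflection $\sigma_\ell$ with $c_\varepsilon\in\ell$, since $\langle\,\cdot\,,N\rangle$ is isometry-covariant and $\sigma_\ell(c_\varepsilon)=c_\varepsilon$. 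Hence $\widetilde\gamma(s):=\sigma_\ell(\gamma(2L-s))$ on $[L,2L]$ is again an arclength solution of \eqref{eq:ideal:homothetic_xyQ_ode}. In complex notation $\sigma_\ell$ reads $z\mapsto c_\varepsilon+e^{2i\phi}\overline{z-c_\varepsilon}$ (with $\phi$ the inclination of $\ell$); a one-line differentiation gives $\widetilde T(s)=ie^{i\widetilde\theta(s)}$ with $\widetilde\theta(s)=2\phi-\theta(2L-s)$, hence $\widetilde k(s)=k(2L-s)$ and $\partial_s^j\widetilde k(L)=(-1)^j\partial_s^j k(L)$ for all $j$.

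\textbf{Step 2: matching at the seam.} Let $\Gamma$ equal $\gamma$ on $[0,L]$ and $\widetilde\gamma$ on $[L,2L]$. I would check that the state $S=(x,y,\theta,k,p,\SQ)$ of the two arcs agrees at $s=L$: the position agrees because $\gamma(L)\in\ell$; the tangent angle agrees modulo $2\pi$ because $T(L)\perp\ell$ forces $\theta(L)\equiv\phi$ or $\phi+\pi$, whence $\widetilde\theta(L)=2\phi-\theta(L)\equiv\theta(L)$; the curvature agrees since $\widetilde k(L)=k(L)$; the slot $p=k_s$ agrees because $\widetilde k_s(L)=-k_s(L)=0=k_s(L)$ (using $k_s(L)=0$); and $\SQ$ agrees because $\widetilde M(L)=-k_{sss}(L)=0=M(L)$ (using $k_{sss}(L)=0$), while the even-order jets $k$ and $k_{ss}$ match automatically, so $\widetilde N(L)=k(L)k_{ss}(L)=N(L)$ and $\widetilde\SQ(L)=(0+iN(L))e^{-i\theta(L)}=\SQ(L)$. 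Here I would also note that near the base semicircle $k$ is bounded away from $0$, so \eqref{eq:ideal:homothetic_xyQ_ode} is a genuine $C^\infty$ system on a fixed neighbourhood of $s=L$.

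\textbf{Step 3: conclusion.} Both $\gamma$ and $\widetilde\gamma$ solve the same $C^\infty$ system and share the state at $s=L$, so by uniqueness $\Gamma$ is the unique solution through that state, defined on all of $[0,2L]$; as a solution of a $C^\infty$ ODE it is $C^\infty$, and by Lemma~\ref{lem:ideal:xyQ_consistency} it satisfies the ideal homothetic equation. The case $s=0$ is identical, the normalisation $p(0)=0$ and $\SQ(0)=ib\in i\R$ (hence $k_s(0)=0$, $k_{sss}(0)=0$) playing the role of the seam hypotheses. The only delicate part is the bookkeeping of how the curvature jets and the complex quantity $\SQ$ transform under the combined reflection-and-reversal, and in particular verifying that $k_s(L)=0$ together with $k_{sss}(L)=0$ kills exactly the odd-order mismatch at the seam while the even-order jets take care of themselves — this is precisely where the fourth-order nature of the ideal flow enters, as against the second-order curve-diffusion case.
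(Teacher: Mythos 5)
The proposal is correct and takes the same approach as the paper, which proves this lemma by a one-sentence reference to the even/odd continuation argument of Lemma~\ref{lem:reflection-principle-jellyfish}. You simply fill in the details the paper omits — the transformation laws $\widetilde\theta(s)=2\phi-\theta(2L-s)$, $\partial_s^j\widetilde k(L)=(-1)^j\partial_s^j k(L)$, and $\widetilde\SQ(L)=\SQ(L)$ given $k_s(L)=k_{sss}(L)=0$ — together with the uniqueness argument for the first-order system \eqref{eq:ideal:homothetic_xyQ_ode}, and this matches the intended proof.
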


\begin{proof}
This is the same even/odd continuation argument as Lemma~\ref{lem:reflection-principle-jellyfish}, with the additional jet condition $k_{sss}(L)=0$ ensuring smooth matching
for the higher-order system. Details are  omitted.
\end{proof}

Dihedral closure and geometric distinctness follows using a similar approach to earlier; we summarise the results in the following.

\begin{proposition}\label{prop:ideal:dihedral-closure}
Let $\varepsilon$ be such that $\Theta(\varepsilon)=\frac{p}{q}\pi$ (coprime $p,q$), and let $\gamma:[0,L]\to\R^2$ be the
fundamental arc from Proposition~\ref{prop:ideal:fundamental-arc}.
Then by doubling via Lemma~\ref{lem:ideal:reflection-principle} and concatenating $q$ rotated copies about $c_\varepsilon$
 one obtains a $C^\infty$ closed immersed curve solving the ideal-flow
homothetic equation pointwise.  The resulting closed curve has dihedral symmetry of order $q$.
\end{proposition}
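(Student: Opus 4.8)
The plan is to run the dihedral concatenation of Propositions~\ref{prop:dihedral-closure} and~\ref{prop:cdf:dihedral-closure} essentially verbatim, so I will only highlight the ideal-flow-specific inputs. First I would fix $\varepsilon$ with $\Theta(\varepsilon)=\tfrac{p}{q}\pi$ (from Corollary~\ref{cor:ideal:rational-angles}, with $\gcd(p,q)=1$), take the fundamental arc $\gamma:[0,L]\to\R^2$ supplied by Proposition~\ref{prop:ideal:fundamental-arc}, set $c_\varepsilon=(-1/\varepsilon,0)$, and let $\ell_0$, $\ell_1$ be the rays from $c_\varepsilon$ through $\gamma(0)$ and $\gamma(L)$. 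At $s=0$ the initial data \eqref{eq:ideal:IC} give $k_s(0)=p(0)=0$ and $k_{sss}(0)=M(0)=0$ (since $\SQ(0)=ib$ and $\theta(0)=0$), while $\gamma(0)-c_\varepsilon$ is horizontal and $T(0)$ vertical, so $\gamma\perp\ell_0$; at $s=L$ the fundamental-arc conditions $V=U=B=0$ (Definition~\ref{def:ideal:fundamental-arc}) give $k_s(L)=k_{sss}(L)=0$ and $R(L)\cdot T(L)=0$, i.e.\ $\gamma\perp\ell_1$. Thus the hypotheses of Lemma~\ref{lem:ideal:reflection-principle} hold at both endpoints. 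I would also record that the ideal homothetic ODE \eqref{eq:ideal:Q_s_homothetic} involves the curve only through the intrinsic operator $\SF[k]$ and the quantity $\cos\theta+\varepsilon\,\gamma\cdot N=\varepsilon\langle\gamma-c_\varepsilon,N\rangle$, hence is invariant under Euclidean isometries that fix $c_\varepsilon$.

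Next I would double: Lemma~\ref{lem:ideal:reflection-principle} applied at $s=L$ produces the $C^\infty$ block $\Gamma:[0,2L]\to\R^2$ with $\Gamma|_{[0,L]}=\gamma$ and $\Gamma(s)=\sigma_{\ell_1}(\gamma(2L-s))$ on $[L,2L]$, solving \eqref{eq:ideal:Q_s_homothetic} with the same centre (as $\sigma_{\ell_1}$ fixes $c_\varepsilon$). Its curvature is even about the seam $s=L$, so the far endpoint $s=2L$ again carries $k_s=k_{sss}=0$ and orthogonality, now to $\rho(\ell_0):=\sigma_{\ell_1}(\ell_0)$, and $\Gamma(2L)=\sigma_{\ell_1}(\gamma(0))=\rho(\gamma(0))=\rho(\Gamma(0))$ because $\gamma(0)\in\ell_0$. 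Here $\rho:=\sigma_{\ell_1}\circ\sigma_{\ell_0}$ is the rotation about $c_\varepsilon$ through angle $2\Theta(\varepsilon)=2p\pi/q$ (the two rays meet at angle $\Theta$), so $\rho^q=\mathrm{Id}$ and, since $\gcd(p,q)=1$, $\rho$ has order exactly $q$.

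Then I would concatenate $\gamma_{p/q}:=\Gamma_0\#\Gamma_1\#\cdots\#\Gamma_{q-1}$ with $\Gamma_j:=\rho^j\circ\Gamma$. Each $\Gamma_j$ is a rotation of $\Gamma$ about $c_\varepsilon$, hence an orientation-preserving isometric image fixing $c_\varepsilon$, so it again solves \eqref{eq:ideal:Q_s_homothetic}. At the $j$-th join the end of $\Gamma_{j-1}$ and the start of $\Gamma_j$ lie on the common ray $\rho^{j}(\ell_0)$, meet it orthogonally, and have $k_s=k_{sss}=0$; a short reflection computation (using $\sigma_{g(\ell)}=g\sigma_\ell g^{-1}$) shows that near the join $\Gamma_j$ is the $\sigma_{\rho^j(\ell_0)}$-reflection of $\Gamma_{j-1}$, so Lemma~\ref{lem:ideal:reflection-principle} yields a $C^\infty$ match there, and likewise at the closing join since $\rho^q(\Gamma(0))=\Gamma_0(0)$. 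Hence $\gamma_{p/q}:\S\to\R^2$ is a $C^\infty$, unit-speed (therefore immersed), closed solution of the ideal homothetic ODE. Finally $\gamma_{p/q}$ is $\rho$-invariant by construction and $\sigma_{\ell_0}$-invariant because $\sigma_{\ell_0}$ carries the backward arclength continuation of $\gamma_{p/q}$ to itself (all seam conditions being symmetric under $s\mapsto -s$); thus $\gamma_{p/q}$ is fixed by the order-$2q$ dihedral group $\langle\rho,\sigma_{\ell_0}\rangle$, i.e.\ it has dihedral symmetry of order $q$.

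The assembly is routine bookkeeping, identical to the elastic and curve-diffusion cases; the one place where the ideal flow genuinely matters is the smoothness of each seam gluing. Because the flow is fourth order, $C^\infty$ continuation across a radial reflection axis needs one extra curvature-jet condition, $k_{sss}=0$, beyond orthogonality and $k_s=0$---this is precisely why the endpoint map $F$ carries the component $U=M(L)$ and why Lemma~\ref{lem:ideal:reflection-principle} was stated with that third hypothesis. Granting Lemma~\ref{lem:ideal:reflection-principle} and Proposition~\ref{prop:ideal:fundamental-arc}, no further obstacle arises.
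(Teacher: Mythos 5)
Your proposal is correct and follows the same approach the paper uses: the paper's own proof is a one-line deferral to Proposition~\ref{prop:cdf:dihedral-closure} (which in turn defers to Proposition~\ref{prop:dihedral-closure}), and what you have written is precisely that dihedral reflection--rotation argument carried out explicitly, including the verification of the endpoint jet conditions $k_s=k_{sss}=0$ and orthogonality from \eqref{eq:ideal:IC} and $F=0$, the identification of $\rho=\sigma_{\ell_1}\circ\sigma_{\ell_0}$ as a rotation by $2\Theta=2p\pi/q$, and the resulting $D_q$-invariance. You also correctly flag the only genuinely new ingredient relative to the second-order cases, namely that Lemma~\ref{lem:ideal:reflection-principle} needs the extra jet condition $k_{sss}=0$ for smooth continuation of the fourth-order system.
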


\begin{proof}
The details are similar to the proof of Proposition~\ref{prop:cdf:dihedral-closure}, 
 and are omitted.
\end{proof}

\subsection{Geometric distinctness}
\label{subsec:ideal:distinctness}

The distinctness argument is the same as for the curve-diffusion epicyclic shrinkers:
the pair $(p,q)$ is preserved by similarity (rotation index $p$ and maximal dihedral symmetry order $q$),
and maximality is enforced by excluding ``interior radial seams'' on a fundamental arc.

\begin{lemma}\label{lem:ideal:turning-number}
Let $\Gamma$ be the closed curve produced by Proposition~\ref{prop:ideal:dihedral-closure} with $\Theta=\frac{p}{q}\pi$.
Then the total turning of the tangent is $2q\Theta=2p\pi$, hence the rotation index of $\Gamma$ equals $p$.
\end{lemma}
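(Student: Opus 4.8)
The plan is to track the tangent angle $\theta$ through the three stages of the construction in Proposition~\ref{prop:ideal:dihedral-closure}, exactly as in the proof of Lemma~\ref{lem:cdf:turning-number}. First I would record the change over a single fundamental arc $\gamma:[0,L(\varepsilon)]\to\R^2$: by the initial data \eqref{eq:ideal:IC} we have $\theta(0)=0$, and by definition \eqref{eq:ideal:ThetaDef} we have $\theta(L(\varepsilon))=\Theta(\varepsilon)$, so the net increase of the tangent angle along the fundamental arc is precisely $\Theta$. (Monotonicity of $\theta$ is not even needed; only the endpoint values enter the total-turning count.)

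Next I would pass to the doubled arc. The reflection used in Lemma~\ref{lem:ideal:reflection-principle} is an orientation-reversing Euclidean isometry composed with the arclength reversal $s\mapsto 2L-s$; the composition is orientation-preserving, so on the reflected piece the tangent continues to rotate in the same sense, and the tangent angle runs from $\Theta$ to $2\Theta$ across the doubled arc $\Gamma:[0,2L]\to\R^2$. Since Lemma~\ref{lem:ideal:reflection-principle} guarantees that $\Gamma$ is $C^\infty$ at the seam, there is no contribution to the total turning from the join itself. Each of the $q$ rigid-motion copies $\Gamma_j=\rho^j\circ\Gamma$ then contributes the same net increase $2\Theta$, and the smooth seam matching in Proposition~\ref{prop:ideal:dihedral-closure} again contributes nothing. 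Summing over $j=0,\dots,q-1$ gives total turning $2q\Theta=2q\cdot\frac{p}{q}\pi=2p\pi$, and the rotation index, being total turning divided by $2\pi$, equals $p$.

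I do not expect any real obstacle: this is essentially a transcription of the curve-diffusion argument in Lemma~\ref{lem:cdf:turning-number}, with the only extra bookkeeping being the higher-order seam condition $k_{sss}(L)=0$, which plays no role in the turning count (it affects smoothness of the gluing, already handled by Lemma~\ref{lem:ideal:reflection-principle}, not the net rotation of $T$). The one point worth a sentence is the verification that reflection-plus-arclength-reversal is orientation-preserving on the tangent direction, so that the $q$ contributions add rather than partially cancel; this is immediate from the reflection principle.
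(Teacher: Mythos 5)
Your proposal is correct and is essentially the paper's own argument (which simply refers back to Lemma~\ref{lem:cdf:turning-number} and records that the tangent angle gains $\Theta$ per fundamental arc, $2\Theta$ per doubled arc, and $2q\Theta=2p\pi$ over the full concatenation). Your extra remark that reflection composed with arclength reversal is orientation-preserving, so the $q$ contributions add rather than cancel, is a sound and slightly more careful justification of the same step.
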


\begin{proof}
Similar to Lemma~\ref{lem:cdf:turning-number}.
\end{proof}

\begin{lemma}\label{lem:ideal:no-interior-seams}
For $|\varepsilon|$ sufficiently small, the seam function
\[
s\longmapsto -\sin\theta(s)+\varepsilon\bigl(y(s)\cos\theta(s)-x(s)\sin\theta(s)\bigr)
\]
along the fundamental arc from Proposition~\ref{prop:ideal:fundamental-arc} vanishes only at $s=0$ and $s=L(\varepsilon)$.
Equivalently, $(\gamma(s)-c_\varepsilon)\cdot T(s)\neq 0$ for all $s\in(0,L(\varepsilon))$.
\end{lemma}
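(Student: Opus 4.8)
The plan is to follow the proof of Lemma~\ref{lem:cdf:no-interior-seams} essentially verbatim: this is a soft persistence-of-simple-zeros argument, the only changes being that the relevant base solution is now the ideal-flow base semicircle from Lemma~\ref{lem:ideal:base-semicircle} and the parameter is $(\alpha,\varepsilon,b)$ rather than $(\alpha,\varepsilon)$. No new computation is required, and the equivalence with $(\gamma(s)-c_\varepsilon)\cdot T(s)\neq 0$ is just the identity $R(s)\cdot T(s)=\varepsilon^{-1}\bigl(-\sin\theta(s)+\varepsilon(y(s)\cos\theta(s)-x(s)\sin\theta(s))\bigr)$ recorded in Section~\ref{subsec:ideal:fundamental-arc}.

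First I would record the base case. Write $h(s;\alpha,\varepsilon,b):=-\sin\theta(s)+\varepsilon\bigl(y(s)\cos\theta(s)-x(s)\sin\theta(s)\bigr)$ for the seam function along the solution of \eqref{eq:ideal:homothetic_xyQ_ode}--\eqref{eq:ideal:IC}. At $(\alpha,\varepsilon,b)=(0,0,0)$, Lemma~\ref{lem:ideal:base-semicircle} gives $\theta(s)=s$ and the $\varepsilon$-term drops, so $h(s;0,0,0)=-\sin s$ on $[0,\pi]$. This vanishes precisely at $s=0$ and $s=\pi$; both zeros are simple, since $\partial_s h(0;0,0,0)=-1$ and $\partial_s h(\pi;0,0,0)=+1$; and $h(\cdot;0,0,0)<0$ on the open interval $(0,\pi)$.

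Next I would invoke smooth dependence and persistence. By $C^\infty$ dependence of the ODE solution on $(s,\alpha,\varepsilon,b)$ on a fixed interval $[0,\pi+\eta]$ around the base semicircle, both $h$ and $\partial_s h$ are continuous there; restrict to the branch $(\alpha(\varepsilon),\varepsilon,b(\varepsilon))$ of Proposition~\ref{prop:ideal:fundamental-arc}, along which $L(\varepsilon)\to\pi$. The endpoint zeros are automatic: $h(0;\cdot)=0$ by the normalisation $\theta(0)=0$, and $h(L(\varepsilon);\cdot)=0$ since $B(\alpha(\varepsilon),\varepsilon,b(\varepsilon),L(\varepsilon))=0$. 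On the compact set obtained by removing small neighbourhoods of $\{0,\pi\}$ from $[0,\pi+\eta]$, $h(\cdot;0,0,0)=-\sin s$ is bounded away from zero, so by continuity $h(\cdot;\alpha(\varepsilon),\varepsilon,b(\varepsilon))$ is nonvanishing there for $|\varepsilon|$ small, excluding interior zeros away from $\{0,\pi\}$. Inside the small neighbourhoods of $0$ and $\pi$, the implicit function theorem (using $\partial_s h\neq 0$ at the base zeros) shows the two simple zeros persist and no further zero appears unless a double root forms, i.e.\ $h=\partial_s h=0$ simultaneously; but $-\sin s$ and $-\cos s$ share no zero on $[0,\pi]$, so this bifurcation is excluded for small perturbations. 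Hence $h$ vanishes on $[0,L(\varepsilon)]$ only at the two endpoints.

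I expect no genuine obstacle here: the argument is purely soft, relying only on continuity in parameters together with the fact that $-\sin s$ has no multiple root on $[0,\pi]$. The single point requiring mild care is that the interval $[0,L(\varepsilon)]$ moves with $\varepsilon$; this is handled by working on the fixed slightly larger interval $[0,\pi+\eta]$ and using $|L(\varepsilon)-\pi|$ small, exactly as in the curve-diffusion case.
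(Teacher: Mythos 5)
Your proposal matches the paper's proof, which simply points back to Lemma~\ref{lem:cdf:no-interior-seams} and cites smooth dependence plus the fact that the base seam function $-\sin s$ has only two simple zeros on $[0,\pi]$. You have filled in the same persistence-of-simple-zeros argument in slightly more detail, with the only changes (extra shooting parameter $b$, base solution from Lemma~\ref{lem:ideal:base-semicircle}) correctly identified.
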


\begin{proof}
The proof is similar to Lemma~\ref{lem:cdf:no-interior-seams}, using smooth dependence on parameters and the fact that
for the base semicircle the seam function is $-\sin s$ with exactly two simple zeros on $[0,\pi]$.
\end{proof}

\begin{lemma}\label{lem:ideal:maximal-dihedral}
Let $\Gamma$ be a closed curve produced by Proposition~\ref{prop:ideal:dihedral-closure} with $\Theta=\frac{p}{q}\pi$.
Then the full symmetry group of $\Gamma$ is exactly $D_q$.
\end{lemma}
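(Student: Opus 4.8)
\medskip\noindent
The plan is to adapt the argument of Lemma~\ref{lem:cdf:no-extra-axes} essentially verbatim, with the interior non-vanishing input now supplied by Lemma~\ref{lem:ideal:no-interior-seams}. By Proposition~\ref{prop:ideal:dihedral-closure} the curve $\Gamma$ is invariant under the dihedral group $D_q$ generated by the rotation $\rho$ about $c_\varepsilon$ through $2\Theta=2p\pi/q$ and the reflection $\sigma_{\ell_0}$ in a radial seam line, so its full symmetry group $G$ contains $D_q$; it remains only to show $G$ is no larger. First I would record that $\Gamma$ is not a multiply-covered circle, since its curvature is non-constant (Lemma~\ref{lem:ideal:first-order}), so $G$ is a finite subgroup of the Euclidean isometry group. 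As $q$ is large, $D_q$ contains a nontrivial rotation, whose only fixed point is $c_\varepsilon$; hence every element of $G$ fixes $c_\varepsilon$, and $G$ is a finite subgroup of $O(2)$ acting about $c_\varepsilon$. Since $G\supseteq D_q$ contains a reflection, $G=D_{\hat q}$ for some integer $\hat q\ge q$, and the $q$ reflection axes of $D_q$ are precisely the $q$ radial seam lines of the construction.

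Next I would assume $\hat q>q$ for contradiction. Since $D_q\subseteq D_{\hat q}$, the $\hat q$ axes of $D_{\hat q}$ contain the $q$ seam-line axes of $D_q$, so there is an axis $\ell$ of $D_{\hat q}$ that is not a seam line. Because $\sigma_\ell$ is a symmetry of $\Gamma$, there is a diffeomorphism $\phi$ of $\mathbb{S}$ with $\sigma_\ell\circ\Gamma=\Gamma\circ\phi$; since $\sigma_\ell$ is an isometry and $\Gamma$ is arc-length parametrised, $\phi$ is an arc-length isometry of $\mathbb{S}$, i.e.\ a rotation or a reflection of the parameter circle. As $\Gamma$ has rotation index $p\ge 1$ (Lemma~\ref{lem:ideal:turning-number}) and Euclidean reflections negate the rotation index of a closed curve, $\phi$ must be orientation-reversing, hence a reflection of $\mathbb{S}$ with exactly two fixed points $s_0,s_0'$.

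The key step is then to contradict Lemma~\ref{lem:ideal:no-interior-seams}. Consider $f(s):=\bigl(\Gamma(s)-c_\varepsilon\bigr)\cdot T(s)$. Since $c_\varepsilon\in\ell$, the reflection $\sigma_\ell$ fixes $c_\varepsilon$, so from $\sigma_\ell\circ\Gamma=\Gamma\circ\phi$ one gets $\Gamma(\phi(s))-c_\varepsilon=D\sigma_\ell\bigl(\Gamma(s)-c_\varepsilon\bigr)$ and, because $\phi$ reverses orientation, $T(\phi(s))=-D\sigma_\ell\bigl(T(s)\bigr)$; as $D\sigma_\ell$ is orthogonal this yields $f(\phi(s))=-f(s)$, whence $f(s_0)=0$. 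On the other hand, each of the $2q$ fundamental-arc copies making up $\Gamma$ is the image of the fundamental arc of Proposition~\ref{prop:ideal:fundamental-arc} under a rigid motion fixing $c_\varepsilon$, and $f=R\cdot T$ is invariant under such motions; combining Lemma~\ref{lem:ideal:no-interior-seams} with the seam conditions $B=0$ at the $2q$ seam points then shows that the zero set of $f$ along $\Gamma$ is exactly the set of seam points. Hence $\Gamma(s_0)$ is a seam point; but $\Gamma(s_0)\in\ell$ and $c_\varepsilon\in\ell$, so $\ell$ is the radial line through $c_\varepsilon$ and that seam point, i.e.\ a seam line — contradicting the choice of $\ell$. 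Therefore $\hat q=q$ and $G=D_q$.

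The main obstacle, such as it is, is the sign identity $f\circ\phi=-f$ together with the identification of the zero set of $f$ with the seam points: the first hinges on $\phi$ being orientation-reversing (which uses the nonvanishing of the rotation index), and the second on transporting Lemma~\ref{lem:ideal:no-interior-seams} through the rigid motions relating the fundamental-arc copies. Everything else is bookkeeping identical to the curve-diffusion case, and the higher order of the ideal flow enters only through the fundamental-arc construction, not here.
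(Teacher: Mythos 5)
Your proposal is correct and follows the paper's strategy, which is to reduce the ideal-flow case to the CDF argument of Lemma~\ref{lem:cdf:no-extra-axes} with Lemma~\ref{lem:ideal:no-interior-seams} supplying the interior non-vanishing. Where you go beyond the paper is useful: the paper's version of the argument (for the CDF case) phrases the contradiction hypothesis as ``dihedral symmetry of order $\hat q > q$ \emph{about the same centre} $c_\varepsilon$,'' tacitly assuming that any extra symmetry must fix $c_\varepsilon$. You justify that assumption: $\Gamma$ has non-constant curvature by Lemma~\ref{lem:ideal:first-order}, so it is not a multiply-covered circle and its symmetry group is finite; a finite group of planar isometries containing the rotation part of $D_q$ has a common fixed point, namely the centre of that rotation, which is $c_\varepsilon$. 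You also make explicit the orientation-reversal step (that the parameter change $\phi$ induced by $\sigma_\ell$ must be a reflection of $\mathbb S$, since the rotation index $p\geq 1$ is negated by Euclidean reflections), and you carry the nonvanishing of $f=(\Gamma-c_\varepsilon)\cdot T$ from the fundamental arc to each of the $2q$ rigidly placed copies; one small wording nitpick is that $f$ is not literally invariant under the gluing reflections but only up to sign, which is of course harmless since you only use its zero set. In short: same method as the paper, with the implicit ``same centre'' reduction made rigorous.
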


\begin{proof}
Similar to Lemma~\ref{lem:cdf:no-extra-axes}, with Lemma~\ref{lem:ideal:no-interior-seams} excluding any extra reflection axis.
\end{proof}

\begin{corollary}\label{cor:ideal:non-equivalence}
Let $\Gamma_{p/q}$ and $\Gamma_{\hat p/\hat q}$ be closed ideal-flow epicyclic expanders produced by
Proposition~\ref{prop:ideal:dihedral-closure} from coprime pairs $(p,q)$ and $(\hat p,\hat q)$.
If $(p,q)\neq(\hat p,\hat q)$, then $\Gamma_{p/q}$ is not equivalent to $\Gamma_{\hat p/\hat q}$ under any similarity.
\end{corollary}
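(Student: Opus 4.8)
The plan is to mirror the proof of Corollary~\ref{cor:cdf:non-equivalence} verbatim, using the two similarity invariants already isolated for the ideal-flow construction: the rotation index (Lemma~\ref{lem:ideal:turning-number}) and the order of the maximal dihedral symmetry group (Lemma~\ref{lem:ideal:maximal-dihedral}). The starting observation is that any similarity transformation $\Phi$ of $\R^2$ is a homothety composed with a Euclidean isometry, so $\Phi$ preserves the rotation index up to sign, and it conjugates the full symmetry group, $\mathrm{Sym}(\Phi(\Gamma))=\Phi\,\mathrm{Sym}(\Gamma)\,\Phi^{-1}$; in particular it preserves the isomorphism type, and hence the order, of $\mathrm{Sym}(\Gamma)$.

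First I would dispose of the case $p\neq\hat p$. By Lemma~\ref{lem:ideal:turning-number} the rotation indices of $\Gamma_{p/q}$ and $\Gamma_{\hat p/\hat q}$ are $p$ and $\hat p$ respectively, and since $p,\hat p\ge 1$ are positive integers the equation $\hat p=\pm p$ forces $p=\hat p$; hence no similarity (orientation-preserving or -reversing) can identify the two curves. Next, suppose $p=\hat p$ but $q\neq\hat q$. Here I would invoke Lemma~\ref{lem:ideal:maximal-dihedral}, which gives $\mathrm{Sym}(\Gamma_{p/q})=D_q$ and $\mathrm{Sym}(\Gamma_{\hat p/\hat q})=D_{\hat q}$ exactly. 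Since $|D_q|=2q\neq 2\hat q=|D_{\hat q}|$ these groups are non-isomorphic, so they cannot be conjugate inside the similarity group, and therefore no similarity maps one curve onto the other. The remaining alternative $(p,q)=(\hat p,\hat q)$ is excluded by hypothesis (both are written in lowest terms), so the proof is complete.

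The only substantive input is the maximality statement Lemma~\ref{lem:ideal:maximal-dihedral}, which in turn rests on Lemma~\ref{lem:ideal:no-interior-seams} (no interior radial seams on a fundamental arc, i.e.\ $(\gamma(s)-c_\varepsilon)\cdot T(s)\neq 0$ on $(0,L(\varepsilon))$); the corollary itself is then a purely formal deduction. Accordingly I do not anticipate any genuine obstacle beyond making sure the rotation-index bookkeeping is robust under orientation-reversing similarities, which is handled by the positivity of $p$ noted above.
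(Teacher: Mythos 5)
Your proof matches the paper's argument exactly: both invoke Lemma~\ref{lem:ideal:turning-number} to rule out $p\neq\hat p$ via the rotation index, and Lemma~\ref{lem:ideal:maximal-dihedral} to rule out $p=\hat p,\ q\neq\hat q$ via the order of the full symmetry group, just as in Corollary~\ref{cor:cdf:non-equivalence}. Your additional remark that orientation-reversing similarities at most flip the sign of the rotation index, which is harmless since $p,\hat p\ge 1$, is a small point of extra care that the paper leaves implicit but which does no harm.
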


\begin{proof}
As in Corollary~\ref{cor:cdf:non-equivalence}: similarities preserve rotation index $p$ (Lemma~\ref{lem:ideal:turning-number})
and conjugate the full symmetry group (Lemma~\ref{lem:ideal:maximal-dihedral}).
\end{proof}

\begin{proof}[Proof of Theorem~\ref{thm:intro:if-epi}]
Choose infinitely many $\varepsilon$ with $\Theta(\varepsilon)/\pi\in\Q$ given by Corollary~\ref{cor:ideal:rational-angles}.
For each such $\varepsilon$, Proposition~\ref{prop:ideal:dihedral-closure} yields a smooth closed homothetic solution,
hence an epicyclic expander for the ideal flow.  Geometric distinctness follows from Corollary \ref{cor:ideal:non-equivalence}.
\end{proof}

\bibliography{jellepi}

\end{document}